\theoremstyle{definition}
\newtheorem{theorem}{Theorem}[section]
\newtheorem{corollary}[theorem]{Corollary}
\newtheorem{lemma}[theorem]{Lemma}
\newtheorem{remark}[theorem]{Remark}
\newtheorem{definition}[theorem]{Definition}
\newtheorem{example}[theorem]{Example}
\UseComputerModernTips \knottips{FF}
\def\Bracket#1{\mathord{\Bigl<\,~ \raise8pt\xybox{0;/r1.3pc/:#1}\,
\Bigr>}}
\def\rBracket#1{\mathord{\Bigl<\,~ \raise1pt\xybox{0;/r1.3pc/:#1}\,
~\Bigr>}}
\def\newBracket#1{\mathord{\Big\langle\, \raise8pt\xybox{0;/r1.3pc/:#1}\,
\Big\rangle}}
\def\doubleBracket#1{\mathord{\Bigl[\Bigl[\, \raise8pt\xybox{0;/r1.3pc/:#1}\,\Bigr]\Bigr]}}
\def\Fig#1{\mathord{~ \raise-10pt\xybox{0;/r0.15pc/:#1}\,~}}
\def\mBracket#1{\mathord{\,~ \raise10pt\xybox{0;/r0.2pc/:#1}\,~}}
\begin{document}

\title{\Large\bf 
Constructions of invariants for surface-links via link invariants and applications to the Kauffman bracket
}

\author{\small 
{Sang Youl Lee}
\smallskip\\
{\small\it 
Department of Mathematics, Pusan National University,
}\\ 
{\small\it Busan 46241, Republic of Korea}\\
{\small\it sangyoul@pusan.ac.kr}}

\renewcommand\leftmark{\centerline{\footnotesize 
S. Y. Lee}}
\renewcommand\rightmark{\centerline{\footnotesize 
Constructions of invariants for surface-links via link invariants
}}

\date{}

\maketitle

\begin{abstract}
 In this paper, we formulate a construction of ideal coset invariants for surface-links in $4$-space using invariants for knots and links in $3$-space. We apply the construction to the Kauffman bracket polynomial invariant and obtain an invariant for surface-links called the Kauffman bracket ideal coset invariant of surface-links. We also define a series of new invariants $\{{\mathbf K}_{2n-1}(\mathcal L) | n=2, 3, 4, \ldots\}$ for surface-links $\mathcal L$ defined by skein relations, which are more effective than the Kauffman bracket ideal coset invariant to distinguish given surface-links.
\end{abstract}

\noindent{\it Mathematics Subject Classification 2000}: 57Q45; 57M25.

\noindent{\it Key words and phrases}: ideal coset invariant; knotted surface; marked graph diagram; surface-link; invariant for surface-links; Kauffman bracket polynomial, skein relation.



\section{Introduction}
\label{intro}

By a {\it surface-link} (or {\it knotted surface}) we mean a closed 2-manifold smoothly (or piecewise linearly and locally flatly) embedded in the $4$-space $\mathbb R^4$ or $S^4$. Two surface-links are said to be {\it equivalent} if they are ambient isotopic. A {\it marked graph diagram} (or {\it ch-diagram}) is a link diagram possibly with some $4$-valent vertices equipped with markers: \xy (-4,4);(4,-4) **@{-}, 
(4,4);(-4,-4) **@{-},  
(3,-0.2);(-3,-0.2) **@{-},
(3,0);(-3,0) **@{-}, 
(3,0.2);(-3,0.2) **@{-}, 
\endxy.
S. J. Lomonaco, Jr. \cite{Lo} and K. Yoshikawa \cite{Yo} introduced a method of presenting surface-links by marked graph diagrams. Indeed, every surface-link $\mathcal L$ is presented by an admissible marked graph diagram $D$. Moreover, if $D$ is an admissible marked graph diagram presenting a surface-link $\mathcal L$, then we can construct a surface-link $\mathcal S(D)$ from $D$ such that $\mathcal S(D)$ is equivalent to $\mathcal L$. If $\mathcal L$ is an oriented surface-link, then it is presented by an admissible oriented marked graph diagram (see Section \ref{sect-omgr-osl}). Using marked graph diagram presentations of surface-links, some properties and invariants for surface-links have been studied by several researchers up to now. For example, see \cite{As,JKaL,JKL,KKL,KJL1,KJL2,Le1,Le2,Le3,So,Yo} and therein.

In \cite{Le3}, the author defined a polynomial, denoted by  $\ll D\gg$, for marked graph diagrams $D$ by a state-sum model associated with an arbitrary given link invariant for knots and links in $3$-space as its state evaluation, which is an invariant for marked graphs in the 3-space $\mathbb R^3$ presented by the diagram $D$ and satisfies a skein relation (see Section \ref{sect-poly-mgd}). In \cite{JKL}, Y. Joung, J. Kim and the author constructed {\it ideal coset invariants} for surface-links in 4-space by means of the polynomial $\ll D\gg$ for marked graph diagrams $D$, and applied the construction to the elementary classical link invariant $[K]:=A^{\|K\|-1}$, where $A$ is a variable and $\|K\|$ is the number of components of a classical link $K$ and obtained an ideal coset invariant for unoriented (nonorientable or orientable but not oriented) surface-links. 

In this paper, we formulate a construction of ideal coset invariants for oriented surface-links in $4$-space by means of the polynomial $\ll D\gg$ for oriented marked graph diagrams. When we forget orientation from this formulation, we get a refined construction of ideal coset invariants for unoriented surface-links given in \cite{JKL} with a simplification that is more applicable in practice. We present a way how to find a unique representative, namely, the {\it normal form} of a given ideal coset from the polynomial $\ll D\gg$ of an (resp.~oriented) marked graph diagram $D$ by using Groebner basis calculation on computer, which is an invariant for the (resp.~oriented) surface-link in the 4-space $\mathbb R^4$ or $S^4$ presented by the (resp.~oriented) marked graph diagram $D$. 
We apply this construction to the Kauffman bracket polynomial for unoriented knots and links and the normalized Kauffman bracket polynomial for oriented knots and links in $3$-space, which lead the {\it Kauffman bracket ideal coset invariant} for unoriented surface-links and the {\it normalized Kauffman bracket ideal coset invariant} for oriented surface-links, respectively. Further, by specializing variables of the polynomial $\ll\cdot\gg$ associated with the (resp.~normalized) Kauffman bracket polynomial, we define a series of new invariants $\{{\mathbf K}_{2n-1}(\mathcal L) | n=2, 3, 4, \ldots\}$ for (resp.~oriented) surface-links $\mathcal L$, which are more powerful than the (resp.~normalized) Kauffman bracket ideal coset invariant to distinguish given (resp.~oriented) surface-links. We also discuss various examples.

This paper is organized as follows. In Section \ref{sect-omgr-osl}, we review (oriented) marked graph diagram presentation of (oriented) surface-links. In Section \ref{sect-poly-mgd}, we deal with the polynomial invariants $\ll\cdot\gg$ for (resp.~oriented) marked graphs in the $3$-space $\mathbb R^3$ associated with classical (resp.~oriented) link invariants. In Section \ref{sect-ici-sl}, we construct ideal coset invariants for (oriented) surface-links and present how to find the normal form of a given ideal coset in terms of the polynomial $\ll\cdot\gg$ by using the Groebner basis theory. In Section \ref{sect-b-poly-MGD}, we apply the construction in Section \ref{sect-ici-sl} to the (resp.~normalized) Kauffman bracket polynomial and derive the (resp.~normalized) Kauffman bracket ideal coset invariant for (resp.~oriented) surface-links. In Section \ref{sect-inv-kbp-spl}, we define a series of new invariants $\{{\mathbf K}_{2n-1}(\mathcal L) | n=2, 3, 4, \ldots\}$ for surface-links $\mathcal L$ using skein relations and calculate the invariants for various surface-links in $4$-space.


\section{Marked graph diagrams of surface-links}\label{sect-omgr-osl}

In this section, we review marked graph diagrams presenting surface-links. A {\it marked vertex graph} or simply a {\it marked graph} is a spatial graph $G$ in $\mathbb R^3$ which satisfies that $G$ is a finite regular graph possibly with $4$-valent vertices, say $v_1, v_2, . . . , v_n$; each vertex $v_i$ is a rigid vertex (that is, we fix a rectangular neighborhood $N_i$ homeomorphic to $\{(x, y)|-1 \leq x, y \leq 1\},$ where $v_i$ corresponds to the origin and the edges incident to $v_i$ are represented by $x^2 = y^2$); each vertex $v_i$ has a {\it marker} which is the interval on $N_i$ given by  $\{(x, 0)|-\frac{1}{2} \leq x \leq \frac{1}{2}\}$.

 An {\it orientation} of a marked graph $G$ is a choice of an orientation for each edge of $G$ in such a way that every vertex in $G$ looks like 
\xy (-5,5);(5,-5) **@{-}, 
(5,5);(-5,-5) **@{-}, 
(3,3.2)*{\llcorner}, 
(-3,-3.4)*{\urcorner}, 
(-2.5,2)*{\ulcorner},
(2.5,-2.4)*{\lrcorner}, 
(3,-0.2);(-3,-0.2) **@{-},
(3,0);(-3,0) **@{-}, 
(3,0.2);(-3,0.2) **@{-}, 
\endxy~or
\xy (-5,5);(5,-5) **@{-},
(5,5);(-5,-5) **@{-},
(2.5,2)*{\urcorner},
(-2.5,-2.2)*{\llcorner},
(-3.2,3)*{\lrcorner},
(3,-3.4)*{\ulcorner},
(3,-0.2);(-3,-0.2) **@{-},
(3,0);(-3,0) **@{-},
(3,0.2);(-3,0.2) **@{-}
\endxy. 
A marked graph is said to be {\it orientable} if it admits an orientation. Otherwise, it is said to be {\it nonorientable}. By an {\it oriented marked graph} we mean an orientable marked graph with a fixed orientation. Two oriented marked graphs are {\it equivalent} if they are ambient isotopic in $\mathbb R^3$ with keeping rectangular neighborhoods, orientation and markers. An oriented marked graph $G$ in $\mathbb R^3$ can be described as usual by a diagram $D$ in $\mathbb R^2$, which is an oriented link diagram in $\mathbb R^2$ possibly with some marked $4$-valent vertices whose incident four edges have orientations illustrated as above, and is called an {\it oriented marked graph diagram} of $G$ (cf. Figure~\ref{fig-oriunori-wmp}). 

\begin{figure}[ht]
\begin{center}
\resizebox{0.55\textwidth}{!}{%
  \includegraphics{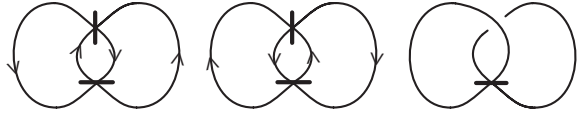}}
\caption{Oriented marked graph diagrams and a nonorientable marked graph diagram}\label{fig-oriunori-wmp}
\end{center}
\end{figure}

Two oriented marked graph diagrams in $\mathbb R^2$ represent equivalent oriented marked graphs in $\mathbb R^3$ if and only if they are transformed into each other by a finite sequence of oriented mark preserving rigid vertex 4-regular spatial graph moves (simply, {\it oriented mark preserving RV4 moves}) $\Gamma_1, \Gamma'_1, \Gamma_2, \Gamma_3, \Gamma_4, \Gamma'_4$ and $\Gamma_5$ shown in Figure~\ref{fig-Y-moves-type-I-o} (\cite{Kau0,KJL2}), which consist of Yoshikawa moves of type I (see Theorem \ref{thm-equiv-mgds-ym}).

\begin{figure}[ht]
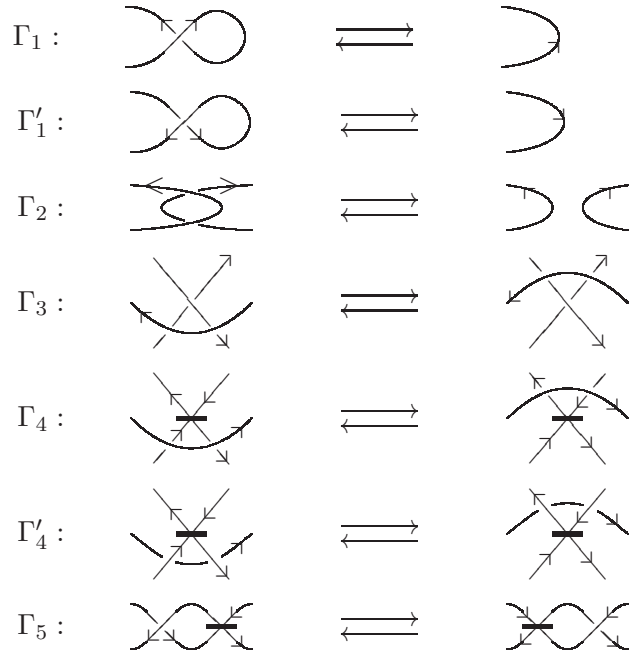

\centerline{\xy 
(12,2);(16,6) **@{-}, 
(12,6);(13.5,4.5) **@{-},
(14.5,3.5);(16,2) **@{-}, 
(16,6);(22,6) **\crv{(18,8)&(20,8)},
(16,2);(22,2) **\crv{(18,0)&(20,0)}, (22,6);(22,2) **\crv{(23.5,4)},
(7,8);(12,6) **\crv{(10,8)}, (7,0);(12,2) **\crv{(10,0)}, (12.4,5) *{\ulcorner}, (15.6,5) *{\urcorner},
(35,5);(45,5) **@{-} ?>*\dir{>}, (35,3);(45,3) **@{-} ?<*\dir{<},
(63.8,2) *{\urcorner},
(57,8);(57,0) **\crv{(67,7)&(67,1)}, (-5,4)*{\Gamma_1 :}, (73,4)*{},
\endxy}
\vskip.3cm
\centerline{ 
\xy (12,2);(16,6) **@{-}, 
(12,6);(13.5,4.5) **@{-},
(14.5,3.5);(16,2) **@{-}, 
(16,6);(22,6) **\crv{(18,8)&(20,8)},
(16,2);(22,2) **\crv{(18,0)&(20,0)}, (22,6);(22,2) **\crv{(23.5,4)},
(7,8);(12,6) **\crv{(10,8)}, (7,0);(12,2) **\crv{(10,0)}, (12.4,2.5) *{\llcorner}, (15.6,2.5) *{\lrcorner},
(35,5);(45,5) **@{-} ?>*\dir{>}, (35,3);(45,3) **@{-} ?<*\dir{<},(63.85,5.2) *{\lrcorner},
(57,8);(57,0) **\crv{(67,7)&(67,1)}, (-5,4)*{\Gamma'_1 :}, (73,4)*{},
\endxy}
\vskip.3cm
\centerline{ \xy (7,7);(7,1)  **\crv{(23,6)&(23,2)}, (16,6.3);(23,7)
**\crv{(19,6.9)}, (16,1.7);(23,1) **\crv{(19,1.1)},
(14,5.7);(14,2.3) **\crv{(8,4)}, (10,6.9) *{<}, (20,6.9) *{>},
(35,5);(45,5) **@{-} ?>*\dir{>}, (35,3);(45,3) **@{-} ?<*\dir{<},
(57,7);(57,1) **\crv{(65,6)&(65,2)}, (73,7);(73,1)
**\crv{(65,6)&(65,2)}, (60,5.3) *{\ulcorner}, (70,5.3) *{\urcorner}, (-5,4)*{\Gamma_2 :},
\endxy}
\vskip.3cm
\centerline{ 
\xy (7,6);(23,6) **\crv{(15,-2)}, 
(10,0);(11.5,1.8) **@{-}, 
(17.5,3);(14.5,6.6) **@{-},
(14.5,6.6);(10,12) **@{-}, 
(20,12);(15.5,6.6) **@{-},
(14.5,5.5);(12.5,3) **@{-},
(18.5,1.8);(20,0) **@{-},
(19.5,11) *{\urcorner}, 
(19,1.1) *{\lrcorner},  
(9,3.5) *{\ulcorner},
(35,7);(45,7) **@{-} ?>*\dir{>}, 
(35,5);(45,5) **@{-} ?<*\dir{<},
(57,6);(73,6) **\crv{(65,14)}, 
(70,12);(68.5,10.2) **@{-}, 
(67.5,9);(65.5,6.5) **@{-}, 
(64.6,5.5);(60,0) **@{-}, 
(62.5,9);(64.4,6.6) **@{-}, 
(64.4,6.6);(70,0) **@{-}, 
(61.5,10.2);(60,12) **@{-},
(69.5,11) *{\urcorner}, 
(69,1.1) *{\lrcorner},  
(58,7) *{\llcorner},
(-5,6)*{\Gamma_3:},
\endxy}
\vskip.3cm
 \centerline{ \xy 
 (7,6);(23,6)  **\crv{(15,-2)}, 
 (10,0);(11.5,1.8) **@{-},
(12.5,3);(20,12) **@{-}, 
(10,12);(17.5,3) **@{-}, 
(18.5,1.8);(20,0) **@{-}, 
(13,6);(17,6) **@{-}, (13,6.1);(17,6.1) **@{-}, (13,5.9);(17,5.9)
**@{-}, (13,6.2);(17,6.2) **@{-}, (13,5.8);(17,5.8) **@{-},
  (13,3.1) *{\urcorner}, (17.5,8.9) *{\llcorner}, (19,1.1) *{\lrcorner},  (21,3.5) *{\urcorner},(13,8) *{\ulcorner},
(35,7);(45,7) **@{-} ?>*\dir{>}, 
(35,5);(45,5) **@{-} ?<*\dir{<},
(57,6);(73,6)  **\crv{(65,14)}, 
(70,12);(68.5,10.2) **@{-},
(67.5,9);(60,0) **@{-}, 
(70,0);(62.5,9) **@{-}, 
(61.5,10.2);(60,12) **@{-}, 
(63,6);(67,6) **@{-}, (63,6.1);(67,6.1) **@{-}, (63,5.9);(67,5.9)
**@{-}, (63,6.2);(67,6.2) **@{-}, (63,5.8);(67,5.8) **@{-},
(62,2) *{\urcorner}, (67,8.3) *{\llcorner}, (67.5,3) *{\lrcorner},  (70.9,8) *{\lrcorner},(61.3,10) *{\ulcorner}, 
(-5,6)*{\Gamma_4:},
\endxy}
\vskip.3cm
 \centerline{ \xy 
  (13,2.2);(17,2.2)  **\crv{(15,1.7)}, 
  (7,6);(11,3)  **\crv{(10,3.5)}, 
  (23,6);(19,3)  **\crv{(20,3.5)}, 
 (10,0);(20,12) **@{-}, 
(10,12);(20,0) **@{-}, 
(13,6);(17,6) **@{-}, (13,6.1);(17,6.1) **@{-}, (13,5.9);(17,5.9)
**@{-}, (13,6.2);(17,6.2) **@{-}, (13,5.8);(17,5.8) **@{-}, 
(13,3.1) *{\urcorner}, (17.5,8.9) *{\llcorner}, (19,1.1) *{\lrcorner},  (21,3.5) *{\urcorner},(13,8) *{\ulcorner},
(35,7);(45,7) **@{-} ?>*\dir{>}, 
(35,5);(45,5) **@{-} ?<*\dir{<},
   (63,9.8);(67,9.8)  **\crv{(65,10.3)}, 
  (57,6);(61,9)  **\crv{(60,8.5)}, 
  (73,6);(69,9)  **\crv{(70,8.5)}, 
(70,12);(60,0) **@{-}, 
(70,0);(60,12) **@{-}, 
(63,6);(67,6) **@{-}, (63,6.1);(67,6.1) **@{-}, (63,5.9);(67,5.9)
**@{-},(63,6.2);(67,6.2) **@{-}, (63,5.8);(67,5.8) **@{-}, 
(62,2) *{\urcorner}, (67,8.3) *{\llcorner}, (67.5,3) *{\lrcorner},  (70.9,8) *{\lrcorner},(61.3,10) *{\ulcorner}, 
(-5,6)*{\Gamma'_4:},
\endxy}
\vskip.3cm
\centerline{ \xy (9,2);(13,6) **@{-}, (9,6);(10.5,4.5) **@{-},
(11.5,3.5);(13,2) **@{-}, (17,2);(21,6) **@{-}, (17,6);(21,2)
**@{-}, (13,6);(17,6) **\crv{(15,8)}, (13,2);(17,2) **\crv{(15,0)},
(7,7);(9,6) **\crv{(8,7)}, (7,1);(9,2) **\crv{(8,1)}, (23,7);(21,6)
**\crv{(22,7)}, (23,1);(21,2) **\crv{(22,1)}, 
(17,4);(21,4) **@{-}, (17,4.1);(21,4.1) **@{-}, (17,3.9);(21,3.9)
**@{-}, (17,4.2);(21,4.2) **@{-}, (17,3.8);(21,3.8) **@{-},
(10,3) *{\llcorner},  (12,3) *{\lrcorner}, (21,6) *{\llcorner},(21,2.2) *{\lrcorner},
(35,5);(45,5) **@{-} ?>*\dir{>}, (35,3);(45,3) **@{-} ?<*\dir{<},
(59,2);(63,6) **@{-}, (59,6);(63,2) **@{-}, (67,2);(71,6) **@{-},
(67,6);(68.5,4.5) **@{-}, (69.5,3.5);(71,2) **@{-}, (63,6);(67,6)
**\crv{(65,8)}, (63,2);(67,2) **\crv{(65,0)}, (57,7);(59,6)
**\crv{(58,7)}, (57,1);(59,2) **\crv{(58,1)}, (73,7);(71,6)
**\crv{(72,7)}, (73,1);(71,2) **\crv{(72,1)}, 
(63,4);(59,4) **@{-}, (63,4.1);(59,4.1) **@{-}, (63,3.9);(59,3.9)
**@{-}, (63,4.2);(59,4.2) **@{-}, (63,3.8);(59,3.8) **@{-},
(59.5,2.5) *{\llcorner},  (59,6) *{\lrcorner}, (71,6) *{\llcorner},(71,2.2) *{\lrcorner},
 (-5,4)*{\Gamma_5:},
\endxy}
\caption{Oriented mark preserving RV4 moves}\label{fig-Y-moves-type-I-o}
\end{figure}

 An {\it unoriented} marked graph diagram or, simply, a marked graph diagram is a nonorientable or an orientable but not oriented marked graph diagram in $\mathbb R^2$. Two marked graph diagrams in $\mathbb R^2$ represent equivalent marked graphs in $\mathbb R^3$ if and only if they are transformed into each other by a finite sequence of the moves $\Omega_1, \Omega_2, \Omega_3,\Omega_4, \Omega'_4$ and $\Omega_5$, where $\Omega_i$ stands for the move $\Gamma_i$ forgetting orientation.

For an (oriented) marked graph diagram $D$, let $L_-(D)$ and $L_+(D)$ be the (oriented) link diagrams obtained from $D$ by replacing each marked vertex \xy (-4,4);(4,-4) **@{-}, 
(4,4);(-4,-4) **@{-},  
(3,-0.2);(-3,-0.2) **@{-},
(3,0);(-3,0) **@{-}, 
(3,0.2);(-3,0.2) **@{-}, 
\endxy with \xy (-4,4);(-4,-4) **\crv{(1,0)},  
(4,4);(4,-4) **\crv{(-1,0)}, 
\endxy and \xy (-4,4);(4,4) **\crv{(0,-1)}, 
(4,-4);(-4,-4) **\crv{(0,1)},   
\endxy, respectively, as illustrated in Figure~\ref{fig-nori-mg}.
We call $L_-(D)$ and $L_+(D)$ the {\it negative resolution} and the {\it positive resolution} of D, respectively. An (oriented) marked graph diagram $D$ is {\it admissible} if both resolutions $L_-(D)$ and $L_+(D)$ are trivial link diagrams.

\begin{figure}[ht]
\begin{center}
\resizebox{0.45\textwidth}{!}{%
  \includegraphics{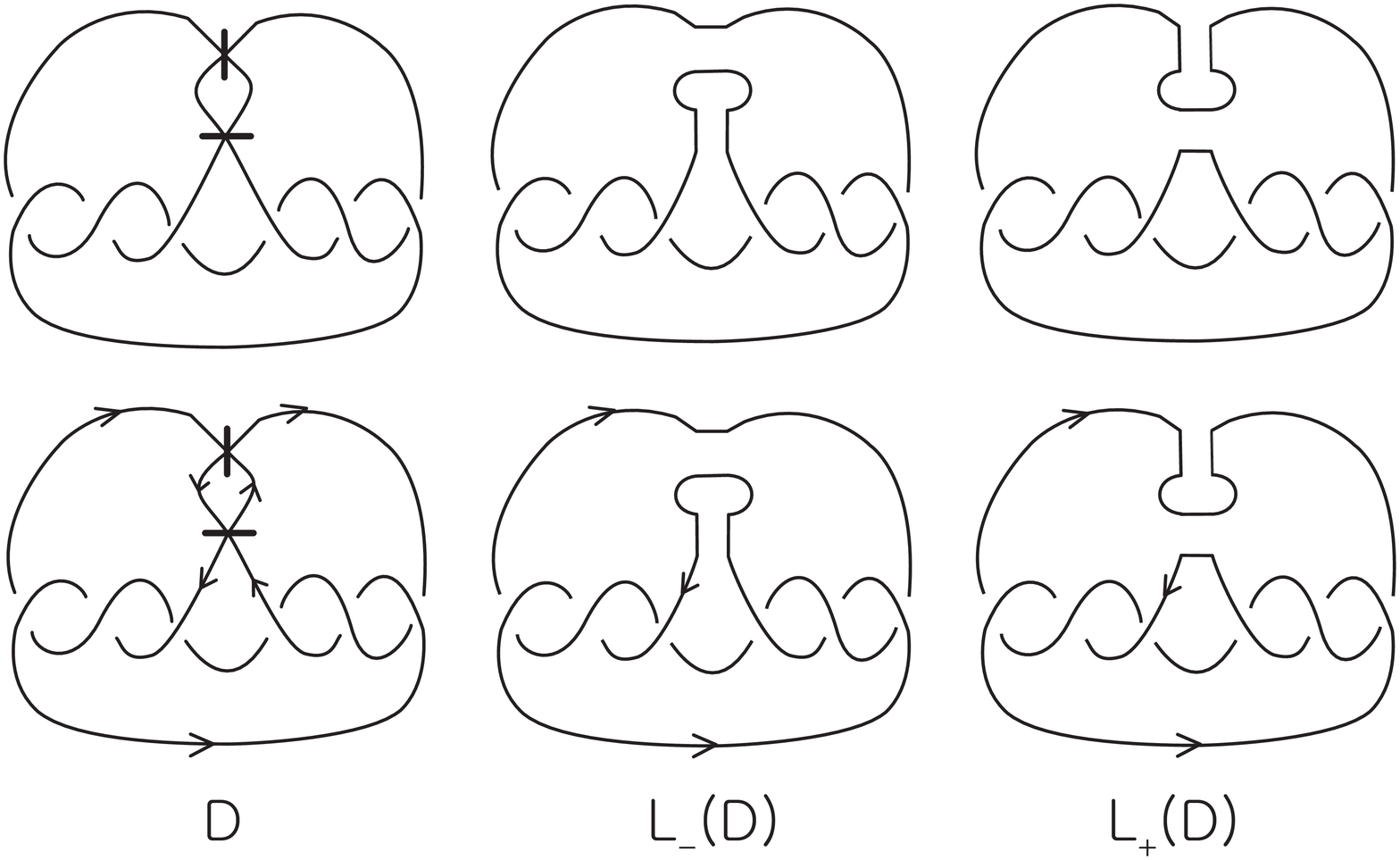}}
\caption{Marked graph diagrams and their resolutions}
\label{fig-nori-mg}
\end{center}
\end{figure}	

For $t \in \mathbb R,$ we denote by $\mathbb R^3_t$ the hyperplane of $\mathbb R^4$ whose fourth coordinate
is equal to $t \in \mathbb R$, i.e., $\mathbb R^3_t := \{(x_1, x_2, x_3, x_4) \in
\mathbb R^4~|~ x_4 = t \}$. A surface-link $\mathcal L \subset \mathbb R^4=\mathbb R^3 \times \mathbb R$ can be described in terms of its {\it cross-sections} $\mathcal L_t=\mathcal L \cap \mathbb R^3_t, ~ t \in \mathbb R$ (cf. \cite{Fox}). Let $p:\mathbb R^4 \to \mathbb R$ be the projection given by $p(x_1, x_2, x_3, x_4)=x_4$. Note that any surface-link can be perturbed to a surface-link $\mathcal L$ such that the projection $p : \mathcal L \to \mathbb R$ is a Morse function with finitely many distinct non-degenerate critical values. More especially, it is well known (cf. \cite{Ka2,Kaw,KSS,Lo}) that any surface-link $\mathcal L$ can be deformed into a surface-link $\mathcal L'$, called a {\it hyperbolic splitting} of $\mathcal L$,
by an ambient isotopy of $\mathbb R^4$ in such a way that
the projection $p: \mathcal L' \to \mathbb R$ satisfies that
all critical points are non-degenerate,
all the index 0 critical points (minimal points) are in $\mathbb R^3_{-1}$,
all the index 1 critical points (saddle points) are in $\mathbb R^3_0$, and
all the index 2 critical points (maximal points) are in $\mathbb R^3_1$.

Let $\mathcal L$ be a surface-link and let ${\mathcal L'}$ be a hyperbolic splitting of $\mathcal L.$ Then the cross-section $\mathcal L'_0=\mathcal L'\cap \mathbb R^3_0$ at $t=0$ is a spatial $4$-valent regular graph in $\mathbb R^3_0$. We give a marker at each $4$-valent vertex (saddle point) that indicates how the saddle point opens up above as illustrated in Figure~\ref{sleesan2:fig1}. The resulting marked graph $G$ is called a {\it marked graph presenting $\mathcal L$}.  A diagram of a marked graph presenting $\mathcal L$ is clearly admissible, and is called a {\it marked graph diagram} (or {\it ch-diagram} (cf. \cite{So})) {\it presenting} $\mathcal L$.  

\begin{figure}[h]
\begin{center}
\resizebox{0.55\textwidth}{!}{%
  \includegraphics{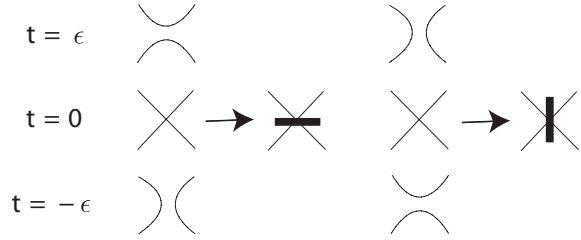} }
\caption{Marking of a vertex} \label{sleesan2:fig1}
\end{center}
\end{figure} 

When $\mathcal L$ is an oriented surface-link, we choose an orientation for each edge of $\mathcal L'_0$ that coincides with the induced orientation on the boundary of $\mathcal L' \cap \mathbb R^3 \times (-\infty, 0]$ by the orientation of $\mathcal L'$ inherited from the orientation of $\mathcal L$. The resulting oriented marked graph diagram $D$ is called an {\it oriented marked graph diagram presenting $\mathcal L$}. 

\begin{figure}[ht]
\begin{center}
\resizebox{0.50\textwidth}{!}{%
\includegraphics{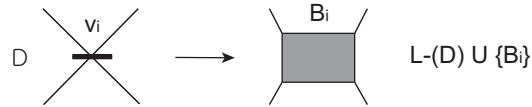} }
\caption{A band attached to $L_-(D)$ at $v_i$}\label{fig-orbd}
\end{center}
\end{figure}

Let $D$ be an admissible marked graph diagram with marked vertices $v_1, \ldots, v_n$. We define a surface $F(D) \subset \mathbb R^3 \times [-1,1]$ by
\begin{equation*}
(\mathbb R^3_t, F(D) \cap \mathbb R^3_t)=\left\{%
\begin{array}{ll}
    (\mathbb R^3, L_+(D)) & \hbox{for $0 < t \leq 1$,} \\
    \bigg( \mathbb R^3, L_-(D) \cup \bigg( \overset{n}{\underset{i=1}{\cup}} B_i \bigg) \bigg) & \hbox{for $t = 0$,} \\
    (\mathbb R^3, L_-(D)) & \hbox{for $-1 \leq t < 0$,} \\
\end{array}%
\right.
\end{equation*}
where $B_i (i=1, \ldots,n)$ is a band attached to $L_-(D)$ at the marked vertex $v_i$ as shown in Figure~\ref{fig-orbd}.
We call $F(D)$ the {\it proper surface associated with} $D$.
Since $D$ is admissible, we can obtain a surface-link from $F(D)$ by attaching trivial disks in $\mathbb R^3 \times [1, \infty)$ and another trivial disks in $\mathbb R^3 \times (-\infty, 1]$.  We denote the resulting surface-link by $\mathcal S(D)$, and call it the {\it surface-link associated with $D$}. It is known that the isotopy type of $\mathcal S(D)$ does not depend on choices of trivial disks (cf. \cite{Ka2,KSS}). It is straightforward from the construction of $\mathcal S(D)$ that $D$ is a marked graph diagram presenting $\mathcal S(D)$. 
It is known that $D$ is orientable if and only if $F(D)$ is an orientable surface.  When $D$ is oriented, the resolutions $L_-(D)$ and $L_+(D)$ have orientations induced from the orientation of $D$ (see Figure~\ref{fig-nori-mg}), and we assume $F(D)$ is oriented so that the induced orientation on $L_+(D) = \partial F(D) \cap \mathbb R^3_1$ matches the orientation of $L_+(D)$. Let $\mathcal L$ be an oriented surface-link in $\mathbb R^4$. We say that $\mathcal L$ is {\it presented} by an oriented marked graph diagram $D$ if $\mathcal L$ is ambient isotopic to the oriented surface-link $\mathcal S(D)$ in $\mathbb R^4$. Note that any oriented surface-link is presented by an oriented marked graph diagram. 

Throughout this paper, a {\it surface-link} means a nonorientable surface-link or an orientable surface-link without orientation, and an {\it oriented surface-link} means an orientable surface-link with a fixed orientation. Now we conclude this section by recalling the following:

\begin{theorem}[\cite{KJL2,KK,Sw}]\label{thm-equiv-mgds-ym}
(1) Two oriented marked graph diagrams present the same oriented surface-link if and only if they are transformed into each other by a finite sequence of the oriented mark preserving RV4 moves in Figure~\ref{fig-Y-moves-type-I-o}, called the {\it oriented Yoshikawa moves of type I}, and the {\it oriented Yoshikawa moves of type II} in Figure~\ref{fig-moves-type-II-o}.		
		
\begin{figure}[h]
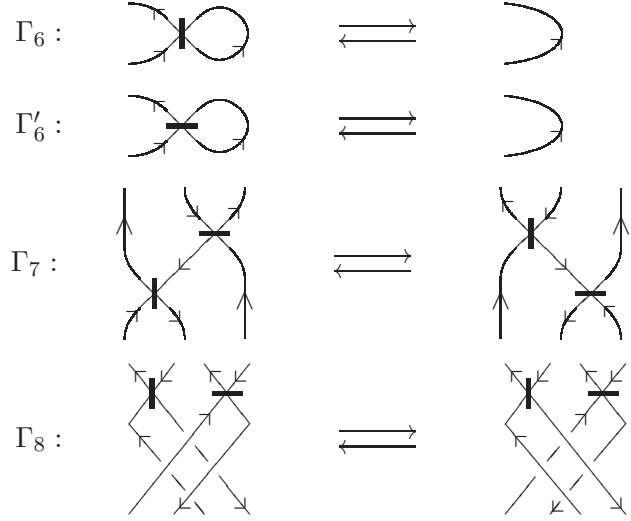

\centerline{ 
\xy (12,6);(16,2) **@{-}, (12,2);(16,6) **@{-},
(16,6);(22,6) **\crv{(18,8)&(20,8)}, (16,2);(22,2)
**\crv{(18,0)&(20,0)}, (22,6);(22,2) **\crv{(23.5,4)}, (7,8);(12,6)
**\crv{(10,8)}, (7,0);(12,2) **\crv{(10,0)}, (11,0.4) *{\urcorner}, (11,6) *{\ulcorner},(21.5,1)*{\urcorner},
(35,5);(45,5) **@{-} ?>*\dir{>}, (35,3);(45,3) **@{-} ?<*\dir{<},
(57,8);(57,0) **\crv{(67,7)&(67,1)}, (-5,4)*{\Gamma_6 :}, (73,4)*{},
(14,6);(14,2) **@{-}, (14.1,6);(14.1,2) **@{-}, (13.9,6);(13.9,2)
**@{-}, (14.2,6);(14.2,2) **@{-}, (13.8,6);(13.8,2) **@{-}, 
(63.8,2) *{\urcorner},
\endxy}
\vskip.3cm
\centerline{ \xy (12,6);(16,2) **@{-}, (12,2);(16,6) **@{-},
(16,6);(22,6) **\crv{(18,8)&(20,8)}, (16,2);(22,2)
**\crv{(18,0)&(20,0)}, (22,6);(22,2) **\crv{(23.5,4)}, (7,8);(12,6)
**\crv{(10,8)}, (7,0);(12,2) **\crv{(10,0)}, (11,0.4) *{\urcorner}, (11,6) *{\ulcorner},(21.5,1)*{\urcorner},
(35,5);(45,5) **@{-} ?>*\dir{>}, (35,3);(45,3) **@{-} ?<*\dir{<},
(57,8);(57,0) **\crv{(67,7)&(67,1)}, (-5,4)*{\Gamma'_6 :},
(73,4)*{}, (12,4);(16,4) **@{-}, (12,4.1);(16,4.1) **@{-},
(12,4.2);(16,4.2) **@{-}, (12,3.9);(16,3.9) **@{-},
(12,3.8);(16,3.8) **@{-}, (63.8,2) *{\urcorner},
\endxy}
\vskip.3cm
\centerline{ \xy (9,4);(17,12) **@{-}, (9,8);(13,4) **@{-},
(17,12);(21,16) **@{-}, (17,16);(21,12) **@{-}, (7,0);(9,4)
**\crv{(7,2)}, (7,12);(9,8) **\crv{(7,10)}, (15,0);(13,4)
**\crv{(15,2)}, (17,16);(15,20) **\crv{(15,18)}, (21,16);(23,20)
**\crv{(23,18)}, (21,12);(23,8) **\crv{(23,10)}, (7,12);(7,20)
**@{-}, (23,8);(23,0) **@{-},
(11,4);(11,8) **@{-}, 
(10.9,4);(10.9,8) **@{-}, 
(11.1,4);(11.1,8) **@{-}, 
(10.8,4);(10.8,8) **@{-}, 
(11.2,4);(11.2,8) **@{-},
(17,14);(21,14) **@{-}, 
(17,14.1);(21,14.1) **@{-},
(17,13.9);(21,13.9) **@{-}, 
(17,14.2);(21,14.2) **@{-},
(17,13.8);(21,13.8) **@{-},
(7,15) *{\wedge},(23,5) *{\wedge},(15,10) *{\llcorner},(8,2.3) *{\urcorner}, (21.5,16) *{\urcorner},(16,17) *{\lrcorner},(13.7,3) *{\lrcorner},
(35,11);(45,11) **@{-} ?>*\dir{>}, (35,9);(45,9) **@{-} ?<*\dir{<},
(71,4);(63,12) **@{-}, (71,8);(67,4) **@{-}, (63,12);(59,16) **@{-},
(63,16);(59,12) **@{-}, (73,0);(71,4) **\crv{(73,2)}, (73,12);(71,8)
**\crv{(73,10)}, (65,0);(67,4) **\crv{(65,2)}, (63,16);(65,20)
**\crv{(65,18)}, (59,16);(57,20) **\crv{(57,18)}, (59,12);(57,8)
**\crv{(57,10)}, (73,12);(73,20) **@{-}, (57,8);(57,0) **@{-},
(61,12);(61,16) **@{-}, 
(61.1,12);(61.1,16) **@{-},
(60.9,12);(60.9,16) **@{-}, 
(61.2,12);(61.2,16) **@{-},
(60.8,12);(60.8,16) **@{-},
(57,5) *{\wedge},(73,15) *{\wedge},(65,10) *{\lrcorner},(58,17) *{\ulcorner}, (71.5,3) *{\ulcorner},(66.3,3) *{\llcorner},(63.7,16.8) *{\llcorner},
(67,6);(71,6) **@{-}, 
(67,6.1);(71,6.1) **@{-}, 
(67,5.9);(71,5.9) **@{-}, 
(67,6.2);(71,6.2) **@{-},
(67,5.8);(71,5.8) **@{-},  
(-5,10)*{\Gamma_7:}, 
 \endxy}
\vskip.3cm
\centerline{ 
\xy (7,20);(14.2,11) **@{-}, (15.8,9);(17.4,7) **@{-},
(19,5);(23,0) **@{-}, (13,20);(7,12) **@{-}, (7,12);(11.2,7) **@{-},
(12.7,5.2);(14.4,3.2) **@{-}, (15.7,1.6);(17,0) **@{-},
(17,20);(23,12) **@{-}, (13,0);(23,12) **@{-}, (7,0);(23,20) **@{-},
(10,18);(10,14) **@{-}, (10.1,18);(10.1,14) **@{-},
(9.9,18);(9.9,14) **@{-}, (10.2,18);(10.2,14) **@{-},
(9.8,18);(9.8,14) **@{-}, (18,16);(22,16) **@{-},
(18,16.1);(22,16.1) **@{-}, (18,15.9);(22,15.9) **@{-},
(18,16.2);(22,16.2) **@{-}, (18,15.8);(22,15.8) **@{-},
 (8.5,17.7) *{\ulcorner}, (18.5,17.7) *{\ulcorner}, (9.1,9) *{\ulcorner}, (12,18.4) *{\llcorner}, (14.5,1.6) *{\llcorner}, (21.8,18.4) *{\llcorner}, (17,12) *{\urcorner}, (21.7,1.6) *{\lrcorner},
(35,11);(45,11) **@{-} ?>*\dir{>}, (35,9);(45,9) **@{-} ?<*\dir{<},
(73,20);(65.8,11) **@{-}, (64.2,9);(62.6,7) **@{-}, (61,5);(57,0)
**@{-}, (67,20);(73,12) **@{-}, (73,12);(68.8,7) **@{-},
(67.3,5.2);(65.6,3.2) **@{-}, (64.3,1.6);(63,0) **@{-},
(63,20);(57,12) **@{-}, (67,0);(57,12) **@{-}, (73,0);(57,20)
**@{-},
 (58.5,17.7) *{\ulcorner}, (68.5,17.7) *{\ulcorner}, (59.1,9) *{\ulcorner}, (62,18.4) *{\llcorner}, (64,1.2) *{\llcorner}, (71.8,18.4) *{\llcorner}, (67,12) *{\urcorner}, (71.7,1.6) *{\lrcorner},
(60,18);(60,14) **@{-}, (60.1,18);(60.1,14) **@{-},
(59.9,18);(59.9,14) **@{-}, (60.2,18);(60.2,14) **@{-},
(59.8,18);(59.8,14) **@{-}, (68,16);(72,16) **@{-},
(68,16.1);(72,16.1) **@{-}, (68,15.9);(72,15.9) **@{-},
(68,16.2);(72,16.2) **@{-}, (68,15.8);(72,15.8) **@{-},
(-5,10)*{\Gamma_{8}:}, 
\endxy}
\caption{Oriented Yoshikawa moves of type II}
\label{fig-moves-type-II-o}
\end{figure} 	

(2) Two unoriented marked graph diagrams present the same surface-link if and only if they are transformed into each other by a finite sequence of the unoriented mark preserving RV4 moves $\Omega_1, \Omega_2, \Omega_3,\Omega_4, \Omega'_4, \Omega_5$, called the {\it unoriented Yoshikawa moves of type I}, and the moves $\Omega_6, \Omega'_6, \Omega_7$ and $\Omega_8$, called the {\it unoriented Yoshikawa moves of type II}, where $\Omega_i$ stands for the move $\Gamma_i$ without orientation.
\end{theorem}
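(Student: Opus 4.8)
The statement is an \emph{if-and-only-if}, and the two directions have very different character. The plan for the sufficiency direction is a move-by-move local verification: for each of $\Gamma_1,\Gamma_1',\Gamma_2,\Gamma_3,\Gamma_4,\Gamma_4',\Gamma_5$ and each of $\Gamma_6,\Gamma_6',\Gamma_7,\Gamma_8$ I would exhibit an explicit ambient isotopy of $\mathbb R^4$ carrying $\mathcal S(D)$ onto $\mathcal S(D')$, where $D'$ is obtained from $D$ by applying the move. For the type I moves this is routine, because each is supported near the saddle level and amounts to an isotopy of the marked graph $\mathcal L'_0$ inside $\mathbb R^3_0$ combined with a compatible slide of the bands $B_i$; such a deformation extends across the levels to an isotopy of the proper surface $F(D)$, and hence of $\mathcal S(D)$. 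For the type II moves one checks instead that the two local surface pieces are isotopic rel boundary: each type II move records a surgery on the hyperbolic splitting (a handle slide, or the birth/death of a cancelling critical pair) that does not change the isotopy class of the resulting surface-link.

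The necessity direction is the substance of the theorem, and I would attack it with Morse theory. First I would fix hyperbolic splittings $\mathcal L_1'$ and $\mathcal L_2'$ of the two given surface-links, so that in each case the saddle-level slice $\mathcal L'_0 \cap \mathbb R^3_0$ yields the marked graph diagram and the admissibility condition encodes the minima and maxima. Since the two surface-links are ambient isotopic, there is a path $\{\mathcal L_s\}_{s\in[0,1]}$ of embedded surfaces joining $\mathcal L_1'$ to $\mathcal L_2'$. The goal is to promote this path to a \emph{generic} one-parameter family of height functions and to read off, at each event along the family, one of the listed moves.

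Concretely, I would study the composite projections $p\colon \mathcal L_s \to \mathbb R$ as $s$ varies, and apply Cerf theory. Generically the family has only finitely many degenerate moments, each of codimension one: births and deaths of cancelling critical pairs, and exchanges in the height ordering of critical points (in particular saddle-saddle passages). Away from these moments the marked graph diagram changes only by an isotopy of the saddle-level slice, which I would decompose into the Reidemeister-type moves $\Omega_1,\dots,\Omega_5$ (respectively $\Gamma_1,\dots,\Gamma_5$) of Figure~\ref{fig-Y-moves-type-I-o}. At each degenerate moment the combinatorial type of the diagram jumps, and the local model of the singularity must be matched with exactly one type II move $\Gamma_6,\Gamma_6',\Gamma_7$ or $\Gamma_8$ of Figure~\ref{fig-moves-type-II-o}. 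For the oriented statement (1) I would carry the induced edge-orientations through the entire argument, checking at each step that the boundary orientation on $L_+(D)=\partial F(D)\cap\mathbb R^3_1$ is preserved.

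The main obstacle is \emph{completeness}: proving that the four type II moves, together with the type I moves, genuinely realize every generic transition, with no exotic singularity left over. This requires a careful stratification of the space of height functions on surfaces embedded in $4$-space and a verification that each codimension-one stratum produces a diagram change lying in the given move set; this is precisely the content established in \cite{KJL2,KK,Sw}, building on the unoriented analysis of Yoshikawa and of Kamada, Kawauchi and Saeki in \cite{Yo,KSS}, whose stratification I would follow. A secondary but genuine difficulty, specific to part (1), is the orientation bookkeeping: one must confirm that the oriented refinements $\Gamma_1',\Gamma_4',\Gamma_6'$ (rather than merely their unoriented shadows $\Omega_1,\Omega_4,\Omega_6$) are the moves actually forced by the edge-orientation data at each oriented vertex, which is exactly what separates part (1) from part (2).
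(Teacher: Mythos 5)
The paper does not prove this statement at all: it is imported verbatim from \cite{KJL2,KK,Sw} (the unoriented case going back to Yoshikawa, with the generating-set reduction of Kearton--Kurlin and Swenton, and the oriented refinement in \cite{KJL2}), so there is no internal proof to compare your argument against. Your outline is the standard strategy of those references --- local isotopies of $F(D)$ for sufficiency, and a Cerf-theoretic analysis of a generic one-parameter family of height functions for necessity --- and the division of labor you describe (type I moves from isotopies of the saddle-level slice, type II moves from codimension-one events) is the right one.

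That said, as a standalone proof your proposal has a gap located exactly where the theorem's content lies. You correctly identify that the whole difficulty is \emph{completeness} --- showing that every codimension-one stratum in the space of embeddings-with-height-functions produces a diagram change realizable by $\Gamma_1,\dots,\Gamma_8$ and nothing else --- but you then explicitly outsource that verification to \cite{KJL2,KK,Sw} rather than carrying it out. Two points in particular are left unargued: (i) the reduction from the a priori infinite family of possible saddle-saddle exchange and band-slide events to the finite list $\Gamma_6,\Gamma'_6,\Gamma_7,\Gamma_8$ (this is the generating-set theorem of Kearton--Kurlin and Swenton, not a formal consequence of genericity); and (ii) for part (1), the claim that every unoriented move sequence between oriented diagrams can be replaced by an oriented one --- the orientation ``bookkeeping'' you mention is not automatic, since an unoriented Yoshikawa move applied to an oriented diagram need not respect the vertex orientation conventions, and showing the oriented moves still suffice is the actual content of \cite{KJL2}. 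Since the paper itself treats the theorem as a citation, your plan is consistent with its usage, but it should be presented as a proof sketch relying on those references rather than as a proof.
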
	


\section{Polynomial invariants for marked graphs in $\mathbb R^3$ associated with link invariants}\label{sect-poly-mgd}

In this section, we first review the polynomial invariants for unoriented marked graphs in $\mathbb R^3$ associated with invariants for unoriented knots and links in $\mathbb R^3$ (firstly introduced in \cite{Le3} and refined in \cite{JKL}) in a specialized fashion that is more applicable in practice, and then we define polynomial invariants for oriented marked graphs in $\mathbb R^3$ associated with invariants for oriented knots and  links in $\mathbb R^3$. 

Let $R$ be a commutative ring with the additive identity $0$ and the multiplicative identity $1$ and let $[~~] : \{\text{unoriented links in $\mathbb R^3$}\} \longrightarrow R$ be a regular or an ambient isotopy invariant such that for a unit $\alpha \in R$ and an element $\delta \in R$,
\begin{align}
&\Big[~\xy (1,-3);(6,2) **@{-},
(1,3);(3.5,0.5) **@{-},
(4.5,-0.5);(6,-2) **@{-},
(6,2);(10,2) **\crv{(8,4)},
(6,-2);(10,-2) **\crv{(8,-4)}, 
(10,2);(10,-2) **\crv{(11.5,0)}, 
\endxy~\Big] = \alpha \Big[~\xy (1.5,-3);(1.5,3) **\crv{(4,0)}, 
\endxy~\Big],~~~
\Big[~\xy (4.5,0.6);(6,2) **@{-},
(1,-3);(3.3,-0.7) **@{-},
(1,3);(3.5,0.5) **@{-},
(3.5,0.5);(6,-2) **@{-},
(6,2);(10,2) **\crv{(8,4)},
(6,-2);(10,-2) **\crv{(8,-4)}, 
(10,2);(10,-2) **\crv{(11.5,0)}, 
\endxy~\Big] 
= \alpha^{-1}\Big[~\xy 
(1.5,-3);(1.5,3) **\crv{(4,0)}, 
\endxy~\Big],\label{p-clinv-1}\\
&\Big[K \sqcup \bigcirc\Big] = \delta \Big[ K\Big], \label{p-clinv-2}
\end{align}
where $K \sqcup \bigcirc$ denotes a disjoint union of a circle
$\bigcirc$ and a link diagram $K.$

Let $R[x,y]$ be the ring of polynomials in variables $x$ and $y$ with coefficients in $R$.

\begin{definition}\label{defn-poly-skr}
For a given marked graph diagram $D$, let $[[D]](x,y)$ ([[D]] for short) be a polynomial in $R[x,y]$ defined by the following two rules:
\begin{itemize}
\item[{\rm ({\bf L1})}] $[[D]]= [D]$ if $D$ is a link diagram,
\item[{\rm ({\bf L2})}]  
$[[~\xy (-4,4);(4,-4) **@{-}, 
(4,4);(-4,-4) **@{-},  
(3,-0.2);(-3,-0.2) **@{-},
(3,0);(-3,0) **@{-}, 
(3,0.2);(-3,0.2) **@{-}, 
\endxy~]] =
[[~\xy (-4,4);(4,4) **\crv{(0,-1)}, 
(4,-4);(-4,-4) **\crv{(0,1)},  
\endxy~]] x + [[~\xy (-4,4);(-4,-4) **\crv{(1,0)},  
(4,4);(4,-4) **\crv{(-1,0)},  
\endxy~]] y.$
\end{itemize}
\end{definition}

Let $D= D_1 \cup \cdots \cup D_m$ be an oriented link diagram and let $w(D_i)$ be the usual writhe of the component $D_i$. The {\it self-writhe $sw(D)$} of $D$ is defined to be the sum $sw(D)=\sum_{i=1}^mw(D_i).$
Now let $D$ be a marked graph diagram. We first choose an arbitrary orientation for each component of $L_+(D)$ and $L_-(D)$. Then we define the {\it self-writhe $sw(D)$} of $D$ by 
\begin{equation*}
sw(D)=\frac{sw(L_+(D))+sw(L_-(D))}{2}.
\end{equation*}
It is noted that $sw(L_+(D))$ and $sw(L_-(D))$ are independent of the choice of orientations because the writhe of each component of $L_+(D)$ and $L_-(D)$ is independent of the choice of orientation for the component. 
	
\begin{remark}\label{rmk-self-wr}
The self-writhe $sw(D)$ of a marked graph diagram $D$ is invariant under all unoriented Yoshikawa moves except the move $\Omega_1$. Indeed, it follows from \cite[Lemma 4.1]{Le3} that $t(D)=2sw(D)$ is invariant under all unoriented Yoshikawa moves and so does $sw(D)$, except the move $\Omega_1$. For $\Omega_1$, we have
\begin{equation*}
sw\bigg(~\xy (1,-3);(6,2) **@{-},
(1,3);(3.5,0.5) **@{-},
(4.5,-0.5);(6,-2) **@{-},
(6,2);(10,2) **\crv{(8,4)},
(6,-2);(10,-2) **\crv{(8,-4)}, 
(10,2);(10,-2) **\crv{(11.5,0)}, 
\endxy~\bigg)
=sw\bigg(~~\xy (1.5,-3);(1.5,3) **\crv{(4,0)}, 
\endxy~~\bigg)+1,~
sw\bigg(~\xy (4.5,0.6);(6,2) **@{-},
(1,-3);(3.3,-0.7) **@{-},
(1,3);(3.5,0.5) **@{-},
(3.5,0.5);(6,-2) **@{-},
(6,2);(10,2) **\crv{(8,4)},
(6,-2);(10,-2) **\crv{(8,-4)}, 
(10,2);(10,-2) **\crv{(11.5,0)}, 
\endxy~\bigg) 
= sw\bigg(~\xy 
(1.5,-3);(1.5,3) **\crv{(4,0)},  
\endxy~\bigg)-1.
\end{equation*}
\end{remark}

\begin{definition}\label{defn-poly}
Let $D$ be a marked graph diagram. We define $\ll D\gg(x,y)$ ($\ll D\gg$ for short) to be the polynomial in $R[x, y]$ given by 
\begin{equation*}
\ll D\gg=\alpha^{-sw(D)}[[D]](x,y).
\end{equation*}
\end{definition}

Let $D$ be a marked graph diagram.
A {\it state} of $D$ is an assignment of $T_\infty$ or $T_0$ to each marked vertex in $D$. Let $\mathfrak S(D)$ be the set of all states of $D$. For each state 
$\sigma \in \mathfrak S(D),$ let $D_{\sigma}$ denote the link diagram obtained from $D$ by replacing marked vertices of $D$ with trivial $2$-tangles
according to the assignment $T_\infty$ or $T_0$ by the
state $\sigma$:
\begin{align*}
&~~~\underset{~T_\infty}{\xy (-4,4);(4,-4) **@{-}, 
(4,4);(-4,-4) **@{-},  
(3,-0.2);(-3,-0.2) **@{-},
(3,0);(-3,0) **@{-}, 
(3,0.2);(-3,0.2) **@{-}, 
\endxy} \longrightarrow \xy (-4,4);(4,4) **\crv{(0,-1)}, 
(4,-4);(-4,-4) **\crv{(0,1)},    
\endxy~~~~~~
\underset{~T_0}{\xy (-4,4);(4,-4) **@{-}, 
(4,4);(-4,-4) **@{-},  
(3,-0.2);(-3,-0.2) **@{-},
(3,0);(-3,0) **@{-}, 
(3,0.2);(-3,0.2) **@{-}, 
\endxy} \longrightarrow \xy (-4,4);(-4,-4) **\crv{(1,0)},  
(4,4);(4,-4) **\crv{(-1,0)}, 
\endxy
\end{align*}
Then $\ll D \gg$ has the following {\it state-sum formula}:
\begin{equation}\label{state formula}
\ll D \gg=\alpha^{-sw(D)}\sum_{\sigma \in \mathfrak S(D)}
[ D_\sigma ]~ x^{\sigma(\infty)}y^{\sigma(0)},
\end{equation}
where $\sigma(\infty)$ and $\sigma(0)$ denote
the numbers of the assignments $T_\infty$ and $T_0$ of the
state $\sigma,$ respectively.

\bigskip

Now we define the polynomial $\ll D\gg$ for an oriented marked graph diagam $D$ associated with a given regular or an ambient isotopy invariant $$[~~] :\{\text{oriented links in $\mathbb R^3$}\} \to R$$
satisfying the properties (\ref{p-clinv-1}) and (\ref{p-clinv-2}) with all possible orientations.

\begin{definition}\label{defn-poly-ori-skr}
For a given oriented marked graph diagram $D$, let $[[D]](x,y)$ ([[D]] for short) be a polynomial in $R[x,y]$ defined by the following two rules:
\begin{itemize}
\item[{\rm ({\bf L1})}] $[[D]]= [D]$ if $D$ is an oriented link diagram,
\item[{\rm ({\bf L2})}]  
$[[~\xy (-4,4);(4,-4) **@{-}, 
(4,4);(-4,-4) **@{-}, 
(3,3.2)*{\llcorner}, 
(-3,-3.4)*{\urcorner}, 
(-2.5,2)*{\ulcorner},
(2.5,-2.4)*{\lrcorner}, 
(3,-0.2);(-3,-0.2) **@{-},
(3,0);(-3,0) **@{-}, 
(3,0.2);(-3,0.2) **@{-}, 
\endxy~]] =
[[~\xy (-4,4);(4,4) **\crv{(0,-1)}, 
(4,-4);(-4,-4) **\crv{(0,1)}, 
(-2.5,1.9)*{\ulcorner}, (2.5,-2.4)*{\lrcorner}, 
\endxy~]]x + [[~\xy (-4,4);(-4,-4) **\crv{(1,0)},  
(4,4);(4,-4) **\crv{(-1,0)}, 
(-2.5,1.9)*{\ulcorner}, (2.5,-2.4)*{\lrcorner}, 
\endxy~]]y.$
\end{itemize}
\end{definition}

Let $D$ be an oriented marked graph diagram. The {\it writhe} $w(D)$ of $D$ is defined to be the sum of the signs of all crossings in $D$ given by
$\mathrm{sign} \left( \xy (5,2.5);(0,-2.5) **@{-} ?<*\dir{<},
(5,-2.5);(3,-0.5) **@{-}, (0,2.5);(2,0.5) **@{-} ?<*\dir{<},
\endxy \right) = 1$ and $\mathrm{sign} \left( \xy (0,2.5);(5,-2.5)
**@{-} ?<*\dir{<}, (0,-2.5);(2,-0.5) **@{-}, (5,2.5);(3,0.5)
**@{-} ?<*\dir{<}, \endxy \right) = -1$ analogue to the writhe of an oriented link diagram. 
	
\begin{remark}\label{rmk-self-ori-wr}
It is easy to see from Figures \ref{fig-Y-moves-type-I-o} and \ref{fig-moves-type-II-o} that the writhe $w(D)$ of an oriented marked graph diagram $D$ is invariant under all oriented Yoshikawa moves except the move $\Gamma_1$ and $\Gamma'_1$. For $\Gamma_1$, $\Gamma'_1$ and their mirror moves, we have
\begin{equation}\label{rel-w(D)}
\begin{split}
&w(~\xy (1,-3);(6,2) **@{-},
(1,3);(3.5,0.5) **@{-},
(4.5,-0.5);(6,-2) **@{-},
(6,2);(10,2) **\crv{(8,4)},
(6,-2);(10,-2) **\crv{(8,-4)}, 
(10,2);(10,-2) **\crv{(11.5,0)},
(2.4,1.1) *{\ulcorner},  
\endxy~)
=w(~~\xy (1.5,-3);(1.5,3) **\crv{(4,0)}, 
(2.4,1.5) *{\ulcorner}, 
\endxy~~)+1,~~
w(~\xy (4.5,0.6);(6,2) **@{-},
(1,-3);(3.3,-0.7) **@{-},
(1,3);(3.5,0.5) **@{-},
(3.5,0.5);(6,-2) **@{-},
(6,2);(10,2) **\crv{(8,4)},
(6,-2);(10,-2) **\crv{(8,-4)}, 
(10,2);(10,-2) **\crv{(11.5,0)},
(2.4,1.1) *{\ulcorner}, 
\endxy~) 
= w(~\xy 
(1.5,-3);(1.5,3) **\crv{(4,0)}, 
(2.4,1.5) *{\ulcorner}, 
\endxy~)-1,\\
&w(~\xy (1,-3);(6,2) **@{-},
(1,3);(3.5,0.5) **@{-},
(4.5,-0.5);(6,-2) **@{-},
(6,2);(10,2) **\crv{(8,4)},
(6,-2);(10,-2) **\crv{(8,-4)}, 
(10,2);(10,-2) **\crv{(11.5,0)},
(2.4,1.8) *{\lrcorner}, 
\endxy~)
=w(~~\xy (1.5,-3);(1.5,3) **\crv{(4,0)}, 
(2.4,-1.8) *{\llcorner}, 
\endxy~~)+1,~~
w(~\xy (4.5,0.6);(6,2) **@{-},
(1,-3);(3.3,-0.7) **@{-},
(1,3);(3.5,0.5) **@{-},
(3.5,0.5);(6,-2) **@{-},
(6,2);(10,2) **\crv{(8,4)},
(6,-2);(10,-2) **\crv{(8,-4)}, 
(10,2);(10,-2) **\crv{(11.5,0)},
(2.4,1.8) *{\lrcorner}, 
\endxy~) 
= w(~\xy 
(1.5,-3);(1.5,3) **\crv{(4,0)}, 
(2.4,-1.8) *{\llcorner}, 
\endxy~)-1.
\end{split}
\end{equation}
\end{remark}

\begin{definition}\label{defn-poly-ori}
Let $D$ be an oriented marked graph diagram. We define $\ll D\gg(x,y)$ ($\ll D\gg$ for short) to be the polynomial in $R[x,y]$ given by 
\begin{equation*}
\ll D\gg=\alpha^{-w(D)}[[D]](x,y).
\end{equation*}
\end{definition}

Let $D$ be an oriented marked graph diagram.
A {\it state} of $D$ is an assignment of $T_\infty$ or $T_0$ to each marked vertex in $D$. Let $\mathfrak S(D)$ be the set of all states of $D$. For each state $\sigma \in \mathfrak S(D),$ let $D_{\sigma}$ denote the oriented link diagram obtained from $D$ by replacing marked vertices of $D$ with trivial oriented $2$-tangles
according to the assignment $T_\infty$ or $T_0$ by the
state $\sigma$:
\begin{align*}
&~~~\underset{~T_\infty}{\xy (-4,4);(4,-4) **@{-}, 
(4,4);(-4,-4) **@{-}, 
(3,3.2)*{\llcorner}, 
(-3,-3.4)*{\urcorner}, 
(-2.5,2)*{\ulcorner},
(2.5,-2.4)*{\lrcorner}, 
(3,-0.2);(-3,-0.2) **@{-},
(3,0);(-3,0) **@{-}, 
(3,0.2);(-3,0.2) **@{-}, 
\endxy} \longrightarrow \xy (-4,4);(4,4) **\crv{(0,-1)}, 
(4,-4);(-4,-4) **\crv{(0,1)},  
(-2.5,1.9)*{\ulcorner}, (2.5,-2.4)*{\lrcorner},   
\endxy~~~~~~
\underset{~T_0}{\xy (-4,4);(4,-4) **@{-}, 
(4,4);(-4,-4) **@{-}, 
(3,3.2)*{\llcorner}, 
(-3,-3.4)*{\urcorner}, 
(-2.5,2)*{\ulcorner},
(2.5,-2.4)*{\lrcorner}, 
(3,-0.2);(-3,-0.2) **@{-},
(3,0);(-3,0) **@{-}, 
(3,0.2);(-3,0.2) **@{-}, 
\endxy} \longrightarrow \xy (-4,4);(-4,-4) **\crv{(1,0)},  
(4,4);(4,-4) **\crv{(-1,0)}, 
(-2.5,1.9)*{\ulcorner}, (2.5,-2.4)*{\lrcorner}, 
\endxy
\end{align*}
Then $\ll D \gg$ has the following {\it state-sum formula}:
\begin{equation}\label{state formula-ori}
\ll D \gg=\alpha^{-w(D)}\sum_{\sigma \in \mathfrak S(D)}
[ D_\sigma ]~ x^{\sigma(\infty)}y^{\sigma(0)}.
\end{equation}

\begin{theorem}\label{thm-inv-mgs}
Let $G$ be an (resp.~oriented) marked graph in $\mathbb R^3$ and let $D$ be an (resp.~oriented) marked graph diagram of $G$. For any given regular or ambient isotopy invariant $[~~] : \{\text{(resp.~oriented) links in $\mathbb R^3$}\} \longrightarrow R$ satisfying the properties (\ref{p-clinv-1}) and (\ref{p-clinv-2}), the polynomial $\ll D \gg$ is invariant under (resp.~oriented) Yoshikawa moves of type I in Figure~\ref{fig-Y-moves-type-I-o}. Therefore $\ll D \gg$ is an ambient isotopy invariant of the (resp.~oriented) marked graph $G$ in $\mathbb R^3$.
\end{theorem}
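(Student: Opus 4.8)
The plan is to verify directly that $\ll D\gg$ is unchanged by each (resp.\ oriented) Yoshikawa move of type I in Figure~\ref{fig-Y-moves-type-I-o}, namely $\Omega_1,\Omega'_1,\Omega_2,\Omega_3,\Omega_4,\Omega'_4,\Omega_5$ (resp.\ $\Gamma_1,\Gamma'_1,\Gamma_2,\Gamma_3,\Gamma_4,\Gamma'_4,\Gamma_5$), working throughout from the state-sum formula (\ref{state formula}) (resp.\ (\ref{state formula-ori})). I would split the moves into the four that fix all markers — the ordinary Reidemeister-type moves $\Omega_1,\Omega'_1,\Omega_2,\Omega_3$ — and the three acting near a marked vertex — $\Omega_4,\Omega'_4,\Omega_5$. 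For moves in the first group the state set $\mathfrak S(D)$ and the monomial weights $x^{\sigma(\infty)}y^{\sigma(0)}$ are untouched by the move, so only the behaviour of each resolved link diagram $D_\sigma$ and of the normalizing exponent has to be tracked.

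For $\Omega_2$ and $\Omega_3$, every $D_\sigma$ before and after differs by a single Reidemeister II (resp.\ III) move, so $[D_\sigma]$ is unchanged for all $\sigma$ since $[~~]$ is at least a regular isotopy invariant; hence $[[D]]$ is invariant, and by Remark~\ref{rmk-self-wr} (resp.\ \ref{rmk-self-ori-wr}) the exponent $sw(D)$ (resp.\ $w(D)$) is invariant as well, giving invariance of $\ll D\gg$. For $\Omega_1$ and $\Omega'_1$ each $D_\sigma$ gains or loses one curl, so the curl relations (\ref{p-clinv-1}) give $[D_\sigma]\mapsto\alpha^{\pm1}[D_\sigma]$ uniformly in $\sigma$, whence $[[D]]\mapsto\alpha^{\pm1}[[D]]$; simultaneously $sw(D)\mapsto sw(D)\pm1$ by Remark~\ref{rmk-self-wr} (resp.\ $w(D)\mapsto w(D)\pm1$ by (\ref{rel-w(D)})), so the factor $\alpha^{-sw(D)}$ (resp.\ $\alpha^{-w(D)}$) contributes $\alpha^{\mp1}$ and the two changes cancel. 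This is exactly what the normalization in Definition~\ref{defn-poly} (resp.\ \ref{defn-poly-ori}) is designed to achieve.

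The substantive step is the group $\Omega_4,\Omega'_4,\Omega_5$. Here I would apply the defining rule $(\mathbf{L2})$ of Definition~\ref{defn-poly-skr} (resp.\ \ref{defn-poly-ori-skr}) to the marked vertex in the local picture, equivalently grouping the state sum according to the assignment ($T_\infty$ or $T_0$) at that vertex; since these moves neither create nor destroy marked vertices, the monomial weights $x^{\sigma(\infty)}y^{\sigma(0)}$ match term by term. The essential point is that $[~~]$ is an \emph{arbitrary} invariant satisfying only (\ref{p-clinv-1}) and (\ref{p-clinv-2}), so no crossing-change skein identity is available and invariance must be established for each resolution separately. Thus the claim to prove is that, for each fixed assignment, the two link diagrams appearing on the two sides of the move are related by Reidemeister II and III moves alone — with no stray curl — so that $[D_\sigma]=[D_{\sigma'}]$ for the matched states; together with the constancy of $sw(D)$ (resp.\ $w(D)$) from Remark~\ref{rmk-self-wr} (resp.\ \ref{rmk-self-ori-wr}) this yields $\ll D\gg$-invariance. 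In the oriented case one must additionally check that the orientations attached to $T_\infty$ and $T_0$ in $(\mathbf{L2})$ are consistent on both sides, so that $[~~]$ is evaluated on correctly oriented diagrams, and that where a resolution closes off a trivial split circle it does so symmetrically, the factor $\delta$ from (\ref{p-clinv-2}) then cancelling.

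I expect the main obstacle to be precisely this per-resolution bookkeeping for $\Omega_5$ (and, in the oriented setting, $\Omega_4,\Omega'_4$): one must confirm case by case that no resolution secretly produces a curl, which would introduce an unwanted $\alpha^{\pm1}$ and break the cancellation above. That this cannot occur is in fact guaranteed in advance by Remarks~\ref{rmk-self-wr} and \ref{rmk-self-ori-wr}, which assert that $sw(D)$ (resp.\ $w(D)$) is constant across $\Omega_4,\Omega'_4,\Omega_5$; so each resolved pair is forced to reduce to Reidemeister II/III moves only, and the verification is a finite diagram check. Once all seven moves are checked, the final sentence of the statement follows, since equivalence of (resp.\ oriented) marked graphs in $\mathbb R^3$ is generated exactly by these type I moves, as recalled in Section~\ref{sect-omgr-osl}.
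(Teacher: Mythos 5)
Your proposal is correct and follows essentially the same route as the paper: invariance under $\Gamma_1,\Gamma'_1$ via the curl relations (\ref{p-clinv-1}) cancelling against the change in $sw(D)$ (resp.~$w(D)$), invariance under $\Gamma_2,\Gamma_3$ from regular isotopy of each resolution, and a finite per-resolution check for $\Gamma_4,\Gamma'_4,\Gamma_5$ using rule $(\mathbf{L2})$, which is exactly the content of Figure~\ref{fig-4-5} (the paper delegates the unoriented case to \cite[Lemma 3.3]{JKL}). The only caveat is that your remark that constancy of the (self-)writhe "forces" the resolved pairs to be related by Reidemeister II/III alone is not a valid inference — that fact must come from the explicit diagram check itself, which you do correctly plan to carry out.
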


\begin{proof}
Let $D$ be a marked graph diagram of $G$. It is easy to see that $\ll D\gg(x,y)=\langle\langle D\rangle\rangle(x,0,0,y)$ is an invariant for all unoriented Yoshikawa moves of type I (see \cite[Lemma 3.3]{JKL}).

To verify the invariance of $\ll \cdot \gg$ for oriented Yoshikawa moves of type I, we first check the moves $\Gamma_1$ and $\Gamma_1'$. It follows from the identities (\ref{p-clinv-1}) with orientations, (\ref{rel-w(D)}) and (\ref{state formula-ori}) that  
\begin{equation*}
\begin{split}
\ll~\xy (1,-3);(6,2) **@{-},
(1,3);(3.5,0.5) **@{-},
(4.5,-0.5);(6,-2) **@{-},
(6,2);(10,2) **\crv{(8,4)},
(6,-2);(10,-2) **\crv{(8,-4)}, 
(10,2);(10,-2) **\crv{(11.5,0)},
(2.4,1.1) *{\ulcorner}, 
(5.6,1.1) *{\urcorner}, 
\endxy~\gg
&=\alpha^{-w(~\xy (1,-3);(6,2) **@{-},
(1,3);(3.5,0.5) **@{-},
(4.5,-0.5);(6,-2) **@{-},
(6,2);(10,2) **\crv{(8,4)},
(6,-2);(10,-2) **\crv{(8,-4)}, 
(10,2);(10,-2) **\crv{(11.5,0)},
(2.4,1.1) *{\ulcorner}, 
(5.6,1.1) *{\urcorner}, 
\endxy~)}
[[~~\xy (1,-3);(6,2) **@{-},
(1,3);(3.5,0.5) **@{-},
(4.5,-0.5);(6,-2) **@{-},
(6,2);(10,2) **\crv{(8,4)},
(6,-2);(10,-2) **\crv{(8,-4)}, 
(10,2);(10,-2) **\crv{(11.5,0)},
(2.4,1.1) *{\ulcorner}, 
(5.6,1.1) *{\urcorner}, 
\endxy~~ ]]
=\alpha^{-w(~~\xy 
(1.5,-3);(1.5,3) **\crv{(4,0)}, 
(2.4,1.5) *{\ulcorner}, 
\endxy ~~)-1}\alpha
[[~\xy (1.5,-3);(1.5,3) **\crv{(4,0)}, 
(2.4,1.5) *{\ulcorner}, 
\endxy ~]]=\ll~\xy (1.5,-3);(1.5,3) **\crv{(4,0)}, 
(2.4,1.5) *{\ulcorner}, 
\endxy~\gg.
\end{split}
\end{equation*}
Similarly, we obtain 
$\ll~\xy (1,-3);(6,2) **@{-},
(1,3);(3.5,0.5) **@{-},
(4.5,-0.5);(6,-2) **@{-},
(6,2);(10,2) **\crv{(8,4)},
(6,-2);(10,-2) **\crv{(8,-4)}, 
(10,2);(10,-2) **\crv{(11.5,0)},
(2.4,-1.5) *{\llcorner}, 
(5.6,-1.5) *{\lrcorner}, 
\endxy~\gg~ 
=~\ll~\xy (1.5,-3);(1.5,3) **\crv{(4,0)}, 
(2.5,-1.5) *{\llcorner}, 
\endxy~\gg.$

Since $[D]$ and the writhe $w(D)$ for an oriented link diagram $D$ are both regular isotopy invariants, it is direct from (\ref{state formula-ori}) that $\ll\cdot \gg$ is invariant under $\Gamma_2$ and $\Gamma_3$.  
The invariances of $[[ \cdot ]]$ under the moves $\Gamma_4, \Gamma'_4$ and $\Gamma_5$ are seen from Figure~\ref{fig-4-5}. Since the writhe $w(D)$ is also invariant under these moves, we see the invariance of $\ll\cdot\gg$ under $\Gamma_4, \Gamma'_4$ and $\Gamma_5$. This completes the proof.
\end{proof}

\begin{figure}[ht]
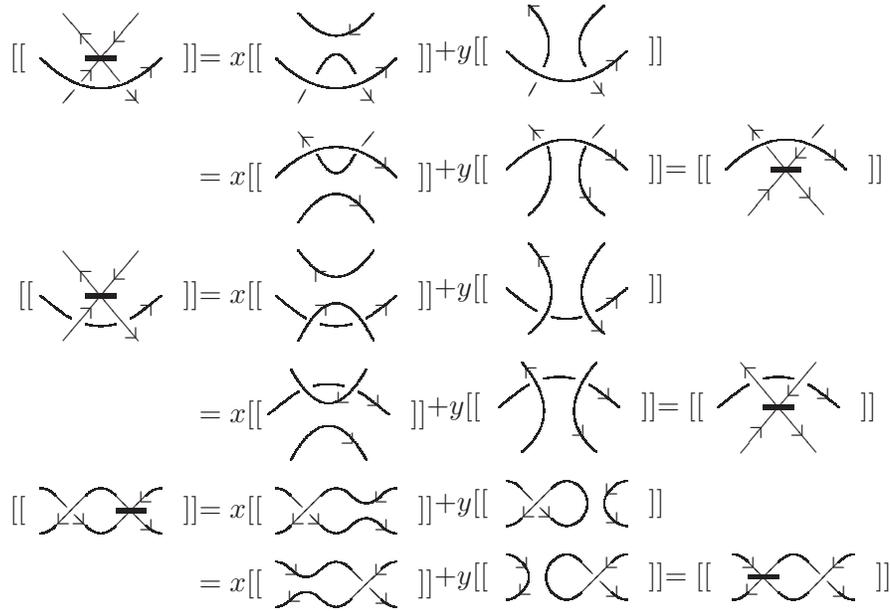

\begin{align*}
\xy (4,6)*{[[},
(7,6);(23,6)  **\crv{(15,-2)}, 
(10,0);(11.5,1.8) **@{-},
(12.5,3);(20,12) **@{-}, 
(10,12);(17.5,3) **@{-}, 
(18.5,1.8);(20,0) **@{-}, 
(13,6);(17,6) **@{-}, 
(13,6.1);(17,6.1) **@{-}, 
(13,5.9);(17,5.9)**@{-}, 
(13,6.2);(17,6.2) **@{-}, 
(13,5.8);(17,5.8) **@{-},
(13,3.1) *{\urcorner}, 
(17.5,8.9) *{\llcorner}, 
(19,1.1) *{\lrcorner},  
(21,3.5) *{\urcorner},
(13,8) *{\ulcorner}, 
(27,6) *{]]},
\endxy
&\xy (1,6)*{= x [[},
 (7,6);(23,6)  **\crv{(15,-2)}, 
(10,0);(11,1.8) **@{-},
(10,12);(20,12) **\crv{(15,6)},
(12.5,4);(17.5,4) **\crv{(15,9)},
(18.6,2);(20,0) **@{-}, 
(17.5,9.9) *{\llcorner}, 
(19,1.1) *{\lrcorner},  
(21,3.5) *{\urcorner},
(27,6) *{]]},
\endxy
\xy (1,6)*{+ y [[},
(7,6);(23,6)  **\crv{(15,-2)}, 
(10,0);(11,1.8) **@{-},
(10,12);(12,4) **\crv{(14,8)},
(20,12);(17.5,4) **\crv{(15,8)},
(18.6,2);(20,0) **@{-},
(19,1.1) *{\lrcorner},  
(21,3.5) *{\urcorner},
(10.7,10.7) *{\ulcorner}, 
(27,6) *{]]},
\endxy\\
&\xy (51,6)*{= x [[},
(57,6);(73,6)  **\crv{(65,14)}, 
(70,12);(68.5,10.2) **@{-},
(67.5,9);(62.5,9)**\crv{(65,4)}, 
(60,0);(70,0) **\crv{(65,7.5)},
(61.5,10.2);(60,12) **@{-}, 
(67.5,3) *{\lrcorner},  
(70.9,8) *{\lrcorner},
(61.3,10) *{\ulcorner}, 
(77,6)*{]]},
\endxy \xy (51,6)*{+ y [[},
(57,6);(73,6)  **\crv{(65,14)}, 
(70,12);(68.5,10.2) **@{-},
(67.5,9);(70,0) **\crv{(65,3.5)}, 
(60,0);(62.5,9) **\crv{(64,3.5)}, 
(61.5,10.2);(60,12) **@{-}, 
(67.5,3) *{\lrcorner},  
(70.9,8) *{\lrcorner},
(61.3,10) *{\ulcorner}, 
(77,6)*{]]},
\endxy
\xy (52,6)*{= [[},
(57,6);(73,6)  **\crv{(65,14)}, 
(70,12);(68.5,10.2) **@{-},
(67.5,9);(60,0) **@{-}, 
(70,0);(62.5,9) **@{-}, 
(61.5,10.2);(60,12) **@{-}, 
(63,6);(67,6) **@{-}, 
(63,6.1);(67,6.1) **@{-}, 
(63,5.9);(67,5.9)**@{-}, 
(63,6.2);(67,6.2) **@{-}, 
(63,5.8);(67,5.8) **@{-},
(62,2) *{\urcorner}, 
(67,8.3) *{\llcorner}, 
(67.5,3) *{\lrcorner},  
(70.9,8) *{\lrcorner},
(61.3,10) *{\ulcorner}, 
(77,6)*{]]},
\endxy\\
\xy (5,6)*{[[},
(13,2.2);(17,2.2)  **\crv{(15,1.7)}, 
(7,6);(11,3)  **\crv{(10,3.5)}, 
(23,6);(19,3)  **\crv{(20,3.5)}, 
(10,0);(20,12) **@{-}, 
(10,12);(20,0) **@{-}, 
(13,6);(17,6) **@{-},
(13,6.1);(17,6.1) **@{-}, 
(13,5.9);(17,5.9)**@{-}, 
(13,6.2);(17,6.2) **@{-}, 
(13,5.8);(17,5.8) **@{-}, 
(13,3.1) *{\urcorner}, 
(17.5,8.9) *{\llcorner}, 
(19,1.1) *{\lrcorner},  
(21,3.5) *{\urcorner},
(13,8) *{\ulcorner},
(27,6)*{]]},
\endxy
&\xy (1,6)*{= x [[},
(13,2.2);(17,2.2)  **\crv{(15,1.7)}, 
(7,6);(11,3)  **\crv{(10,3.5)}, 
(23,6);(19,3)  **\crv{(20,3.5)}, 
(10,0);(20,0) **\crv{(15,10)},
(20,12);(10,12) **\crv{(15,5)},
(13,3.5) *{\urcorner}, 
(21,3.5) *{\urcorner},
(13,8) *{\ulcorner},
(27,6)*{]]},
\endxy
\xy (1,6)*{+ y [[},
(13,2.2);(17,2.2) **\crv{(15,1.7)}, 
(7,6);(11,3) **\crv{(10,3.5)}, 
(23,6);(19,3) **\crv{(20,3.5)}, 
(10,0);(10,12) **\crv{(16,5)},
(20,12);(20,0) **\crv{(14,5)},
(19,1.1) *{\lrcorner},  (21,3.5) *{\urcorner},
(12,9) *{\ulcorner},
(27,6)*{]]},
\endxy\\
&\xy (52,6)*{= x [[},
(63,9.8);(67,9.8)  **\crv{(65,10.3)}, 
  (57,6);(61,9)  **\crv{(60,8.5)}, 
  (73,6);(69,9)  **\crv{(70,8.5)}, 
(60,0);(70,0) **\crv{(65,9)},
(70,12);(60,12) **\crv{(65,3)},
(67,8.3) *{\llcorner}, 
(67.5,3) *{\lrcorner},  
(70.9,8) *{\lrcorner},
(77,6)*{]]},
\endxy
\xy (51,6)*{+ y [[},
(63,9.8);(67,9.8)  **\crv{(65,10.3)}, 
  (57,6);(61,9)  **\crv{(60,8.5)}, 
  (73,6);(69,9)  **\crv{(70,8.5)}, 
(60,0);(60,12) **\crv{(66,5)},
(70,12);(70,0) **\crv{(64,5)},
(67.5,3) *{\lrcorner},  
(70.9,8) *{\lrcorner},
(61.3,10) *{\ulcorner}, 
(77,6)*{]]},
\endxy
\xy (52,6)*{= [[},
(63,9.8);(67,9.8)  **\crv{(65,10.3)}, 
  (57,6);(61,9)  **\crv{(60,8.5)}, 
  (73,6);(69,9)  **\crv{(70,8.5)}, 
(70,12);(60,0) **@{-}, 
(70,0);(60,12) **@{-}, 
(63,6);(67,6) **@{-}, 
(63,6.1);(67,6.1) **@{-}, 
(63,5.9);(67,5.9)**@{-},
(63,6.2);(67,6.2) **@{-}, 
(63,5.8);(67,5.8) **@{-}, 
(62,2) *{\urcorner}, 
(67,8.3) *{\llcorner}, 
(67.5,3) *{\lrcorner},  
(70.9,8) *{\lrcorner},
(61.3,10) *{\ulcorner}, 
(77,6)*{]]},
\endxy\\
\xy (4,4)*{[[},
(9,2);(13,6) **@{-}, 
(9,6);(10.5,4.5) **@{-},
(11.5,3.5);(13,2) **@{-}, 
(17,2);(21,6) **@{-}, 
(17,6);(21,2)**@{-}, 
(13,6);(17,6) **\crv{(15,8)}, 
(13,2);(17,2) **\crv{(15,0)},
(7,7);(9,6) **\crv{(8,7)}, 
(7,1);(9,2) **\crv{(8,1)}, 
(23,7);(21,6)**\crv{(22,7)}, 
(23,1);(21,2) **\crv{(22,1)}, 
(17,4);(21,4) **@{-}, 
(17,4.1);(21,4.1) **@{-}, 
(17,3.9);(21,3.9)**@{-}, 
(17,4.2);(21,4.2) **@{-}, 
(17,3.8);(21,3.8) **@{-},
(10,3) *{\llcorner},  
(12,3) *{\lrcorner}, 
(21,6) *{\llcorner},
(21,2.2) *{\lrcorner},
(27,4)*{]]},
\endxy
&\xy (1,4)*{=x [[},
(9,2);(13,6) **@{-}, 
(9,6);(10.5,4.5) **@{-},
(11.5,3.5);(13,2) **@{-}, 
(17,2);(21,2) **\crv{(19,4)}, 
(17,6);(21,6) **\crv{(19,4)}, 
(13,6);(17,6) **\crv{(15,8)}, 
(13,2);(17,2) **\crv{(15,0)},
(7,7);(9,6) **\crv{(8,7)}, 
(7,1);(9,2) **\crv{(8,1)}, 
(23,7);(21,6) **\crv{(22,7)}, 
(23,1);(21,2) **\crv{(22,1)}, 
(10,3) *{\llcorner},  
(12,3) *{\lrcorner}, 
(21,6) *{\llcorner},
(21,2.2) *{\lrcorner},
(27,4)*{]]},
\endxy
\xy (1,4)*{+ y [[},
(9,2);(13,6) **@{-}, 
(9,6);(10.5,4.5) **@{-},
(11.5,3.5);(13,2) **@{-}, 
(17,2);(17,6) **\crv{(18.5,4)}, 
(21,6);(21,2) **\crv{(19,4)}, 
(13,6);(17,6) **\crv{(15,8)}, 
(13,2);(17,2) **\crv{(15,0)},
(7,7);(9,6) **\crv{(8,7)}, 
(7,1);(9,2) **\crv{(8,1)}, 
(23,7);(21,6)**\crv{(22,7)}, 
(23,1);(21,2) **\crv{(22,1)}, 
(10,3) *{\llcorner},  
(12,3) *{\lrcorner}, 
(21,6) *{\llcorner},
(21,2.2) *{\lrcorner},
(27,4)*{]]},
\endxy\\
&\xy (51,4)*{=x [[},
(59,2);(63,2) **\crv{(61,4)}, 
(59,6);(63,6) **\crv{(61,4)}, 
(67,2);(71,6) **@{-},
(67,6);(68.5,4.5) **@{-}, 
(69.5,3.5);(71,2) **@{-}, 
(63,6);(67,6)**\crv{(65,8)}, 
(63,2);(67,2) **\crv{(65,0)}, 
(57,7);(59,6)**\crv{(58,7)}, 
(57,1);(59,2) **\crv{(58,1)}, 
(73,7);(71,6)**\crv{(72,7)}, 
(73,1);(71,2) **\crv{(72,1)}, 
(59.5,2.5) *{\llcorner},  
(59,6) *{\lrcorner}, 
(71,6) *{\llcorner},
(71,2.2) *{\lrcorner},
(77,4)*{]]},
\endxy
\xy (51,4)*{+ y [[},
(59,2);(59,6) **\crv{(61,4)}, 
(63,6);(63,2) **\crv{(61.5,4)},
(67,2);(71,6) **@{-},
(67,6);(68.5,4.5) **@{-}, 
(69.5,3.5);(71,2) **@{-}, 
(63,6);(67,6)**\crv{(65,8)}, 
(63,2);(67,2) **\crv{(65,0)}, 
(57,7);(59,6)**\crv{(58,7)}, 
(57,1);(59,2) **\crv{(58,1)}, 
(73,7);(71,6)**\crv{(72,7)}, 
(73,1);(71,2) **\crv{(72,1)}, 
(59.5,2.5) *{\llcorner},  
(59,6) *{\lrcorner}, 
(71,6) *{\llcorner},
(71,2.2) *{\lrcorner},
(77,4)*{]]},
\endxy
\xy (51,4)*{= [[},
(59,2);(63,6) **@{-}, 
(59,6);(63,2) **@{-}, 
(67,2);(71,6) **@{-},
(67,6);(68.5,4.5) **@{-}, 
(69.5,3.5);(71,2) **@{-}, 
(63,6);(67,6)**\crv{(65,8)}, 
(63,2);(67,2) **\crv{(65,0)}, 
(57,7);(59,6)**\crv{(58,7)}, 
(57,1);(59,2) **\crv{(58,1)}, 
(73,7);(71,6)**\crv{(72,7)}, 
(73,1);(71,2) **\crv{(72,1)}, 
(63,4);(59,4) **@{-}, 
(63,4.1);(59,4.1) **@{-}, 
(63,3.9);(59,3.9)**@{-}, 
(63,4.2);(59,4.2) **@{-}, 
(63,3.8);(59,3.8) **@{-},
(59.5,2.5) *{\llcorner},  
(59,6) *{\lrcorner}, 
(71,6) *{\llcorner},
(71,2.2) *{\lrcorner},
(77,4)*{]]},
\endxy
\end{align*}
\caption{Invariance of $[[\cdot]]$ under the moves $\Gamma_4, \Gamma'_4$ and $\Gamma_5$}\label{fig-4-5}
\end{figure}
 

\section{Ideal coset invariants for surface-links}\label{sect-ici-sl}

In this section, we formulate a construction of ideal coset invariants for oriented surface-links by means of the polynomial invariants $\ll\cdot\gg$ for oriented marked graphs in 3-space associated with oriented link invariants. When we forget orientations, this formulation also gives a refinement of the construction of ideal coset invariants for surface-links given in \cite{JKL} with a simplification that is more applicable in practice. We describe a way how to find a unique representative of an ideal coset in terms of the polynomial $\ll D\gg$ of a marked graph diagram $D$, which is an invariant of the surface-link presented by $D$.

\subsection{Construction of ideal coset invariants}\label{subs-const-ici}

An {\it oriented $n$-tangle diagram} ($n\geq 1$) is an oriented link diagram $\mathcal T$ in the rectangle $I^2=[0,1]\times [0,1]$ in $\mathbb R^2$ such that $\mathcal T$ transversely intersect with $(0,1)\times\{0\}$ and $(0,1)\times\{1\}$ in $n$ distinct points, respectively. 

\begin{figure}[t]
\centerline{\xy
(-8,8);(8,8) **@{-},
(-8,-8);(8,-8) **@{-},
(8,-8);(8,8) **@{-},
(-8,-8);(-8,8) **@{-},
(-4,5);(-4,8) **@{-}, (-3.95,6.7) *{\wedge},
(-4,-5);(-4,-8) **@{-}, (-3.97,-6) *{\wedge},
(0,5);(0,8) **@{-}, (0.1,6) *{\vee},
(0,-5);(0,-8) **@{-}, (0.1,-7) *{\vee},
(4,5);(4,8) **@{-}, (4.05,6.7) *{\wedge},
(4,-5);(4,-8) **@{-}, (4.04,-6) *{\wedge},
(0,-12) *{(a)},
\endxy
\qquad\qquad\qquad
\xy
(-8,8);(12,8) **@{-},
(-8,-8);(12,-8) **@{-},
(12,-8);(12,8) **@{-},
(-8,-8);(-8,8) **@{-},
(-4,5);(-4,8) **@{-}, (-3.95,6.7) *{\wedge},
(-4,-5);(-4,-8) **@{-}, (-3.97,-6) *{\wedge},
(0,5);(0,8) **@{-}, (0.1,6) *{\vee},
(0,-5);(0,-8) **@{-}, (0.1,-7) *{\vee},
(4,5);(4,8) **@{-}, (4.05,6.7) *{\wedge},
(4,-5);(4,-8) **@{-}, (4.04,-6) *{\wedge},
(8,5);(8,8) **@{-}, (8.1,6) *{\vee},
(8,-5);(8,-8) **@{-}, (8.1,-7) *{\vee},
(2,-12) *{(b)},
\endxy}
\caption{Orientations of arcs in $3, 4$-tangle diagrams}\label{fig-bdary-t}
\end{figure}

\begin{figure}[h]
\begin{center}
\resizebox{0.50\textwidth}{!}{%
  \includegraphics{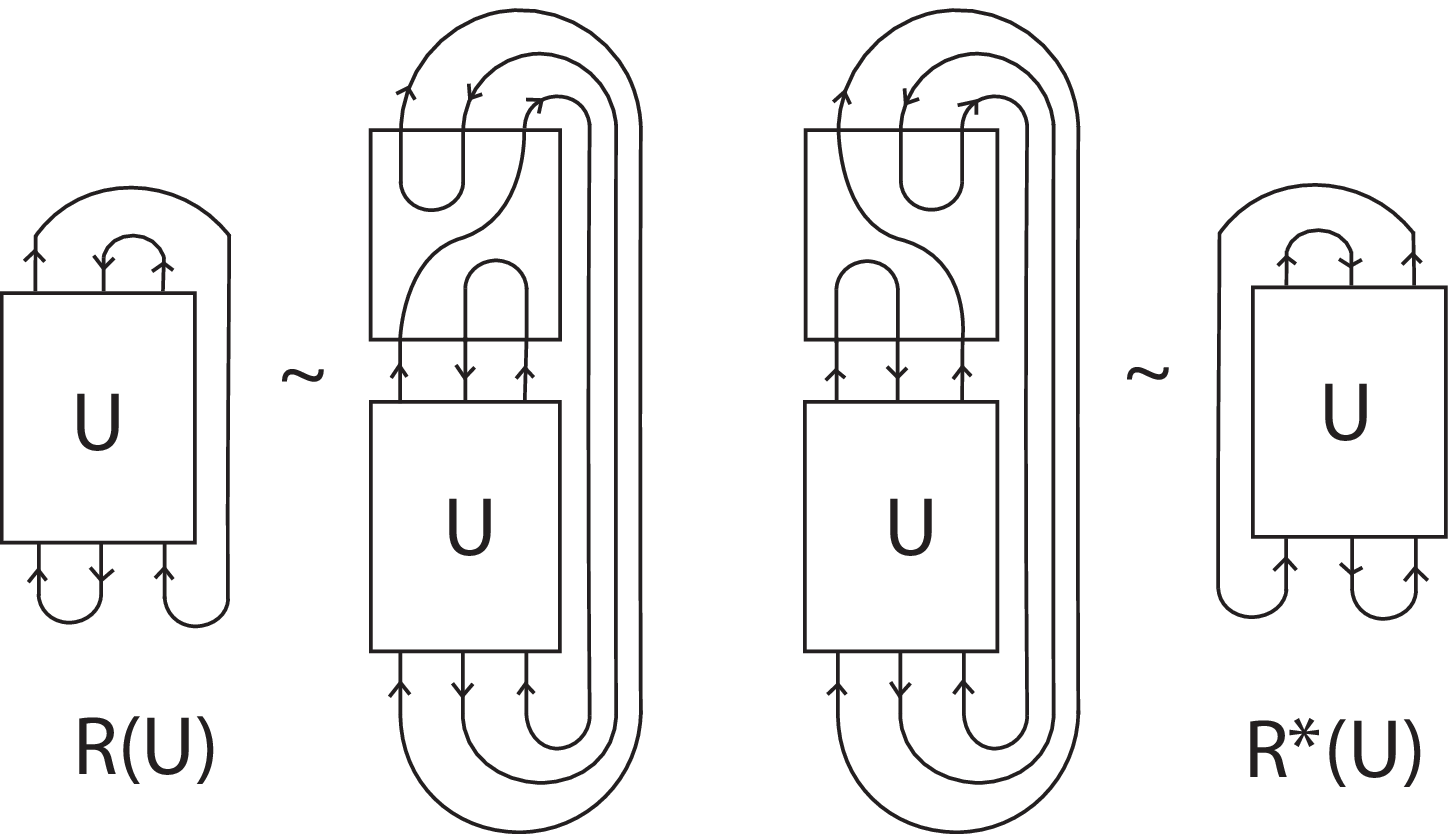}}
\caption{Closing operations $R$ and $R^*$ of a $3$-tangle $U$} \label{fig-m07-gid}
\end{center}
\begin{center}
\resizebox{0.30\textwidth}{!}{%
\includegraphics{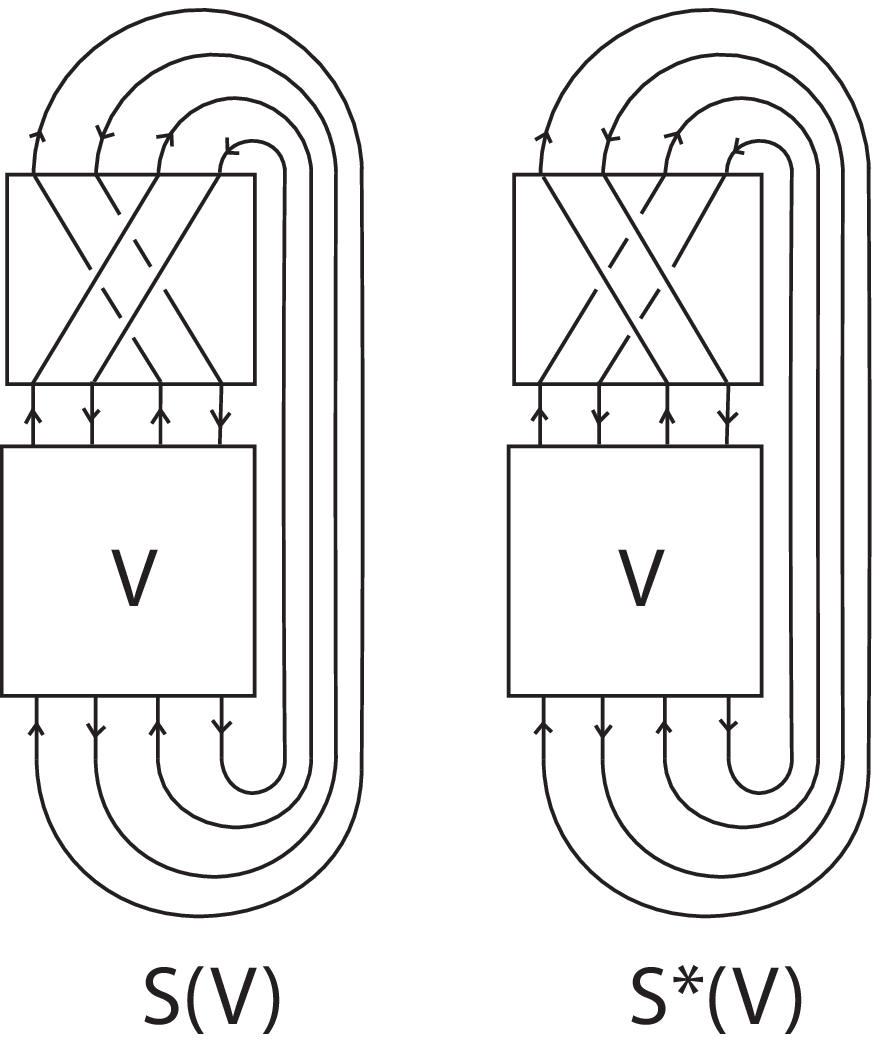}}
\caption{Closing operations $S$ and $S^*$ of a $4$-tangle $V$} \label{fig-m08-gid}
\end{center}
\end{figure}	

Let $\mathcal T^{\rm ori}_3$ and $\mathcal T^{\rm ori}_4$ denote the set of all oriented $3$- and $4$-tangle diagrams such that the orientations of the arcs of the tangles intersecting the boundary of $I^2$ coincide with the orientations as shown in (a) and (b) of Figure~\ref{fig-bdary-t}, respectively.
For $U \in \mathcal T^{\rm ori}_3$ and $V \in \mathcal T^{\rm ori}_4$, let $R(U), R^*(U), S(V)$ and $S^*(V)$ denote the oriented link diagrams obtained from the tangles $U$ and $V$ by {\it closing} $n$ arcs as shown in Figures~\ref{fig-m07-gid} and \ref{fig-m08-gid}.

Let $\mathcal T_3$ and $\mathcal T_4$ denote the set of all $3$- and $4$-tangle diagrams without orientations, respectively. For $U \in \mathcal T_3$ and $V \in \mathcal T_4$, let $R(U), R^*(U), S(V)$ and $S^*(V)$ be the link diagrams obtained by the same way as above forgetting orientations. Then we have the following three lemmas:

\begin{lemma}\label{lem1-Ideal-inv}
For the oriented Yoshikawa moves $\Gamma_6$ and $\Gamma'_6$, we have
\begin{align}
\ll\xy
(1,3);(6,-2) **@{-}, 
(1,-3);(6,2) **@{-},
(6,2);(10,2) **\crv{(8,4)}, 
(6,-2);(10,-2)**\crv{(8,-4)}, 
(10,2);(10,-2) **\crv{(11.5,0)}, 
(4,2);(4,-2) **@{-}, 
(4.1,2);(4.1,-2) **@{-}, 
(3.9,2);(3.9,-2)**@{-},
(1.5,-3) *{\urcorner}, 
(1.5,1.9) *{\ulcorner},
(9.5,-3)*{\urcorner},
\endxy\gg &=
(\delta x+y)\ll\xy 
(1.5,-3);(1.5,3) **\crv{(4,0)},
(2.4,1.5) *{\ulcorner},  
\endxy\gg,\label{pf-inv-y6-1}\\
\ll\xy 
(1,3);(6,-2) **@{-}, 
(1,-3);(6,2) **@{-},
(6,2);(10,2) **\crv{(8,4)}, 
(6,-2);(10,-2)**\crv{(8,-4)}, 
(10,2);(10,-2) **\crv{(11.5,0)}, 
(2,0);(6,0) **@{-}, 
(2,0.1);(6,0.1) **@{-},
(2,0.2);(6,0.2) **@{-}, 
(2,-0.1);(6,-0.1) **@{-},
(2,-0.2);(6,-0.2) **@{-},
(1.5,-3) *{\urcorner}, 
(1.5,1.9) *{\ulcorner},
(9.5,-3)*{\urcorner}, 
\endxy\gg 
&=(x+\delta y)\ll\xy 
(1.5,-3);(1.5,3) **\crv{(4,0)},
(2.4,1.5) *{\ulcorner}, 
\endxy\gg.\label{pf-inv-y6-2}
\end{align}
\end{lemma}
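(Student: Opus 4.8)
The plan is to evaluate each left-hand side directly from its definition, using the state-sum formula (\ref{state formula-ori}) (equivalently, rule (\textbf{L2}) of Definition \ref{defn-poly-ori-skr}) and then simplifying with the circle-removing relation (\ref{p-clinv-2}). Write $D$ for the marked graph diagram on the left of (\ref{pf-inv-y6-1}) and $U$ for the trivial arc on the right. The diagram $D$ has a single marked vertex and no genuine crossings (the loop joins two legs of the \emph{marked} vertex, carrying no over/under information), so $\mathfrak S(D)$ has exactly two elements and (\ref{state formula-ori}) collapses to a two-term sum; moreover $w(D)=0=w(U)$, hence the normalizing factors $\alpha^{-w(D)}$ and $\alpha^{-w(U)}$ are both $1$ and play no role.

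Next I would analyse the two states geometrically. The loop in $D$ joins two adjacent legs of the vertex, and exactly one smoothing of the vertex joins that same pair of legs. This ``loop-closing'' smoothing splits off a trivial circle $\bigcirc$ while sending the two free ends to a single arc isotopic to $U$, so that $[D_\sigma]=[U\sqcup\bigcirc]=\delta\,[U]$ by (\ref{p-clinv-2}) (which is assumed to hold with all orientations). The other smoothing instead threads the loop into one arc isotopic to $U$ with no extra component, so $[D_\sigma]=[U]$.

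It then remains to attach the correct weight $x$ or $y$ to each state, and this is read off from the marker. The vertex in (\ref{pf-inv-y6-1}) carries a vertical marker, i.e.\ it is the horizontally marked vertex of Definition \ref{defn-poly-ori-skr} rotated by a right angle; since that rotation interchanges the two smoothings, the loop-closing state here receives weight $x$ and the threading state receives weight $y$. Substituting into the two-term (\ref{state formula-ori}) gives
\[
\ll D\gg=\delta\,[U]\,x+[U]\,y=(\delta x+y)\,[U]=(\delta x+y)\ll U\gg,
\]
since $\ll U\gg=[U]$ by (\textbf{L1}) and Definition \ref{defn-poly-ori}. This is (\ref{pf-inv-y6-1}). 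For (\ref{pf-inv-y6-2}) the vertex carries a horizontal marker, so the two smoothings swap their weights: now the loop-closing smoothing has weight $y$ and the threading one weight $x$, and the identical computation yields the coefficient $x+\delta y$.

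The only delicate point --- and the main obstacle --- is this weight bookkeeping: correctly matching each marker orientation to the $T_\infty$/$T_0$ labelling (equivalently, to $x$ and $y$), together with checking that the component split off by the loop-closing smoothing really is an unknotted, unlinked circle so that (\ref{p-clinv-2}) applies verbatim. Everything else is the routine two-term evaluation of (\ref{state formula-ori}).
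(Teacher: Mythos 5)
Your proposal is correct and follows essentially the same route as the paper: expand the single marked vertex via rule ({\bf L2}) (equivalently the two-term state sum), observe that one resolution splits off a disjoint unknotted circle contributing $\delta$ via (\ref{p-clinv-2}) while the other yields the plain arc, match the $x$/$y$ weights to the marker direction, and note that the writhe normalization is unaffected since no classical crossings are involved. The paper carries out exactly this computation, phrasing the last point as invariance of $w(D)$ under $\Gamma_6$ and $\Gamma'_6$ rather than asserting both writhes vanish, but the content is the same.
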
	

\begin{proof}
It follows from Definition \ref{defn-poly-ori-skr} together with (\ref{p-clinv-2}) that
\begin{align*}
\Big[\Big[~\xy
(1,3);(6,-2) **@{-}, 
(1,-3);(6,2) **@{-},
(6,2);(10,2) **\crv{(8,4)}, 
(6,-2);(10,-2)**\crv{(8,-4)}, 
(10,2);(10,-2) **\crv{(11.5,0)}, 
(4,2);(4,-2) **@{-}, 
(4.1,2);(4.1,-2) **@{-}, 
(3.9,2);(3.9,-2)**@{-},
(1.5,-3) *{\urcorner}, 
(1.5,1.9) *{\ulcorner},
(9.5,-3)*{\urcorner},
\endxy~\Big]\Big]
&=  \Big[\Big[~\xy 
(1.5,-3);(1.5,3) **\crv{(4,0)},  
(6,2);(6,-2) **\crv{(4.5,0)},
(6,2);(10,2) **\crv{(8,4)},
(6,-2);(10,-2) **\crv{(8,-4)}, 
(10,2);(10,-2) **\crv{(11.5,0)},
(2.4,1.5) *{\ulcorner},
\endxy~\Big]\Big] x
+  \Big[\Big[~\xy 
(1.5,-3);(1.5,3) **\crv{(4,0)},
(2.4,1.5) *{\ulcorner},  
\endxy~\Big]\Big] y
=(\delta x+y)\Big[\Big[~\xy 
(1.5,-3);(1.5,3) **\crv{(4,0)},  
(2.4,1.5) *{\ulcorner},
\endxy~\Big]\Big],\\
\Big[\Big[~\xy 
(1,3);(6,-2) **@{-}, 
(1,-3);(6,2) **@{-},
(6,2);(10,2) **\crv{(8,4)}, 
(6,-2);(10,-2)**\crv{(8,-4)}, 
(10,2);(10,-2) **\crv{(11.5,0)}, 
(2,0);(6,0) **@{-}, 
(2,0.1);(6,0.1) **@{-},
(2,0.2);(6,0.2) **@{-}, 
(2,-0.1);(6,-0.1) **@{-},
(2,-0.2);(6,-0.2) **@{-},
(1.5,-3) *{\urcorner}, 
(1.5,1.9) *{\ulcorner},
(9.5,-3)*{\urcorner},
\endxy~\Big]\Big] 
&=  \Big[\Big[~\xy 
(1.5,-3);(1.5,3) **\crv{(4,0)}, 
(2.4,1.5) *{\ulcorner}, 
\endxy~\Big]\Big] x
+  \Big[\Big[~\xy 
(6,2);(6,-2) **\crv{(4.5,0)},
(6,2);(10,2) **\crv{(8,4)},
(6,-2);(10,-2) **\crv{(8,-4)}, 
(10,2);(10,-2) **\crv{(11.5,0)},
(1.5,-3);(1.5,3) **\crv{(4,0)},
(2.4,1.5) *{\ulcorner},  
\endxy~\Big]\Big] y
=(x+\delta y)\Big[\Big[~\xy 
(1.5,-3);(1.5,3) **\crv{(4,0)}, 
(2.4,1.5) *{\ulcorner}, 
\endxy~\Big]\Big].
\end{align*}
This observation and the fact that the writhe $w(D)$ of an oriented marked graph diagram $D$ is invariant under $\Gamma_6$ and $\Gamma'_6$ (cf. Remark \ref{rmk-self-ori-wr}) yields that
\begin{align*}
\ll\xy
(1,3);(6,-2) **@{-}, 
(1,-3);(6,2) **@{-},
(6,2);(10,2) **\crv{(8,4)}, 
(6,-2);(10,-2)**\crv{(8,-4)}, 
(10,2);(10,-2) **\crv{(11.5,0)}, 
(4,2);(4,-2) **@{-}, 
(4.1,2);(4.1,-2) **@{-}, 
(3.9,2);(3.9,-2)**@{-},
(1.5,-3) *{\urcorner}, 
(1.5,1.9) *{\ulcorner},
(9.5,-3)*{\urcorner},
\endxy\gg &=
\alpha^{-w(~\xy
(1,3);(6,-2) **@{-}, 
(1,-3);(6,2) **@{-},
(6,2);(10,2) **\crv{(8,4)}, 
(6,-2);(10,-2)**\crv{(8,-4)}, 
(10,2);(10,-2) **\crv{(11.5,0)}, 
(4,2);(4,-2) **@{-}, 
(4.1,2);(4.1,-2) **@{-}, 
(3.9,2);(3.9,-2)**@{-},
(1.5,-3) *{\urcorner}, 
(1.5,1.9) *{\ulcorner},
(9.5,-3)*{\urcorner},
\endxy~)}\Big[\Big[~\xy
(1,3);(6,-2) **@{-}, 
(1,-3);(6,2) **@{-},
(6,2);(10,2) **\crv{(8,4)}, 
(6,-2);(10,-2)**\crv{(8,-4)}, 
(10,2);(10,-2) **\crv{(11.5,0)}, 
(4,2);(4,-2) **@{-}, 
(4.1,2);(4.1,-2) **@{-}, 
(3.9,2);(3.9,-2)**@{-},
(1.5,-3) *{\urcorner}, 
(1.5,1.9) *{\ulcorner},
(9.5,-3)*{\urcorner},
\endxy~\Big]\Big]\notag\\
&=\alpha^{-w(~\xy 
(1.5,-3);(1.5,3) **\crv{(4,0)},
(2.4,1.5) *{\ulcorner},  
\endxy~)}
(\delta x+y)\Big[\Big[~\xy 
(1.5,-3);(1.5,3) **\crv{(4,0)},
(2.4,1.5) *{\ulcorner},  
\endxy~\Big]\Big]
=(\delta x+y)\ll\xy 
(1.5,-3);(1.5,3) **\crv{(4,0)},
(2.4,1.5) *{\ulcorner},  
\endxy\gg,\\
\ll\xy 
(1,3);(6,-2) **@{-}, 
(1,-3);(6,2) **@{-},
(6,2);(10,2) **\crv{(8,4)}, 
(6,-2);(10,-2)**\crv{(8,-4)}, 
(10,2);(10,-2) **\crv{(11.5,0)}, 
(2,0);(6,0) **@{-}, 
(2,0.1);(6,0.1) **@{-},
(2,0.2);(6,0.2) **@{-}, 
(2,-0.1);(6,-0.1) **@{-},
(2,-0.2);(6,-0.2) **@{-},
(1.5,-3) *{\urcorner}, 
(1.5,1.9) *{\ulcorner},
(9.5,-3)*{\urcorner}, 
\endxy\gg 
&=\alpha^{-w(~\xy 
(1,3);(6,-2) **@{-}, 
(1,-3);(6,2) **@{-},
(6,2);(10,2) **\crv{(8,4)}, 
(6,-2);(10,-2)**\crv{(8,-4)}, 
(10,2);(10,-2) **\crv{(11.5,0)}, 
(2,0);(6,0) **@{-}, 
(2,0.1);(6,0.1) **@{-},
(2,0.2);(6,0.2) **@{-}, 
(2,-0.1);(6,-0.1) **@{-},
(2,-0.2);(6,-0.2) **@{-},
(1.5,-3) *{\urcorner}, 
(1.5,1.9) *{\ulcorner},
(9.5,-3)*{\urcorner}, 
\endxy~)}\Big[\Big[~\xy 
(1,3);(6,-2) **@{-}, 
(1,-3);(6,2) **@{-},
(6,2);(10,2) **\crv{(8,4)}, 
(6,-2);(10,-2)**\crv{(8,-4)}, 
(10,2);(10,-2) **\crv{(11.5,0)}, 
(2,0);(6,0) **@{-}, 
(2,0.1);(6,0.1) **@{-},
(2,0.2);(6,0.2) **@{-}, 
(2,-0.1);(6,-0.1) **@{-},
(2,-0.2);(6,-0.2) **@{-},
(1.5,-3) *{\urcorner}, 
(1.5,1.9) *{\ulcorner},
(9.5,-3)*{\urcorner}, 
\endxy~\Big]\Big]\notag\\
&= \alpha^{-w(~\xy 
(1.5,-3);(1.5,3) **\crv{(4,0)},
(2.4,1.5) *{\ulcorner}, 
\endxy~)}
(x+\delta y)\Big[\Big[~\xy 
(1.5,-3);(1.5,3) **\crv{(4,0)},
(2.4,1.5) *{\ulcorner}, 
\endxy~\Big]\Big]
=(x+\delta y)\ll\xy 
(1.5,-3);(1.5,3) **\crv{(4,0)},
(2.4,1.5) *{\ulcorner}, 
\endxy\gg.
\end{align*}
This completes the proof.
\end{proof}

\begin{lemma}\label{lem2-Ideal-inv}
Let $D$ be an oriented marked graph diagram and let $D'$ be an oriented marked graph diagram obtained from $D$ by applying a single oriented Yoshikawa move $\Gamma_7$. Then
\begin{align*}
\ll D'\gg-\ll D\gg &=\alpha^{-w(D)}\sum_{k=1}^m\psi_k(x,y)\Big([R(U_k)]-[R^*(U_k)]\Big)xy,
  \end{align*}
  where $U_k \in \mathcal T^{ori}_3 (k=1,2,\ldots,m)$ and $\psi_k(x,y)$ is a polynomial in $R[x,y].$
\end{lemma}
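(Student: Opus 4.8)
The plan is to reduce the computation of $\ll D'\gg-\ll D\gg$ to a purely diagrammatic comparison of the two local pictures of the move $\Gamma_7$, resolved vertex by vertex. First I would record that, by Remark~\ref{rmk-self-ori-wr}, the writhe is unchanged by $\Gamma_7$, so $w(D')=w(D)$; hence the normalizing factor $\alpha^{-w(D)}$ is common to both terms and
\[
\ll D'\gg-\ll D\gg=\alpha^{-w(D)}\bigl([[D']]-[[D]]\bigr).
\]
Thus it suffices to analyze $[[D']]-[[D]]$. I note here a fact I will use repeatedly: since marked vertices contribute no crossings, resolving them does not change the writhe, so every resolution of $D$ (resp.\ $D'$) again has writhe $w(D)$.

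Next I would localize. The diagrams $D$ and $D'$ coincide outside a disk $B$ in which they display the two sides of $\Gamma_7$, each carrying exactly two marked vertices $v_1,v_2$. Applying rule (L2) of Definition~\ref{defn-poly-ori-skr} to $v_1$ and $v_2$ in both $[[D]]$ and $[[D']]$ expands each into four terms indexed by the local state $(a,b)\in\{T_\infty,T_0\}^2$, with monomial weights $x^2,xy,xy,y^2$. Because (L2) is a local linear rule and the marked vertices lying outside $B$ are identical in $D$ and $D'$, I would group the expansion by the local state and, inside each group, expand the remaining (exterior) marked vertices via the state-sum formula (\ref{state formula-ori}); each exterior state $\rho$ contributes a monomial coefficient $\psi_k(x,y)=x^{\rho(\infty)}y^{\rho(0)}\in R[x,y]$ and turns the part of the diagram outside the closing region into an oriented $3$-tangle $U_k\in\mathcal T^{ori}_3$.

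The core of the argument is the state-by-state comparison of the local tangles for fixed exterior state. For the two consistent states $(T_\infty,T_\infty)$ and $(T_0,T_0)$, weighted $x^2$ and $y^2$, the local resolutions of $D$ and $D'$ are ordinary oriented tangle diagrams that are carried into each other inside $B$ by oriented Reidemeister moves of types II and III only; since these moves preserve both $[\,\cdot\,]$ and the writhe (consistent with $w(D')=w(D)$), the corresponding link diagrams have equal values under $[\,\cdot\,]$, so the $x^2$ and $y^2$ contributions cancel in the difference. For the mixed states, weighted $xy$, one of the two cross-resolutions still agrees between $D$ and $D'$ up to R2/R3 and cancels, while the surviving cross-resolution no longer matches: closing the $3$-tangle $U_k$ by the exterior in the two ways forced by $D'$ and by $D$ yields precisely the link diagrams $R(U_k)$ and $R^*(U_k)$ of Figure~\ref{fig-m07-gid}. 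Collecting the exterior weights then gives
\[
[[D']]-[[D]]=\sum_{k=1}^{m}\psi_k(x,y)\bigl([R(U_k)]-[R^*(U_k)]\bigr)\,xy,
\]
and multiplying by $\alpha^{-w(D)}$ completes the proof.

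The main obstacle is exactly this mixed-state bookkeeping: one must draw the four resolutions of the $\Gamma_7$ tangle explicitly, verify that the two consistent resolutions (and one of the two mixed resolutions) are R2/R3-equivalent so that they cancel, and identify the remaining mixed resolutions, after closure by the common exterior, with the closures $R$ and $R^*$ of the single oriented $3$-tangle $U_k$. Care is also needed to check that all arcs meeting $\partial B$ carry the boundary orientations required for membership in $\mathcal T^{ori}_3$, so that the closing operations $R$ and $R^*$ are even defined; this is precisely where the orientation conventions built into the move $\Gamma_7$ are used.
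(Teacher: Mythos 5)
Your proposal is correct and follows essentially the same route as the paper: both reduce to $[[D']]-[[D]]$ via $w(D')=w(D)$, expand the exterior marked vertices into coefficients $\psi_k(x,y)$ times $3$-tangles $U_k\in\mathcal T^{\rm ori}_3$, and observe that of the four local resolutions of the two marked vertices in the $\Gamma_7$ piece, only one mixed ($xy$-weighted) resolution differs between the two sides, producing exactly $[R(U_k)]-[R^*(U_k)]$. The only (immaterial) difference is the order of expansion — you resolve the two local vertices first and the exterior second, while the paper writes $D=T_7\circ\mathcal T$ and expands $\mathcal T$ first, then computes $[[T_7\circ U_k]]$ explicitly in terms of the fundamental tangles $\vec f_0,\ldots,\vec f_4$.
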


\begin{proof}
Let $D$ be an oriented marked graph diagram and let $D'$ be an oriented marked graph diagram obtained from $D$ by applying a single oriented Yoshikawa move $\Gamma_7$ as shown in Figure~\ref{fig-m07-ort}. 
\begin{figure}[t]
\begin{center}
\resizebox{0.40\textwidth}{!}{%
  \includegraphics{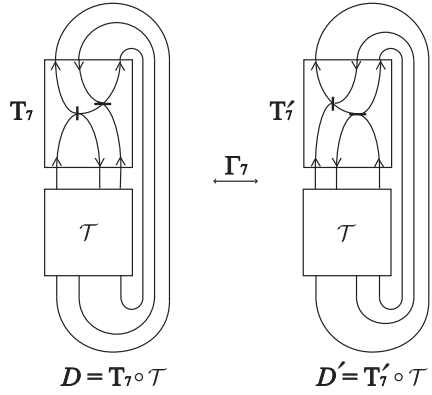}}
\caption{Oriented Yoshikawa move $\Gamma_7$} \label{fig-m07-ort}
\end{center}
\end{figure}
Applying the axioms ({\bf L1}) and ({\bf L2}) in Definition \ref{defn-poly-ori-skr} to the 3-tangle diagram $\mathcal T$ in $D=T_7\circ \mathcal T$, we can express $[[D]]$ as a linear combination of polynomials $[[T_7\circ U_k]] (1 \leq k \leq m)$ for some integer $m \geq 1$, where each $U_k$ is an oriented $3$-tangle diagram that has no marked vertices and hence $U_k \in \mathcal T^{ori}_3$. Hence 
$$[[D]]=[[T_7\circ\mathcal T]]=\sum_{k=1}^m\psi_k(x,y)[[T_7 \circ U_k]],
$$
where $\psi_k(x,y)$ is a polynomial in $R[x,y].$ By applying the same procedure to $\mathcal T$ in $D'=T'_7\circ \mathcal T$, we have
$$[[D']] =[[T'_7\circ\mathcal T]]=\sum_{k=1}^m\psi_k(x,y)[[T'_7 \circ U_k]].$$
By a straightforward computation, we obtain 
\begin{align*}
[[T_7 \circ U_k]] &=[\vec{f}_2\circ U_k] x^2+[\vec{f}_0\circ U_k] xy+[\vec{f}_4\circ U_k] yx+[\vec{f}_1\circ U_k] y^2,\\
[[T_7' \circ U_k]] &=[\vec{f}_2\circ U_k] x^2+[\vec{f}_0\circ U_k] xy+[\vec{f}_3\circ U_k] yx+[\vec{f}_1\circ U_k] y^2,
\end{align*}
where $\vec{f}_0, \ldots, \vec{f}_4$ are the fundamental oriented 3-tangle diagrams shown in Figure~\ref{fig-3tbas-ori}.
 \begin{figure}[h]
\begin{center}
\resizebox{0.60\textwidth}{!}{%
  \includegraphics{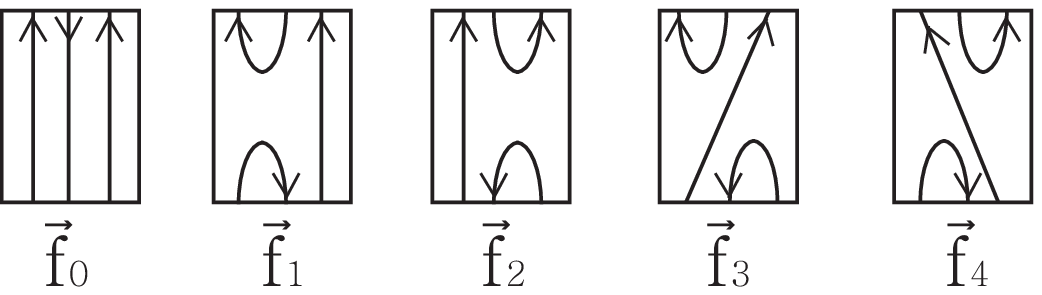} }
\caption{Fundamental oriented $3$-tangle diagrams}\label{fig-3tbas-ori}
\end{center}
\end{figure}

This gives
\begin{align*}
[[D]]-[[D']]&=\sum_{k=1}^m\psi_k(x,y)\Big([[T_7 \circ U_k]]-[[T'_7 \circ U_k]]\Big)\\
&=\sum_{k=1}^m\psi_k(x,y)\Big([\vec{f}_4\circ U_k]-[\vec{f}_3\circ U_k] \Big)xy\\
&=\sum_{k=1}^m(-\psi_k(x,y))\Big([R(U_k)]-[R^*(U_k)]\Big)xy.
\end{align*}
Therefore 
\begin{align*}
\ll D'\gg-\ll D\gg &=\alpha^{-w(D')}[[D']]-\alpha^{-w(D)}[[D]]
=\alpha^{-w(D)}\Big([[D']]-[[D]]\Big)\\
&=\alpha^{-w(D)}\sum_{k=1}^m\psi_k(x,y)\Big([R(U_k)]-[R^*(U_k)]\Big)xy.
  \end{align*}
This completes the proof.
\end{proof}

\begin{lemma}\label{lem3-Ideal-inv}
Let $D$ be an oriented marked graph diagram and let $D'$ be an oriented marked graph diagram obtained from $D$ by applying a single oriented Yoshikawa move $\Gamma_8$. Then
\begin{align*}
\ll D\gg-\ll D'\gg 
&=\alpha^{-w(D)}\sum_{k=1}^n\varphi_k(x,y)\Big([S(V_k)]-[S^*(V_k)]\Big)xy,
\end{align*}
where $V_k \in \mathcal T^{ori}_4 (k=1,2,\ldots,n)$ and $\varphi_k(x,y)$ is a polynomial in $R[x,y].$
\end{lemma}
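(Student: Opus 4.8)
The plan is to mirror the proof of Lemma~\ref{lem2-Ideal-inv}, replacing the $3$-tangle region and the closing operations $R, R^*$ by the $4$-tangle region and the operations $S, S^*$. I write the local picture of the move $\Gamma_8$ as a $4$-tangle $T_8$ (resp.\ $T_8'$) composed with the remaining $4$-tangle $\mathcal T$ of the diagram, so that $D = T_8 \circ \mathcal T$ and $D' = T_8' \circ \mathcal T$ with the same $\mathcal T$ on both sides. First I would apply the axioms (\textbf{L1}) and (\textbf{L2}) of Definition~\ref{defn-poly-ori-skr} to every marked vertex of $\mathcal T$. This expresses $[[D]]$ as a finite $R[x,y]$-linear combination $\sum_{k=1}^n \varphi_k(x,y)\,[[T_8 \circ V_k]]$, where each $V_k$ is an oriented $4$-tangle diagram with no marked vertices and hence $V_k \in \mathcal T^{ori}_4$; applying the identical expansion to the same $\mathcal T$ inside $D'$ yields $[[D']] = \sum_{k=1}^n \varphi_k(x,y)\,[[T_8' \circ V_k]]$ with the very same coefficients $\varphi_k$.

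Next I would resolve, via (\textbf{L2}), the two marked vertices contained in $T_8$ and in $T_8'$. Each side then expands into four states weighted by the monomials $x^2, xy, yx, y^2$, whose coefficients are the links obtained by capping $V_k$ with the four resolved fundamental $4$-tangle diagrams. The key local observation---entirely analogous to the computation of $[[T_7 \circ U_k]]$ and $[[T_7' \circ U_k]]$ in Lemma~\ref{lem2-Ideal-inv}, with Figure~\ref{fig-3tbas-ori} replaced by its $4$-tangle counterpart---is that three of the four resolved tangles coincide on the two sides of $\Gamma_8$, so their contributions cancel in the difference $[[D]] - [[D']]$, while the single surviving state produces exactly the closures $S(V_k)$ on one side and $S^*(V_k)$ on the other. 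This yields
\begin{equation*}
[[D]] - [[D']] = \sum_{k=1}^n \varphi_k(x,y)\bigl([S(V_k)] - [S^*(V_k)]\bigr)xy,
\end{equation*}
after absorbing the resulting sign into $\varphi_k$ if necessary (the statement only requires $\varphi_k \in R[x,y]$).

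Finally, by Remark~\ref{rmk-self-ori-wr} the writhe is unchanged by $\Gamma_8$, so $w(D) = w(D')$, and multiplying through by the common factor $\alpha^{-w(D)}$ converts the identity for $[[\cdot]]$ into the claimed identity for $\ll\cdot\gg$, exactly as in the last display of the proof of Lemma~\ref{lem2-Ideal-inv}.

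The main obstacle will be the explicit local computation of the two resolved tangle families for $\Gamma_8$: because the $\Gamma_8$ tangle carries more crossings than $\Gamma_7$, one must carefully draw all four resolutions of the two marked vertices on each side, keep track of the induced orientations so that each $V_k$ genuinely lies in $\mathcal T^{ori}_4$, and verify that the three non-contributing states match across the move while the surviving pair is precisely $(S(V_k), S^*(V_k))$. Once this picture is pinned down, the remaining algebraic bookkeeping is routine.
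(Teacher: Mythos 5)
Your proposal is correct and follows essentially the same route as the paper: decompose $D=T_8\circ\mathcal T$ and $D'=T_8'\circ\mathcal T$, expand $\mathcal T$ via ({\bf L1})--({\bf L2}) into a common $R[x,y]$-combination over $V_k\in\mathcal T^{ori}_4$, resolve the two marked vertices of $T_8$ and $T_8'$ to see that the $x^2$, $xy$ and $y^2$ states agree while the $yx$ state produces $S(V_k)$ versus $S^*(V_k)$, and finish using $w(D)=w(D')$. The only cosmetic difference is that the paper names the five fundamental $4$-tangles $\vec g_9,\vec g_5,\vec g_{12},\vec g,\vec g^*$ explicitly (Figure~\ref{fig-4tbas-ori}) and finds that no sign absorption is needed, which your hedge already accommodates.
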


\begin{proof}
Let $D'$ be an oriented marked graph diagram obtained from $D$ by applying a single oriented Yoshikawa move $\Gamma_8$ as shown in Figure~\ref{fig-m08-ort}. 
\begin{figure}[h]
\begin{center}
\resizebox{0.40\textwidth}{!}{%
  \includegraphics{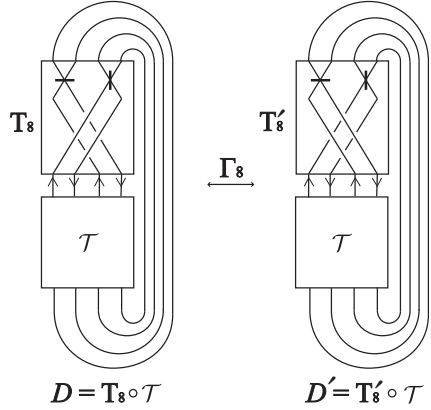}}
\caption{Oriented Yoshikawa move $\Gamma_8$}\label{fig-m08-ort}
\end{center}
\end{figure}
Applying the axioms ({\bf L1}) and ({\bf L2}) in Definition \ref{defn-poly-ori-skr} to the 4-tangle diagram $\mathcal T$ in $D=T_8\circ \mathcal T$, we can express $[[D]]$ as a linear combination of polynomials $[[T_8\circ V_k]] (1 \leq k \leq n)$ for some integer $n \geq 1$, where each $V_k$ is an oriented $4$-tangle diagram that has no marked vertices and hence $V_k \in \mathcal T^{ori}_4$. Hence 
$$[[D]]=[[T_8\circ\mathcal T]]=\sum_{k=1}^n\varphi_k(x,y)[[T_8 \circ V_k]],
$$
where $\varphi_k(x,y)$ is a polynomial in $R[x,y].$ By applying the same procedure to $\mathcal T$ in $D'=T'_8\circ \mathcal T$, we have
$$[[D']] =[[T'_8\circ\mathcal T]]=\sum_{k=1}^n\varphi_k(x,y)[[T'_8 \circ V_k]].$$
By a straightforward computation, we obtain 
\begin{align*}
[[T_8 \circ V_k]] &=[\vec{g}_9\circ V_k] x^2+[\vec{g}_{5}\circ V_k] xy+[\vec{g}\circ V_k] yx+[\vec{g}_{12}\circ V_k] y^2,\\
[[T_8' \circ V_k]] &=[\vec{g}_9\circ V_k] x^2+[\vec{g}_{5}\circ V_k] xy+[\vec{g}^*\circ V_k] yx+[\vec{g}_{12}\circ V_k] y^2,
\end{align*}
where  $\vec{g}_9, \vec{g}_5, \vec{g}_{12}, \vec{g}$ and $\vec{g}^*$ are the fundamental oriented 4-tangle diagrams shown in Figure~\ref{fig-4tbas-ori}.
 \begin{figure}[h]
\begin{center}
\resizebox{0.60\textwidth}{!}{%
  \includegraphics{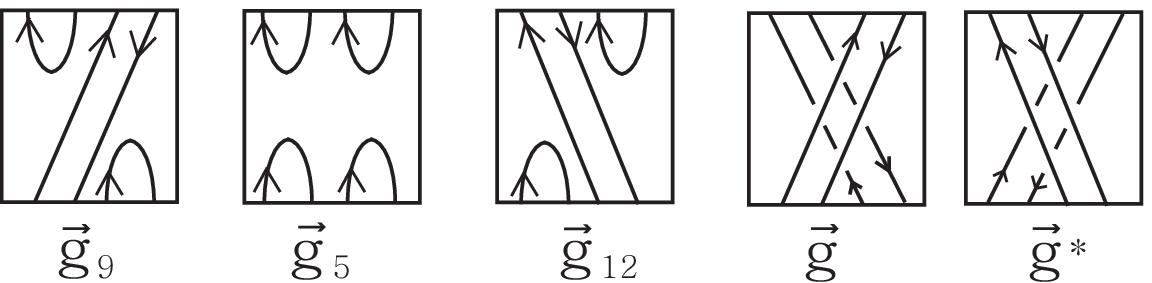} }
\caption{Fundamental oriented $4$-tangle diagrams}\label{fig-4tbas-ori}
\end{center}
\end{figure}

This gives
\begin{align*}
[[D]]-[[D']]&=\sum_{k=1}^n\varphi_k(x,y)\Big([[T_8 \circ V_k]]-[[T'_8 \circ V_k]]\Big)\\
&=\sum_{k=1}^n\varphi_k(x,y)\Big([\vec{g}\circ V_k]-[\vec{g}^*\circ V_k]\Big)xy\\
&=\sum_{k=1}^n\varphi_k(x,y)\Big([S(V_k)]-[S^*(V_k)]\Big)xy.
\end{align*}
Therefore
\begin{align*}
\ll D\gg-\ll D'\gg &=\alpha^{-w(D)}[[D]]-\alpha^{-w(D')}[[D']]=\alpha^{-w(D)}\Big([[D]]-[[D']]\Big)\\
&=\alpha^{-w(D)}\sum_{k=1}^n\varphi_k(x,y)\Big([S(V_k)]-[S^*(V_k)]\Big)xy.
\end{align*}
This completes the proof.
\end{proof}

\begin{definition}\label{defn-obst-id}	
For any given regular or ambient isotopy invariant $$[~~] : \{\text{(resp.~oriented) links in $\mathbb R^3$}\} \longrightarrow R$$ satisfying the properties (\ref{p-clinv-1}) and (\ref{p-clinv-2}) (resp. with orientations), the {\it $[~~]$-obstruction ideal} (simply, {\it $[~~]$ ideal}) or the {\it ideal associated with $[~~]$} is defined to be the ideal of $R[x,y]$ generated by the polynomials in $R[x,y]$:
\begin{equation}\label{eq-polys}
\begin{split}
&P_1(x,y) = \delta x+y-1,\\
&P_2(x,y) = x+\delta y-1,\\
&P_U(x,y)=([R(U)]-[R^*(U)])xy, U \in \mathcal T_3~ (\text{resp.}~\mathcal T^{\rm ori}_3),\\
&P_V(x,y)=([S(V)]-[S^*(V)])xy, V \in \mathcal T_4~ (\text{resp.}~\mathcal T^{\rm ori}_4).
\end{split}
\end{equation}
\end{definition}

We remark that if $[~~]$ is a regular or ambient isotopy invariant for unoriented knots and links in $3$-space, then the $[~~]$ ideal in Definition \ref{defn-obst-id} is identical to the ideal in \cite[Definition 4.1]{JKL} with the specialization given by $(x,y,z,w)=(x,0,0,y)$ and $\phi={\rm identity}$.

\begin{theorem}\label{thm1-Ideal-inv}
Let $[~~]$ be a regular or ambient isotopy invariant for (resp.~oriented) knots and links in the $3$-space $\mathbb R^3$ or $S^3$ and let $I=I([~~])$ denote the ideal associated with $[~~]$. Then the map 
$$\ll~\gg^I : \{\text{(resp.~oriented) marked graph diagrams}\} \longrightarrow R[x,y]/I$$ defined by 
$\ll~\gg^I(D)=\ll D\gg^I:=\ll D\gg +~ I$ for each (resp.~oriented) marked graph diagram $D$ is an invariant for (resp.~oriented) surface-links in the $4$-space $\mathbb R^4$ or $S^4$.
\end{theorem}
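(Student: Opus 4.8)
The plan is to apply Theorem~\ref{thm-equiv-mgds-ym}, which tells us that two (resp.~oriented) marked graph diagrams present the same (resp.~oriented) surface-link precisely when they are joined by a finite sequence of Yoshikawa moves of type~I and type~II. Consequently it suffices to prove that the coset $\ll D\gg^I=\ll D\gg+I$ is unchanged when $D$ undergoes a single such move. I would first dispatch the type~I moves: Theorem~\ref{thm-inv-mgs} already asserts that the polynomial $\ll D\gg$ itself is invariant under every (resp.~oriented) Yoshikawa move of type~I, so its image in $R[x,y]/I$ is a fortiori invariant. Hence only the type~II moves $\Gamma_6,\Gamma_6',\Gamma_7,\Gamma_8$ (and, in the unoriented case, $\Omega_6,\Omega_6',\Omega_7,\Omega_8$) remain, and for each of these it is enough to check that the two diagrams related by the move have values differing by an element of the ideal $I$.

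For $\Gamma_6$ and $\Gamma_6'$ I would invoke Lemma~\ref{lem1-Ideal-inv}. If $D$ carries the capped marked-vertex configuration on the left-hand side of~\eqref{pf-inv-y6-1} and $D'$ is the diagram with a single arc in its place, the lemma gives $\ll D\gg=(\delta x+y)\ll D'\gg$. Since $P_1(x,y)=\delta x+y-1$ is one of the generators of $I$ recorded in~\eqref{eq-polys}, we have $\delta x+y\equiv 1\pmod I$, so $\ll D\gg\equiv\ll D'\gg\pmod I$, i.e.\ $\ll D\gg^I=\ll D'\gg^I$. The move $\Gamma_6'$ is identical, using instead the relation~\eqref{pf-inv-y6-2} together with the generator $P_2(x,y)=x+\delta y-1$.

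For $\Gamma_7$ and $\Gamma_8$ I would use Lemmas~\ref{lem2-Ideal-inv} and~\ref{lem3-Ideal-inv}. When $D,D'$ differ by $\Gamma_7$, Lemma~\ref{lem2-Ideal-inv} writes $\ll D'\gg-\ll D\gg=\alpha^{-w(D)}\sum_{k=1}^m\psi_k(x,y)\bigl([R(U_k)]-[R^*(U_k)]\bigr)xy$ with each $U_k\in\mathcal T^{ori}_3$. Each factor $\bigl([R(U_k)]-[R^*(U_k)]\bigr)xy$ is exactly the generator $P_{U_k}(x,y)$ of $I$ from~\eqref{eq-polys}; multiplying it by the polynomial $\psi_k(x,y)\in R[x,y]$ and then by $\alpha^{-w(D)}\in R$ keeps it inside the ideal, so the whole difference lies in $I$ and $\ll D'\gg^I=\ll D\gg^I$. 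Likewise $\Gamma_8$ is handled by Lemma~\ref{lem3-Ideal-inv}, whose discrepancy terms $\bigl([S(V_k)]-[S^*(V_k)]\bigr)xy$ with $V_k\in\mathcal T^{ori}_4$ are precisely the generators $P_{V_k}(x,y)$ of $I$. The unoriented statement is strictly parallel: one replaces the writhe $w(D)$ by the self-writhe $sw(D)$, uses the unoriented analogues of Lemmas~\ref{lem1-Ideal-inv}--\ref{lem3-Ideal-inv}, and recalls (as observed after Definition~\ref{defn-obst-id}) that for unoriented $[~~]$ the ideal $I$ coincides with the obstruction ideal of~\cite{JKL}.

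In this argument the genuinely hard work has already been carried out in the three lemmas, which compute how $\ll\cdot\gg$ transforms under the type~II moves; what remains is essentially bookkeeping. The only point demanding care is to notice that the tangles $U_k$ and $V_k$ supplied by Lemmas~\ref{lem2-Ideal-inv} and~\ref{lem3-Ideal-inv} range over $\mathcal T^{ori}_3$ and $\mathcal T^{ori}_4$, which are exactly the index sets over which the generators $P_U$ and $P_V$ of $I$ are taken in Definition~\ref{defn-obst-id}; this identification is what guarantees that every discrepancy term is a bona fide element of $I$ rather than merely an expression of the same shape. Thus $I$ is, by design, just large enough to absorb the failure of $\ll\cdot\gg$ to be invariant under the type~II moves, and the proof is complete once these observations are recorded.
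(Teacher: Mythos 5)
Your proposal is correct and follows essentially the same route as the paper: type I moves are dispatched by Theorem~\ref{thm-inv-mgs}, the moves $\Gamma_6,\Gamma_6'$ by Lemma~\ref{lem1-Ideal-inv} together with the generators $P_1,P_2\in I$, and $\Gamma_7,\Gamma_8$ by Lemmas~\ref{lem2-Ideal-inv} and~\ref{lem3-Ideal-inv}, whose discrepancy terms are $R[x,y]$-multiples of the generators $P_{U_k},P_{V_k}$. The only cosmetic difference is that for the unoriented case the paper simply cites \cite[Theorem 4.2]{JKL} rather than rerunning the unoriented analogues of the lemmas as you suggest.
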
	

\begin{proof}
By \cite[Theorem 4.2]{JKL}, we see that $\ll~\gg^I$ is an invariant for unoriented surface-links and so the oriented case only remain to be proved.

Let $D$ be an oriented marked graph diagram. By Theorem~\ref{thm-inv-mgs}, we see that $\ll D\gg$ is invariant under the oriented Yoshikawa moves $\Gamma_1, \Gamma'_1, \Gamma_2, \Gamma_3, \Gamma_4, \Gamma'_4$ and $\Gamma_5$ and so clearly do the $[~~]$ ideal coset $\ll D\gg+I$. 

For the moves $\Gamma_6$ and $\Gamma'_6$, it is direct from Lemma \ref{lem1-Ideal-inv} that 
$$\ll\xy
(1,3);(6,-2) **@{-}, 
(1,-3);(6,2) **@{-},
(6,2);(10,2) **\crv{(8,4)}, 
(6,-2);(10,-2)**\crv{(8,-4)}, 
(10,2);(10,-2) **\crv{(11.5,0)}, 
(4,2);(4,-2) **@{-}, 
(4.1,2);(4.1,-2) **@{-}, 
(3.9,2);(3.9,-2)**@{-},
(1.5,-3) *{\urcorner}, 
(1.5,1.9) *{\ulcorner},
(9.5,-3)*{\urcorner},
\endxy\gg+I=
\ll\xy 
(1.5,-3);(1.5,3) **\crv{(4,0)},
(2.4,1.5) *{\ulcorner},  
\endxy\gg+I~\text{and}~
\ll\xy 
(1,3);(6,-2) **@{-}, 
(1,-3);(6,2) **@{-},
(6,2);(10,2) **\crv{(8,4)}, 
(6,-2);(10,-2)**\crv{(8,-4)}, 
(10,2);(10,-2) **\crv{(11.5,0)}, 
(2,0);(6,0) **@{-}, 
(2,0.1);(6,0.1) **@{-},
(2,0.2);(6,0.2) **@{-}, 
(2,-0.1);(6,-0.1) **@{-},
(2,-0.2);(6,-0.2) **@{-},
(1.5,-3) *{\urcorner}, 
(1.5,1.9) *{\ulcorner},
(9.5,-3)*{\urcorner}, 
\endxy\gg +I=\ll\xy 
(1.5,-3);(1.5,3) **\crv{(4,0)},
(2.4,1.5) *{\ulcorner}, 
\endxy\gg + I.$$

Let $D'$ be an oriented marked graph diagram obtained from $D$ by applying a single oriented Yoshikawa move $\Gamma_7$ or $\Gamma_8$. Then it is direct from Lemma \ref{lem2-Ideal-inv} and Lemma \ref{lem3-Ideal-inv} that $\ll D'\gg -\ll D\gg \in I$. This gives $\ll D\gg+I=\ll D'\gg+I$ and completes the proof.
\end{proof}

Let $\mathbb F_R$ be an extension field of $R$. By Hilbert Basis Theorem (cf. \cite{CLO}), the $[~~]$ ideal $I$ in $\mathbb F_R[x,y]$ is completely determined by a finite number of polynomials in $\mathbb F_R[x,y],$ say $p_1, p_2, \ldots, p_r.$ Then $I=<p_1, p_2, \ldots, p_r>$ in $\mathbb F_R[x,y].$

\bigskip

The following corollary is an immediate consequence of Theorem~\ref{thm1-Ideal-inv}.

\begin{corollary}\label{cor1-Ideal-inv}
 Let $S$ be a commutative ring and let $\phi : R[x,y] \to S$ be a homomorphism such that $\phi(I)=0$. Then
the composite map $$\Phi = \phi~\circ\ll~\gg^I : \{\text{(resp.~oriented) marked graph diagrams}\} \longrightarrow S$$ defined by 
$\Phi(D)=\phi(\ll D\gg^I)=\phi(\ll D\gg+I)=\phi(\ll D\gg)$ for each (resp.~oriented) marked graph diagram $D$ is an invariant for (resp.~oriented) surface-links.
\end{corollary}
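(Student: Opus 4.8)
The plan is to deduce this directly from Theorem~\ref{thm1-Ideal-inv}, the content being essentially that a ring homomorphism killing $I$ factors through the quotient $R[x,y]/I$ and hence composes cleanly with the already-established quotient-valued invariant.

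First I would make precise the factorization of $\phi$. Since $\phi : R[x,y] \to S$ is a ring homomorphism with $\phi(I) = 0$, the ideal $I$ is contained in $\ker\phi$, so by the universal property of quotient rings $\phi$ induces a well-defined ring homomorphism $\bar\phi : R[x,y]/I \to S$ satisfying $\bar\phi(f + I) = \phi(f)$ for every $f \in R[x,y]$. In particular, for any (resp.~oriented) marked graph diagram $D$ one has $\bar\phi(\ll D\gg^I) = \bar\phi(\ll D\gg + I) = \phi(\ll D\gg)$, which is exactly the value $\Phi(D)$ recorded in the statement; this also confirms that the chain of equalities $\phi(\ll D\gg^I) = \phi(\ll D\gg + I) = \phi(\ll D\gg)$ is consistent and independent of the choice of representative of the coset $\ll D\gg + I$.

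Next I would invoke Theorem~\ref{thm1-Ideal-inv}, which asserts that $\ll~\gg^I$ is an invariant for (resp.~oriented) surface-links; equivalently, if $D$ and $D'$ present the same (resp.~oriented) surface-link, then $\ll D\gg^I = \ll D'\gg^I$ in $R[x,y]/I$. Applying the single-valued map $\bar\phi$ to both sides gives $\Phi(D) = \bar\phi(\ll D\gg^I) = \bar\phi(\ll D'\gg^I) = \Phi(D')$ in $S$, so $\Phi$ takes equal values on any two diagrams presenting equivalent surface-links, which is precisely the assertion that $\Phi$ is a surface-link invariant. Phrased through Theorem~\ref{thm-equiv-mgds-ym}, this step amounts to observing that $\ll D\gg^I$ is unchanged under the (resp.~oriented) Yoshikawa moves and then post-composing with the fixed homomorphism $\bar\phi$.

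This argument presents no genuine obstacle: the only point needing care is the well-definedness of $\bar\phi$, which is guaranteed by the hypothesis $\phi(I) = 0$, while everything else is formal functoriality. I would conclude by remarking that $\Phi = \bar\phi \circ \ll~\gg^I$ is the composite of a surface-link invariant with a fixed ring homomorphism and is therefore itself a surface-link invariant, which completes the proof.
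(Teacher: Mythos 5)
Your proof is correct and follows exactly the route the paper intends: the paper simply declares the corollary an immediate consequence of Theorem~\ref{thm1-Ideal-inv}, and your argument spells out that immediacy via the factorization of $\phi$ through $R[x,y]/I$. Nothing is missing.
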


It is noted that a specialization of the polynomial $\ll~\gg$ valued in some ring $S$ formalized in Corollary \ref{cor1-Ideal-inv} is sometimes useful in practice. In the final section \ref{sect-inv-kbp-spl}, we shall discuss such a specialization of the polynomial $\ll~\gg$ associated with the (normalized) Kauffman bracket polynomial (see Subsection \ref{subsect2-b-poly-MGD}) for (oriented) knots and links in $3$-space, which gives a series of new invariants for (oriented) surface-links in $4$-space.

\subsection{Normal forms of ideal cosets}

In this subsection, we give a brief description of a way how to find a unique representative of the ideal coset invariant $\ll D\gg + I$ in terms of the polynomial $\ll D\gg$ of a marked graph diagram $D$ by using a Groebner basis for the $[~~]$ ideal $I$, which is indeed an invariant of the surface-link in the $4$-space $\mathbb R^4$ or $S^4$ presented by $D$. 

Let $\mathbb F$ be an arbitrary field and let $\mathbb F[x_1,\ldots,x_n]$ be the ring of polynomials in $n$ variables $x_1,\ldots,x_n$ with coefficients in
$\mathbb F.$ Let $I$ denote the ideal of $\mathbb F[x_1,\ldots,x_n]$ generated by the polynomials $f_1, \ldots, f_s$ and let $G=\{g_1, \ldots, g_t\}$ be a Groebner
basis for the ideal $I$ with respect to a fixed monomial order. Then it is well known that for any
polynomial $f \in \mathbb F[x_1,\ldots,x_n],$ there exists a unique polynomial
$r$ with the following properties:
\begin{enumerate}
    \item [(i)] No term of $r$ is divisible by any of
    ${\rm LT}(g_1), \ldots, {\rm LT}(g_t),$ where ${\rm LT}(g_i)$ denotes the leading term of $f$.
    \item [(ii)] There exist $b_i \in \mathbb F[x_1,\ldots,x_n] (1\leq i \leq t)$ such that
    \begin{equation}\label{Division Al-2}
    f =b_1g_1+\cdots+b_tg_t + r.
    \end{equation}
    \item [(iii)] $r$ is the remainder on division of $f$ by
    $G=\{g_1, \ldots, g_t\}$ no matter how the elements of $G$
    are listed when using the Division Algorithm in $\mathbb F[x_1,\ldots,x_n]$.
\end{enumerate}

The unique remainder $r$ in (\ref{Division Al-2}) is called the {\it
normal form} of $f$ on division by the ideal $I$ and denoted by
$\overline{f}^G$. We should note that if $G$ and $G'$ are Groebner basis for the ideal $I$ with respect to the same monomial order in $\mathbb F[x_1,\ldots,x_n]$, then $\overline{f}^G=\overline{f}^{G'}$ for all $f \in \mathbb F[x_1,\ldots,x_n]$ and hence we may denote the unique remainder $\overline{f}^G$ by $\overline{f}^I$ or simply $\overline{f}$ once a monomial order is fixed. For more details, see \cite{JKL} or \cite[Chapter 2]{CLO}. 

\smallskip

Now we turn to the ideal coset invariant for surface-links in the subsection \ref{subs-const-ici}.

\begin{theorem}\label{resid modulo-3}
Let $D$ be an (resp.~oriented) marked graph diagram and let $\mathcal L$ be the (resp.~oriented) surface-link presented by $D$. Let $\ll D\gg$ be the polynomial of $D$ defined in Definition \ref{defn-poly} (resp.~Definition \ref{defn-poly-ori}) and let $I=I([~])$ be the ideal associated with a regular or ambient isotopy invariant $[~~]$ of (resp.~oriented) knots and links in $3$-space. Then for any Groebner basis $G$ for the ideal $I$ with a fixed monomial order, the normal form $\overline{\ll D\gg}$ on division of $\ll D\gg$ in $\mathbb F_R$ by $G$ is uniquely determined by the surface link $\mathcal L$ and therefore it is an invariant of $\mathcal L$. We denote it by $\overline{\ll \mathcal L\gg}$.
\end{theorem}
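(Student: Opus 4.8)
The plan is to reduce the statement to a single standard property of normal forms: that once a monomial order is fixed, two polynomials have the same normal form on division by $I$ precisely when their difference lies in $I$. Granting this, the theorem follows at once from the coset invariance already established in Theorem~\ref{thm1-Ideal-inv}.

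First I would work throughout in $\mathbb{F}_R[x,y]$, where $\mathbb{F}_R$ is the extension field of $R$ fixed just after Theorem~\ref{thm1-Ideal-inv}; this is essential, since the uniqueness of the remainder (and hence the entire Groebner machinery recalled above) requires the coefficients to lie in a field. I would then invoke the two facts isolated in the discussion preceding the theorem: (i) for a fixed monomial order the normal form $\overline{f}$ is independent of the particular Groebner basis $G$ chosen for $I$, so that the notation $\overline{f}$ is unambiguous; and (ii) the remainder map $f \mapsto \overline{f}$ is $\mathbb{F}_R$-linear with $\overline{f}=0$ exactly when $f \in I$. From (ii) one obtains $\overline{f}-\overline{g}=\overline{f-g}$, whence $\overline{f}=\overline{g}$ if and only if $f-g \in I$.

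Next I would bring in the surface-link invariance. Suppose $D$ and $D'$ are two (resp.~oriented) marked graph diagrams both presenting the (resp.~oriented) surface-link $\mathcal{L}$. By Theorem~\ref{thm-equiv-mgds-ym} these diagrams are related by a finite sequence of (resp.~oriented) Yoshikawa moves, and by Theorem~\ref{thm1-Ideal-inv} the $[~~]$ ideal coset is preserved under every such move, so $\ll D\gg + I = \ll D'\gg + I$, i.e.\ $\ll D\gg - \ll D'\gg \in I$. Applying the criterion of the previous step with $f=\ll D\gg$ and $g=\ll D'\gg$ yields $\overline{\ll D\gg}=\overline{\ll D'\gg}$. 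Hence the normal form does not depend on the chosen diagram presenting $\mathcal{L}$, and we may unambiguously set $\overline{\ll \mathcal{L}\gg}:=\overline{\ll D\gg}$, which is therefore an invariant of $\mathcal{L}$.

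The step requiring the most care is not combinatorial but algebraic bookkeeping: checking that the remainder map is well defined and $\mathbb{F}_R$-linear and that the resulting normal form is genuinely basis-independent for a fixed monomial order. Both are standard consequences of the Division Algorithm and the defining property of a Groebner basis (see \cite[Chapter 2]{CLO}), so I would cite them rather than reprove them. The only subtlety worth flagging explicitly is the passage from the base ring $R$ to the field $\mathbb{F}_R$, since the uniqueness of the normal form can fail over a general commutative ring; this is precisely why $I$ and the division are taken in $\mathbb{F}_R[x,y]$.
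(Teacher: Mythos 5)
Your proposal is correct and follows essentially the same route as the paper, which simply cites Theorem~\ref{thm1-Ideal-inv} together with standard Groebner basis theory; you have merely spelled out the two ingredients (coset invariance of $\ll D\gg$ under Yoshikawa moves, and the fact that $\overline{f}=\overline{g}$ iff $f-g\in I$ for a fixed monomial order) that the paper leaves implicit. Your remark about working over the field $\mathbb F_R$ rather than $R$ is a useful clarification but not a departure from the paper's argument.
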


\begin{proof}
This follows directly from Theorem \ref{thm1-Ideal-inv} and the theory of Groebner bases for ideals in polynomial rings.
\end{proof}

It is worth noting that by using the commercial computer algebra systems ``Maple" or ``Mathematica" one can compute the normal form $\overline{f}$ for any polynomial $f$ in some polynomial rings $\mathbb F[x_1,\ldots,x_n]$ on division by any Groebner basis $G$ for the ideal $I$ with a fixed monomial order such that $f+ I = g+I$ if and only if $\overline{f}=\overline{g}$ for all $f, g \in \mathbb F[x_1,\ldots,x_n]$. 


\section{Kauffman bracket ideal coset invariants}\label{sect-b-poly-MGD}

In this section, we apply the construction formulated in Section \ref{sect-ici-sl} to the Kauffman bracket polynomial for knots and links in the $3$-space $\mathbb R^3$ or $S^3$ and compute the ideal coset invariant for unoriented surface-links in the $4$-space $\mathbb R^4$ or $S^4$ associated with the Kauffman bracket polynomial for unoriented link diagrams and the ideal coset invariant for oriented surface-links associated with the normalized Kauffman bracket polynomial for oriented link diagrams. We also describe how to find unique representatives of the ideal cosets $\ll D\gg+I$ in terms of a Groebner basis for the ideal $I$ and the polynomial $\ll D\gg$.

\subsection{Kauffman bracket ideal coset invariant for unoriented surface-links}\label{subsect1-b-poly-MGD}

Let $L$ be a link diagram. The Kauffman bracket
polynomial of $L$\cite{Kau} is a Laurent polynomial $\langle L\rangle = \langle L \rangle(A) \in R=\mathbb{Z}[A^{\pm 1}]$
defined by the following three rules:
\begin{flalign*}
 &{\rm ({\bf B1})}~ \langle \bigcirc \rangle = 1. &\\
 &{\rm ({\bf B2})}~ \langle \bigcirc\sqcup K \rangle = \delta \langle K\rangle,
 ~\text{where}~\delta=-A^2-A^{-2}.&\\
 &{\rm ({\bf B3})} ~
 \Big\langle~\xy 
 (4,4);(-4,-4) **@{-},
 (-4,4);(-1,1) **@{-}, 
 (1,-1);(4,-4) **@{-},  
\endxy~\Big\rangle 
 = A\Big\langle~\xy 
 (-4,4);(-4,-4) **\crv{(1,0)}, 
 (4,4);(4,-4) **\crv{(-1,0)},  
\endxy~\Big\rangle 
+ A^{-1}\Big\langle~\xy 
(-4,4);(4,4) **\crv{(0,-1)}, 
(4,-4);(-4,-4) **\crv{(0,1)},
\endxy~\Big\rangle.
\end{flalign*}
Note that the Kauffman bracket polynomial is a regular isotopy invariant for unoriented links, namely it is invariant under Reidemeister moves $\Omega_2$ and $\Omega_3$ except the move $\Omega_1$. For $\Omega_1$, we have
\begin{eqnarray*}
\Big\langle~\xy (1,-3);(6,2) **@{-},
(1,3);(3.5,0.5) **@{-},
(4.5,-0.5);(6,-2) **@{-},
(6,2);(10,2) **\crv{(8,4)},
(6,-2);(10,-2) **\crv{(8,-4)}, 
(10,2);(10,-2) **\crv{(11.5,0)}, 
\endxy~\Big\rangle
=-A^3\Big\langle
~~\xy (1.5,-3);(1.5,3) **\crv{(4,0)},  
\endxy~~\Big\rangle,~~
\Big\langle~\xy (4.5,0.6);(6,2) **@{-},
(1,-3);(3.3,-0.7) **@{-},
(1,3);(3.5,0.5) **@{-},
(3.5,0.5);(6,-2) **@{-},
(6,2);(10,2) **\crv{(8,4)},
(6,-2);(10,-2) **\crv{(8,-4)}, 
(10,2);(10,-2) **\crv{(11.5,0)}, 
\endxy~\Big\rangle 
= -A^{-3}\Big\langle~~\xy 
(1.5,-3);(1.5,3) **\crv{(4,0)},  
\endxy~~\Big\rangle.
\end{eqnarray*}

Now let $D$ be a marked graph diagram. Then it follows from Definitions \ref{defn-poly-skr} and \ref{defn-poly} that $\ll D\gg$ is a polynomial in $\mathbb Z[A^{\pm 1}][x,y](=\mathbb Z[A^{\pm 1},x,y])$ given by 
\begin{equation}\label{defn-poly-inv-kb}
\ll D\gg=(-A^3)^{-sw(D)}[[D]],
\end{equation}
where $[[D]]=[[ D ]](A,A^{-1},x,y)$ is a polynomial defined by the two axioms:
\begin{itemize}
\item[{\rm ({\bf L1})}] 
$[[D]]= \langle D\rangle$ 
if $D$ is a knot or link diagram.
\item[{\rm ({\bf L2})}]  
$\Big[\Big[~\xy (-4,4);(4,-4) **@{-}, 
(4,4);(-4,-4) **@{-},  
(3,-0.2);(-3,-0.2) **@{-},
(3,0);(-3,0) **@{-}, 
(3,0.2);(-3,0.2) **@{-}, 
\endxy~\Big]\Big] =
\Big[\Big[~\xy (-4,4);(4,4) **\crv{(0,-1)}, 
(4,-4);(-4,-4) **\crv{(0,1)}, 
\endxy~\Big]\Big] x
+ \Big[\Big[~\xy (-4,4);(-4,-4) **\crv{(1,0)},  
(4,4);(4,-4) **\crv{(-1,0)},  
\endxy~\Big]\Big]y.$
\end{itemize}
Also, the state-sum formula for the polynomial
$\ll D\gg$ in (\ref{state formula}) is read as
\begin{equation*}
\ll D \gg=(-A^3)^{-sw(D)}\sum_{\sigma \in \mathfrak S(D)}\langle D_\sigma \rangle
x^{\sigma(\infty)}y^{\sigma(0)}.
\end{equation*}

\begin{theorem}\label{thm-inv-kb}
(1) The Kauffman bracket ideal $I$ is the ideal of the ring $\mathbb Z[A^{\pm 1},x,y]$ generated by the following three polynomials:
\begin{equation}\label{gens-kbp-ideal}
\begin{split}
&f_1(A,A^{-1},x,y) = (-A^2-A^{-2})x+y-1,\\
&f_2(A,A^{-1},x,y) = x+(-A^2-A^{-2})y-1,\\
&f_3(A,A^{-1},x,y)=(A^4+1+A^{-4})xy.
\end{split}
\end{equation}

(2) For any marked graph diagram $D$, the ideal coset $\ll D\gg^I =\ll D\gg +~ I$  is an invariant for the unoriented surface-link in $\mathbb R^4$ or $S^4$ presented by $D$.
\end{theorem}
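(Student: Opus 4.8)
The plan is to split the statement. Part (2) will follow almost immediately from the general construction already established, whereas part (1) is where the real computation lies. For part (2), I would specialize Theorem \ref{thm1-Ideal-inv}, which asserts that $\ll\cdot\gg^{I}$ is a surface-link invariant for \emph{any} regular or ambient isotopy invariant $[~]$ obeying (\ref{p-clinv-1}) and (\ref{p-clinv-2}). Thus it suffices to check that the Kauffman bracket $\langle\cdot\rangle$ is such an invariant with $\alpha=-A^{3}$ and $\delta=-A^{2}-A^{-2}$. The loop relation (\ref{p-clinv-2}) is precisely rule $({\bf B2})$, and the two curl relations in (\ref{p-clinv-1}) are exactly the $\Omega_{1}$ formulas recorded just above the statement (reading $\alpha=-A^{3}$ and $\alpha^{-1}=-A^{-3}$), while invariance under $\Omega_{2},\Omega_{3}$ is the classical regular-isotopy invariance of $\langle\cdot\rangle$. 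Hence the hypotheses of Theorem \ref{thm1-Ideal-inv} hold verbatim and $\ll D\gg^{I}$ is an invariant of the surface-link presented by $D$.

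For part (1), recall from Definition \ref{defn-obst-id} that $I$ is generated by $P_{1},P_{2}$ together with the two families $\{P_{U}:U\in\mathcal T_{3}\}$ and $\{P_{V}:V\in\mathcal T_{4}\}$. After the substitution $\delta=-A^{2}-A^{-2}$ one has $P_{1}=f_{1}$ and $P_{2}=f_{2}$ on the nose, so everything reduces to showing that the tangle generators generate the same ideal as $f_{3}=(A^{4}+1+A^{-4})xy$. My first move is to cut the infinite families down to a finite check using the Kauffman bracket skein module: since $\langle\cdot\rangle$ satisfies $({\bf B3})$ and $({\bf B2})$, every $3$-tangle (resp.\ $4$-tangle) is a $\mathbb Z[A^{\pm1}]$-linear combination of the crossingless planar tangles, of which there are $C_{3}=5$ (resp.\ $C_{4}=14$); and because the closures $R,R^{*}$ (resp.\ $S,S^{*}$) of Figures \ref{fig-m07-gid} and \ref{fig-m08-gid} glue in fixed boundary arcs, they are linear with respect to these expansions. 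Writing $U=\sum_{i}c_{i}e_{i}$ gives $\langle R(U)\rangle-\langle R^{*}(U)\rangle=\sum_{i}c_{i}\big(\langle R(e_{i})\rangle-\langle R^{*}(e_{i})\rangle\big)$, and similarly for $V$, so it suffices to evaluate the differences on the finitely many crossingless basis tangles.

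The decisive computation is then elementary in each case: for a crossingless $e_{i}$ both $R(e_{i})$ and $R^{*}(e_{i})$ are disjoint unions of circles, so each bracket is a power of $\delta$, and the difference has the shape $\delta^{a}-\delta^{b}$. I would verify that in every case $a\equiv b\pmod 2$, i.e.\ the two closures differ by an even number of loops, so that using $\delta^{2}-1=A^{4}+1+A^{-4}$ together with $\delta^{2k}-1=(\delta^{2}-1)(1+\delta^{2}+\cdots+\delta^{2k-2})$, each difference is a $\mathbb Z[A^{\pm1}]$-multiple of $A^{4}+1+A^{-4}$. This yields $P_{U},P_{V}\in\langle f_{3}\rangle$ and hence $I\subseteq\langle f_{1},f_{2},f_{3}\rangle$. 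For the reverse inclusion it is enough to exhibit one explicit generator equal to a unit multiple of $f_{3}$: for instance a tangle $U$ whose two closures $R(U),R^{*}(U)$ are a one- and a three-component unlink gives $\langle R(U)\rangle-\langle R^{*}(U)\rangle=1-\delta^{2}=-(A^{4}+1+A^{-4})$, whence $f_{3}=-P_{U}\in I$; combined with $f_{1}=P_{1}\in I$ and $f_{2}=P_{2}\in I$ this gives $\langle f_{1},f_{2},f_{3}\rangle\subseteq I$, and equality follows.

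The main obstacle I anticipate is precisely this computation step, and in particular the parity statement. The divisibility must be by $\delta^{2}-1$ and \emph{not} merely by $\delta-1$, so the argument genuinely requires that each pair of closures $R(e),R^{*}(e)$ (and $S(e),S^{*}(e)$) change the loop count by an even amount; this even-parity phenomenon is forced by the specific shapes of the closing operations, and isolating it uniformly over all $5$ three-tangle and $14$ four-tangle crossingless diagrams — rather than grinding through fourteen closures individually — is the delicate point. Once this parity (hence divisibility) fact is pinned down, both inclusions are routine and part (1) is complete.
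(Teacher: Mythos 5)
Your overall architecture matches the paper's: part (2) is the immediate specialization of Theorem \ref{thm1-Ideal-inv} to $[~~]=\langle~~\rangle$ with $\alpha=-A^3$ and $\delta=-A^2-A^{-2}$; part (1) reduces, via the skein expansion of an arbitrary tangle into the $5$ (resp.\ $14$) fundamental crossingless $3$- (resp.\ $4$-)tangles and the linearity of the closure operations, to evaluating the finitely many differences $\langle R(e)\rangle-\langle R^*(e)\rangle$ and $\langle S(e)\rangle-\langle S^*(e)\rangle$. The identifications $P_1=f_1$, $P_2=f_2$, and your argument that $f_3\in I$ (a basis $3$-tangle whose two closures are a one- and a three-component unlink gives $P_U=(1-\delta^2)xy=-f_3$) are all correct and consistent with what the paper does.

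The gap is in your treatment of the $4$-tangle family. You assert that for a crossingless basis tangle both closures are disjoint unions of circles, so each bracket is a power of $\delta$ and the difference has the form $\delta^a-\delta^b$, reducing everything to a parity statement about loop counts. That is true for $R,R^*$ (these closures are the crossingless fundamental tangles $\vec f_3,\vec f_4$, and the differences come out as $0$ or $\pm(\delta^2-1)$), but it is false for $S,S^*$: those closing operations are the tangles $\vec g,\vec g^*$ obtained by resolving the marked vertices on the two sides of $\Gamma_8$, and they retain the classical crossings of that move. Consequently $\langle S(g_i)\rangle$ is not a power of $\delta$. The paper's computation (quoting \cite[Lemma 4.2]{Le2}) gives $\langle S(g_i)\rangle-\langle S^*(g_i)\rangle=\mp A^{-10}(A^{20}+A^{12}-A^{8}-1)$ for $i=0,13$ and $0$ otherwise; since $A^{10}+A^{2}-A^{-2}-A^{-10}$ is antisymmetric under $A\mapsto A^{-1}$ while any $\delta^a-\delta^b$ is symmetric, this difference cannot be written in the form your argument requires. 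The needed divisibility by $A^4+1+A^{-4}=\delta^2-1$ does still hold, but only because of the factorization $A^{20}+A^{12}-A^{8}-1=A^{4}(A^{4}+1+A^{-4})(A^{12}-A^{8}+A^{4}-1)$, which is a genuine computation and not a consequence of any parity-of-circles phenomenon. So the ``delicate point'' you isolate is the wrong one for half of the problem: for the $4$-tangle generators you must actually evaluate the fourteen bracket differences of the crossed closures (or cite the computation in \cite{Le2}) rather than argue by parity.
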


\begin{proof}
(1) Since $\delta=-A^2-A^{-2}$, the polynomials $P_1(x,y)$ and $P_2(x,y)$ in (\ref{eq-polys}) are the polynomials $f_1(A,A^{-1},x,y)$ and $f_2(A,A^{-1},x,y)$ in (\ref{gens-kbp-ideal}), respectively.

Let $U$ be any $3$-tangle diagram in $\mathcal T_3$. Applying the Kauffman bracket axioms ({\bf B1})--({\bf B3}) to the 3-tangle diagram $U$ in $R(U)$ and $R^*(U)$, we have
$$\langle R(U)\rangle=\sum_{i=0}^5\psi_i(U)\langle R(f_i)\rangle,~
\langle R^*(U)\rangle=\sum_{i=0}^5\psi_i(U)\langle R^*(f_i)\rangle,
$$
where $\psi_i(U) \in \mathbb Z[A^{\pm 1},x,y]$ and $f_i (0\leq i \leq 4)$ denotes the fundamental oriented $3$-tangle diagram $\vec{f}_i$ in Figure~\ref{fig-3tbas-ori} without orientation. It is easy to check that
\begin{equation*}
\begin{split}
\langle R(f_i)\rangle -\langle R^*(f_i)\rangle 
=\left\{
    \begin{array}{ll}
      0, & \hbox{$i=0, 1, 2;$} \\
  \delta^{2}-1, & \hbox{$i=3$;} \\
  1-\delta^{2}, & \hbox{$i=4$.} 
    \end{array}
  \right.
\end{split}
\end{equation*}
From (\ref{eq-polys}), we have
\begin{align}\label{eq2-pf-m7-1}
&P_U(x,y)=(\langle R(U)\rangle-\langle R^*(U)\rangle)xy
=xy\sum_{i=0}^5\psi_i(U)\Big(\langle R(f_i)\rangle -\langle R^*(f_i)\rangle\Big)\notag\\
&=(\delta^{2}-1)xy\Big(\psi_3(U)-\psi_4(U)\Big)
=(A^4+1+A^{-4})xy\Big(\psi_3(U)-\psi_4(U)\Big).
\end{align}

Similarly, let $V$ be any $4$-tangle diagram in $\mathcal T_4$.
Applying the Kauffman bracket axioms ({\bf B1})--({\bf B3}) to the 4-tangle diagram $V$ in $S(V)$ and $S^*(V)$, we have
$$\langle S(V)\rangle=\sum_{i=0}^{13}\varphi_i(V)\langle S(g_i)\rangle,~
\langle S^*(V)\rangle=\sum_{i=0}^{13}\varphi_i(V)\langle S^*(g_i)\rangle,
$$ where $\varphi_i(V) \in \mathbb Z[A^{\pm 1},x,y]$ and $g_i (0\leq i \leq 13)$ is the fundamental $4$-tangle diagram shown in Figure~\ref{fig-4tbas}.
\begin{figure}[h]
\begin{center}
\resizebox{0.60\textwidth}{!}{%
  \includegraphics{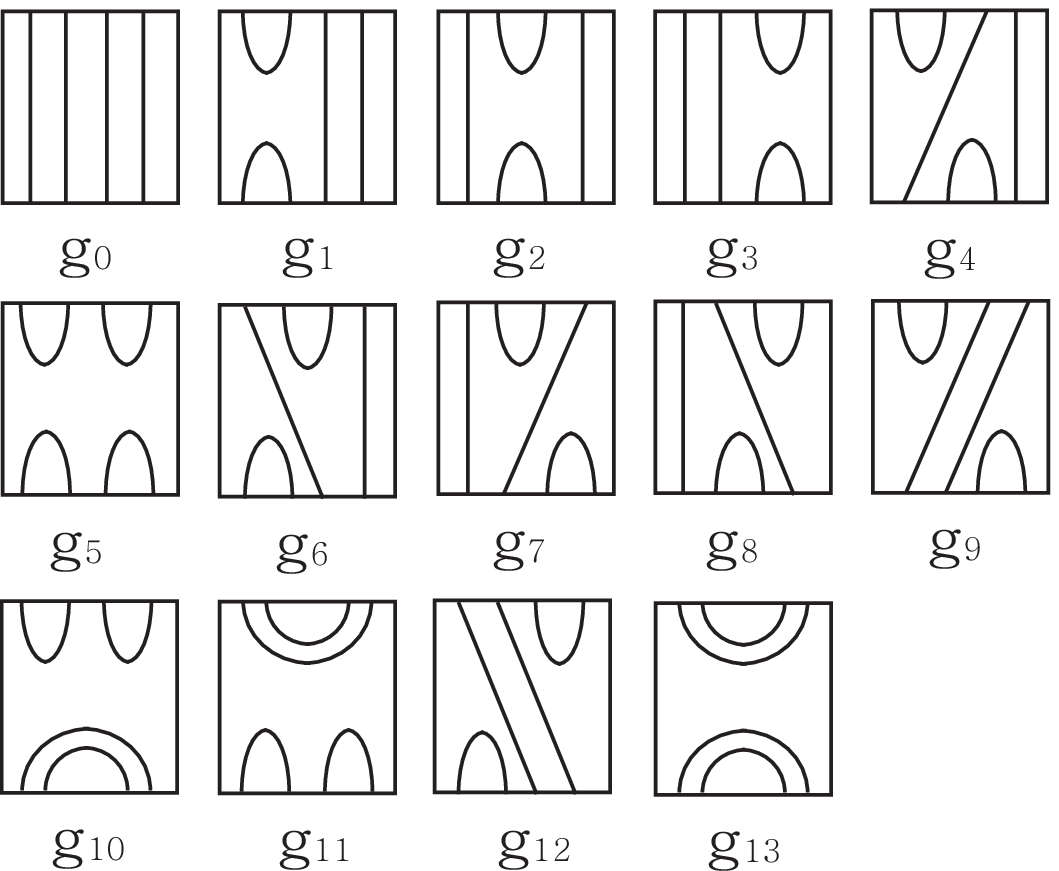} }
\caption{Fundamental $4$-tangle diagrams} \label{fig-4tbas}
\end{center}
\end{figure}
It is easy but tedious to check (cf. \cite[Lemma 4.2]{Le2}) that
\begin{equation*}
\begin{split}
\langle S(g_i)\rangle -\langle S^*(g_i)\rangle 
=\left\{
    \begin{array}{ll}
      -xyA^{-10}(A^{20}+A^{12}-A^{8}-1), & \hbox{$i=0;$} \\
      ~~xyA^{-10}(A^{20}+A^{12}-A^{8}-1), & \hbox{$i=13;$} \\
      ~~0 & \hbox{otherwise.}
    \end{array}
  \right.
\end{split}
\end{equation*}
From (\ref{eq-polys}), we have
\begin{align}\label{eq2-pf-m8-1}
P_V(x,y)&=(\langle S(V)\rangle-\langle S^*(V)\rangle)xy
=xy\sum_{i=0}^{13}\varphi_i(V)\Big(\langle S(g_i)\rangle -\langle S^*(g_i)\rangle\Big)\notag\\
&=-A^{-10}(A^{20}+A^{12}-A^{8}-1)xy\Big(\varphi_0(V)-\varphi_{13}(V)\Big)\notag\\
&=-A^{-6}(A^4+1+A^{-4})xy(A^{12}-A^{8}+A^{4}-1)\Big(\varphi_0(V)-\varphi_{13}(V)\Big).
\end{align}
By (\ref{eq2-pf-m7-1}) and (\ref{eq2-pf-m8-1}), we obtain that for all $U \in \mathcal T_3$ and $V \in \mathcal T_4$, the polynomials $P_U(x,y)$ and $P_V(x,y)$ are generated by $f_3(A,A^{-1},x,y)=(A^4+1+A^{-4})xy$. Therefore $I=<P_1, P_2, \{P_U, P_V | U \in \mathcal T_3, V \in \mathcal T_4\}>=<f_1, f_2, f_3>$. 

The assertion (2) is direct from Theorem~\ref{thm1-Ideal-inv} by taking $[~~]$ to be the Kauffman bracket $\langle~~\rangle$. This completes the proof.
\end{proof}


\subsection{Normalized Kauffman bracket ideal coset invariant for oriented surface-links}\label{subsect2-b-poly-MGD}

Let $L$ be an oriented link diagram and let $\widetilde{L}$ be the link diagram $L$ without orientation. The normalized Kauffman bracket polynomial $\langle L\rangle_N$ of $L$ is defined by 
\begin{equation}\label{nkbp}
\langle L\rangle_N=(-A^3)^{-w(L)}\langle \widetilde{L}\rangle
\end{equation}
and it is an invariant of the oriented link in $\mathbb R^3$ or $S^3$ presented by $L$. It is well known that 
\begin{equation}\label{A2-R1-move-1}
A^{4}\langle~\xy 
(5,5);(-5,-5) **@{-} ?<*\dir{<},
(-5,5);(-2,2) **@{-} ?<*\dir{<}, 
(2,-2);(5,-5) **@{-},
\endxy~\rangle_N - A^{-4} \langle~\xy 
(-5,5);(5,-5) **@{-} ?<*\dir{<},
(5,5);(2,2) **@{-} ?<*\dir{<}, 
(-2,-2);(-5,-5) **@{-},
\endxy~\rangle_N = (A^{-2}-A^2)
\langle~\xy (-4,4);(-4,-4) **\crv{(1,0)},  
(4,4);(4,-4) **\crv{(-1,0)}, 
(-2.5,1.9)*{\ulcorner}, 
(2.5,1.9)*{\urcorner}, 
\endxy~\rangle_N
\end{equation} 
and $V_L(t)=\langle L\rangle_N(t^{-1/4})$ is the Jones polynomial of $L$ \cite{Kau}.

\bigskip

Now let $D$ be an oriented marked graph diagram. Since the normalized Kauffman bracket $\langle~~\rangle_N$ is an ambient isotopy invariant, we have $\alpha=1$. Hence it follows from Definitions \ref{defn-poly-ori-skr} and \ref{defn-poly-ori} that the polynomial $\ll D\gg$ associated with $[~~]=\langle~~\rangle_N$ is just equal to the polynomial $[[D]]=[[ D ]](A,A^{-1},x,y) \in \mathbb Z[A^{\pm 1}][x,y]=\mathbb Z[A^{\pm 1},x,y]$ defined by the two axioms:
\begin{itemize}
\item[{\rm ({\bf L1})}] $[[D]]= \langle D\rangle_N$ if $D$ is an oriented link diagram,
\item[{\rm ({\bf L2})}]  
$[[~\xy (-4,4);(4,-4) **@{-}, 
(4,4);(-4,-4) **@{-}, 
(3,3.2)*{\llcorner}, 
(-3,-3.4)*{\urcorner}, 
(-2.5,2)*{\ulcorner},
(2.5,-2.4)*{\lrcorner}, 
(3,-0.2);(-3,-0.2) **@{-},
(3,0);(-3,0) **@{-}, 
(3,0.2);(-3,0.2) **@{-}, 
\endxy~]] =
[[~\xy (-4,4);(4,4) **\crv{(0,-1)}, 
(4,-4);(-4,-4) **\crv{(0,1)}, 
(-2.5,1.9)*{\ulcorner}, (2.5,-2.4)*{\lrcorner}, 
\endxy~]]x + [[~\xy (-4,4);(-4,-4) **\crv{(1,0)},  
(4,4);(4,-4) **\crv{(-1,0)}, 
(-2.5,1.9)*{\ulcorner}, (2.5,-2.4)*{\lrcorner}, 
\endxy~]]y.$
\end{itemize}

In what follws we denote the polynomial $\ll D\gg$ associated with the normalized bracket $[~~]=\langle~~\rangle_N$ by $\ll D\gg_N$ for our convenience. Note that for any state $\sigma \in \mathfrak S(D)$, $w(D_\sigma)=w(D)$. From (\ref{state formula-ori}) and (\ref{nkbp}), we see that the state-sum formula for the polynomial $\ll D\gg_N=[[D]]$ is given by
\begin{align}\label{state formula-kb-ori}
\ll D \gg_N&=\sum_{\sigma \in \mathfrak S(D)}\langle D_\sigma \rangle_N x^{\sigma(\infty)}y^{\sigma(0)}=\sum_{\sigma \in \mathfrak S(D)}(-A^3)^{-w(D_\sigma)}\langle \widetilde{D_\sigma} \rangle x^{\sigma(\infty)}y^{\sigma(0)}\notag\\
&=(-A^3)^{-w(D)}\sum_{\sigma \in \mathfrak S(D)}\langle \widetilde{D_\sigma} \rangle x^{\sigma(\infty)}y^{\sigma(0)}.
\end{align}

The following theorem \ref{thm-skein-rel-kb} gives a method of computing the polynomial $\ll D\gg_N$ recursively for a given oriented marked graph diagram $D$.

\begin{theorem}\label{thm-skein-rel-kb}
Let $D$ be an oriented marked graph diagram. 
\begin{itemize}
\item[(1)] 
$\ll~\xy (0,0) *\xycircle(3,3){-}, (3,0) *{\wedge}, \endxy~\gg_N = 1.$
\item[(2)] 
If $D$ and $D'$ are two oriented marked graph diagrams related by a finite sequence of oriented Yoshikawa moves generated by the moves $\Gamma_1, \Gamma'_1, \Gamma_2, \Gamma_3, \Gamma_4, \Gamma'_4$ and $\Gamma_5$ of type I, then $\ll D\gg_N=\ll D'\gg_N.$
\item[(3)] 
$\ll~D \sqcup~ \xy (0,0) *\xycircle(3,3){-}, (3,0) *{\wedge}, \endxy~\gg_N = (-A^{-2}-A^2)\ll D  \gg_N.$
\item[(4)] 
$\ll ~\xy (-4,4);(4,-4) **@{-}, 
(4,4);(-4,-4) **@{-}, 
(3,3.2)*{\llcorner}, 
(-3,-3.4)*{\urcorner}, 
(-2.5,2)*{\ulcorner},
(2.5,-2.4)*{\lrcorner}, 
(3,-0.2);(-3,-0.2) **@{-},
(3,0);(-3,0) **@{-}, 
(3,0.2);(-3,0.2) **@{-}, 
\endxy~\gg_N =
\ll~\xy (-4,4);(4,4) **\crv{(0,-1)}, 
(4,-4);(-4,-4) **\crv{(0,1)},  
(-2.5,1.9)*{\ulcorner}, (2.5,-2.4)*{\lrcorner},   
\endxy~\gg_N x +  \ll~\xy (-4,4);(-4,-4) **\crv{(1,0)},  
(4,4);(4,-4) **\crv{(-1,0)}, (-2.5,1.9)*{\ulcorner}, (2.5,-2.4)*{\lrcorner}, 
\endxy~\gg_N y.$
\item[(5)] 
$A^{4}\ll~\xy 
(5,5);(-5,-5) **@{-} ?<*\dir{<},
(-5,5);(-2,2) **@{-} ?<*\dir{<}, 
(2,-2);(5,-5) **@{-},(0,-7)*{D_+},
\endxy~\gg_N - A^{-4} \ll~\xy 
(-5,5);(5,-5) **@{-} ?<*\dir{<},
(5,5);(2,2) **@{-} ?<*\dir{<}, 
(-2,-2);(-5,-5) **@{-},(0,-7)*{D_-},
\endxy~\gg_N = (A^{-2}-A^2)
\ll~\xy (-4,4);(-4,-4) **\crv{(1,0)},  
(4,4);(4,-4) **\crv{(-1,0)}, 
(-2.5,1.9)*{\ulcorner}, 
(2.5,1.9)*{\urcorner}, 
(0,-7)*{D_0},
\endxy~\gg_N,$
where $D_+, D_-$ and $D_0$ are three identical oriented link diagrams except the parts indicated. 
\end{itemize}
\end{theorem}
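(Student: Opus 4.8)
The plan is to treat the five parts in order, exploiting throughout that for the normalized Kauffman bracket $\langle\cdot\rangle_N$ one has $\alpha=1$, so that $\ll D\gg_N=[[D]]$ and the state-sum (\ref{state formula-kb-ori}) may be used in the form $\ll D\gg_N=\sum_{\sigma\in\mathfrak S(D)}\langle D_\sigma\rangle_N\,x^{\sigma(\infty)}y^{\sigma(0)}$. Parts (1)--(4) are then bookkeeping. For (1), an oriented circle carries no marked vertices, so ({\bf L1}) gives $\ll\bigcirc\gg_N=\langle\bigcirc\rangle_N=(-A^3)^{0}\cdot1=1$ by ({\bf B1}). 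For (2), I would verify that $\langle\cdot\rangle_N$ meets the hypotheses (\ref{p-clinv-1})--(\ref{p-clinv-2}) with $\alpha=1$ and $\delta=-A^2-A^{-2}$: the curl relation holds with unit coefficient because $\langle\cdot\rangle_N$ is invariant under $\Omega_1$, and the disjoint-circle relation is ({\bf B2}) together with the observation that an added circle changes neither the bracket normalization nor the writhe. Theorem~\ref{thm-inv-mgs} then applies directly.

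For (3) I would again use the state-sum: adjoining a disjoint circle leaves the marked vertices, the state set $\mathfrak S(D)$, and the writhe untouched while sending each $D_\sigma$ to $D_\sigma\sqcup\bigcirc$, so applying ({\bf B2}) term by term multiplies the whole expression by $\delta=-A^2-A^{-2}$. Part (4) is immediate, since $\ll D\gg_N=[[D]]$ and the resolution of a marked vertex into its two oriented smoothings creates or destroys no crossing; the stated identity is then precisely rule ({\bf L2}) of Definition~\ref{defn-poly-ori-skr} with the orientations matched.

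Part (5) carries the real content. The distinguished crossing in $D_+,D_-$ and the oriented smoothing in $D_0$ lie away from every marked vertex, so the three diagrams share the same marked vertices, hence the same state set $\mathfrak S$ and the same monomials $x^{\sigma(\infty)}y^{\sigma(0)}$, and for each fixed $\sigma$ the resolved oriented link diagrams $(D_+)_\sigma,(D_-)_\sigma,(D_0)_\sigma$ agree outside the distinguished site. I would apply the classical normalized-bracket skein relation (\ref{A2-R1-move-1}) to each such triple and sum against the common monomials, so that the coefficients $A^4,\,-A^{-4},\,A^{-2}-A^2$ factor out of the sum and yield the asserted relation. The one point demanding care is writhe consistency: $D_+$ and $D_-$ have unequal writhes, but (\ref{A2-R1-move-1}) is already stated for $\langle\cdot\rangle_N$ and absorbs this discrepancy, and because $w((D_\pm)_\sigma)=w(D_\pm)$ for every state (marked-vertex resolutions preserve writhe) no stray powers of $-A^3$ intrude. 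I expect this state-by-state orientation-and-writhe matching to be the main obstacle, the remaining parts following directly from the definitions and from Theorem~\ref{thm-inv-mgs}.
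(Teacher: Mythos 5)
Your proposal is correct and follows essentially the same route as the paper: (1)--(3) from ({\bf B1}), ({\bf B2}) and Theorem~\ref{thm-inv-mgs}, (4) directly from axiom ({\bf L2}) since $\ll D\gg_N=[[D]]$, and (5) from the classical relation (\ref{A2-R1-move-1}). The only difference is that in (5) you run a state-by-state argument valid for marked graph diagrams, whereas the statement takes $D_+,D_-,D_0$ to be oriented \emph{link} diagrams, so the paper obtains it immediately from (\ref{A2-R1-move-1}) and ({\bf L1}); your extra care with states and writhes is sound but not needed for the statement as written.
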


\begin{proof}
By {\rm ({\bf B1})}, {\rm ({\bf B2})}, Definition \ref{defn-poly-ori-skr} and Theorem \ref{thm-inv-mgs}, the assertions (1), (2) and (3) follow at once. Since $\ll D\gg_N=[[D]]$ by definition, the skein relation (4) is straightforward from the axiom 
(${\bf L2}$). Finally, the skein relation in (5) follows immediately from the skein relation in (\ref{A2-R1-move-1}) for the normalized Kauffman bracket for oriented link diagrams and the axiom (${\bf L1}$). This completes the proof.
\end{proof}

\begin{theorem}\label{thm-rels-polys-kbp}
Let $D$ be an oriented marked graph diagram $D$ and let $\widetilde{D}$ be the marked graph diagram $D$ without orientation. Then
\begin{equation*}
\ll D \gg_N = (-A^3)^{sw(D)-w(D)}\ll\widetilde{D}\gg.
\end{equation*}
\end{theorem}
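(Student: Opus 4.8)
The plan is to prove the identity by comparing the two state-sum expansions directly, since both $\ll D\gg_N$ and $\ll\widetilde D\gg$ are assembled from the Kauffman bracket of the \emph{same} family of resolved link diagrams. First I would record the state-sum for the left-hand side from equation~(\ref{state formula-kb-ori}), namely
\begin{equation*}
\ll D\gg_N=(-A^3)^{-w(D)}\sum_{\sigma\in\mathfrak S(D)}\langle\widetilde{D_\sigma}\rangle\,x^{\sigma(\infty)}y^{\sigma(0)},
\end{equation*}
where the prefactor $(-A^3)^{-w(D)}$ could be pulled out of the sum precisely because $w(D_\sigma)=w(D)$ for every state (resolving a marked vertex into a trivial oriented $2$-tangle introduces no crossing). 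For the right-hand side I would invoke the Kauffman-bracket specialization of the state-sum obtained from equation~(\ref{state formula}), namely
\begin{equation*}
\ll\widetilde D\gg=(-A^3)^{-sw(\widetilde D)}\sum_{\sigma\in\mathfrak S(\widetilde D)}\langle\widetilde D_\sigma\rangle\,x^{\sigma(\infty)}y^{\sigma(0)}.
\end{equation*}

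The heart of the argument is then three identifications that make the two sums literally the same object. First, a state is merely an assignment of $T_\infty$ or $T_0$ to each marked vertex and never sees the orientation, so $\mathfrak S(D)=\mathfrak S(\widetilde D)$ canonically. Second, for a fixed state $\sigma$ the diagram $\widetilde{D_\sigma}$ (resolve, then forget orientation) coincides with $\widetilde D_\sigma$ (forget orientation, then resolve), since resolving a marked vertex and forgetting orientations act independently on disjoint local pictures; hence $\langle\widetilde{D_\sigma}\rangle=\langle\widetilde D_\sigma\rangle$ for every $\sigma$. Third, the self-writhe $sw(D)$ is defined through the underlying unoriented marked graph diagram (via an arbitrary auxiliary orientation on the two resolutions, the result being independent of that choice), so $sw(D)=sw(\widetilde D)$ holds tautologically. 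With these three facts the common expression $\sum_{\sigma}\langle\widetilde D_\sigma\rangle\,x^{\sigma(\infty)}y^{\sigma(0)}$ occurs in both displays.

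Finally I would eliminate this common sum. Solving the second display gives $\sum_{\sigma}\langle\widetilde D_\sigma\rangle\,x^{\sigma(\infty)}y^{\sigma(0)}=(-A^3)^{sw(\widetilde D)}\ll\widetilde D\gg=(-A^3)^{sw(D)}\ll\widetilde D\gg$, and substituting into the first display yields
\begin{equation*}
\ll D\gg_N=(-A^3)^{-w(D)}(-A^3)^{sw(D)}\ll\widetilde D\gg=(-A^3)^{sw(D)-w(D)}\ll\widetilde D\gg,
\end{equation*}
which is the assertion. The proof is essentially bookkeeping, so there is no serious analytic obstacle; the only points that deserve care are the clean statement of the identification $\widetilde{D_\sigma}=\widetilde D_\sigma$ and the justification that each normalization factor may be factored out of its sum (using $w(D_\sigma)=w(D)$ on the oriented side and the orientation-independence of $sw$ on the unoriented side). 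I would also double-check the convention bookkeeping, namely that $\alpha=1$ is used for the ambient-isotopy invariant $\langle~\rangle_N$ while $\alpha=-A^3$ is used for the regular-isotopy invariant $\langle~\rangle$, so that the two prefactors combine exactly into $(-A^3)^{sw(D)-w(D)}$.
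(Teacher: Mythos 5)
Your proposal is correct and follows essentially the same route as the paper: both compare the two state-sum formulas term by term, using $\mathfrak S(D)=\mathfrak S(\widetilde D)$, $sw(D)=sw(\widetilde D)$, and the fact that the resolved diagrams coincide after forgetting orientation, then factor the normalizations to obtain $(-A^3)^{sw(D)-w(D)}$. Your version merely spells out a few identifications (such as $w(D_\sigma)=w(D)$ and $\widetilde{D_\sigma}=\widetilde D_\sigma$) that the paper leaves implicit.
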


\begin{proof}
Since $sw(D)=sw(\widetilde{D})$ and $\mathfrak S(D)=\mathfrak S(\widetilde{D})$, it follows from (\ref{state formula-kb-ori}) that 
\begin{align*}
\ll D \gg_N &=(-A^3)^{-w(D)}\sum_{\sigma \in \mathfrak S(D)}\langle \widetilde{D_\sigma} \rangle x^{\sigma(\infty)}y^{\sigma(0)}\\
&=(-A^3)^{-w(D)+sw(D)-sw(D)}\sum_{\sigma \in \mathfrak S(D)}\langle \widetilde{D_\sigma} \rangle x^{\sigma(\infty)}y^{\sigma(0)}\\
&=(-A^3)^{-w(D)+sw(D)}\Big((-A^3)^{-sw(\widetilde{D})}\sum_{\sigma \in \mathfrak S(\widetilde{D})}\langle \widetilde{D_\sigma} \rangle x^{\sigma(\infty)}y^{\sigma(0)}\Big)\\
&= (-A^3)^{sw(D)-w(D)}\ll \widetilde{D}\gg.
\end{align*}
This completes the proof.
\end{proof}

\begin{theorem}\label{thm-inv-nkbp}
(1) The normalzed Kauffman bracket ideal is equal to the Kauffman bracket ideal $I$ in Theorem \ref{thm-inv-kb} (1).

(2) For any oriented marked graph diagram $D$, the ideal coset $\ll D\gg_N^I =\ll D\gg_N +~ I$  is an invariant for the oriented surface-link in $\mathbb R^4$ or $S^4$ presented by $D$.
\end{theorem}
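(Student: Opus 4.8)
The plan is to prove part (1) by matching the generators of the normalized Kauffman bracket ideal (Definition~\ref{defn-obst-id} with $[~~]=\langle~~\rangle_N$) against the three generators $f_1,f_2,f_3$ of the Kauffman bracket ideal $I$ identified in Theorem~\ref{thm-inv-kb}(1), and then to read off part (2) from Theorem~\ref{thm1-Ideal-inv}. The two ``$\Omega_1$-type'' generators are immediate: since $\langle~~\rangle_N$ is an ambient isotopy invariant its normalizing unit is $\alpha=1$, while deleting a split unknot changes no writhe, so by (\ref{nkbp}) its loop value is still $\delta=-A^2-A^{-2}$. Hence the generators $P_1=\delta x+y-1$ and $P_2=x+\delta y-1$ coincide verbatim with $f_1$ and $f_2$.

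The substantive step concerns the tangle-closure generators $P_U$ and $P_V$. The key point is that for $U\in\mathcal T^{\rm ori}_3$ the two closures $R(U)$ and $R^*(U)$ are built from the \emph{same} oriented tangle by adjoining crossingless closure arcs; these arcs create no new crossings and leave the orientation of the interior of $U$ untouched, so the two closed diagrams have identical crossing sets with identical signs and therefore $w(R(U))=w(R^*(U))$ (and likewise $w(S(V))=w(S^*(V))$ for $V\in\mathcal T^{\rm ori}_4$). Applying the normalization (\ref{nkbp}), the common writhe factors out:
\begin{equation*}
P_U=\big(\langle R(U)\rangle_N-\langle R^*(U)\rangle_N\big)xy=(-A^3)^{-w(R(U))}\big(\langle R(\widetilde U)\rangle-\langle R^*(\widetilde U)\rangle\big)xy,
\end{equation*}
where $\widetilde U$ is $U$ with orientation forgotten. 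The bracketed factor is exactly the unoriented generator analyzed in the proof of Theorem~\ref{thm-inv-kb}(1), which was shown there to be a multiple of $f_3=(A^4+1+A^{-4})xy$; since $(-A^3)^{-w(R(U))}$ is a unit of $\mathbb Z[A^{\pm1},x,y]$, this gives $P_U\in\langle f_3\rangle$. The identical argument handles $P_V$ via Lemma~\ref{lem3-Ideal-inv}, so every generator of the normalized ideal lies in $I=\langle f_1,f_2,f_3\rangle$.

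For the reverse inclusion it suffices to produce $f_3$ inside the normalized ideal, as $f_1,f_2$ already appear as $P_1,P_2$. Taking $U$ to be the fundamental oriented $3$-tangle $\vec f_3$ of Figure~\ref{fig-3tbas-ori}, the displayed identity yields $P_U=(-A^3)^{-w(\vec f_3)}(\delta^2-1)xy$, a unit multiple of $f_3$, so $f_3$ lies in the normalized ideal. Combining the two inclusions proves part (1). Part (2) is then immediate: Theorem~\ref{thm1-Ideal-inv} applied to the ambient isotopy invariant $[~~]=\langle~~\rangle_N$ shows that $\ll D\gg_N+I(\langle~~\rangle_N)$ is an invariant of the oriented surface-link presented by $D$, and by part (1) the ideal $I(\langle~~\rangle_N)$ equals the Kauffman bracket ideal $I$, giving the asserted invariance of $\ll D\gg_N^I=\ll D\gg_N+I$.

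The hard part will be the writhe equality $w(R(U))=w(R^*(U))$ (and its $4$-tangle analogue): one must verify that the prescribed boundary orientations in Figure~\ref{fig-bdary-t} make both closures orientation-compatible with a single fixed oriented tangle, so that passing between $R$ and $R^*$ alters no crossing sign. This compatibility is precisely what lets the normalizing factor be extracted as a common unit, and it is the only place where the reduction from the oriented to the already-settled unoriented computation of Theorem~\ref{thm-inv-kb}(1) could break down.
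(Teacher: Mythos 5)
Your proof is correct and follows essentially the same route as the paper, whose own proof is a one-line citation of Theorem~\ref{thm-inv-kb} together with the writhe-unit relation between $\langle~\rangle_N$ and $\langle~\rangle$ (Theorem~\ref{thm-rels-polys-kbp}); you have supplied the details the paper leaves implicit, in particular the equality $w(R(U))=w(R^*(U))$ that lets the unit $(-A^3)^{-w(R(U))}$ factor out of $P_U$ so that the unoriented computation of Theorem~\ref{thm-inv-kb}(1) applies, and the reverse inclusion obtained by taking $U=\vec{f}_3$. Part (2) then follows from Theorem~\ref{thm1-Ideal-inv} exactly as you say.
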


\begin{proof}
This is a direct consequence of Theorems \ref{thm-inv-kb} and \ref{thm-rels-polys-kbp}.
\end{proof}

\subsection{Normal forms of Kauffman bracket ideal cosets}\label{subsect-exmps}

In this subsection, we show how to calculate the normal forms of the Kauffman bracket ideal cosets and the normalized Kauffman bracket ideal cosets implimented by the Groebner basis calculations on Maple 17 and compute the normal forms of surface-links in Yoshikawa's table \cite{Yo} with ch-index $\leq 7$. For our purpose, we substitute $B$ for $A^{-1}$ and consider the ideal $J$ of the polynomial ring $\mathbb Q[A, B, x, y]$ generated by the following four polynomials:
\begin{equation*}
\begin{split}
&p_1(A,B,x,y) = (-A^2-B^2)x+y-1,\\
&p_2(A,B,x,y) = x+(-A^2-B^2)y-1,\\
&p_3(A,B,x,y)=(A^4+1+B^4)xy,\\
&p_4(A,B,x,y)=AB-1.
\end{split}
\end{equation*}
Using Groebner basis calculations on Maple 17, we see that 
\begin{equation}\label{eq-gb-kb}
G=\{x-1+y, AB-1, A^2+B^2+1, B^3+A+B\}
\end{equation} 
is a Groebner basis for the ideal $J=<p_1, p_2, p_3, p_4>$ with respect to the graded reverse lexicographic order "tdeg" (also called "grevlex" in the literature) in $A, B, x, y$. As a consequence of Theorems \ref{resid modulo-3} and \ref{thm-inv-nkbp}, we obtain the following theorem.

\begin{theorem}\label{resid modulo-kbp-nf}
Let $D$ be an (resp.~oriented) marked graph diagram and let $\mathcal L$ be the (resp.~oriented) surface-link presented by $D$. Let $\ll D\gg$ be the polynomial of $D$ in (\ref{defn-poly-inv-kb}) (resp.~(\ref{state formula-kb-ori})). Then the normal form $\overline{\ll D\gg}$ (resp.~$\overline{\ll D\gg}_N$) on division of $\ll D\gg$ (resp.~$\ll D\gg_N$) by the Groebner basis $G$ in (\ref{eq-gb-kb}) is an invariant of $\mathcal L$.
\end{theorem}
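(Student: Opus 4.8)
The plan is to deduce the statement from Theorem~\ref{resid modulo-3} and Theorem~\ref{thm-inv-nkbp}, the only genuine work being the bookkeeping needed to pass from the Laurent polynomial ring $\mathbb{Z}[A^{\pm 1},x,y]$, in which the Kauffman bracket ideal naturally lives, to the ordinary polynomial ring $\mathbb{Q}[A,B,x,y]$ over the field $\mathbb{Q}$, in which the Groebner basis machinery of Theorem~\ref{resid modulo-3} applies cleanly. First I would record that, by Theorem~\ref{thm-inv-nkbp}(1), the normalized Kauffman bracket ideal coincides with the Kauffman bracket ideal $I=\langle f_1,f_2,f_3\rangle\subseteq \mathbb{Z}[A^{\pm 1},x,y]$ of Theorem~\ref{thm-inv-kb}(1), and that by Theorem~\ref{thm-inv-kb}(2) and Theorem~\ref{thm-inv-nkbp}(2) the coset $\ll D\gg + I$ (resp.\ $\ll D\gg_N + I$) depends only on the surface-link $\mathcal{L}$.

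The central step is the translation of $I$ into the ideal $J=\langle p_1,p_2,p_3,p_4\rangle$. The assignment $B\mapsto A^{-1}$ induces a ring isomorphism
\[
\Theta:\ \mathbb{Q}[A,B,x,y]\big/\langle AB-1\rangle \ \xrightarrow{\ \sim\ }\ \mathbb{Q}[A^{\pm 1},x,y],
\]
the standard presentation of a localization as a quotient. Under $\Theta$ the generators $p_1,p_2,p_3$ map to $f_1,f_2,f_3$ (after extending scalars from $\mathbb{Z}$ to $\mathbb{Q}$), while $p_4=AB-1$ generates the kernel of $\mathbb{Q}[A,B,x,y]\to\mathbb{Q}[A^{\pm1},x,y]$; hence $\Theta$ descends to an isomorphism $\mathbb{Q}[A,B,x,y]/J \cong \mathbb{Q}[A^{\pm 1},x,y]/I_{\mathbb{Q}}$, where $I_{\mathbb{Q}}$ is the extension of $I$ to $\mathbb{Q}$-coefficients. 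Since $\ll D\gg$ (resp.\ $\ll D\gg_N$) is a Laurent polynomial in $A$, rewriting each negative power $A^{-k}$ as $B^{k}$ yields an honest polynomial representative $\widehat{\ll D\gg}\in \mathbb{Q}[A,B,x,y]$ of the same class, and $\Theta$ identifies the coset $\widehat{\ll D\gg}+J$ with the invariant coset $\ll D\gg + I_{\mathbb{Q}}$.

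With this identification in hand I would apply Theorem~\ref{resid modulo-3} in the concrete setting $\mathbb{F}=\mathbb{Q}$, $n=4$, and the ideal $J$: since $G$ in \eqref{eq-gb-kb} is a Groebner basis for $J$ with respect to the grevlex order, the normal form $\overline{\widehat{\ll D\gg}}^{\,G}$ is the unique representative of the coset $\widehat{\ll D\gg}+J$. If $D,D'$ present the same $\mathcal{L}$, then $\ll D\gg-\ll D'\gg\in I\subseteq I_{\mathbb{Q}}$, whence $\widehat{\ll D\gg}-\widehat{\ll D'\gg}\in J$, so the two normal forms agree; this gives the asserted invariance of $\overline{\ll D\gg}$ (resp.\ $\overline{\ll D\gg}_N$), and the oriented and unoriented cases are handled uniformly because both cosets are invariant and both ideals equal $I$.

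The step I expect to be the main obstacle is verifying the compatibility of the Laurent-to-polynomial passage, namely that adjoining the single relation $AB-1$ faithfully encodes $B=A^{-1}$, so that membership in $I$ over the Laurent ring is \emph{equivalent} to membership in $J$ over the polynomial ring; this equivalence is exactly what legitimizes the normal form modulo $G$ as a representative of the ideal coset of Theorem~\ref{thm1-Ideal-inv}. A secondary, purely computational point is the confirmation that $G$ is a complete Groebner basis for $J$ in the grevlex order (equivalently, that every S-polynomial of its elements reduces to zero by Buchberger's criterion), which here is delegated to Maple~17 and should be regarded as the one externally verified ingredient of the argument.
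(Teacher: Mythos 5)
Your proposal is correct and follows essentially the same route as the paper, which derives the theorem directly from Theorem~\ref{resid modulo-3} and Theorem~\ref{thm-inv-nkbp} after substituting $B$ for $A^{-1}$ and adjoining the relation $AB-1$ to form the ideal $J$ in $\mathbb{Q}[A,B,x,y]$. The only difference is that you make explicit the ring-theoretic bookkeeping (the isomorphism $\mathbb{Q}[A,B,x,y]/\langle AB-1\rangle\cong\mathbb{Q}[A^{\pm1},x,y]$ and the fact that $J$ is the preimage of the extended ideal $I_{\mathbb{Q}}$) that the paper leaves implicit, which is a harmless and indeed welcome amount of extra care.
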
 

We denote the normal forms $\overline{\ll D\gg}$ and $\overline{\ll D\gg}_N$ in Theorem \ref{resid modulo-kbp-nf} by $\overline{\ll \mathcal L\gg}$ and $\overline{\ll \mathcal L\gg}_N$, respectively. 

In the rest of this subsection, we give various examples which will be used in Section~\ref{sect-inv-kbp-spl} again. First, we recall the normalized Kauffman bracket polynomials of the following oriented knots and links in $3$-space which will be used in our discussion of examples:
\begin{equation}\label{nkbp-exmps}
\begin{split}
\langle2^2_1\rangle_N=&\langle~\xy
(12,2);(10.4,0.4) **@{-}, 
(9.6,-0.4);(8,-2) **@{-},
(12,-2);(8,2) **@{-},
(5.6,-0.4);(4,-2) **@{-}, 
(8,2);(6.6,0.4) **@{-},
(8,-2);(4,2) **@{-},
(4,2);(12,2) **\crv{(8,6)},
(4,-2);(12,-2) **\crv{(8,-6)},
(11,0.5)*{\urcorner}, 
(8.9,0.5)*{\ulcorner}, 
\endxy~\rangle_N
=-A^{10}-A^{2},\\
\langle2^{2*}_1\rangle_N=&\langle~\xy
(12,2);(10.4,0.4) **@{-}, 
(9.6,-0.4);(8,-2) **@{-},
(12,-2);(8,2) **@{-},
(5.6,-0.4);(4,-2) **@{-}, 
(8,2);(6.6,0.4) **@{-},
(8,-2);(4,2) **@{-},
(4,2);(12,2) **\crv{(8,6)},
(4,-2);(12,-2) **\crv{(8,-6)}, 
(11,0.5)*{\urcorner}, 
(11,-0.8)*{\lrcorner},  
\endxy~\rangle_N
=-A^{-10}-A^{-2},\\
\langle3_1\rangle_N=&\langle~\xy
(12,2);(10.4,0.4) **@{-}, 
(9.6,-0.4);(8,-2) **@{-},
(12,-2);(8,2) **@{-},
(5.6,-0.4);(4,-2) **@{-}, 
(8,2);(6.6,0.4) **@{-},
(8,-2);(4,2) **@{-}, 
(11,0.5)*{\urcorner}, 
(11,-0.8)*{\lrcorner},  
(4,2);(2.4,0.4) **@{-}, 
(1.6,-0.4);(0,-2) **@{-},
(4,-2);(0,2) **@{-},
(0,2);(12,2) **\crv{(5,6)},
(0,-2);(12,-2) **\crv{(5,-6)},
\endxy~\rangle_N
=-A^{-16}+A^{-12}+A^{-4},\\
\langle4^2_1\rangle_N=&\langle~\xy
(12,2);(10.4,0.4) **@{-}, 
(9.6,-0.4);(8,-2) **@{-},
(12,-2);(8,2) **@{-},
(5.6,-0.4);(4,-2) **@{-}, 
(8,2);(6.6,0.4) **@{-},
(8,-2);(4,2) **@{-},
(11,0.5)*{\urcorner}, 
(8.9,0.5)*{\ulcorner}, 
(4,2);(2.4,0.4) **@{-}, 
(1.6,-0.4);(0,-2) **@{-},
(4,-2);(0,2) **@{-},
(-3.6,-0.4);(-4,-2) **@{-}, 
(0,2);(-1.6,0.4) **@{-},
(0,-2);(-4,2) **@{-},
(-2.2,-0.4);(-4,-2) **@{-},
(-4,2);(12,2) **\crv{(5,6)},
(-4,-2);(12,-2) **\crv{(5,-6)},
\endxy~\rangle_N
=-A^{18}-A^{10}+A^{6}-A^{2},\\
\langle4_1\rangle_N=&\langle~\xy
(12,2);(10.4,0.4) **@{-}, 
(9.6,-0.4);(8,-2) **@{-},
(12,-2);(8,2) **@{-},
(5.6,-0.4);(4,-2) **@{-}, 
(8,2);(6.6,0.4) **@{-},
(8,-2);(4,2) **@{-},
(11,0.5)*{\urcorner}, (8.9,0.5)*{\ulcorner}, 
(4,-2);(-2,-2) **\crv{(1,-5)},
(4,4);(2,-2) **\crv{(0,1)},
(-2,2);(1.5,4) **\crv{(-1,4.5)},
(-2,-2);(-2,2) **\crv{(-3,0)},
(4,-4);(12,-2) **\crv{(10,-6)},
(4,4);(12,2) **\crv{(10,6)},
\endxy~\rangle_N=A^{-8}-A^{-4}+1-A^4+A^8,\\
\langle2^2_1\sharp 3_1\rangle_N=&\langle~\xy 
(2,0);(4,-2) **@{-}, 
(-2,0);(-4,-2) **@{-},  
(2.4,0.4);(4,2) **@{-},
(0.1,-1.9);(1.7,-0.2) **@{-},  
(0.1,-1.9);(-1.7,-0.3) **@{-},  
(-2.4,0.4);(-4,2) **@{-},
(-8,2);(-6.4,0.4) **@{-}, 
(-5.6,-0.4);(-4,-2) **@{-}, 
(-8,-2);(-4,2) **@{-},
(12,2);(10.4,0.4) **@{-}, 
(9.6,-0.4);(8,-2) **@{-},
(12,-2);(8,2) **@{-},
(5.6,-0.4);(4,-2) **@{-}, 
(8,2);(6.6,0.4) **@{-},
(8,-2);(4,2) **@{-},
(-5,0.4)*{\urcorner}, 
(-5,-0.8)*{\lrcorner}, 
(11,0.5)*{\urcorner}, 
(11,-0.8)*{\lrcorner},  
(-8,2);(-2,4) **\crv{(-10,5)}, 
(2,4);(12,2) **\crv{(14,5)},
(-2,0);(-2,4) **\crv{(1,2)}, 
(2,4);(2,0) **\crv{(-1,2)},
(-8,-2);(0,-5) **\crv{(-10,-5)},
(0,-5);(12,-2) **\crv{(14,-5)},
\endxy~\rangle_N
=(-A^{10}-A^{2})(-A^{-16}+A^{-12}+A^{-4}),\\
\langle3^*_1\sharp 3_1\rangle_N=& \langle~\xy 
(2,0);(4,-2) **@{-}, 
(-2,0);(-4,-2) **@{-},  
(2.4,0.4);(4,2) **@{-},
(0.1,-1.9);(1.7,-0.2) **@{-},  
(0.1,-1.9);(-1.7,-0.3) **@{-},  
(-2.4,0.4);(-4,2) **@{-},
(-12,2);(-10.4,0.4) **@{-}, 
(-9.6,-0.4);(-8,-2) **@{-}, 
(-12,-2);(-8,2) **@{-},
(-8,2);(-6.4,0.4) **@{-}, 
(-5.6,-0.4);(-4,-2) **@{-}, 
(-8,-2);(-4,2) **@{-},
(12,2);(10.4,0.4) **@{-}, 
(9.6,-0.4);(8,-2) **@{-},
(12,-2);(8,2) **@{-},
(5.6,-0.4);(4,-2) **@{-}, 
(8,2);(6.6,0.4) **@{-},
(8,-2);(4,2) **@{-},
(-9,0.4)*{\urcorner}, 
(-9,-0.8)*{\lrcorner}, 
(11,0.5)*{\urcorner}, 
(11,-0.8)*{\lrcorner},  
(-12,2);(-2,4) **\crv{(-14,5)}, 
(2,4);(12,2) **\crv{(14,5)},
(-2,0);(-2,4) **\crv{(1,2)}, 
(2,4);(2,0) **\crv{(-1,2)},
(-12,-2);(0,-5) **\crv{(-14,-5)},
(0,-5);(12,-2) **\crv{(14,-5)},
 \endxy~\rangle_N
=(A^{16}-A^{12}-A^{4})(A^{-16}-A^{-12}-A^{-4}).
\end{split}
\end{equation}
By (5) in Theorem \ref{thm-skein-rel-kb}, we obtain the following useful identities:
\begin{equation}\label{skein-rel-exmp}
\begin{split}
&\ll~\xy 
(5,5);(-5,-5) **@{-} ?<*\dir{<},
(-5,5);(-2,2) **@{-} ?<*\dir{<}, 
(2,-2);(5,-5) **@{-}, 
\endxy~\gg = A^{-8} \ll~\xy 
(-5,5);(5,-5) **@{-} ?<*\dir{<},
(5,5);(2,2) **@{-} ?<*\dir{<}, 
(-2,-2);(-5,-5) **@{-}, 
\endxy~\gg + (A^{-6}-A^{-2})
\ll~\xy (-4,4);(-4,-4) **\crv{(1,0)},  
(4,4);(4,-4) **\crv{(-1,0)}, 
(-2.5,1.9)*{\ulcorner}, (2.5,1.9)*{\urcorner}, 
\endxy~\gg,
\\
&\ll~\xy 
(-5,5);(5,-5) **@{-} ?<*\dir{<},
(5,5);(2,2) **@{-} ?<*\dir{<}, 
(-2,-2);(-5,-5) **@{-}, 
\endxy~\gg =A^{8}\ll~\xy 
(5,5);(-5,-5) **@{-} ?<*\dir{<},
(-5,5);(-2,2) **@{-} ?<*\dir{<}, 
(2,-2);(5,-5) **@{-}, 
\endxy~\gg + (A^{6}-A^{2})
\ll~\xy (-4,4);(-4,-4) **\crv{(1,0)},  
(4,4);(4,-4) **\crv{(-1,0)}, 
(-2.5,1.9)*{\ulcorner}, (2.5,1.9)*{\urcorner}, 
\endxy~\gg.
\end{split}
\end{equation}

\begin{example}\label{examp-T^2} 
Let $2^1_1$ be the unoriented standard torus in Yoshikawa's table \cite{Yo}. Then $sw(2^1_1)=0$ and
\begin{eqnarray*}
\doubleBracket{
(0,0.5);(1,-0.5) **@{-},(0,-0.5);(1,0.5) **@{-},
(0,-0.5);(1,-1.5) **@{-},(0,-1.5);(1,-0.5) **@{-},
(0.5,0.25);(0.5,-0.25) **@{-},
(0.25,-1);(0.75,-1) **@{-},
(0,0.5);(0,-1.5) **\crv{(-0.5,0.4)&(-0.5,-1.4)},
(1,0.5);(1,-1.5)**\crv{(1.5,0.4)&(1.5,-1.4)}, (0.5,-2.2)*{2^1_1}}
&=&\newBracket{(0,0.5);(0,-0.5) **\crv{(0.5,0.4)&(0.5,-0.4)},
(1,0.5);(1,-0.5)**\crv{(0.5,0.4)&(0.5,-0.4)},
(0,-0.5);(1,-0.5) **\crv{(0.1,-1)&(0.9,-1)},
(0,-1.5);(1,-1.5)**\crv{(0.1,-1)&(0.9,-1)},(0,0.5);
(0,-1.5) **\crv{(-0.5,0.4)&(-0.5,-1.4)},
(1,0.5);(1,-1.5) **\crv{(1.5,0.4)&(1.5,-1.4)}}x^2+
 \newBracket{(0,0.5);(0,-0.5) **\crv{(0.5,0.4)&(0.5,-0.4)},
(1,0.5);(1,-0.5)**\crv{(0.5,0.4)&(0.5,-0.4)},
(0,-0.5);(0,-1.5) **\crv{(0.5,-0.4)&(0.5,-1.4)},
(1,-0.5);(1,-1.5)**\crv{(0.5,-0.4)&(0.5,-1.4)},(0,0.5);
(0,-1.5) **\crv{(-0.5,0.4)&(-0.5,-1.4)},
(1,0.5);(1,-1.5) **\crv{(1.5,0.4)&(1.5,-1.4)}}xy+
\newBracket{(0,0.5);(1,0.5) **\crv{(0.1,0)&(0.9,0)},
(0,-0.5);(1,-0.5)**\crv{(0.1,0)&(0.9,0)},
(0,-0.5);(1,-0.5) **\crv{(0.1,-1)&(0.9,-1)},
(0,-1.5);(1,-1.5)**\crv{(0.1,-1)&(0.9,-1)},(0,0.5);
(0,-1.5) **\crv{(-0.5,0.4)&(-0.5,-1.4)},
(1,0.5);(1,-1.5)**\crv{(1.5,0.4)&(1.5,-1.4)}}yx+
\newBracket{(0,0.5);(1,0.5) **\crv{(0.1,0)&(0.9,0)},
(0,-0.5);(1,-0.5)**\crv{(0.1,0)&(0.9,0)},
(0,-0.5);(0,-1.5) **\crv{(0.5,-0.4)&(0.5,-1.4)},
(1,-0.5);(1,-1.5)**\crv{(0.5,-0.4)&(0.5,-1.4)},
(0,0.5);(0,-1.5)**\crv{(-0.5,0.4)&(-0.5,-1.4)},
(1,0.5);(1,-1.5) **\crv{(1.5,0.4)&(1.5,-1.4)}}y^2\\
&=&x^2+2(-A^2-A^{-2})xy+y^2.
\end{eqnarray*}
Substituting $B$ for $A^{-1}$, we get $\ll 2^{1}_1\gg=(-A^3)^{-sw(2^1_1)}[[2^1_1]]=x^2+2(-A^2-B^{2})xy+y^2$. This yields the normal form $\overline{\ll 2^{1}_1\gg}=1.$
Further, for any orientation on $2^1_1$, $w(2^1_1)=sw(2^1_1)=0$ and hence it follows from Theorem \ref{thm-rels-polys-kbp} that
$\overline{\ll 2^{1}_1\gg}_N=\overline{\ll 2^{1}_1\gg}=1.$
\end{example}

\begin{example}\label{exmp-RP2}
Let $2_1^{-1}$ be the positive standard projective plane in Yoshikawa's table \cite{Yo}. Then $sw(2_1^{-1})=0$ and
$$\doubleBracket{(0,-0.5);(0.4,-0.1) **@{-}, 
 (0.6,0.1);(1,0.5) **@{-}, 
(0,0.5);(1,-0.5) **@{-},
(0,-0.5);(1,-1.5) **@{-},
(0,-1.5);(1,-0.5) **@{-},
(0.25,-1);(0.75,-1) **@{-},
(0,0.5);(0,-1.5) **\crv{(-0.5,0.4)&(-0.5,-1.4)},
(1,0.5);(1,-1.5) **\crv{(1.5,0.4)&(1.5,-1.4)},(0.5,-2.2)*{2_1^{-1}}} 
= \newBracket{(0,-0.5);(0.4,-0.1) **@{-}, 
(0.6,0.1);(1,0.5) **@{-}, 
(0,0.5);(1,-0.5) **@{-},
(0,-0.5);(1,-0.5) **\crv{(0.1,-1)&(0.9,-1)},
(0,-1.5);(1,-1.5)**\crv{(0.1,-1)&(0.9,-1)},
(0,0.5);(0,-1.5) **\crv{(-0.5,0.4)&(-0.5,-1.4)},
(1,0.5);(1,-1.5) **\crv{(1.5,0.4)&(1.5,-1.4)}}x +
\newBracket{(0,-0.5);(0.4,-0.1) **@{-}, 
(0.6,0.1);(1,0.5) **@{-}, 
(0,0.5);(1,-0.5) **@{-},
(0,-0.5);(0,-1.5) **\crv{(0.5,-0.4)&(0.5,-1.4)},
(1,-0.5);(1,-1.5)**\crv{(0.5,-0.4)&(0.5,-1.4)},
(0,0.5);(0,-1.5) **\crv{(-0.5,0.4)&(-0.5,-1.4)},
(1,0.5);(1,-1.5) **\crv{(1.5,0.4)&(1.5,-1.4)}}y
=(-A^3)x+(-A^{-3})y.$$
Substituting $B$ for $A^{-1}$, we have $\ll 2^{-1}_1\gg=(-A^3)^{-sw(2_1^{-1})}[[2_1^{-1}]]=-A^3x-B^3y$. This gives the normal form $\overline{\ll 2^{-1}_1\gg}=A+B.$
Let $2_1^{-1*}$ be the negative standard projective plane. Then we also have $\overline{\ll 2_1^{-1*}\gg}=A+B.$
\end{example}

\begin{example}\label{examp-2comp-sl-6^{0,1}_1}
Consider the 2 component surface-link $6^{0,1}_1$ in Yoshikawa's table \cite{Yo} with the orientation indicated below. By Theorem \ref{thm-skein-rel-kb} together with (\ref{skein-rel-exmp}) and (\ref{nkbp-exmps}), we obtain
\begin{align*}
\ll~&\xy 
(-8,8);(-2,8) **@{-}, %
(2,8);(8,8) **@{-}, %
(-8,8);(-8,2) **@{-}, %
(8,8);(8,2) **@{-}, %
(-8,-8);(-2,-8) **@{-}, %
(-2,-8);(8,-8) **@{-}, %
(-8,-8);(-8,-2) **@{-}, %
(8,-8);(8,-2) **@{-},%
(-2,8);(2,4) **@{-}, 
(-2,4);(2,8) **@{-}, 
(-2,4);(4,-2) **@{-}, 
(2,4);(-4,-2) **@{-}, 
%
%
(2.4,0.4);(4,2) **@{-},
(0.1,-1.9);(1.7,-0.2) **@{-},  
(0.1,-1.9);(-1.7,-0.3) **@{-},  
(-2.4,0.4);(-4,2) **@{-},
(-8,2);(-6.4,0.4) **@{-}, 
(-5.6,-0.4);(-4,-2) **@{-}, 
(-8,-2);(-4,2) **@{-},
(5.6,-0.4);(4,-2) **@{-}, 
(8,2);(6.6,0.4) **@{-},
(8,-2);(4,2) **@{-},
(-0.1,4.5);(-0.1,7.5) **@{-},
(0,4.5);(0,7.5) **@{-},
(0.1,4.5);(0.1,7.5) **@{-},
(-1.5,1.9);(1.5,1.9) **@{-},
(-1.5,2);(1.5,2) **@{-},
(-1.5,2.1);(1.5,2.1) **@{-}, 
(-5,0.4)*{\urcorner}, 
(-5,-0.8)*{\lrcorner}, 
(7,0.5)*{\urcorner}, 
(7,-0.8)*{\lrcorner}, 
(0.5,-12.2) *{6^{0,1}_1},
\endxy\gg_N =
x^2 \ll~\xy 
(-8,8);(-2,8) **@{-}, %
(2,8);(8,8) **@{-}, %
(-8,8);(-8,2) **@{-}, %
(8,8);(8,2) **@{-}, %
(-8,-8);(-2,-8) **@{-}, %
(-2,-8);(8,-8) **@{-}, %
(-8,-8);(-8,-2) **@{-}, %
(8,-8);(8,-2) **@{-},%
(2,0);(4,-2) **@{-}, 
(-2,0);(-4,-2) **@{-}, 
%
%
(2.4,0.4);(4,2) **@{-},
(0.1,-1.9);(1.7,-0.2) **@{-},  
(0.1,-1.9);(-1.7,-0.3) **@{-},  
(-2.4,0.4);(-4,2) **@{-},
(-8,2);(-6.4,0.4) **@{-}, 
(-5.6,-0.4);(-4,-2) **@{-}, 
(-8,-2);(-4,2) **@{-},
(5.6,-0.4);(4,-2) **@{-}, 
(8,2);(6.6,0.4) **@{-},
(8,-2);(4,2) **@{-},
(-5,0.4)*{\urcorner}, 
(-5,-0.8)*{\lrcorner}, 
(7,0.5)*{\urcorner}, 
(7,-0.8)*{\lrcorner}, 
(-2,4);(-2,8) **\crv{(1,6)}, 
(2,8);(2,4) **\crv{(-1,6)},
(-2,4);(2,4) **\crv{(0,1)}, 
(-2,0);(2,0) **\crv{(0,3)},
 \endxy\gg_N 
 +xy \ll~\xy 
(-8,8);(-2,8) **@{-}, %
(2,8);(8,8) **@{-}, %
(-8,8);(-8,2) **@{-}, %
(8,8);(8,2) **@{-}, %
(-8,-8);(-2,-8) **@{-}, %
(-2,-8);(8,-8) **@{-}, %
(-8,-8);(-8,-2) **@{-}, %
(8,-8);(8,-2) **@{-},%
(2,0);(4,-2) **@{-}, 
(-2,0);(-4,-2) **@{-},  
%
%
(2.4,0.4);(4,2) **@{-},
(0.1,-1.9);(1.7,-0.2) **@{-},  
(0.1,-1.9);(-1.7,-0.3) **@{-},  
(-2.4,0.4);(-4,2) **@{-},
(-8,2);(-6.4,0.4) **@{-}, 
(-5.6,-0.4);(-4,-2) **@{-}, 
(-8,-2);(-4,2) **@{-},
(5.6,-0.4);(4,-2) **@{-}, 
(8,2);(6.6,0.4) **@{-},
(8,-2);(4,2) **@{-},
(-5,0.4)*{\urcorner}, 
(-5,-0.8)*{\lrcorner}, 
(7,0.5)*{\urcorner}, 
(7,-0.8)*{\lrcorner}, 
(-2,4);(-2,8) **\crv{(1,6)}, 
(2,8);(2,4) **\crv{(-1,6)},
(-2,0);(-2,4) **\crv{(1,2)}, 
(2,4);(2,0) **\crv{(-1,2)},
 \endxy\gg_N +\\
 &\hskip 2.8cm yx
 \ll~\xy 
(-8,8);(-2,8) **@{-}, %
(2,8);(8,8) **@{-}, %
(-8,8);(-8,2) **@{-}, %
(8,8);(8,2) **@{-}, %
(-8,-8);(-2,-8) **@{-}, %
(-2,-8);(8,-8) **@{-}, %
(-8,-8);(-8,-2) **@{-}, %
(8,-8);(8,-2) **@{-},%
(2,0);(4,-2) **@{-}, 
(-2,0);(-4,-2) **@{-}, 
%
%
(2.4,0.4);(4,2) **@{-},
(0.1,-1.9);(1.7,-0.2) **@{-},  
(0.1,-1.9);(-1.7,-0.3) **@{-},  
(-2.4,0.4);(-4,2) **@{-},
(-8,2);(-6.4,0.4) **@{-}, 
(-5.6,-0.4);(-4,-2) **@{-}, 
(-8,-2);(-4,2) **@{-},
(5.6,-0.4);(4,-2) **@{-}, 
(8,2);(6.6,0.4) **@{-},
(8,-2);(4,2) **@{-},
(-5,0.4)*{\urcorner}, 
(-5,-0.8)*{\lrcorner}, 
(7,0.5)*{\urcorner}, 
(7,-0.8)*{\lrcorner}, 
(-2,8);(2,8) **\crv{(0,5)}, 
(-2,4);(2,4) **\crv{(0,7)},
(-2,4);(2,4) **\crv{(0,1)}, 
(-2,0);(2,0) **\crv{(0,3)},
 \endxy\gg_N + y^2
 \ll~\xy 
(-8,8);(-2,8) **@{-}, %
(2,8);(8,8) **@{-}, %
(-8,8);(-8,2) **@{-}, %
(8,8);(8,2) **@{-}, %
(-8,-8);(-2,-8) **@{-}, %
(-2,-8);(8,-8) **@{-}, %
(-8,-8);(-8,-2) **@{-}, %
(8,-8);(8,-2) **@{-},%
(2,0);(4,-2) **@{-}, 
(-2,0);(-4,-2) **@{-},  
%
%
(2.4,0.4);(4,2) **@{-},
(0.1,-1.9);(1.7,-0.2) **@{-},  
(0.1,-1.9);(-1.7,-0.3) **@{-},  
(-2.4,0.4);(-4,2) **@{-},
(-8,2);(-6.4,0.4) **@{-}, 
(-5.6,-0.4);(-4,-2) **@{-}, 
(-8,-2);(-4,2) **@{-},
(5.6,-0.4);(4,-2) **@{-}, 
(8,2);(6.6,0.4) **@{-},
(8,-2);(4,2) **@{-},
(-5,0.4)*{\urcorner}, 
(-5,-0.8)*{\lrcorner}, 
(7,0.5)*{\urcorner}, 
(7,-0.8)*{\lrcorner}, 
(-2,0);(-2,4) **\crv{(1,2)}, 
(2,4);(2,0) **\crv{(-1,2)},
(-2,8);(2,8) **\crv{(0,5)}, 
(-2,4);(2,4) **\crv{(0,7)},
 \endxy\gg_N\\
 &= \Big\langle
 ~\xy (0,0) *\xycircle(3,3){-},\endxy
 ~\xy (0,0) *\xycircle(3,3){-},\endxy
 ~\Big\rangle_N (x^2+y^2) + xy\Big\langle~\xy 
(-8,8);(-2,8) **@{-}, %
(2,8);(8,8) **@{-}, %
(-8,8);(-8,2) **@{-}, %
(8,8);(8,2) **@{-}, %
(-8,-8);(-2,-8) **@{-}, %
(-2,-8);(8,-8) **@{-}, %
(-8,-8);(-8,-2) **@{-}, %
(8,-8);(8,-2) **@{-},%
(2,0);(4,-2) **@{-}, 
(-2,0);(-4,-2) **@{-},  
%
%
(2.4,0.4);(4,2) **@{-},
(0.1,-1.9);(1.7,-0.2) **@{-},  
(0.1,-1.9);(-1.7,-0.3) **@{-},  
(-2.4,0.4);(-4,2) **@{-},
(-8,2);(-6.4,0.4) **@{-}, 
(-5.6,-0.4);(-4,-2) **@{-}, 
(-8,-2);(-4,2) **@{-},
(5.6,-0.4);(4,-2) **@{-}, 
(8,2);(6.6,0.4) **@{-},
(8,-2);(4,2) **@{-},
(-5,0.4)*{\urcorner}, 
(-5,-0.8)*{\lrcorner}, 
(7,0.5)*{\urcorner}, 
(7,-0.8)*{\lrcorner},
(-2,4);(-2,8) **\crv{(1,6)}, 
(2,8);(2,4) **\crv{(-1,6)},
(-2,0);(-2,4) **\crv{(1,2)}, 
(2,4);(2,0) **\crv{(-1,2)},
 \endxy~\Big\rangle_N + yx\Big\langle~\xy
(0,0) *\xycircle(3,3){-}, \endxy~
\xy (0,0) *\xycircle(3,3){-}, \endxy~
\xy (0,0) *\xycircle(3,3){-}, \endxy~\Big\rangle_N\\
&=(-A^{-2}-A^{2})(x^2+y^2)+(-A^{10}-A^2)(-A^{-10}-A^{-2})xy+ (-A^{-2}-A^{2})^2xy\\
&=(-A^{2}-A^{-2})(x^2+y^2) + (A^4+4+A^{-4}+A^{-8}+A^8)xy.
 \end{align*}
 Substituting $B$ for $A^{-1}$, we get $$\ll 6^{0,1}_1\gg_N=(-A^{2}-B^{2})(x^2+y^2) + (A^4+4+B^{4}+B^{8}+A^8) xy.$$ This yields the normal form $\overline{\ll 6^{0,1}_1\gg}_N=1.$
Further, forgetting the orientation on $6^{0,1}_1$, we see that $sw(\widetilde{6^{0,1}_1})=w(6^{0,1}_1)=0$ and hence it follows from Theorem \ref{thm-rels-polys-kbp} that $\overline{\ll \widetilde{6^{0,1}_1}\gg}=\overline{\ll 6^{0,1}_1\gg}_N=1.$
\end{example}
 
 
\begin{example}\label{examp-2comp-sl-7^{0,-2}_1}
Consider the nonorientable two component surface-link $7^{0,-2}_1$ in Yoshikawa's table \cite{Yo}. Then $sw(7^{0,-2}_1)=1$ and  
\begin{align*}
\ll~&\xy 
(-2,12);(2,8) **@{-}, 
(-2,8);(-0.7,9.5) **@{-},
(0.7,10.6);(2,12) **@{-},
(-8,12);(-2,12) **@{-}, %
(2,12);(8,12) **@{-}, %
(-8,12);(-8,2) **@{-}, %
(8,12);(8,2) **@{-},%
(-8,-8);(-2,-8) **@{-}, %
(-2,-8);(8,-8) **@{-}, %
(-8,-8);(-8,-2) **@{-}, %
(8,-8);(8,-2) **@{-},%
(-2,8);(2,4) **@{-}, 
(-2,4);(2,8) **@{-}, 
(-2,4);(4,-2) **@{-}, 
(2,4);(-4,-2) **@{-}, 
(2.4,0.4);(4,2) **@{-},
(0.1,-1.9);(1.7,-0.2) **@{-},  
(0.1,-1.9);(-1.7,-0.3) **@{-},  
(-2.4,0.4);(-4,2) **@{-},
(-8,2);(-6.4,0.4) **@{-}, 
(-5.6,-0.4);(-4,-2) **@{-}, 
(-8,-2);(-4,2) **@{-},
(5.6,-0.4);(4,-2) **@{-}, 
(8,2);(6.6,0.4) **@{-},
(8,-2);(4,2) **@{-},
(-0.1,4.5);(-0.1,7.5) **@{-},
(0,4.5);(0,7.5) **@{-},
(0.1,4.5);(0.1,7.5) **@{-},
(-1.5,1.9);(1.5,1.9) **@{-},
(-1.5,2);(1.5,2) **@{-},
(-1.5,2.1);(1.5,2.1) **@{-}, 
(0.5,-12.2) *{7^{0,-2}_1},
\endxy\gg 
= (-A^3)^{-1}\bigg(x^2
[[~\xy 
(-2,12);(2,8) **@{-}, 
(-2,8);(-0.7,9.5) **@{-},
(0.7,10.6);(2,12) **@{-},
(-8,12);(-2,12) **@{-}, %
(2,12);(8,12) **@{-}, %
(-8,12);(-8,2) **@{-}, %
(8,12);(8,2) **@{-},%
(-8,-8);(-2,-8) **@{-}, %
(-2,-8);(8,-8) **@{-}, %
(-8,-8);(-8,-2) **@{-}, %
(8,-8);(8,-2) **@{-},%
(2,0);(4,-2) **@{-}, 
(-2,0);(-4,-2) **@{-}, 
(2.4,0.4);(4,2) **@{-},
(0.1,-1.9);(1.7,-0.2) **@{-},  
(0.1,-1.9);(-1.7,-0.3) **@{-},  
(-2.4,0.4);(-4,2) **@{-},
(-8,2);(-6.4,0.4) **@{-}, 
(-5.6,-0.4);(-4,-2) **@{-}, 
(-8,-2);(-4,2) **@{-},
(5.6,-0.4);(4,-2) **@{-}, 
(8,2);(6.6,0.4) **@{-},
(8,-2);(4,2) **@{-}, 
(-2,4);(-2,8) **\crv{(1,6)}, 
(2,8);(2,4) **\crv{(-1,6)},
(-2,4);(2,4) **\crv{(0,1)}, 
(-2,0);(2,0) **\crv{(0,3)},
 \endxy~]] 
 +xy
 [[~\xy 
(-2,12);(2,8) **@{-}, 
(-2,8);(-0.7,9.5) **@{-},
(0.7,10.6);(2,12) **@{-},
(-8,12);(-2,12) **@{-}, %
(2,12);(8,12) **@{-}, %
(-8,12);(-8,2) **@{-}, %
(8,12);(8,2) **@{-},%
(-8,-8);(-2,-8) **@{-}, %
(-2,-8);(8,-8) **@{-}, %
(-8,-8);(-8,-2) **@{-}, %
(8,-8);(8,-2) **@{-},%
(2,0);(4,-2) **@{-}, 
(-2,0);(-4,-2) **@{-},  
(2.4,0.4);(4,2) **@{-},
(0.1,-1.9);(1.7,-0.2) **@{-},  
(0.1,-1.9);(-1.7,-0.3) **@{-},  
(-2.4,0.4);(-4,2) **@{-},
(-8,2);(-6.4,0.4) **@{-}, 
(-5.6,-0.4);(-4,-2) **@{-}, 
(-8,-2);(-4,2) **@{-},
(5.6,-0.4);(4,-2) **@{-}, 
(8,2);(6.6,0.4) **@{-},
(8,-2);(4,2) **@{-},
(-2,4);(-2,8) **\crv{(1,6)}, 
(2,8);(2,4) **\crv{(-1,6)},
(-2,0);(-2,4) **\crv{(1,2)}, 
(2,4);(2,0) **\crv{(-1,2)},
 \endxy~]] \\
 &\hskip 3.7cm + yx
 [[~\xy 
(-2,12);(2,8) **@{-}, 
(-2,8);(-0.7,9.5) **@{-},
(0.7,10.6);(2,12) **@{-},
(-8,12);(-2,12) **@{-}, %
(2,12);(8,12) **@{-}, %
(-8,12);(-8,2) **@{-}, %
(8,12);(8,2) **@{-},%
(-8,-8);(-2,-8) **@{-}, %
(-2,-8);(8,-8) **@{-}, %
(-8,-8);(-8,-2) **@{-}, %
(8,-8);(8,-2) **@{-},%
(2,0);(4,-2) **@{-}, 
(-2,0);(-4,-2) **@{-}, 
(2.4,0.4);(4,2) **@{-},
(0.1,-1.9);(1.7,-0.2) **@{-},  
(0.1,-1.9);(-1.7,-0.3) **@{-},  
(-2.4,0.4);(-4,2) **@{-},
(-8,2);(-6.4,0.4) **@{-}, 
(-5.6,-0.4);(-4,-2) **@{-}, 
(-8,-2);(-4,2) **@{-},
(5.6,-0.4);(4,-2) **@{-}, 
(8,2);(6.6,0.4) **@{-},
(8,-2);(4,2) **@{-}, 
(-2,8);(2,8) **\crv{(0,5)}, 
(-2,4);(2,4) **\crv{(0,7)},
(-2,4);(2,4) **\crv{(0,1)}, 
(-2,0);(2,0) **\crv{(0,3)},
 \endxy~]] + y^2
 [[~\xy 
(-2,12);(2,8) **@{-}, 
(-2,8);(-0.7,9.5) **@{-},
(0.7,10.6);(2,12) **@{-},
(-8,12);(-2,12) **@{-}, %
(2,12);(8,12) **@{-}, %
(-8,12);(-8,2) **@{-}, %
(8,12);(8,2) **@{-},%
(-8,-8);(-2,-8) **@{-}, %
(-2,-8);(8,-8) **@{-}, %
(-8,-8);(-8,-2) **@{-}, %
(8,-8);(8,-2) **@{-},%
(2,0);(4,-2) **@{-}, 
(-2,0);(-4,-2) **@{-},  
(2.4,0.4);(4,2) **@{-},
(0.1,-1.9);(1.7,-0.2) **@{-},  
(0.1,-1.9);(-1.7,-0.3) **@{-},  
(-2.4,0.4);(-4,2) **@{-},
(-8,2);(-6.4,0.4) **@{-}, 
(-5.6,-0.4);(-4,-2) **@{-}, 
(-8,-2);(-4,2) **@{-},
(5.6,-0.4);(4,-2) **@{-}, 
(8,2);(6.6,0.4) **@{-},
(8,-2);(4,2) **@{-},
(-2,0);(-2,4) **\crv{(1,2)}, 
(2,4);(2,0) **\crv{(-1,2)},
(-2,8);(2,8) **\crv{(0,5)}, 
(-2,4);(2,4) **\crv{(0,7)},
 \endxy~]]\bigg)
 \end{align*}
 \begin{align*}
= & x^2 \langle~\xy (0,0) *\xycircle(3,3){-}, \endxy~
\xy (0,0) *\xycircle(3,3){-}, \endxy~\rangle 
-A^{-3}xy \bigg(A
\langle~\xy
(0,0) *\xycircle(3,3){-}, \endxy~
\xy (0,0) *\xycircle(3,3){-},
\endxy~
\rangle
+ A^{-1}\langle~
\xy
(-8,8);(-2,8) **@{-}, %
(2,8);(8,8) **@{-}, %
(-8,8);(-8,2) **@{-}, %
(8,8);(8,2) **@{-}, %
(-8,-8);(-2,-8) **@{-}, %
(-2,-8);(8,-8) **@{-}, %
(-8,-8);(-8,-2) **@{-}, %
(8,-8);(8,-2) **@{-},%
(2,0);(4,-2) **@{-}, 
(-2,0);(-4,-2) **@{-},  
%
%
(2.4,0.4);(4,2) **@{-},
(0.1,-1.9);(1.7,-0.2) **@{-},  
(0.1,-1.9);(-1.7,-0.3) **@{-},  
(-2.4,0.4);(-4,2) **@{-},
(-8,2);(-6.4,0.4) **@{-}, 
(-5.6,-0.4);(-4,-2) **@{-}, 
(-8,-2);(-4,2) **@{-},
(5.6,-0.4);(4,-2) **@{-}, 
(8,2);(6.6,0.4) **@{-},
(8,-2);(4,2) **@{-},
(-2,4);(-2,8) **\crv{(1,6)}, 
(2,8);(2,4) **\crv{(-1,6)},
(-2,0);(-2,4) **\crv{(1,2)}, 
(2,4);(2,0) **\crv{(-1,2)},
\endxy~\rangle\bigg)\\
&  + yx \langle~\xy
(0,0) *\xycircle(3,3){-}, \endxy~
\xy (0,0) *\xycircle(3,3){-}, \endxy~
\xy (0,0) *\xycircle(3,3){-}, \endxy ~\rangle
+ y^2 \langle~\xy (0,0) *\xycircle(3,3){-}, \endxy~
\xy (0,0) *\xycircle(3,3){-}, \endxy~\rangle\\
= &  (-A^{-2}-A^{2})(x^2+y^2)+(-A^{-2}-A^{2})^2 xy \\
& -A^{-3} \Big(A(-A^{-2}-A^2) +A^{-1}(-A^{-4}-A^{4})^2\Big) xy\\
=& (-A^{-2}-A^{2})(x^2+y^2)+(3-A^{-12})xy. 
 \end{align*}
Substituting $B$ for $A^{-1}$, we get $\ll 7^{0,-2}_1\gg=(-A^{2}-B^{2})(x^2+y^2) + (3-B^{12}) xy.$ This gives $\overline{\ll 7^{0,-2}_1\gg}=1.$
\end{example}

A virtual marked graph diagram is a marked graph diagram possibly with virtual crossings indicated by small circles as usual in virtual link diagrams. In \cite{Kau2}, L. H. Kauffman suggested the notion of isotopy of virtual surface-links in four space by means of virtual marked graph diagrams modulo a generalization of Yoshikawa moves on marked graph diagrams in purpose to investigate the relationships between this diagrammatic definition and more geometric approaches to virtual 2-knots. In \cite{NR}, S. Nelson and P. Rivera introduced an isotopy invariant of virtual surface-links presented by virtual marked graph diagrams by using ribbon biquandles.

Note that the (normalized) Kauffman bracket polynomial is an invariant for virtual links \cite{Kau1}. This confirms that the construction of the ideal coset invariant associated with the (normalized) Kauffman bracket polynomial can be extended to (oriented) virtual surface-links presented by virtual marked graph diagrams and consequently the ideal coset invariant associated with the (normalized) Kauffman bracket polynomial is also invariant for (oriented) virtual surface-links. In a separate paper \cite{Le4}, this extension will be dealt with full details in a more general setting. It is noted that the examples \ref{examp-T^2}-\ref{examp-2comp-sl-7^{0,-2}_1} above are implicit that the (normalized) Kauffman bracket ideal coset invariant seems to be almost trivial for surface-links and that this conjecture will be discussed in \cite{Le4} in details. On the other hand, the following example \ref{examp-virtual-sknot} shows that the (normalized) Kauffman bracket ideal coset invariant is highly nontrivial for (oriented) virtual surface-links. 
 
\begin{example}\label{examp-virtual-sknot}
Consider the oriented virtual $S^2$-knot $D$ below. Since
\begin{align*}
\langle~&\xy 
(2,0);(4,-2) **@{-}, 
(-2,0);(-4,-2) **@{-},  
(-2,0) *\xycircle(1,1){-},
(2,0) *\xycircle(1,1){-},
%
(2.4,0.4);(4,2) **@{-},
(0.1,-1.9);(1.7,-0.2) **@{-},  
(0.1,-1.9);(-1.7,-0.3) **@{-},  
(-2.4,0.4);(-4,2) **@{-},
(-12,2);(-10.4,0.4) **@{-}, 
(-9.6,-0.4);(-8,-2) **@{-}, 
(-12,-2);(-8,2) **@{-},
(-8,2);(-6.4,0.4) **@{-}, 
(-5.6,-0.4);(-4,-2) **@{-}, 
(-8,-2);(-4,2) **@{-},
(12,2);(10.4,0.4) **@{-}, 
(9.6,-0.4);(8,-2) **@{-},
(12,-2);(8,2) **@{-},
(5.6,-0.4);(4,-2) **@{-}, 
(8,2);(6.6,0.4) **@{-},
(8,-2);(4,2) **@{-},
(-9,0.4)*{\urcorner}, 
(-9,-0.8)*{\lrcorner}, 
(11,0.5)*{\urcorner}, 
(11,-0.8)*{\lrcorner},  
(-12,2);(-2,4) **\crv{(-14,5)}, 
(2,4);(12,2) **\crv{(14,5)},
(-2,0);(-2,4) **\crv{(1,2)}, 
(2,4);(2,0) **\crv{(-1,2)},
(-12,-2);(0,-5) **\crv{(-14,-5)},
(0,-5);(12,-2) **\crv{(14,-5)},
 \endxy~\rangle
=A\langle~\xy
(12,2);(10.4,0.4) **@{-}, 
(9.6,-0.4);(8,-2) **@{-},
(12,-2);(8,2) **@{-},
(5.6,-0.4);(4,-2) **@{-}, 
(8,2);(6.6,0.4) **@{-},
(8,-2);(4,2) **@{-},
(11,0.5)*{\urcorner}, 
(11,-0.8)*{\lrcorner},  
(2,0) *\xycircle(1,1){-},
%
(4,2);(2.4,0.4) **@{-}, 
(1.6,-0.4);(0,-2) **@{-},
(4,-2);(0,2) **@{-},
(0,2);(12,2) **\crv{(5,6)},
(0,-2);(12,-2) **\crv{(5,-6)},
\endxy~\rangle
+A^{-1}\langle~\xy 
(2,0);(4,-2) **@{-}, 
(-2,0);(-4,-2) **@{-},  
(-2,0) *\xycircle(1,1){-},
(2,0) *\xycircle(1,1){-},
%
(2.4,0.4);(4,2) **@{-},
(0.1,-1.9);(1.7,-0.2) **@{-},  
(0.1,-1.9);(-1.7,-0.3) **@{-},  
(-2.4,0.4);(-4,2) **@{-},
(-8,2);(-6.4,0.4) **@{-}, 
(-5.6,-0.4);(-4,-2) **@{-}, 
(-8,-2);(-4,2) **@{-},
(12,2);(10.4,0.4) **@{-}, 
(9.6,-0.4);(8,-2) **@{-},
(12,-2);(8,2) **@{-},
(5.6,-0.4);(4,-2) **@{-}, 
(8,2);(6.6,0.4) **@{-},
(8,-2);(4,2) **@{-},
(-5,0.4)*{\urcorner}, 
(-5,-0.8)*{\lrcorner}, 
(11,0.5)*{\urcorner}, 
(11,-0.8)*{\lrcorner},  
(-8,2);(-2,4) **\crv{(-10,5)}, 
(2,4);(12,2) **\crv{(14,5)},
(-2,0);(-2,4) **\crv{(1,2)}, 
(2,4);(2,0) **\crv{(-1,2)},
(-8,-2);(0,-5) **\crv{(-10,-5)},
(0,-5);(12,-2) **\crv{(14,-5)},
 \endxy~\rangle\\
&=A\langle~\xy (0,0) *\xycircle(3,3){-},(3,0) *{\wedge},\endxy~\rangle+
A^{-1}\bigg[A\langle~\xy
(12,2);(10.4,0.4) **@{-}, 
(9.6,-0.4);(8,-2) **@{-},
(12,-2);(8,2) **@{-},
(5.6,-0.4);(4,-2) **@{-}, 
(8,2);(6.6,0.4) **@{-},
(8,-2);(4,2) **@{-},
(11,0.5)*{\urcorner}, 
(11,-0.8)*{\lrcorner},  
(2,0) *\xycircle(1,1){-},
%
(4,2);(2.4,0.4) **@{-}, 
(1.6,-0.4);(0,-2) **@{-},
(4,-2);(0,2) **@{-},
(0,2);(12,2) **\crv{(5,6)},
(0,-2);(12,-2) **\crv{(5,-6)},
\endxy~\rangle+A^{-1}\langle~\xy
(12,2);(10.4,0.4) **@{-}, 
(9.6,-0.4);(8,-2) **@{-},
(12,-2);(8,2) **@{-},
(5.6,-0.4);(4,-2) **@{-}, 
(8,2);(6.6,0.4) **@{-},
(8,-2);(4,2) **@{-},
(11,0.5)*{\urcorner}, 
(11,-0.8)*{\lrcorner},  
(2,0) *\xycircle(1,1){-},
%
(4,2);(2.4,0.4) **@{-}, 
(1.6,-0.4);(0,-2) **@{-},
(4,-2);(0,2) **@{-},
(0,2);(12,2) **\crv{(5,6)},
(0,-2);(12,-2) **\crv{(5,-6)},
\endxy~\rangle\bigg]=A+1+A^{-2},
\end{align*}
we have
\begin{align*}
\ll~&\xy 
(-12,8);(-2,8) **@{-}, 
(2,8);(12,8) **@{-}, 
(-12,8);(-12,2) **@{-}, 
(12,8);(12,2) **@{-}, 
(-12,-8);(-2,-8) **@{-}, 
(-2,-8);(12,-8) **@{-}, 
(-12,-8);(-12,-2) **@{-}, 
(12,-8);(12,-2) **@{-},
(-2,8);(2,4) **@{-}, 
(-2,4);(2,8) **@{-}, 
(-2,4);(4,-2) **@{-}, 
(2,4);(-4,-2) **@{-}, 
(-2,0) *\xycircle(1,1){-},
(2,0) *\xycircle(1,1){-},
%
(2.4,0.4);(4,2) **@{-},
(0.1,-1.9);(1.7,-0.2) **@{-},  
(0.1,-1.9);(-1.7,-0.3) **@{-},  
(-2.4,0.4);(-4,2) **@{-},
(-12,2);(-10.4,0.4) **@{-}, 
(-9.6,-0.4);(-8,-2) **@{-}, 
(-12,-2);(-8,2) **@{-},
(-8,2);(-6.4,0.4) **@{-}, 
(-5.6,-0.4);(-4,-2) **@{-}, 
(-8,-2);(-4,2) **@{-},
(12,2);(10.4,0.4) **@{-}, 
(9.6,-0.4);(8,-2) **@{-},
(12,-2);(8,2) **@{-},
(5.6,-0.4);(4,-2) **@{-}, 
(8,2);(6.6,0.4) **@{-},
(8,-2);(4,2) **@{-},
(-0.1,4.5);(-0.1,7.5) **@{-},
(0,4.5);(0,7.5) **@{-},
(0.1,4.5);(0.1,7.5) **@{-},
(-1.5,1.9);(1.5,1.9) **@{-},
(-1.5,2);(1.5,2) **@{-},
(-1.5,2.1);(1.5,2.1) **@{-}, 
(-9,0.4)*{\urcorner}, 
(-9,-0.8)*{\lrcorner}, 
(11,0.5)*{\urcorner}, 
(11,-0.8)*{\lrcorner}, 
(0,12) *{D},
\endxy\gg_N 
= x^2
\ll~\xy 
(-12,8);(-2,8) **@{-}, 
(2,8);(12,8) **@{-}, 
(-12,8);(-12,2) **@{-}, 
(12,8);(12,2) **@{-}, 
(-12,-8);(-2,-8) **@{-}, 
(-2,-8);(12,-8) **@{-}, 
(-12,-8);(-12,-2) **@{-}, 
(12,-8);(12,-2) **@{-},
(2,0);(4,-2) **@{-}, 
(-2,0);(-4,-2) **@{-}, 
(-2,0) *\xycircle(1,1){-},
(2,0) *\xycircle(1,1){-},
%
(2.4,0.4);(4,2) **@{-},
(0.1,-1.9);(1.7,-0.2) **@{-},  
(0.1,-1.9);(-1.7,-0.3) **@{-},  
(-2.4,0.4);(-4,2) **@{-},
(-12,2);(-10.4,0.4) **@{-}, 
(-9.6,-0.4);(-8,-2) **@{-}, 
(-12,-2);(-8,2) **@{-},
(-8,2);(-6.4,0.4) **@{-}, 
(-5.6,-0.4);(-4,-2) **@{-}, 
(-8,-2);(-4,2) **@{-},
(12,2);(10.4,0.4) **@{-}, 
(9.6,-0.4);(8,-2) **@{-},
(12,-2);(8,2) **@{-},
(5.6,-0.4);(4,-2) **@{-}, 
(8,2);(6.6,0.4) **@{-},
(8,-2);(4,2) **@{-},
(-9,0.4)*{\urcorner}, 
(-9,-0.8)*{\lrcorner}, 
(11,0.5)*{\urcorner}, 
(11,-0.8)*{\lrcorner}, 
(-2,4);(-2,8) **\crv{(1,6)}, 
(2,8);(2,4) **\crv{(-1,6)},
(-2,4);(2,4) **\crv{(0,1)}, 
(-2,0);(2,0) **\crv{(0,3)},
 \endxy\gg_N 
 +xy
 \ll~\xy 
(-12,8);(-2,8) **@{-}, 
(2,8);(12,8) **@{-}, 
(-12,8);(-12,2) **@{-}, 
(12,8);(12,2) **@{-}, 
(-12,-8);(-2,-8) **@{-}, 
(-2,-8);(12,-8) **@{-}, 
(-12,-8);(-12,-2) **@{-}, 
(12,-8);(12,-2) **@{-},
(2,0);(4,-2) **@{-}, 
(-2,0);(-4,-2) **@{-},  
(-2,0) *\xycircle(1,1){-},
(2,0) *\xycircle(1,1){-},
%
(2.4,0.4);(4,2) **@{-},
(0.1,-1.9);(1.7,-0.2) **@{-},  
(0.1,-1.9);(-1.7,-0.3) **@{-},  
(-2.4,0.4);(-4,2) **@{-},
(-12,2);(-10.4,0.4) **@{-}, 
(-9.6,-0.4);(-8,-2) **@{-}, 
(-12,-2);(-8,2) **@{-},
(-8,2);(-6.4,0.4) **@{-}, 
(-5.6,-0.4);(-4,-2) **@{-}, 
(-8,-2);(-4,2) **@{-},
(12,2);(10.4,0.4) **@{-}, 
(9.6,-0.4);(8,-2) **@{-},
(12,-2);(8,2) **@{-},
(5.6,-0.4);(4,-2) **@{-}, 
(8,2);(6.6,0.4) **@{-},
(8,-2);(4,2) **@{-},
(-9,0.4)*{\urcorner}, 
(-9,-0.8)*{\lrcorner}, 
(11,0.5)*{\urcorner}, 
(11,-0.8)*{\lrcorner}, 
(-2,4);(-2,8) **\crv{(1,6)}, 
(2,8);(2,4) **\crv{(-1,6)},
(-2,0);(-2,4) **\crv{(1,2)}, 
(2,4);(2,0) **\crv{(-1,2)},
 \endxy\gg_N +\\
 &\hskip 3.55cm yx
 \ll~\xy 
(-12,8);(-2,8) **@{-}, 
(2,8);(12,8) **@{-}, 
(-12,8);(-12,2) **@{-}, 
(12,8);(12,2) **@{-}, 
(-12,-8);(-2,-8) **@{-}, 
(-2,-8);(12,-8) **@{-}, 
(-12,-8);(-12,-2) **@{-}, 
(12,-8);(12,-2) **@{-},
(2,0);(4,-2) **@{-}, 
(-2,0);(-4,-2) **@{-}, 
(-2,0) *\xycircle(1,1){-},
(2,0) *\xycircle(1,1){-},
%
(2.4,0.4);(4,2) **@{-},
(0.1,-1.9);(1.7,-0.2) **@{-},  
(0.1,-1.9);(-1.7,-0.3) **@{-},  
(-2.4,0.4);(-4,2) **@{-},
(-12,2);(-10.4,0.4) **@{-}, 
(-9.6,-0.4);(-8,-2) **@{-}, 
(-12,-2);(-8,2) **@{-},
(-8,2);(-6.4,0.4) **@{-}, 
(-5.6,-0.4);(-4,-2) **@{-}, 
(-8,-2);(-4,2) **@{-},
(12,2);(10.4,0.4) **@{-}, 
(9.6,-0.4);(8,-2) **@{-},
(12,-2);(8,2) **@{-},
(5.6,-0.4);(4,-2) **@{-}, 
(8,2);(6.6,0.4) **@{-},
(8,-2);(4,2) **@{-},
(-9,0.4)*{\urcorner}, 
(-9,-0.8)*{\lrcorner}, 
(11,0.5)*{\urcorner}, 
(11,-0.8)*{\lrcorner},  
(-2,8);(2,8) **\crv{(0,5)}, 
(-2,4);(2,4) **\crv{(0,7)},
(-2,4);(2,4) **\crv{(0,1)}, 
(-2,0);(2,0) **\crv{(0,3)},
 \endxy\gg_N + y^2
 \ll~\xy 
(-12,8);(-2,8) **@{-}, 
(2,8);(12,8) **@{-}, 
(-12,8);(-12,2) **@{-}, 
(12,8);(12,2) **@{-}, 
(-12,-8);(-2,-8) **@{-}, 
(-2,-8);(12,-8) **@{-}, 
(-12,-8);(-12,-2) **@{-}, 
(12,-8);(12,-2) **@{-},
(2,0);(4,-2) **@{-}, 
(-2,0);(-4,-2) **@{-},  
(-2,0) *\xycircle(1,1){-},
(2,0) *\xycircle(1,1){-},
%
(2.4,0.4);(4,2) **@{-},
(0.1,-1.9);(1.7,-0.2) **@{-},  
(0.1,-1.9);(-1.7,-0.3) **@{-},  
(-2.4,0.4);(-4,2) **@{-},
(-12,2);(-10.4,0.4) **@{-}, 
(-9.6,-0.4);(-8,-2) **@{-}, 
(-12,-2);(-8,2) **@{-},
(-8,2);(-6.4,0.4) **@{-}, 
(-5.6,-0.4);(-4,-2) **@{-}, 
(-8,-2);(-4,2) **@{-},
(12,2);(10.4,0.4) **@{-}, 
(9.6,-0.4);(8,-2) **@{-},
(12,-2);(8,2) **@{-},
(5.6,-0.4);(4,-2) **@{-}, 
(8,2);(6.6,0.4) **@{-},
(8,-2);(4,2) **@{-},
(-9,0.4)*{\urcorner}, 
(-9,-0.8)*{\lrcorner}, 
(11,0.5)*{\urcorner}, 
(11,-0.8)*{\lrcorner}, 
(-2,0);(-2,4) **\crv{(1,2)}, 
(2,4);(2,0) **\crv{(-1,2)},
(-2,8);(2,8) **\crv{(0,5)}, 
(-2,4);(2,4) **\crv{(0,7)},
 \endxy\gg_N\\
 &= x^2\langle~\xy 
 (0,0) *\xycircle(3,3){-}, (3,0) *{\wedge},
 \endxy~\xy 
 (0,0) *\xycircle(3,3){-}, (3,0) *{\wedge},
 \endxy~\rangle_N + xy\langle~\xy 
(2,0);(4,-2) **@{-}, 
(-2,0);(-4,-2) **@{-},  
(-2,0) *\xycircle(1,1){-},
(2,0) *\xycircle(1,1){-},
%
(2.4,0.4);(4,2) **@{-},
(0.1,-1.9);(1.7,-0.2) **@{-},  
(0.1,-1.9);(-1.7,-0.3) **@{-},  
(-2.4,0.4);(-4,2) **@{-},
(-12,2);(-10.4,0.4) **@{-}, 
(-9.6,-0.4);(-8,-2) **@{-}, 
(-12,-2);(-8,2) **@{-},
(-8,2);(-6.4,0.4) **@{-}, 
(-5.6,-0.4);(-4,-2) **@{-}, 
(-8,-2);(-4,2) **@{-},
(12,2);(10.4,0.4) **@{-}, 
(9.6,-0.4);(8,-2) **@{-},
(12,-2);(8,2) **@{-},
(5.6,-0.4);(4,-2) **@{-}, 
(8,2);(6.6,0.4) **@{-},
(8,-2);(4,2) **@{-},
(-9,0.4)*{\urcorner}, 
(-9,-0.8)*{\lrcorner}, 
(11,0.5)*{\urcorner}, 
(11,-0.8)*{\lrcorner}, 
(-12,2);(-2,4) **\crv{(-14,5)}, 
(2,4);(12,2) **\crv{(14,5)},
(-2,0);(-2,4) **\crv{(1,2)}, 
(2,4);(2,0) **\crv{(-1,2)},
(-12,-2);(0,-5) **\crv{(-14,-5)},
(0,-5);(12,-2) **\crv{(14,-5)},
 \endxy~\rangle_N+\\
&\hskip 3.3cm yx\langle~\xy
(0,0) *\xycircle(3,3){-}, (3,0) *{\wedge},\endxy~\xy (0,0) *\xycircle(3,3){-}, (3,0) *{\wedge},\endxy~\xy (0,0) *\xycircle(3,3){-}, (3,0) *{\wedge},\endxy~\rangle_N
+y^2\langle~\xy
(0,0) *\xycircle(3,3){-}, (3,0) *{\wedge},\endxy~\xy (0,0) *\xycircle(3,3){-}, (3,0) *{\wedge},\endxy~\rangle_N\\
&= (-A^{-2}-A^{2})(x^2+y^2)+ (-A^{-2}-A^{2})^2xy+(A+1+A^{-2})xy\\
&=(-A^{-2}-A^{2})(x^2+y^2)+ (A^4+A+3+A^{-2}+A^{-4})xy.
 \end{align*}
 Substituting $B$ for $A^{-1}$, we get $$\ll D\gg_N=(-A^{2}-B^{2})(x^2+y^2) + (A^4+A+3+B^{2}+B^{4}) xy.$$ This yields the normal form $$\overline{\ll D\gg}_N=-B^2y^2-Ay^2+B^2y+Ay+1.$$
\end{example}

In \cite{Le4}, it is seen that the normalized Kauffman bracket ideal coset invariant distills genuine oriented virtual marked graph diagrams from oriented virtual marked graph diagrams. 

\bigskip

\noindent{\bf Question.} Is there a regular or an ambient isotopy invariant $[~~]$ for knots and links in $3$-space such that the associated $[~~]$ ideal is $< 0 >$?

\bigskip

If such a link invariant $[~~]$ exists, then it follows from Theorem \ref{thm1-Ideal-inv} that it is naturally extended to a polynomial
invariant for surface-links in $4$-space.


\section{A modification of the Kauffman bracket ideal}
\label{sect-inv-kbp-spl}

In this section, we consider a modification of the construction of the Kauffman bracket ideal in view of Corollary \ref{cor1-Ideal-inv} which leads new invariants for (oriented) surface-links. We give the modification in the subsection \ref{sect-inv-kbp-spl-1} and compute the invariants for various surface-links in Yoshikawa's table \cite{Yo} in the subsection \ref{sect-inv-kbp-spl-2}.

\subsection{A family of invariants for surface-links}
\label{sect-inv-kbp-spl-1}

Let 
\begin{equation*}
\begin{split}
&z(t)=\frac{1}{2}\Big(\sqrt{3-t^{-1}}+\mathbf{i}\sqrt{1+t^{-1}}\Big),~~\overline{z}(t)=\frac{1}{2}\Big(\sqrt{3-t^{-1}}-\mathbf{i}\sqrt{1+t^{-1}}\Big),
\end{split}
\end{equation*}
where $t\not= 0$ and $\mathbf{i}=\sqrt{-1}$. We observe that
$z(t) \overline{z}(t)=\frac{1}{4}(3-t^{-1}-\mathbf{i}^2(1+t^{-1}))=1$ and so
we have $\overline{z}(t)=z(t)^{-1}$. For any polynomial $f=f(A, A^{-1}, x, y) \in \mathbb Z[A, A^{-1}, x, y],$ we define $\phi f$ by $\phi f=f(z(t), z(t)^{-1}, t, t)=f(z(t), \overline{z}(t), t, t).$ 
It is obvious that for every polynomial $f \in \mathbb Z[A, A^{-1}, x, y]$, the evaluation $\phi f$ is expressed as the form:
\begin{align}
\phi f &=a_0(t)+\mathbf{i} b_0(t)+\Big(a_1(t)+\mathbf{i} b_1(t)\Big)\sqrt{1+t^{-1}}
\notag\\&+\Big(a_2(t)+\mathbf{i}b_2(t)\Big)
\sqrt{3-t^{-1}}+\Big(a_3(t)+\mathbf{i}b_3(t)\Big)\sqrt{3+2t^{-1}-t^{-2}},
\label{irr-ft-form}
\end{align}
where $a_i(t), b_i(t) \in \mathbb Z[2^{-1}, t^{-1}, t] (0 \leq i \leq 3)$, namely, polynomials in variables $2^{-1}$ and $t^{\pm 1}$ with integral coefficients.
Now we make the following definition:

\begin{definition}\label{defn-sp=poly-1}
Let $D$ be an oriented marked graph diagram and let $\ll D\gg_N$ be the polynomial of $D$ associated with the normalized Kauffman bracket polynomial $\langle~~~\rangle_N$. We define ${\mathbf K}(D;t)_N$ (for short, ${\mathbf K}(D)_N$) by the formula:
\begin{equation*}
{\mathbf K}(D)_N={\mathbf K}(D;t)_N=\phi\ll D\gg_N = (-z(t)^3)^{-w(D)}[[\widetilde{D}]](z(t), \overline{z}(t), t, t).
\end{equation*}
\end{definition}

\begin{lemma}\label{lem-inv-y6-rkb}
Let $D$ be an oriented marked graph diagram. Then ${\mathbf K}(D)_N$ is an invariant for all oriented Yoshikawa moves, except for the moves $\Gamma_7$ and $\Gamma_8$.
\end{lemma}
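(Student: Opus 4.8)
The plan is to reduce the claim to invariances that are already essentially proved in the excerpt, and to dispense with the two exceptional moves $\Gamma_1,\Gamma_1'$ by a direct computation that exploits the special choice $A=z(t)$.

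First I would recall that $\mathbf K(D)_N = \phi\ll D\gg_N$, where $\phi$ is the ring homomorphism sending $A\mapsto z(t)$, $A^{-1}\mapsto \overline z(t)=z(t)^{-1}$, and $x,y\mapsto t$. Since $\phi$ is a ring homomorphism, any \emph{equality} of polynomials $\ll D\gg_N=\ll D'\gg_N$ in $\mathbb Z[A^{\pm1},x,y]$ is automatically preserved after applying $\phi$, giving $\mathbf K(D)_N=\mathbf K(D')_N$. By Theorem~\ref{thm-skein-rel-kb}(2), the polynomial $\ll\cdot\gg_N$ is genuinely invariant (as a polynomial identity) under all type~I moves generated by $\Gamma_1,\Gamma_1',\Gamma_2,\Gamma_3,\Gamma_4,\Gamma_4',\Gamma_5$. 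Hence for every type~I move we get $\mathbf K(D)_N=\mathbf K(D')_N$ for free. This covers all of the oriented Yoshikawa moves except the type~II moves $\Gamma_6,\Gamma_6',\Gamma_7,\Gamma_8$; since $\Gamma_7$ and $\Gamma_8$ are explicitly excluded from the statement, it remains only to handle $\Gamma_6$ and $\Gamma_6'$.

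For $\Gamma_6$ and $\Gamma_6'$ the polynomial $\ll\cdot\gg_N$ is \emph{not} invariant: by Lemma~\ref{lem1-Ideal-inv} (whose formulas transfer verbatim to the $\langle\,\rangle_N$ setting, where $\alpha=1$) the two sides differ by the scalar factors $\delta x+y$ and $x+\delta y$, with $\delta=-A^2-A^{-2}$. The crux of the argument is therefore to show that after applying $\phi$ these scalar factors become $1$. Substituting $A=z(t)$ gives $\phi\delta=-z(t)^2-z(t)^{-2}=-z(t)^2-\overline z(t)^2$, and substituting $x=y=t$ I would compute
\begin{equation*}
\phi(\delta x+y)=\phi(x+\delta y)=t\bigl(-z(t)^2-\overline z(t)^2\bigr)+t .
\end{equation*}
Using $z(t)\overline z(t)=1$, one has $z(t)^2+\overline z(t)^2=(z(t)+\overline z(t))^2-2=(\,\sqrt{3-t^{-1}}\,)^2-2=(3-t^{-1})-2=1-t^{-1}$, so that $\phi\delta=-(1-t^{-1})=t^{-1}-1$ and hence $\phi(\delta x+y)=t(t^{-1}-1)+t=(1-t)+t=1$. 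Thus both scalar multipliers collapse to $1$ under $\phi$, and $\mathbf K(\cdot)_N$ is invariant under $\Gamma_6$ and $\Gamma_6'$.

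The main obstacle is precisely this last verification: the whole definition of $z(t)$ is engineered so that $\phi\delta=t^{-1}-1$, which is exactly the relation needed to kill the $\Gamma_6,\Gamma_6'$ defect. So the work is to confirm the identity $z(t)^2+\overline z(t)^2=1-t^{-1}$ (equivalently $z(t)+\overline z(t)=\sqrt{3-t^{-1}}$ together with $z(t)\overline z(t)=1$, both of which are recorded in the excerpt) and then to assemble the pieces: type~I invariances come free from Theorem~\ref{thm-skein-rel-kb}(2) via the homomorphism $\phi$, and the type~II moves $\Gamma_6,\Gamma_6'$ are handled by the scalar computation above, while $\Gamma_7,\Gamma_8$ are deliberately set aside. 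Since every oriented Yoshikawa move other than $\Gamma_7,\Gamma_8$ is thereby shown to preserve $\mathbf K(D)_N$, the lemma follows.
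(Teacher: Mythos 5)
Your proposal is correct and follows essentially the same route as the paper: type I invariance is inherited from the polynomial identity $\ll D\gg_N=\ll D'\gg_N$ (Theorem \ref{thm-inv-mgs}, repackaged in Theorem \ref{thm-skein-rel-kb}(2)) since $\phi$ is a ring homomorphism, and the $\Gamma_6,\Gamma_6'$ defect is killed by the scalar computation $\phi(\delta x+y)=\phi(x+\delta y)=1$ via Lemma \ref{lem1-Ideal-inv} and the identity $-z(t)^2-\overline z(t)^2=t^{-1}-1$. The paper states this last identity without derivation; your verification of it through $z(t)\overline z(t)=1$ and $z(t)+\overline z(t)=\sqrt{3-t^{-1}}$ is a welcome but inessential addition.
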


\begin{proof}
The invariance for oriented Yoshikawa moves of type I is direct from Theorem \ref{thm-inv-mgs}. To prove the invariance of ${\mathbf K}(D)_N$ for the oriented Yoshikawa moves $\Gamma_6$ and $\Gamma'_6$, we observe that
$-z(t)^2-z(t)^{-2}=-z(t)^2-\overline{z}(t)^{2}=t^{-1}-1.$
Note that $\delta=-A^2-A^{-2}$ for the Kauffman bracket. In (\ref{pf-inv-y6-1}) and (\ref{pf-inv-y6-2}) in Lemma \ref{lem1-Ideal-inv}, 
\begin{align*}
\phi(\delta x+y)=(-z(t)^2-z(t)^{-2})t+t=(t^{-1}-1)t+t=1,\\
\phi(x+\delta y)=t+(-z(t)^2-z(t)^{-2})t=t+(t^{-1}-1)t=1.
\end{align*} 
This shows that ${\mathbf K}(D)_N$ is invariant under $\Gamma_6$ and $\Gamma'_6$. This completes the proof.
\end{proof}

\begin{lemma}\label{lem-inv-y78-rkb}
Let $D$ be an oriented marked graph diagram and let $D'$ be an oriented marked graph diagram obtained from $D$ by applying a single Yoshikawa move $\Gamma_7$ or $\Gamma_8$. Then
\begin{equation}
{\mathbf K}(D')_N={\mathbf K}(D)_N + (2t-1)\Psi(t),\label{eqn2-inv-y78}
\end{equation}
where $\Psi(t)$ is of the form in (\ref{irr-ft-form}).
\end{lemma}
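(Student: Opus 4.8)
The plan is to reduce the statement to an evaluation of the homomorphism $\phi$ on the change $\ll D'\gg_N-\ll D\gg_N$, and to read off the factor $(2t-1)$ from the generators of the Kauffman bracket ideal $I$. The key structural fact is that $\phi$, being evaluation at $A=z(t)$, $A^{-1}=\overline{z}(t)$, $x=y=t$, is a ring homomorphism on $\mathbb{Z}[A,A^{-1},x,y]$ (it respects $A\cdot A^{-1}=1$ precisely because $z(t)\overline{z}(t)=1$), and that $\mathbf{K}(D)_N=\phi\ll D\gg_N$ by Definition \ref{defn-sp=poly-1}. Hence $\mathbf{K}(D')_N-\mathbf{K}(D)_N=\phi\big(\ll D'\gg_N-\ll D\gg_N\big)$, so everything comes down to understanding the image under $\phi$ of that difference.

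First I would record the values of $\phi$ on the three generators $f_1,f_2,f_3$ of $I$ from (\ref{gens-kbp-ideal}). Using the identity $-z(t)^2-z(t)^{-2}=t^{-1}-1$ already established in the proof of Lemma \ref{lem-inv-y6-rkb}, the substitution $x=y=t$ gives
\begin{align*}
\phi f_1 &= (t^{-1}-1)t+t-1=0,\\
\phi f_2 &= t+(t^{-1}-1)t-1=0,\\
\phi f_3 &= \big((t^{-1}-1)^2-1\big)\,t^2=(t^{-2}-2t^{-1})\,t^2=1-2t=-(2t-1),
\end{align*}
where for $f_3$ I use $A^4+1+A^{-4}=(A^2+A^{-2})^2-1$. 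Thus $\phi$ annihilates $f_1$ and $f_2$ and sends $f_3$ to $-(2t-1)$, which is exactly the source of the factor $2t-1$.

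Next I would place the difference in $I$ and assemble the conclusion. Since $\langle~\rangle_N$ is an ambient isotopy invariant we have $\alpha=1$, so Lemma \ref{lem2-Ideal-inv} (for $\Gamma_7$) and Lemma \ref{lem3-Ideal-inv} (for $\Gamma_8$) express $\ll D'\gg_N-\ll D\gg_N$ as a linear combination of the generators $P_{U_k}$, respectively $P_{V_k}$, of $I$, so that $\ll D'\gg_N-\ll D\gg_N\in I=\langle f_1,f_2,f_3\rangle$ (the same conclusion follows from Theorem \ref{thm-inv-nkbp}, since $D$ and $D'$ present the same oriented surface-link). Writing
\begin{equation*}
\ll D'\gg_N-\ll D\gg_N=g_1f_1+g_2f_2+g_3f_3,\qquad g_1,g_2,g_3\in\mathbb{Z}[A,A^{-1},x,y],
\end{equation*}
and applying the homomorphism $\phi$ together with the values above yields
\begin{equation*}
\mathbf{K}(D')_N-\mathbf{K}(D)_N=\phi(g_1)\cdot 0+\phi(g_2)\cdot 0+\phi(g_3)\cdot\big(-(2t-1)\big)=(2t-1)\,\Psi(t),
\end{equation*}
with $\Psi(t):=-\phi(g_3)$. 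Because $g_3\in\mathbb{Z}[A,A^{-1},x,y]$, its image $\phi(g_3)$ has the shape (\ref{irr-ft-form}), and therefore so does $\Psi(t)$.

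I expect the only delicate points to be the sign bookkeeping in the evaluation $\phi f_3=-(2t-1)$ and the confirmation that, with $\alpha=1$, Lemmas \ref{lem2-Ideal-inv} and \ref{lem3-Ideal-inv} genuinely exhibit the difference as an element of $I$ (not merely as zero in the quotient). The decisive feature is that $\phi$ kills $f_1$ and $f_2$ outright, so the entire change is governed by the single generator $f_3$; this is what forces the clean factorization $(2t-1)\Psi(t)$ and guarantees that the residual factor $\Psi(t)$ is again of the form (\ref{irr-ft-form}).
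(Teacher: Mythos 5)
Your proof is correct and is essentially the paper's argument in a slightly more abstract packaging: the paper substitutes $A=z(t)$, $x=y=t$ directly into the expressions from Lemmas \ref{lem2-Ideal-inv} and \ref{lem3-Ideal-inv} together with (\ref{eq2-pf-m7-1}) and (\ref{eq2-pf-m8-1}), using the same identity $z(t)^4+1+\overline{z}(t)^4=t^{-2}-2t^{-1}$ to extract the factor $1-2t$, which is exactly your computation $\phi f_3=-(2t-1)$. Routing the argument through ideal membership and the vanishing of $\phi$ on $f_1,f_2$ is harmless (those generators do not actually appear in the decomposition of the difference, which lies in $\langle f_3\rangle$ by the proof of Theorem \ref{thm-inv-kb}), and the remaining details match the paper's proof.
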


\begin{proof}
Let $D$ be an oriented marked graph diagram and let $D'$ be the oriented marked graph diagram obtained from $D$ by applying a single Yoshikawa move $\Gamma_7$. 
Since $z(t)^4+1+\overline{z}(t)^{4}
=(-z(t)^2-\overline{z}(t)^{2})^2-1=(t^{-1}-1)^2-1=t^{-2}-2t^{-1},$
it follows from (\ref{eq2-pf-m7-1}) with the substitutions $A=z(t), A^{-1}=\overline{z}(t), x=y=t$ that
$$(\langle R(U)\rangle-\langle R^*(U)\rangle)t^2
=(1-2t)\Big(\psi_3(U)-\psi_4(U)\Big).$$
By Lemma \ref{lem2-Ideal-inv} by taking the normalized Kauffman bracket, we have
\begin{align*}
{\mathbf K}(D')_N&-{\mathbf K}(D)_N=\phi(\ll D'\gg)-\phi(\ll D\gg)\\
&=(-z(t)^3)^{-w(D)}\sum_{k=1}^m\phi(\psi_k(A,A^{-1},x,y))(1-2t)\phi\Big(\psi_3(U_k)-\psi_4(U_k)\Big)\\
&=(2t-1)\Psi(t),
\end{align*}
where 
$\Psi(t)=(-z(t)^3)^{-w(D)}\sum_{k=1}^m\phi(\psi_k(A,A^{-1},x,y))
\phi\Big(\psi_4(U_k)-\psi_3(U_k)\Big).$ Since $\psi_4(U_k)-\psi_3(U_k) \in \mathbb Z[A,A^{-1},x,y]$ for all $k=1,2,\ldots,m$, it is clear that $\Psi(t)$ has the form in (\ref{irr-ft-form}). 

Now let $D$ be an oriented marked graph diagram and let $D'$ be an oriented marked graph diagram obtained from $D$ by applying a single oriented Yoshikawa move $\Gamma_8$ (cf. Figure~\ref{fig-m08-ort}). By a similar argument combined with Lemma \ref{lem3-Ideal-inv} and (\ref{eq2-pf-m8-1}), we obtain the identity (\ref{eqn2-inv-y78}). This completes the proof.
\end{proof}

The following theorem \ref{thm-skein-rel-kb-1}, which gives a method of computing the polynomial ${\mathbf K}(D)_N$ recursively for a given oriented marked graph diagram $D$.

\begin{theorem}\label{thm-skein-rel-kb-1}
Let $D$ be an oriented marked graph diagram. 
\begin{itemize}
\item[(1)] 
${\mathbf K}(~\xy (0,0) *\xycircle(3,3){-}, (3,0) *{\wedge}, \endxy~)_N = 1.$
\item[(2)] 
If $D$ and $D'$ are two oriented marked graph diagrams related by a finite sequence of oriented Yoshikawa moves generated by the moves $\Gamma_1, \Gamma'_1, \Gamma_2, \Gamma_3, \Gamma_4,$ $\Gamma'_4, \Gamma_5, \Gamma_6$ and $\Gamma'_6$, then ${\mathbf K}(D)_N={\mathbf K}(D')_N.$
\item[(3)] 
${\mathbf K}(D \sqcup~ \xy (0,0) *\xycircle(3,3){-}, (3,0) *{\wedge}, \endxy~)_N = 
(t^{-1}-1){\mathbf K}(D )_N.$
\item[(4)] 
${\mathbf K}\Big(~\xy (-4,4);(4,-4) **@{-}, 
(4,4);(-4,-4) **@{-}, 
(3,3.2)*{\llcorner}, 
(-3,-3.4)*{\urcorner}, 
(-2.5,2)*{\ulcorner},
(2.5,-2.4)*{\lrcorner}, 
(3,-0.2);(-3,-0.2) **@{-},
(3,0);(-3,0) **@{-}, 
(3,0.2);(-3,0.2) **@{-}, 
\endxy~\Big)_N =t\bigg[
{\mathbf K}\Big(~\xy (-4,4);(4,4) **\crv{(0,-1)}, 
(4,-4);(-4,-4) **\crv{(0,1)},  
(-2.5,1.9)*{\ulcorner}, (2.5,-2.4)*{\lrcorner},   
\endxy~\Big)_N  + {\mathbf K}\Big(~\xy (-4,4);(-4,-4) **\crv{(1,0)},  
(4,4);(4,-4) **\crv{(-1,0)}, (-2.5,1.9)*{\ulcorner}, (2.5,-2.4)*{\lrcorner}, 
\endxy~\Big)_N \bigg].$
\item[(5)] 
$\lambda(t){\mathbf K}\Big(~\xy 
(5,5);(-5,-5) **@{-} ?<*\dir{<},
(-5,5);(-2,2) **@{-} ?<*\dir{<}, 
(2,-2);(5,-5) **@{-},(0,-7)*{D_+},
\endxy~\Big)_N - \overline{\lambda}(t){\mathbf K}\Big(~\xy 
(-5,5);(5,-5) **@{-} ?<*\dir{<},
(5,5);(2,2) **@{-} ?<*\dir{<}, 
(-2,-2);(-5,-5) **@{-},(0,-7)*{D_-},
\endxy~\Big)_N =\mathbf{i}\sqrt{1+t^{-1}}\sqrt{3-t^{-1}}~
{\mathbf K}\Big(~\xy (-4,4);(-4,-4) **\crv{(1,0)},  
(4,4);(4,-4) **\crv{(-1,0)}, 
(-2.5,1.9)*{\ulcorner}, 
(2.5,1.9)*{\urcorner}, 
(0,-7)*{D_0},
\endxy~\Big)_N,$
where $D_+, D_-, D_0$ are three identical oriented link diagrams except the parts indicated and $\lambda(t)=2^{-1}t^{-2}\Big((t^2+2t-1)-\mathbf{i}(t^2-t)\sqrt{1+t^{-1}}\sqrt{3-t^{-1}}\Big)$.
\end{itemize}
\end{theorem}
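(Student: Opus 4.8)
The plan is to derive all five assertions by pushing the already-established identities for $\ll\cdot\gg_N$ in Theorem~\ref{thm-skein-rel-kb} through the evaluation map $\phi$ that sends $A\mapsto z(t)$, $A^{-1}\mapsto\overline{z}(t)=z(t)^{-1}$ and $x,y\mapsto t$. Since $\phi$ is a ring homomorphism on $\mathbb{Z}[A^{\pm1},x,y]$ (the relation $z(t)\overline{z}(t)=1$ making it compatible with $AA^{-1}=1$), every polynomial identity of Theorem~\ref{thm-skein-rel-kb} transports verbatim once $\phi$ is applied to both sides, leaving only the images of the scalar coefficients to be computed. Assertion (1) is then immediate from $\ll\bigcirc\gg_N=1$ together with $\phi(1)=1$, and assertion (2) is precisely Lemma~\ref{lem-inv-y6-rkb}, which I would simply cite.

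For (3) and (4) the only new ingredient is the evaluation of a few scalars. First I would record $\phi(\delta)=\phi(-A^2-A^{-2})=-z(t)^2-\overline{z}(t)^2=t^{-1}-1$, exactly as in the proof of Lemma~\ref{lem-inv-y6-rkb}; applying $\phi$ to the identity $\ll D\sqcup\bigcirc\gg_N=\delta\ll D\gg_N$ of Theorem~\ref{thm-skein-rel-kb}(3) then yields (3). For (4) I would apply $\phi$ to the marker skein identity of Theorem~\ref{thm-skein-rel-kb}(4), which writes $\ll\cdot\gg_N$ at a marked vertex as $x$ times its value on the $T_\infty$-smoothing plus $y$ times its value on the $T_0$-smoothing, and substitute $\phi(x)=\phi(y)=t$; this factors out the common $t$ and produces the bracketed sum.

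The substantive computation, and the step I expect to be the main obstacle, is (5). Here I would first square $z(t)$ to get $z(t)^2=\tfrac12\bigl(1-t^{-1}+\mathbf{i}P\bigr)$, where $P:=\sqrt{(3-t^{-1})(1+t^{-1})}=\sqrt{3+2t^{-1}-t^{-2}}$ is the irrational factor appearing in (\ref{irr-ft-form}), and then square again to obtain $z(t)^4=\tfrac12\bigl((-1-2t^{-1}+t^{-2})+\mathbf{i}(1-t^{-1})P\bigr)$. Matching real and imaginary parts against the definition of $\lambda(t)$ shows $z(t)^4=-\lambda(t)$, and conjugating gives $\overline{z}(t)^4=-\overline{\lambda}(t)$; likewise $\overline{z}(t)^2-z(t)^2=-\mathbf{i}P=-\mathbf{i}\sqrt{1+t^{-1}}\sqrt{3-t^{-1}}$. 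Applying $\phi$ to the crossing skein relation $A^4\ll D_+\gg_N-A^{-4}\ll D_-\gg_N=(A^{-2}-A^2)\ll D_0\gg_N$ of Theorem~\ref{thm-skein-rel-kb}(5) and inserting these three images gives $-\lambda(t)\,{\mathbf K}(D_+)_N+\overline{\lambda}(t)\,{\mathbf K}(D_-)_N=-\mathbf{i}P\,{\mathbf K}(D_0)_N$; multiplying through by $-1$ produces exactly the stated relation.

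The delicate point throughout (5) is the sign bookkeeping for the branches of the square roots: the coefficients $\phi(A^4)$, $\phi(A^{-4})$ and $\phi(A^{-2}-A^2)$ each acquire a sign relative to $\lambda(t)$, $\overline{\lambda}(t)$ and $\mathbf{i}P$, and one must verify that these three signs conspire into the single overall factor of $-1$ that is then cleared, rather than hoping the roots cancel symbolically. I would therefore fix the principal branches of $\sqrt{1+t^{-1}}$ and $\sqrt{3-t^{-1}}$ at the outset and verify the key equality $z(t)^4=-\lambda(t)$ directly, since the entire content of (5) rests on this one identity; everything else in the theorem is a formal consequence of homomorphically evaluating Theorem~\ref{thm-skein-rel-kb}.
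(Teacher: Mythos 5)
Your proposal is correct and follows essentially the same route as the paper: the paper likewise obtains (1)--(4) by substituting $A=z(t)$, $A^{-1}=\overline{z}(t)$, $x=y=t$ into Theorem \ref{thm-skein-rel-kb} and citing Lemma \ref{lem-inv-y6-rkb}, and obtains (5) from exactly the three identities you isolate, namely $z(t)^4=-\lambda(t)$, $\overline{z}(t)^4=-\overline{\lambda}(t)$ and $\overline{z}(t)^2-z(t)^2=-\mathbf{i}\sqrt{1+t^{-1}}\sqrt{3-t^{-1}}$. Your explicit verification of $z(t)^4=-\lambda(t)$ via $z(t)^2=\tfrac12(1-t^{-1}+\mathbf{i}P)$ checks out, so the sign bookkeeping you flag as the delicate point is handled correctly.
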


\begin{proof}
From Theorem \ref{thm-skein-rel-kb} with the substitutions $A=z(t), A^{-1}=\overline{z}(t), x=y=t$ and Lemma \ref{lem-inv-y6-rkb}, we obtain
the assertions (1), (2), (3) and (4) directly. Observe that
\begin{align*}
&z(t)^4=-2^{-1}t^{-2}\Big((t^2+2t-1)-(t^2-t)\mathbf{i}\sqrt{1+t^{-1}}\sqrt{3-t^{-1}}\Big),\\
&\overline{z}(t)^4=-2^{-1}t^{-2}\Big(t^2+2t-1+(t^2-t)\mathbf{i}\sqrt{1+t^{-1}}\sqrt{3-t^{-1}}\Big),\\
&\overline{z}(t)^2-z(t)^2=-\mathbf{i}\sqrt{1+t^{-1}}\sqrt{3-t^{-1}}.
\end{align*}
Hence the assertions (5) follows immediately from the skein relation in (5) in Theorem \ref{thm-skein-rel-kb}. This completes the proof.
\end{proof}

Let $\mathbb C$ denote the field of all complex numbers. For each integer $n \geq 2$, let $\phi_n : \mathbb Z[A, A^{-1}, x, y] \to \mathbb C$ be the function defined by $$\phi_n(f)=\phi f|_{t=n}=f(z(n), \overline{z}(n), n, n)$$ for each polynomial $f = f(A, A^{-1}, x, y) \in \mathbb Z[A, A^{-1}, x, y]$. Then $\phi_n$ is a ring homomorphism and the image of $\phi_n$ is a subring of $\mathbb C$. From (\ref{irr-ft-form}), we see that $\phi_n(f)$ is expressed as a complex number of the form:
\begin{align}
\phi_n(f)&=a_0(n)+\mathbf{i} b_0(n)+\Big(a_1(n)+\mathbf{i} b_1(n)\Big)\sqrt{1+n^{-1}}+
\Big(a_2(n)+\mathbf{i}b_2(n)\Big)\sqrt{3-n^{-1}}\notag\\
&+\Big(a_3(n)+\mathbf{i}b_3(n)\Big)\sqrt{1+n^{-1}}\sqrt{3-n^{-1}},
\label{irr-ft-form-1}
\end{align}
where $a_i(n), b_i(n) \in \mathbb Z[2^{-1}, n^{-1}] (0 \leq i \leq 3)$, namely, polynomials in variables $2^{-1}$ and $n^{-1}$ with integral coefficients. Let
$$\Lambda_{n}=\{\phi_n(f)~|~ f \in \mathbb Z[A, A^{-1}, x, y]\}.$$ 
Now let $\mathbb Z_{2n-1}=\{[0], [1], [2],\ldots, [2n-1]\}$ denote the factor ring $\mathbb Z/(2n-1)\mathbb Z$ of the ring $\mathbb Z$ of integers modulo $2n-1$ and let $\overline{\phi_n(f)}$ denote $\phi_n(f)$ reducing integral coefficients of each terms $a_i(n)$ and $b_i(n) (0 \leq i \leq 3)$ modulo $2n-1$ (hence the reduced coefficients are all in $\mathbb Z_{2n-1}$). 
Then we have the following:

\begin{theorem}\label{thm-inv-kb-1} 
Let $\mathcal L$ be an oriented surface-link and let $D$ be an oriented marked graph diagram presenting $\mathcal L$. Then for each integer $n \geq 2$, ${\mathbf K}_{2n-1}(D)_N=\overline{{\mathbf K}(D;n)_N}$ is an invariant of $\mathcal L$, and denoted by ${\mathbf K}_{2n-1}(\mathcal L)_N$.
\end{theorem}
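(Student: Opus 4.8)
The plan is to show that $\overline{{\mathbf K}(D;n)_N}$ is left unchanged by every oriented Yoshikawa move, since by Theorem~\ref{thm-equiv-mgds-ym}(1) two oriented marked graph diagrams present the same oriented surface-link precisely when they are connected by a finite sequence of oriented Yoshikawa moves of type I and type II. First I would dispose of all the type I moves together with $\Gamma_6$ and $\Gamma'_6$: by Lemma~\ref{lem-inv-y6-rkb} the value ${\mathbf K}(D;n)_N$ is already invariant under these moves, so applying the coefficientwise reduction modulo $2n-1$ preserves the invariance automatically.

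The only remaining moves are the type II moves $\Gamma_7$ and $\Gamma_8$, and here I would invoke Lemma~\ref{lem-inv-y78-rkb}. Specializing the identity (\ref{eqn2-inv-y78}) at $t=n$ gives
\[
{\mathbf K}(D';n)_N = {\mathbf K}(D;n)_N + (2n-1)\,\Psi(n),
\]
where $\Psi(n)$ is of the form (\ref{irr-ft-form-1}); that is, $\Psi(n)=\sum_i \big(a_i(n)+\mathbf{i}\,b_i(n)\big)\,\omega_i$ with surd-basis elements $\omega_i\in\{1,\sqrt{1+n^{-1}},\sqrt{3-n^{-1}},\sqrt{1+n^{-1}}\sqrt{3-n^{-1}}\}$ and coefficients $a_i(n),b_i(n)\in\mathbb Z[2^{-1},n^{-1}]$. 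Consequently the extra term $(2n-1)\Psi(n)$ has, in this same canonical form, coefficients $(2n-1)a_i(n)$ and $(2n-1)b_i(n)$.

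The crux is then the elementary observation that reduction modulo $2n-1$ annihilates each such coefficient. Since $n\geq 2$ forces $2n-1$ to be odd with $2n\equiv 1\pmod{2n-1}$, both $2$ and $n$ are units in $\mathbb Z_{2n-1}$ (with $2^{-1}\equiv n$ and $n^{-1}\equiv 2$), so the assignment $2^{-1}\mapsto n$, $n^{-1}\mapsto 2$ together with reduction of integer coefficients defines a ring homomorphism $\mathbb Z[2^{-1},n^{-1}]\to\mathbb Z_{2n-1}$ under which $2n-1\mapsto 0$. Hence every coefficient $(2n-1)a_i(n)$ and $(2n-1)b_i(n)$ reduces to $0$, so $\overline{(2n-1)\Psi(n)}=0$ and therefore $\overline{{\mathbf K}(D';n)_N}=\overline{{\mathbf K}(D;n)_N}$. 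Combining the two cases yields that ${\mathbf K}_{2n-1}(D)_N$ is preserved by all oriented Yoshikawa moves, hence depends only on $\mathcal L$.

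The main obstacle I anticipate is not the divisibility argument, which is immediate once the factor $2n-1$ has been exposed by Lemma~\ref{lem-inv-y78-rkb}, but rather the \emph{well-definedness} of the reduction $\overline{\,\cdot\,}$ itself. One must ensure that the coefficients $a_i(n),b_i(n)$ in the representation (\ref{irr-ft-form-1}) are canonically determined by $\phi_n(f)$, which requires the surds $1,\sqrt{1+n^{-1}},\sqrt{3-n^{-1}},\sqrt{1+n^{-1}}\sqrt{3-n^{-1}}$ (and the factor $\mathbf{i}$) to be linearly independent over the appropriate field, so that ``reducing the integer coefficients modulo $2n-1$'' is a genuine function on the values of the invariant rather than an artifact of a chosen expression. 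The safest route is to read these coefficients off the formal expansion (\ref{irr-ft-form}) in the indeterminate $t$, where they are honest polynomials in $\mathbb Z[2^{-1},t^{-1},t]$ produced canonically by the recursive skein computation of Theorem~\ref{thm-skein-rel-kb-1}, and only afterward specialize $t=n$; this sidesteps any coincidental degeneracies of the surd basis at particular integers $n$ and makes the reduction compatible with the ring homomorphism $\phi_n$.
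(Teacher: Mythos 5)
Your argument is essentially the paper's own proof: invariance under the type I moves and $\Gamma_6,\Gamma'_6$ is obtained from Lemma~\ref{lem-inv-y6-rkb}, and for $\Gamma_7,\Gamma_8$ the discrepancy $(2n-1)\Psi(n)$ supplied by Lemma~\ref{lem-inv-y78-rkb} is annihilated by the coefficientwise reduction modulo $2n-1$, exactly as in the paper (using that $2$ and $n$ are units in $\mathbb Z_{2n-1}$). Your closing caveat about the well-definedness of $\overline{\,\cdot\,}$ --- namely that the surd decomposition in (\ref{irr-ft-form-1}) must determine the coefficients $a_i(n), b_i(n)$ canonically --- is a legitimate point the paper leaves implicit, and your suggestion to read the coefficients off the formal expansion (\ref{irr-ft-form}) in $t$ before specializing $t=n$ is a sound way to handle it.
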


\begin{proof}
By Lemma \ref{lem-inv-y6-rkb}, ${\mathbf K}(D;n)$ is invariant under oriented Yoshikawa moves of type I and the moves $\Gamma_6$ and $\Gamma'_6$ and so does ${\mathbf K}_{2n-1}(D)=\overline{{\mathbf K}(D;n)}$ for each integer $n \geq 2$. 

Let $D$ be an oriented marked graph diagram and let $D'$ be an oriented marked graph diagram obtained from $D$ by applying a single oriented Yoshikawa move $\Gamma_7$ or $\Gamma_8$. By Lemma \ref{lem-inv-y78-rkb}, we see that for each integer $n \geq 2$,
\begin{equation*}
{\mathbf K}(D;n)_N={\mathbf K}(D';n)_N + (2n-1)\Psi(n),
\end{equation*}
where $\Psi(n) \in \Lambda_n.$ 
Hence we have
\begin{align*}
{\mathbf K}_{2n-1}(D)_N&=\overline{{\mathbf K}(D;n)}_N=\overline{{\mathbf K}(D';n)_N + (2n-1)\Psi(n)}\\&=\overline{{\mathbf K}(D';n)}_N={\mathbf K}_{2n-1}(D')_N.
\end{align*}
This completes the proof.
\end{proof}

\begin{definition}\label{defn-sp=poly-2}
Let $D$ be a marked graph diagram and let $\ll D\gg$ be the polynomial of $D$ associated with the Kauffman bracket polynomial. Define ${\mathbf K}(D)$ by 
\begin{equation*}
{\mathbf K}(D)={\mathbf K}(D;t)=\phi\ll D\gg = (-z(t)^3)^{-sw(D)}[[D]](z(t), \overline{z}(t), t, t).
\end{equation*}
\end{definition}

By parallel argument of the proof forgetting orientation, we obtain the corresponding Lemmas \ref{lem-inv-y6-rkb} and \ref{lem-inv-y78-rkb} for ${\mathbf K}(D)$ and consequently we obtain the following:

\begin{theorem}\label{thm-inv-kb-2} 
Let $\mathcal L$ be a surface-link and let $D$ be a marked graph diagram presenting $\mathcal L$. Then for each integer $n \geq 2$, ${\mathbf K}_{2n-1}(D)=\overline{{\mathbf K}(D;n)}$ is an invariant of $\mathcal L$, and denoted by ${\mathbf K}_{2n-1}(\mathcal L)$.
\end{theorem}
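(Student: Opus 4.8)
The plan is to carry out the unoriented analogue of the argument that proved Theorem~\ref{thm-inv-kb-1}. Since ${\mathbf K}(D)=\phi\ll D\gg$ is obtained from $\ll D\gg$ by applying the ring evaluation $\phi$ (with $A=z(t)$, $A^{-1}=\overline{z}(t)$, $x=y=t$), every invariance already established for $\ll D\gg$ is inherited by ${\mathbf K}(D)$. Thus the whole problem reduces to proving the unoriented counterparts of Lemmas~\ref{lem-inv-y6-rkb} and \ref{lem-inv-y78-rkb} and then invoking Theorem~\ref{thm-equiv-mgds-ym}(2). First I would dispose of the unoriented Yoshikawa moves of type I: by Theorem~\ref{thm-inv-mgs}, $\ll D\gg$ is invariant under $\Omega_1,\Omega_2,\Omega_3,\Omega_4,\Omega'_4,\Omega_5$, so applying $\phi$ shows ${\mathbf K}(D)$, and hence ${\mathbf K}_{2n-1}(D)$, is unchanged under these moves for every $n\geq 2$.

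For the moves $\Omega_6$ and $\Omega'_6$ I would use the unoriented counterpart of Lemma~\ref{lem1-Ideal-inv}, which gives $\ll\cdot\gg=(\delta x+y)\ll\cdot\gg$ and $\ll\cdot\gg=(x+\delta y)\ll\cdot\gg$ with $\delta=-A^2-A^{-2}$. Because $-z(t)^2-\overline{z}(t)^2=t^{-1}-1$ and $x=y=t$ under $\phi$, the computation of Lemma~\ref{lem-inv-y6-rkb} applies verbatim to yield $\phi(\delta x+y)=(t^{-1}-1)t+t=1$ and likewise $\phi(x+\delta y)=1$. Hence ${\mathbf K}(D)$ is \emph{genuinely} invariant (not merely modulo $2n-1$) under $\Omega_6$ and $\Omega'_6$, completing the unoriented analogue of Lemma~\ref{lem-inv-y6-rkb}.

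The essential step is the unoriented analogue of Lemma~\ref{lem-inv-y78-rkb} for $\Omega_7$ and $\Omega_8$. Here I would appeal to the tangle expansions in the proof of Theorem~\ref{thm-inv-kb}(1): for any $U\in\mathcal T_3$ and $V\in\mathcal T_4$, the products $(\langle R(U)\rangle-\langle R^*(U)\rangle)xy$ and $(\langle S(V)\rangle-\langle S^*(V)\rangle)xy$ are both $R[x,y]$-multiples of $f_3(A,A^{-1},x,y)=(A^4+1+A^{-4})xy$ by~(\ref{eq2-pf-m7-1}) and~(\ref{eq2-pf-m8-1}). Combined with the unoriented forms of Lemmas~\ref{lem2-Ideal-inv} and \ref{lem3-Ideal-inv}, this shows $\ll D'\gg-\ll D\gg$ is an $R[x,y]$-multiple of $f_3$. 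Since $z(t)^4+1+\overline{z}(t)^4=(t^{-1}-1)^2-1=t^{-2}-2t^{-1}$, applying $\phi$ gives $\phi(f_3)=(t^{-2}-2t^{-1})t^2=1-2t$, whence ${\mathbf K}(D')-{\mathbf K}(D)=(2t-1)\Psi(t)$ with $\Psi(t)$ of the form~(\ref{irr-ft-form}). Specializing $t=n$ and reducing integral coefficients modulo $2n-1$ then annihilates the term $(2n-1)\Psi(n)$, since $\Psi(n)\in\Lambda_n$ and $2,n$ are invertible modulo $2n-1$; thus $\overline{{\mathbf K}(D;n)}=\overline{{\mathbf K}(D';n)}$, i.e. ${\mathbf K}_{2n-1}(D)={\mathbf K}_{2n-1}(D')$. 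Because the moves $\Omega_1,\ldots,\Omega_5,\Omega_6,\Omega'_6,\Omega_7,\Omega_8$ generate the equivalence of marked graph diagrams presenting the same surface-link (Theorem~\ref{thm-equiv-mgds-ym}(2)), ${\mathbf K}_{2n-1}(D)$ depends only on $\mathcal L$.

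The main obstacle is not a new idea but careful bookkeeping in two places. First, one must confirm that the reduction $\overline{\,\cdot\,}$ is well defined on the canonical form~(\ref{irr-ft-form-1}), so that $\overline{(2n-1)\Psi(n)}=0$ really holds coefficient-by-coefficient in $\mathbb Z_{2n-1}$; this rests on the invertibility of $2$ and $n$ modulo $2n-1$ and on the uniqueness of the expression~(\ref{irr-ft-form-1}). Second, one must verify that the unoriented tangle computations of Theorem~\ref{thm-inv-kb}(1) genuinely produce the \textbf{single} generator $f_3$ for both the $\Omega_7$ and $\Omega_8$ obstructions; this is the tedious part but is already recorded in the proof of Theorem~\ref{thm-inv-kb}. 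Once these are in hand, the argument is a direct transcription of the oriented case.
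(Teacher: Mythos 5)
Your proposal is correct and follows exactly the route the paper intends: the paper's own proof of this theorem consists of the single remark that one obtains the unoriented counterparts of Lemmas~\ref{lem-inv-y6-rkb} and \ref{lem-inv-y78-rkb} "by parallel argument of the proof forgetting orientation," which is precisely the argument you spell out (type~I moves via Theorem~\ref{thm-inv-mgs}, $\Omega_6,\Omega'_6$ via $\phi(\delta x+y)=\phi(x+\delta y)=1$, and $\Omega_7,\Omega_8$ via the $f_3$-multiple structure from the proof of Theorem~\ref{thm-inv-kb} together with $\phi(f_3)=1-2t$ and reduction modulo $2n-1$). No further comment is needed.
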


\begin{remark}
For each pair $\epsilon=(\epsilon_1, \epsilon_2) \in \{(1,-1), (-1,1), (-1,-1)\}$, let
\begin{equation*}
\begin{split}
&z_\epsilon(t)=\frac{1}{2}\Big(\epsilon_1\sqrt{3-t^{-1}}+\epsilon_2\mathbf{i}\sqrt{1+t^{-1}}\Big),~~
\overline{z_\epsilon(t)}=\frac{1}{2}\Big(\epsilon_1\sqrt{3-t^{-1}}-\epsilon_2\mathbf{i}\sqrt{1+t^{-1}}\Big),
\end{split}
\end{equation*}
where $t\not= 0$. It is easily seen that $z_\epsilon(t)\overline{z}_\epsilon(t)=1$ and
$-z_\epsilon(t)^2-z_\epsilon(t)^{-2}=-z_\epsilon(t)^2-\overline{z}_\epsilon(t)^{2}=t^{-1}-1.$ In the proof of Lemma \ref{lem-inv-y6-rkb} with $\delta=-A^2-A^{-2}$, it is checked that
\begin{align*}
\phi(\delta x+y)=(-z_\epsilon(t)^2-z_\epsilon(t)^{-2})t+t=(t^{-1}-1)t+t=1,\\
\phi(x+\delta y)=t+(-z_\epsilon(t)^2-z_\epsilon(t)^{-2})t=t+(t^{-1}-1)t=1.
\end{align*} 
In the proof of Lemma \ref{lem-inv-y78-rkb}, it is checked that
$z_\epsilon(t)^4+1+\overline{z}_\epsilon(t)^{4}
=(-z_\epsilon(t)^2-\overline{z}_\epsilon(t)^{2})^2-1=(t^{-1}-1)^2-1=t^{-2}-2t^{-1}$ and it also follows from (\ref{eq2-pf-m7-1}) with the substitution $A=z_\epsilon(t), A^{-1}=z_\epsilon(t)^{-1}, x=y=t$ that
$(\langle R(U)\rangle-\langle R^*(U)\rangle)t^2
=(1-2t)\Big(\psi_3(U)-\psi_4(U)\Big).$ Now we define
\begin{align*}
&{\mathbf K}_\epsilon(D;t)=\phi_\epsilon\ll D\gg =(-z_\epsilon(t)^3)^{-sw(D)}[[D]](z_\epsilon(t), \overline{z}_\epsilon(t), t, t),\\
&{\mathbf K}_\epsilon(D;t)_N=\phi_\epsilon\ll D\gg_N = (-z_\epsilon(t)^3)^{-w(D)}[[\widetilde{D}]](z_\epsilon(t), \overline{z}_\epsilon(t), t, t),
\end{align*}
where $\phi_\epsilon f=f(z_\epsilon(t), \overline{z_\epsilon(t)}, t, t)$ for $f \in \mathbb Z[A, A^{-1}, x, y]$. By the same argument of the proof of Theorem \ref{thm-inv-kb-1} with $\phi=\phi_\epsilon$, we obtain that for each integer $n \geq 2$, ${\mathbf K}^\epsilon_{2n-1}(D)=\overline{{\mathbf K}_\epsilon(D;n)}$ (resp.~${\mathbf K}^\epsilon_{2n-1}(D)_N=\overline{{\mathbf K}_\epsilon(D;n)_N}$) is also an invariant of an (resp.~oriented) surface-link $\mathcal L$ presented by an (resp.~oriented) marked graph diagram $D$.
\end{remark} 


\subsection{Examples}
\label{sect-inv-kbp-spl-2}

In this subsection, we calculate the invariants ${\mathbf K}_{2n-1}(\mathcal L)$ and ${\mathbf K}_{2n-1}(\mathcal L)_N$ for various surface-links $\mathcal L$.

\begin{example}\label{examp-T^2-1} 
Let $2^1_1$ be the standard torus of genus one with the orientation in Example \ref{examp-T^2}, where it is seen that
$\ll \widetilde{2^{1}_1}\gg=\ll 2^{1}_1\gg_N=x^2+2(-A^2-A^{-2})xy+y^2.$
This gives that for each integer $n \geq 2$,
${\mathbf K}( \widetilde{2^{1}_1};n)={\mathbf K}(2^1_1;n)_N
=n^2+2(-z(n)^2-\overline{z}(n)^{2})n^2+n^2=2n.$ Therefore $${\mathbf K}_{2n-1}( \widetilde{2^{1}_1})={\mathbf K}_{2n-1}(2^1_1)_N=\overline{2n}=[1]~ (n\geq 2).$$
\end{example}

\begin{example}\label{exmp-RP2-1}
Let $2_1^{-1}$ be the positive standard projective plane. Then it is seen from Example \ref{exmp-RP2} that $\ll 2_1^{-1}\gg=(-A^3)x+(-A^{-3})y.$
This gives that for each integer $n \geq 2$,
${\mathbf K}(2_1^{-1};n)=(-z(n)^3)n+(-\overline{z}(n)^{3})n=n^{-\frac{1}{2}}\sqrt{3n-1}.$ Therefore $${\mathbf K}_{2n-1}(2_1^{-1})=[1]n^{-\frac{1}{2}}\sqrt{3n-1}~(n\geq 2).$$
Similarly, let $2_1^{-1*}$ be the negative standard projective plane. Then for each $n \geq 2$, we have ${\mathbf K}_{2n-1}(2_1^{-1*})={\mathbf K}_{2n-1}(2_1^{-1})=[1]n^{-\frac{1}{2}}\sqrt{3n-1}.$
\end{example}

\begin{example}\label{examp-2compsl-6^01_1-1}
Let $6^{0,1}_1$ be the two component orientable surface-link with the orientation in Example \ref{examp-2comp-sl-6^{0,1}_1}, where we have $\ll 6^{0,1}_1\gg_N=(-A^{2}-A^{-2})(x^2+y^2) + (A^4+4+A^{-4}+A^{-8}+A^8)xy.$ This gives
${\mathbf K}(6^{0,1}_1;t)_N=\ll 6^{0,1}_1\gg_N(z(t), \overline{z}(t), t, t)=(4t^3+3t^2-4t+1)t^{-2}$. Hence for each integer $n \geq 2$,
$${\mathbf K}_{2n-1}(6^{0,1}_1)_N=\overline{{\mathbf K}(6^{0,1}_1;n)_N}=[4n^3+3n^2-4n+1]n^{-2}.$$
Further, forgetting the orientation on $6^{0,1}_1$, we see from Example \ref{examp-2comp-sl-6^{0,1}_1} that $\ll 6^{0,1}_1\gg_N$ $=\ll \widetilde{6^{0,1}_1}\gg$ and therefore ${\mathbf K}_{2n-1}(\widetilde{6^{0,1}_1})={\mathbf K}_{2n-1}(6^{0,1}_1)_N$ for all $n\geq 2$.
\end{example}

\begin{example}\label{examp-2compsl-7^02_1-1}
Let $7^{0,-2}_1$ be the two component nonorientable surface-link in Example \ref{examp-2comp-sl-7^{0,-2}_1}, where we get $\ll 7^{0,-2}_1\gg=(-A^{-2}-A^{2})(x^2+y^2)+(3-A^{-12})xy.$ This gives
${\mathbf K}(7^{0,-2}_1;t)=\ll 7^{0,-2}_1\gg(z(t), \overline{z}(t), t, t)=(4t^5+12t^4-4t^3-9t^2+6t-1)2^{-1}t^{-4}
+(4t^4-2t^3-6t^2+5t-1)2^{-1}t^{-4}\mathbf{i}\sqrt{(t+1)(3t-1)}.$
Hence for each integer $n \geq 2$, 
\begin{align*}
{\mathbf K}_{2n-1}(7^{0,-2}_1)
&=\overline{{\mathbf K}(7^{0,-2}_1;n)}
=[4n^5+12n^4-4n^3-9n^2+6n-1]2^{-1}n^{-4}\\
&+[4n^4-2n^3-6n^2+5n-1]2^{-1}n^{-4}\mathbf{i}\sqrt{(n+1)(3n-1)}.
\end{align*}
\end{example}

\begin{example}\label{examp-spun-2-knot-8_1-1}
Let $8_1$ be the spun $2$-knot of the trefoil in Yoshikawa's table (cf. \cite{Kaw,Yo}) with the orientation indicated below. By Theorem \ref{thm-skein-rel-kb} together with (\ref{nkbp-exmps}) and (\ref{skein-rel-exmp}), we have
\begin{align*}
\ll~&\xy 
(-12,8);(-2,8) **@{-}, 
(2,8);(12,8) **@{-}, 
(-12,8);(-12,2) **@{-}, 
(12,8);(12,2) **@{-}, 
(-12,-8);(-2,-8) **@{-}, 
(-2,-8);(12,-8) **@{-}, 
(-12,-8);(-12,-2) **@{-}, 
(12,-8);(12,-2) **@{-},
(-2,8);(2,4) **@{-}, 
(-2,4);(2,8) **@{-}, 
(-2,4);(4,-2) **@{-}, 
(2,4);(-4,-2) **@{-}, 
%
%
(2.4,0.4);(4,2) **@{-},
(0.1,-1.9);(1.7,-0.2) **@{-},  
(0.1,-1.9);(-1.7,-0.3) **@{-},  
(-2.4,0.4);(-4,2) **@{-},
(-12,2);(-10.4,0.4) **@{-}, 
(-9.6,-0.4);(-8,-2) **@{-}, 
(-12,-2);(-8,2) **@{-},
(-8,2);(-6.4,0.4) **@{-}, 
(-5.6,-0.4);(-4,-2) **@{-}, 
(-8,-2);(-4,2) **@{-},
(12,2);(10.4,0.4) **@{-}, 
(9.6,-0.4);(8,-2) **@{-},
(12,-2);(8,2) **@{-},
(5.6,-0.4);(4,-2) **@{-}, 
(8,2);(6.6,0.4) **@{-},
(8,-2);(4,2) **@{-},
(-0.1,4.5);(-0.1,7.5) **@{-},
(0,4.5);(0,7.5) **@{-},
(0.1,4.5);(0.1,7.5) **@{-},
(-1.5,1.9);(1.5,1.9) **@{-},
(-1.5,2);(1.5,2) **@{-},
(-1.5,2.1);(1.5,2.1) **@{-}, 
(-9,0.4)*{\urcorner}, 
(-9,-0.8)*{\lrcorner}, 
(11,0.5)*{\urcorner}, 
(11,-0.8)*{\lrcorner}, 
(0.5,-11.2) *{8_1},
\endxy\gg_N =
x^2 \ll~\xy 
(-12,8);(-2,8) **@{-}, 
(2,8);(12,8) **@{-}, 
(-12,8);(-12,2) **@{-}, 
(12,8);(12,2) **@{-}, 
(-12,-8);(-2,-8) **@{-}, 
(-2,-8);(12,-8) **@{-}, 
(-12,-8);(-12,-2) **@{-}, 
(12,-8);(12,-2) **@{-},
(2,0);(4,-2) **@{-}, 
(-2,0);(-4,-2) **@{-}, 
%
%
(2.4,0.4);(4,2) **@{-},
(0.1,-1.9);(1.7,-0.2) **@{-},  
(0.1,-1.9);(-1.7,-0.3) **@{-},  
(-2.4,0.4);(-4,2) **@{-},
(-12,2);(-10.4,0.4) **@{-}, 
(-9.6,-0.4);(-8,-2) **@{-}, 
(-12,-2);(-8,2) **@{-},
(-8,2);(-6.4,0.4) **@{-}, 
(-5.6,-0.4);(-4,-2) **@{-}, 
(-8,-2);(-4,2) **@{-},
(12,2);(10.4,0.4) **@{-}, 
(9.6,-0.4);(8,-2) **@{-},
(12,-2);(8,2) **@{-},
(5.6,-0.4);(4,-2) **@{-}, 
(8,2);(6.6,0.4) **@{-},
(8,-2);(4,2) **@{-},
(-9,0.4)*{\urcorner}, 
(-9,-0.8)*{\lrcorner}, 
(11,0.5)*{\urcorner}, 
(11,-0.8)*{\lrcorner}, 
(-2,4);(-2,8) **\crv{(1,6)}, 
(2,8);(2,4) **\crv{(-1,6)},
(-2,4);(2,4) **\crv{(0,1)}, 
(-2,0);(2,0) **\crv{(0,3)},
 \endxy\gg_N 
 +xy \ll~\xy 
(-12,8);(-2,8) **@{-}, 
(2,8);(12,8) **@{-}, 
(-12,8);(-12,2) **@{-}, 
(12,8);(12,2) **@{-}, 
(-12,-8);(-2,-8) **@{-}, 
(-2,-8);(12,-8) **@{-}, 
(-12,-8);(-12,-2) **@{-}, 
(12,-8);(12,-2) **@{-},
(2,0);(4,-2) **@{-}, 
(-2,0);(-4,-2) **@{-},  
%
%
(2.4,0.4);(4,2) **@{-},
(0.1,-1.9);(1.7,-0.2) **@{-},  
(0.1,-1.9);(-1.7,-0.3) **@{-},  
(-2.4,0.4);(-4,2) **@{-},
(-12,2);(-10.4,0.4) **@{-}, 
(-9.6,-0.4);(-8,-2) **@{-}, 
(-12,-2);(-8,2) **@{-},
(-8,2);(-6.4,0.4) **@{-}, 
(-5.6,-0.4);(-4,-2) **@{-}, 
(-8,-2);(-4,2) **@{-},
(12,2);(10.4,0.4) **@{-}, 
(9.6,-0.4);(8,-2) **@{-},
(12,-2);(8,2) **@{-},
(5.6,-0.4);(4,-2) **@{-}, 
(8,2);(6.6,0.4) **@{-},
(8,-2);(4,2) **@{-},
(-9,0.4)*{\urcorner}, 
(-9,-0.8)*{\lrcorner}, 
(11,0.5)*{\urcorner}, 
(11,-0.8)*{\lrcorner}, 
(-2,4);(-2,8) **\crv{(1,6)}, 
(2,8);(2,4) **\crv{(-1,6)},
(-2,0);(-2,4) **\crv{(1,2)}, 
(2,4);(2,0) **\crv{(-1,2)},
 \endxy\gg_N +\\
 &\hskip 3.6cm yx
 \ll~\xy 
(-12,8);(-2,8) **@{-}, 
(2,8);(12,8) **@{-}, 
(-12,8);(-12,2) **@{-}, 
(12,8);(12,2) **@{-}, 
(-12,-8);(-2,-8) **@{-}, 
(-2,-8);(12,-8) **@{-}, 
(-12,-8);(-12,-2) **@{-}, 
(12,-8);(12,-2) **@{-},
(2,0);(4,-2) **@{-}, 
(-2,0);(-4,-2) **@{-}, 
%
%
(2.4,0.4);(4,2) **@{-},
(0.1,-1.9);(1.7,-0.2) **@{-},  
(0.1,-1.9);(-1.7,-0.3) **@{-},  
(-2.4,0.4);(-4,2) **@{-},
(-12,2);(-10.4,0.4) **@{-}, 
(-9.6,-0.4);(-8,-2) **@{-}, 
(-12,-2);(-8,2) **@{-},
(-8,2);(-6.4,0.4) **@{-}, 
(-5.6,-0.4);(-4,-2) **@{-}, 
(-8,-2);(-4,2) **@{-},
(12,2);(10.4,0.4) **@{-}, 
(9.6,-0.4);(8,-2) **@{-},
(12,-2);(8,2) **@{-},
(5.6,-0.4);(4,-2) **@{-}, 
(8,2);(6.6,0.4) **@{-},
(8,-2);(4,2) **@{-},
(-9,0.4)*{\urcorner}, 
(-9,-0.8)*{\lrcorner}, 
(11,0.5)*{\urcorner}, 
(11,-0.8)*{\lrcorner},  
(-2,8);(2,8) **\crv{(0,5)}, 
(-2,4);(2,4) **\crv{(0,7)},
(-2,4);(2,4) **\crv{(0,1)}, 
(-2,0);(2,0) **\crv{(0,3)},
 \endxy\gg_N + y^2
 \ll~\xy 
(-12,8);(-2,8) **@{-}, 
(2,8);(12,8) **@{-}, 
(-12,8);(-12,2) **@{-}, 
(12,8);(12,2) **@{-}, 
(-12,-8);(-2,-8) **@{-}, 
(-2,-8);(12,-8) **@{-}, 
(-12,-8);(-12,-2) **@{-}, 
(12,-8);(12,-2) **@{-},
(2,0);(4,-2) **@{-}, 
(-2,0);(-4,-2) **@{-},  
%
%
(2.4,0.4);(4,2) **@{-},
(0.1,-1.9);(1.7,-0.2) **@{-},  
(0.1,-1.9);(-1.7,-0.3) **@{-},  
(-2.4,0.4);(-4,2) **@{-},
(-12,2);(-10.4,0.4) **@{-}, 
(-9.6,-0.4);(-8,-2) **@{-}, 
(-12,-2);(-8,2) **@{-},
(-8,2);(-6.4,0.4) **@{-}, 
(-5.6,-0.4);(-4,-2) **@{-}, 
(-8,-2);(-4,2) **@{-},
(12,2);(10.4,0.4) **@{-}, 
(9.6,-0.4);(8,-2) **@{-},
(12,-2);(8,2) **@{-},
(5.6,-0.4);(4,-2) **@{-}, 
(8,2);(6.6,0.4) **@{-},
(8,-2);(4,2) **@{-},
(-9,0.4)*{\urcorner}, 
(-9,-0.8)*{\lrcorner}, 
(11,0.5)*{\urcorner}, 
(11,-0.8)*{\lrcorner}, 
(-2,0);(-2,4) **\crv{(1,2)}, 
(2,4);(2,0) **\crv{(-1,2)},
(-2,8);(2,8) **\crv{(0,5)}, 
(-2,4);(2,4) **\crv{(0,7)},
 \endxy\gg_N\\
 &= \Big\langle
 ~\xy (0,0) *\xycircle(3,3){-},\endxy
 ~\xy (0,0) *\xycircle(3,3){-},\endxy
 ~\Big\rangle_N (x^2+y^2) + xy\Big\langle~\xy 
(-12,8);(-2,8) **@{-}, 
(2,8);(12,8) **@{-}, 
(-12,8);(-12,2) **@{-}, 
(12,8);(12,2) **@{-}, 
(-12,-8);(-2,-8) **@{-}, 
(-2,-8);(12,-8) **@{-}, 
(-12,-8);(-12,-2) **@{-}, 
(12,-8);(12,-2) **@{-},
(2,0);(4,-2) **@{-}, 
(-2,0);(-4,-2) **@{-},  
%
%
(2.4,0.4);(4,2) **@{-},
(0.1,-1.9);(1.7,-0.2) **@{-},  
(0.1,-1.9);(-1.7,-0.3) **@{-},  
(-2.4,0.4);(-4,2) **@{-},
(-12,2);(-10.4,0.4) **@{-}, 
(-9.6,-0.4);(-8,-2) **@{-}, 
(-12,-2);(-8,2) **@{-},
(-8,2);(-6.4,0.4) **@{-}, 
(-5.6,-0.4);(-4,-2) **@{-}, 
(-8,-2);(-4,2) **@{-},
(12,2);(10.4,0.4) **@{-}, 
(9.6,-0.4);(8,-2) **@{-},
(12,-2);(8,2) **@{-},
(5.6,-0.4);(4,-2) **@{-}, 
(8,2);(6.6,0.4) **@{-},
(8,-2);(4,2) **@{-},
(-9,0.4)*{\urcorner}, 
(-9,-0.8)*{\lrcorner}, 
(11,0.5)*{\urcorner}, 
(11,-0.8)*{\lrcorner}, 
(-2,4);(-2,8) **\crv{(1,6)}, 
(2,8);(2,4) **\crv{(-1,6)},
(-2,0);(-2,4) **\crv{(1,2)}, 
(2,4);(2,0) **\crv{(-1,2)},
 \endxy~\Big\rangle_N + yx\Big\langle~\xy
(0,0) *\xycircle(3,3){-}, \endxy~
\xy (0,0) *\xycircle(3,3){-}, \endxy~
\xy (0,0) *\xycircle(3,3){-}, \endxy~\Big\rangle_N\\
&=(-A^{-2}-A^{2})(x^2+y^2)+ (-A^{-2}-A^{2})^2xy+\\
&\hskip 3.3cm (-A^{16}+A^{12}+A^{4})(-A^{-16}+A^{-12}+A^{-4})xy\\
&=(-A^{2}-A^{-2})(x^2+y^2) + (5-A^{12}-A^{-12}+A^8+A^{-8}) xy.
 \end{align*}
This yields 
${\mathbf K}(8_1;t)_N=\ll 8_1\gg_N(z(t), \overline{z}(t), t, t)
=(6t^5+14t^4-8t^3-8t^2+6t-1)t^{-4}$.
Hence for each integer $n \geq 2$,
\begin{align*}
{\mathbf K}_{2n-1}(8_1)_N &=\overline{{\mathbf K}(8_1;n)_N}= [6n^5+14n^4-8n^3-8n^2+6n-1]n^{-4}.
\end{align*}
Further, forgetting the orientation on $8_1$, we see that $\ll 8_1\gg_N$ $=\ll \widetilde{8_1}\gg$ and therefore
${\mathbf K}_{2n-1}(\widetilde{8_1})={\mathbf K}_{2n-1}(8_1)_N$ for all $n\geq 2.$
\end{example}
 
 
\begin{example}\label{examp-9_1-2}
Let $9_1$ be the ribbon $2$-knot associated with the knot $6_1$ in Yoshikawa's table with the orientation indicated below. By Theorem \ref{thm-skein-rel-kb} together with (\ref{nkbp-exmps}) and (\ref{skein-rel-exmp}), we obtain 

\begin{align*}
\ll~&\xy 
(-2,12);(2,8) **@{-}, 
(-2,8);(-0.7,9.5) **@{-},
(0.7,10.6);(2,12) **@{-},
(-12,12);(-2,12) **@{-}, 
(2,12);(12,12) **@{-}, 
(-12,12);(-12,2) **@{-}, 
(12,12);(12,2) **@{-},
(-12,-8);(-2,-8) **@{-}, 
(-2,-8);(12,-8) **@{-}, 
(-12,-8);(-12,-2) **@{-}, 
(12,-8);(12,-2) **@{-},
(-2,8);(2,4) **@{-}, 
(-2,4);(2,8) **@{-}, 
(-2,4);(4,-2) **@{-}, 
(2,4);(-4,-2) **@{-}, 
(2.4,0.4);(4,2) **@{-},
(0.1,-1.9);(1.7,-0.2) **@{-},  
(0.1,-1.9);(-1.7,-0.3) **@{-},  
(-2.4,0.4);(-4,2) **@{-},
(-12,2);(-10.4,0.4) **@{-}, 
(-9.6,-0.4);(-8,-2) **@{-}, 
(-12,-2);(-8,2) **@{-},
(-8,2);(-6.4,0.4) **@{-}, 
(-5.6,-0.4);(-4,-2) **@{-}, 
(-8,-2);(-4,2) **@{-},
(12,2);(10.4,0.4) **@{-}, 
(9.6,-0.4);(8,-2) **@{-},
(12,-2);(8,2) **@{-},
(5.6,-0.4);(4,-2) **@{-}, 
(8,2);(6.6,0.4) **@{-},
(8,-2);(4,2) **@{-},
(-0.1,4.5);(-0.1,7.5) **@{-},
(0,4.5);(0,7.5) **@{-},
(0.1,4.5);(0.1,7.5) **@{-},
(-1.5,1.9);(1.5,1.9) **@{-},
(-1.5,2);(1.5,2) **@{-},
(-1.5,2.1);(1.5,2.1) **@{-},
(-3,-1.2)*{\llcorner}, 
(-11,-1.2)*{\llcorner}, 
(-8.9,-1.2)*{\lrcorner}, 
(11,0.5)*{\urcorner}, 
(8.9,0.5)*{\ulcorner}, 
(0.5,-11.2) *{9_1},
\endxy\gg_N 
= x^2
\ll~\xy 
(-2,12);(2,8) **@{-}, 
(-2,8);(-0.7,9.5) **@{-},
(0.7,10.6);(2,12) **@{-},
(-12,12);(-2,12) **@{-}, 
(2,12);(12,12) **@{-}, 
(-12,12);(-12,2) **@{-}, 
(12,12);(12,2) **@{-},
(-12,-8);(-2,-8) **@{-}, 
(-2,-8);(12,-8) **@{-}, 
(-12,-8);(-12,-2) **@{-}, 
(12,-8);(12,-2) **@{-},
(2,0);(4,-2) **@{-}, 
(-2,0);(-4,-2) **@{-}, 
(2.4,0.4);(4,2) **@{-},
(0.1,-1.9);(1.7,-0.2) **@{-},  
(0.1,-1.9);(-1.7,-0.3) **@{-},  
(-2.4,0.4);(-4,2) **@{-},
(-12,2);(-10.4,0.4) **@{-}, 
(-9.6,-0.4);(-8,-2) **@{-}, 
(-12,-2);(-8,2) **@{-},
(-8,2);(-6.4,0.4) **@{-}, 
(-5.6,-0.4);(-4,-2) **@{-}, 
(-8,-2);(-4,2) **@{-},
(12,2);(10.4,0.4) **@{-}, 
(9.6,-0.4);(8,-2) **@{-},
(12,-2);(8,2) **@{-},
(5.6,-0.4);(4,-2) **@{-}, 
(8,2);(6.6,0.4) **@{-},
(8,-2);(4,2) **@{-},
(-3,-1.2)*{\llcorner}, 
(-11,-1.2)*{\llcorner}, 
(-8.9,-1.2)*{\lrcorner}, 
(11,0.5)*{\urcorner}, 
(8.9,0.5)*{\ulcorner}, 
(-2,4);(-2,8) **\crv{(1,6)}, 
(2,8);(2,4) **\crv{(-1,6)},
(-2,4);(2,4) **\crv{(0,1)}, 
(-2,0);(2,0) **\crv{(0,3)},
 \endxy\gg_N 
 +xy
 \ll~\xy 
(-2,12);(2,8) **@{-}, 
(-2,8);(-0.7,9.5) **@{-},
(0.7,10.6);(2,12) **@{-},
(-12,12);(-2,12) **@{-}, 
(2,12);(12,12) **@{-}, 
(-12,12);(-12,2) **@{-}, 
(12,12);(12,2) **@{-},
(-12,-8);(-2,-8) **@{-}, 
(-2,-8);(12,-8) **@{-}, 
(-12,-8);(-12,-2) **@{-}, 
(12,-8);(12,-2) **@{-},
(2,0);(4,-2) **@{-}, 
(-2,0);(-4,-2) **@{-},  
(2.4,0.4);(4,2) **@{-},
(0.1,-1.9);(1.7,-0.2) **@{-},  
(0.1,-1.9);(-1.7,-0.3) **@{-},  
(-2.4,0.4);(-4,2) **@{-},
(-12,2);(-10.4,0.4) **@{-}, 
(-9.6,-0.4);(-8,-2) **@{-}, 
(-12,-2);(-8,2) **@{-},
(-8,2);(-6.4,0.4) **@{-}, 
(-5.6,-0.4);(-4,-2) **@{-}, 
(-8,-2);(-4,2) **@{-},
(12,2);(10.4,0.4) **@{-}, 
(9.6,-0.4);(8,-2) **@{-},
(12,-2);(8,2) **@{-},
(5.6,-0.4);(4,-2) **@{-}, 
(8,2);(6.6,0.4) **@{-},
(8,-2);(4,2) **@{-},
(-3,-1.2)*{\llcorner}, (-11,-1.2)*{\llcorner}, (-8.9,-1.2)*{\lrcorner}, (11,0.5)*{\urcorner}, (8.9,0.5)*{\ulcorner}, 
(-2,4);(-2,8) **\crv{(1,6)}, 
(2,8);(2,4) **\crv{(-1,6)},
(-2,0);(-2,4) **\crv{(1,2)}, 
(2,4);(2,0) **\crv{(-1,2)},
 \endxy\gg_N \\
 &\hskip 3.1cm + yx
 \ll~\xy 
(-2,12);(2,8) **@{-}, 
(-2,8);(-0.7,9.5) **@{-},
(0.7,10.6);(2,12) **@{-},
(-12,12);(-2,12) **@{-}, 
(2,12);(12,12) **@{-}, 
(-12,12);(-12,2) **@{-}, 
(12,12);(12,2) **@{-},
(-12,-8);(-2,-8) **@{-}, 
(-2,-8);(12,-8) **@{-}, 
(-12,-8);(-12,-2) **@{-}, 
(12,-8);(12,-2) **@{-},
(2,0);(4,-2) **@{-}, 
(-2,0);(-4,-2) **@{-}, 
(2.4,0.4);(4,2) **@{-},
(0.1,-1.9);(1.7,-0.2) **@{-},  
(0.1,-1.9);(-1.7,-0.3) **@{-},  
(-2.4,0.4);(-4,2) **@{-},
(-12,2);(-10.4,0.4) **@{-}, 
(-9.6,-0.4);(-8,-2) **@{-}, 
(-12,-2);(-8,2) **@{-},
(-8,2);(-6.4,0.4) **@{-}, 
(-5.6,-0.4);(-4,-2) **@{-}, 
(-8,-2);(-4,2) **@{-},
(12,2);(10.4,0.4) **@{-}, 
(9.6,-0.4);(8,-2) **@{-},
(12,-2);(8,2) **@{-},
(5.6,-0.4);(4,-2) **@{-}, 
(8,2);(6.6,0.4) **@{-},
(8,-2);(4,2) **@{-},
(-3,-1.2)*{\llcorner}, 
(-11,-1.2)*{\llcorner}, 
(-8.9,-1.2)*{\lrcorner}, 
(11,0.5)*{\urcorner}, 
(8.9,0.5)*{\ulcorner}, 
(-2,8);(2,8) **\crv{(0,5)}, 
(-2,4);(2,4) **\crv{(0,7)},
(-2,4);(2,4) **\crv{(0,1)}, 
(-2,0);(2,0) **\crv{(0,3)},
 \endxy\gg_N + y^2
 \ll~\xy 
(-2,12);(2,8) **@{-}, 
(-2,8);(-0.7,9.5) **@{-},
(0.7,10.6);(2,12) **@{-},
(-12,12);(-2,12) **@{-}, 
(2,12);(12,12) **@{-}, 
(-12,12);(-12,2) **@{-}, 
(12,12);(12,2) **@{-},
(-12,-8);(-2,-8) **@{-}, 
(-2,-8);(12,-8) **@{-}, 
(-12,-8);(-12,-2) **@{-}, 
(12,-8);(12,-2) **@{-},
(2,0);(4,-2) **@{-}, 
(-2,0);(-4,-2) **@{-},  
(2.4,0.4);(4,2) **@{-},
(0.1,-1.9);(1.7,-0.2) **@{-},  
(0.1,-1.9);(-1.7,-0.3) **@{-},  
(-2.4,0.4);(-4,2) **@{-},
(-12,2);(-10.4,0.4) **@{-}, 
(-9.6,-0.4);(-8,-2) **@{-}, 
(-12,-2);(-8,2) **@{-},
(-8,2);(-6.4,0.4) **@{-}, 
(-5.6,-0.4);(-4,-2) **@{-}, 
(-8,-2);(-4,2) **@{-},
(12,2);(10.4,0.4) **@{-}, 
(9.6,-0.4);(8,-2) **@{-},
(12,-2);(8,2) **@{-},
(5.6,-0.4);(4,-2) **@{-}, 
(8,2);(6.6,0.4) **@{-},
(8,-2);(4,2) **@{-},
(-3,-1.2)*{\llcorner}, (-11,-1.2)*{\llcorner}, (-8.9,-1.2)*{\lrcorner}, (11,0.5)*{\urcorner}, (8.9,0.5)*{\ulcorner}, 
(-2,0);(-2,4) **\crv{(1,2)}, 
(2,4);(2,0) **\crv{(-1,2)},
(-2,8);(2,8) **\crv{(0,5)}, 
(-2,4);(2,4) **\crv{(0,7)},
 \endxy\gg_N\\
= & x^2 \langle~\xy (0,0) *\xycircle(3,3){-}, \endxy~
\xy (0,0) *\xycircle(3,3){-}, \endxy~\rangle_N 
+ xy \bigg(A^{-8}
\langle~\xy
(12,2);(10.4,0.4) **@{-}, 
(9.6,-0.4);(8,-2) **@{-},
(12,-2);(8,2) **@{-},
(5.6,-0.4);(4,-2) **@{-}, 
(8,2);(6.6,0.4) **@{-},
(8,-2);(4,2) **@{-},
(11,0.5)*{\urcorner}, (8.9,0.5)*{\ulcorner}, 
(4,-2);(-2,-2) **\crv{(1,-5)},
(4,4);(2,-2) **\crv{(0,1)},
(-2,2);(1.5,4) **\crv{(-1,4.5)},
(-2,-2);(-2,2) **\crv{(-3,0)},
(4,-4);(12,-2) **\crv{(10,-6)},
(4,4);(12,2) **\crv{(10,6)},
\endxy~
\rangle_N 
+ (A^{-6}-A^{-2})\langle~
\xy
(12,2);(10.4,0.4) **@{-}, 
(9.6,-0.4);(8,-2) **@{-},
(12,-2);(8,2) **@{-},
(5.6,-0.4);(4,-2) **@{-}, 
(8,2);(6.6,0.4) **@{-},
(8,-2);(4,2) **@{-},
(4,2);(12,2) **\crv{(8,6)},
(4,-2);(12,-2) **\crv{(8,-6)},
(11,0.5)*{\urcorner}, (8.9,0.5)*{\ulcorner}, 
\endxy~\rangle_N\bigg)\\
&  + yx \langle~\xy
(0,0) *\xycircle(3,3){-}, \endxy~
\xy (0,0) *\xycircle(3,3){-}, \endxy~
\xy (0,0) *\xycircle(3,3){-}, \endxy ~\rangle_N
+ y^2 \langle~\xy (0,0) *\xycircle(3,3){-}, \endxy~
\xy (0,0) *\xycircle(3,3){-}, \endxy~\rangle_N\\
= &  (-A^{-2}-A^{2})(x^2+y^2)+(-A^{-2}-A^{2})^2 xy \\
& + \Big(A^{-8}(A^{-8}-A^{-4}+1-A^4+A^8) +(A^{-6}-A^{-2})(-A^{10}-A^{2})\Big) xy\\
=& (-A^{-2}-A^{2})(x^2+y^2)+(4-A^{-4}+A^{-16}-A^{-12}+A^{-8}+A^8)xy. 
 \end{align*}
This gives 
${\mathbf K}(9_1;t)_N =\ll 9_1\gg_N(z(t), \overline{z}(t), t, t)=(6t^7+27t^6+12t^5-37t^4-2t^3+19t^2-8t+1)2^{-1}t^{-6}
+(6t^6+7t^5-21t^4+14t^2-7t+1)2^{-1}t^{-6}\mathbf{i}\sqrt{(t+1)(3t-1)}.$
Hence for each integer $n \geq 2$,
\begin{align*}
{\mathbf K}_{2n-1}(9_1)_N &=\overline{{\mathbf K}(9_1;n)_N}\\
&=[6n^7+27n^6+12n^5-37n^4-2n^3+19n^2-8n+1]2^{-1}n^{-6}\\
&+[6n^6+7n^5-21n^4+14n^2-7n+1]2^{-1}n^{-6}\mathbf{i}\sqrt{(n+1)(3n-1)} .
\end{align*}
Further, forgetting the orientation on $9_1$, we see that $\ll 9_1\gg_N$ $=\ll \widetilde{9_1}\gg$ and therefore
${\mathbf K}_{2n-1}(\widetilde{9_1})={\mathbf K}_{2n-1}(9_1)_N$ for all $n \geq 2.$
\end{example}

\begin{example}\label{examp-10_2-1} 
Let $10_2$ be the $2$-twist spun $2$-knot of the trefoil in Yoshikawa's table with the orientation indicated below. By Theorem \ref{thm-skein-rel-kb} together with (\ref{nkbp-exmps}) and (\ref{skein-rel-exmp}), we obtain 

\begin{align*}
\ll~&\xy (-12,8);(-2,8) **@{-}, (2,8);(12,8) **@{-}, (-12,8);(-12,2) **@{-}, 
(12,8);(12,2) **@{-}, (-12,-8);(-2,-8) **@{-}, (2,-8);(12,-8) **@{-}, 
(-12,-8);(-12,-2) **@{-}, (12,-8);(12,-2) **@{-},
(-2,8);(2,4) **@{-}, (-2,4);(-0.4,5.4) **@{-},
(0.4,6.4);(2,8) **@{-}, (-2,4);(4,-2) **@{-}, (2,4);(-4,-2) **@{-}, (-2,-8);(2,-4) **@{-}, (-2,-4);(2,-8) **@{-},
(-2,-4);(1.4,-0.4) **@{-}, (2.4,0.4);(4,2) **@{-},
(2,-4);(0.6,-2.4) **@{-}, (-0.4,-1.6);(-1.7,-0.3) **@{-},  (-2.4,0.4);(-4,2) **@{-},
(-12,2);(-10.4,0.4) **@{-}, (-9.6,-0.4);(-8,-2) **@{-}, (-12,-2);(-8,2) **@{-},
(-8,2);(-6.4,0.4) **@{-}, (-5.6,-0.4);(-4,-2) **@{-}, (-8,-2);(-4,2) **@{-},
(12,2);(10.4,0.4) **@{-}, (9.6,-0.4);(8,-2) **@{-},
(12,-2);(8,2) **@{-},
(5.6,-0.4);(4,-2) **@{-}, (8,2);(6.6,0.4) **@{-},
(8,-2);(4,2) **@{-},
(-1.5,1.9);(1.5,1.9) **@{-},
(-1.5,2);(1.5,2) **@{-},
(-1.5,2.1);(1.5,2.1) **@{-},
(-1.5,-5.9);(1.5,-5.9) **@{-},
(-1.5,-6);(1.5,-6) **@{-},
(-1.5,-6.1);(1.5,-6.1) **@{-},
(-1,6.4)*{\ulcorner}, (-1.4,-8)*{\urcorner}, (-3,-1.2)*{\llcorner}, (-11,-1.2)*{\llcorner}, (-8.9,-1.2)*{\lrcorner}, (11,0.5)*{\urcorner}, (8.9,0.5)*{\ulcorner}, (1,-3)*{\lrcorner},
(0.5,-11.2) *{10_2},
 \endxy\gg_N 
 = x^2 \ll~\xy (-12,8);(-2,8) **@{-}, (2,8);(12,8) **@{-}, (-12,8);(-12,2) **@{-}, (12,8);(12,2) **@{-}, (-12,-8);(-2,-8) **@{-}, (2,-8);(12,-8) **@{-}, (-12,-8);(-12,-2) **@{-}, (12,-8);(12,-2) **@{-},
(-2,8);(2,4) **@{-}, (-2,4);(-0.4,5.4) **@{-},
(0.4,6.4);(2,8) **@{-}, (2,0);(4,-2) **@{-}, (-2,0);(-4,-2) **@{-},
(-2,-4);(1.4,-0.4) **@{-}, (2.4,0.4);(4,2) **@{-},
(2,-4);(0.6,-2.4) **@{-}, (-0.4,-1.6);(-1.7,-0.3) **@{-},  (-2.4,0.4);(-4,2) **@{-},
(-12,2);(-10.4,0.4) **@{-}, (-9.6,-0.4);(-8,-2) **@{-}, (-12,-2);(-8,2) **@{-},
(-8,2);(-6.4,0.4) **@{-}, (-5.6,-0.4);(-4,-2) **@{-}, (-8,-2);(-4,2) **@{-},
(12,2);(10.4,0.4) **@{-}, (9.6,-0.4);(8,-2) **@{-},
(12,-2);(8,2) **@{-},
(5.6,-0.4);(4,-2) **@{-}, (8,2);(6.6,0.4) **@{-},
(8,-2);(4,2) **@{-},
(-1,6.4)*{\ulcorner}, (-1.4,-8)*{\urcorner}, (-3,-1.2)*{\llcorner}, (-11,-1.2)*{\llcorner}, (-8.9,-1.2)*{\lrcorner}, (11,0.5)*{\urcorner}, (8.9,0.5)*{\ulcorner}, (1,-3)*{\lrcorner},
(-2,4);(2,4) **\crv{(0,1)}, (-2,0);(2,0) **\crv{(0,3)},
(-2,-4);(2,-4) **\crv{(0,-7)}, (-2,-8);(2,-8) **\crv{(0,-5)},
 \endxy~\gg_N
 + xy \ll~\xy (-12,8);(-2,8) **@{-}, (2,8);(12,8) **@{-}, (-12,8);(-12,2) **@{-}, (12,8);(12,2) **@{-}, (-12,-8);(-2,-8) **@{-}, (2,-8);(12,-8) **@{-}, (-12,-8);(-12,-2) **@{-}, (12,-8);(12,-2) **@{-},
(-2,8);(2,4) **@{-}, (-2,4);(-0.4,5.4) **@{-},
(0.4,6.4);(2,8) **@{-}, (2,0);(4,-2) **@{-}, (-2,0);(-4,-2) **@{-},
(-2,-4);(1.4,-0.4) **@{-}, (2.4,0.4);(4,2) **@{-},
(2,-4);(0.6,-2.4) **@{-}, (-0.4,-1.6);(-1.7,-0.3) **@{-},  (-2.4,0.4);(-4,2) **@{-},
(-12,2);(-10.4,0.4) **@{-}, (-9.6,-0.4);(-8,-2) **@{-}, (-12,-2);(-8,2) **@{-},
(-8,2);(-6.4,0.4) **@{-}, (-5.6,-0.4);(-4,-2) **@{-}, (-8,-2);(-4,2) **@{-},
(12,2);(10.4,0.4) **@{-}, (9.6,-0.4);(8,-2) **@{-},
(12,-2);(8,2) **@{-},
(5.6,-0.4);(4,-2) **@{-}, (8,2);(6.6,0.4) **@{-},
(8,-2);(4,2) **@{-},
(-1,6.4)*{\ulcorner}, (-1.4,-8)*{\urcorner}, (-3,-1.2)*{\llcorner}, (-11,-1.2)*{\llcorner}, (-8.9,-1.2)*{\lrcorner}, (11,0.5)*{\urcorner}, (8.9,0.5)*{\ulcorner}, (1,-3)*{\lrcorner},
(-2,4);(2,4) **\crv{(0,1)}, (-2,0);(2,0) **\crv{(0,3)},
(-2,-4);(-2,-8) **\crv{(1,-6)}, (2,-8);(2,-4) **\crv{(-1,-6)},
 \endxy~\gg_N\\
&\hskip 3.20cm +yx \ll\xy (-12,8);(-2,8) **@{-}, (2,8);(12,8) **@{-}, (-12,8);(-12,2) **@{-}, (12,8);(12,2) **@{-}, (-12,-8);(-2,-8) **@{-}, (2,-8);(12,-8) **@{-}, (-12,-8);(-12,-2) **@{-}, (12,-8);(12,-2) **@{-},
(-2,8);(2,4) **@{-}, (-2,4);(-0.4,5.4) **@{-},
(0.4,6.4);(2,8) **@{-}, (2,0);(4,-2) **@{-}, (-2,0);(-4,-2) **@{-},
(-2,-4);(1.4,-0.4) **@{-}, (2.4,0.4);(4,2) **@{-},
(2,-4);(0.6,-2.4) **@{-}, (-0.4,-1.6);(-1.7,-0.3) **@{-},  (-2.4,0.4);(-4,2) **@{-},
(-12,2);(-10.4,0.4) **@{-}, (-9.6,-0.4);(-8,-2) **@{-}, (-12,-2);(-8,2) **@{-},
(-8,2);(-6.4,0.4) **@{-}, (-5.6,-0.4);(-4,-2) **@{-}, (-8,-2);(-4,2) **@{-},
(12,2);(10.4,0.4) **@{-}, (9.6,-0.4);(8,-2) **@{-},
(12,-2);(8,2) **@{-},
(5.6,-0.4);(4,-2) **@{-}, (8,2);(6.6,0.4) **@{-},
(8,-2);(4,2) **@{-},
(-1,6.4)*{\ulcorner}, (-1.4,-8)*{\urcorner}, (-3,-1.2)*{\llcorner}, (-11,-1.2)*{\llcorner}, (-8.9,-1.2)*{\lrcorner}, (11,0.5)*{\urcorner}, (8.9,0.5)*{\ulcorner}, (1,-3)*{\lrcorner},
(-2,4);(-2,0) **\crv{(1,2)}, (2,0);(2,4) **\crv{(-1,2)},
(-2,-4);(2,-4) **\crv{(0,-7)}, (-2,-8);(2,-8) **\crv{(0,-5)},
 \endxy\gg_N
 ~+y^2 \ll\xy (-12,8);(-2,8) **@{-}, (2,8);(12,8) **@{-}, (-12,8);(-12,2) **@{-}, (12,8);(12,2) **@{-}, (-12,-8);(-2,-8) **@{-}, (2,-8);(12,-8) **@{-}, (-12,-8);(-12,-2) **@{-}, (12,-8);(12,-2) **@{-},
(-2,8);(2,4) **@{-}, (-2,4);(-0.4,5.4) **@{-},
(0.4,6.4);(2,8) **@{-}, (2,0);(4,-2) **@{-}, (-2,0);(-4,-2) **@{-},
(-2,-4);(1.4,-0.4) **@{-}, (2.4,0.4);(4,2) **@{-},
(2,-4);(0.6,-2.4) **@{-}, (-0.4,-1.6);(-1.7,-0.3) **@{-},  (-2.4,0.4);(-4,2) **@{-},
(-12,2);(-10.4,0.4) **@{-}, (-9.6,-0.4);(-8,-2) **@{-}, (-12,-2);(-8,2) **@{-},
(-8,2);(-6.4,0.4) **@{-}, (-5.6,-0.4);(-4,-2) **@{-}, (-8,-2);(-4,2) **@{-},
(12,2);(10.4,0.4) **@{-}, (9.6,-0.4);(8,-2) **@{-},
(12,-2);(8,2) **@{-},
(5.6,-0.4);(4,-2) **@{-}, (8,2);(6.6,0.4) **@{-},
(8,-2);(4,2) **@{-},
(-1,6.4)*{\ulcorner}, (-1.4,-8)*{\urcorner}, (-3,-1.2)*{\llcorner}, (-11,-1.2)*{\llcorner}, (-8.9,-1.2)*{\lrcorner}, (11,0.5)*{\urcorner}, (8.9,0.5)*{\ulcorner}, (1,-3)*{\lrcorner},
(-2,4);(-2,0) **\crv{(1,2)}, (2,0);(2,4) **\crv{(-1,2)},
(-2,-4);(-2,-8) **\crv{(1,-6)}, (2,-8);(2,-4) **\crv{(-1,-6)},
 \endxy~\gg_N
 \end{align*}
 \begin{align*}
= & x^2 \langle~\xy (0,0) *\xycircle(3,3){-}, \endxy~
\xy (0,0) *\xycircle(3,3){-}, \endxy ~\rangle_N
+ xy \Big(A^{-8}\langle~\xy (0,0) *\xycircle(3,3){-}, 
\endxy~~\rangle_N + (A^{-6}-A^{-2}) \langle~ \xy
(12,2);(10.4,0.4) **@{-}, 
(9.6,-0.4);(8,-2) **@{-},
(12,-2);(8,2) **@{-},
(5.6,-0.4);(4,-2) **@{-}, 
(8,2);(6.6,0.4) **@{-},
(8,-2);(4,2) **@{-},
(11,0.5)*{\urcorner}, (8.9,0.5)*{\ulcorner}, 
(4,2);(2.4,0.4) **@{-}, 
(1.6,-0.4);(0,-2) **@{-},
(4,-2);(0,2) **@{-},
(-3.6,-0.4);(-4,-2) **@{-}, 
(0,2);(-1.6,0.4) **@{-},
(0,-2);(-4,2) **@{-},
(-2.2,-0.4);(-4,-2) **@{-},
(-4,2);(12,2) **\crv{(5,6)},
(-4,-2);(12,-2) **\crv{(5,-6)},
\endxy~\rangle_N\Big)\\
&+ xy\Big(A^{-8}
\langle~\xy
(12,2);(10.4,0.4) **@{-}, 
(9.6,-0.4);(8,-2) **@{-},
(12,-2);(8,2) **@{-},
(5.6,-0.4);(4,-2) **@{-}, 
(8,2);(6.6,0.4) **@{-},
(8,-2);(4,2) **@{-},
(11,0.5)*{\urcorner}, (8.9,0.5)*{\ulcorner}, 
(4,-2);(-2,-2) **\crv{(1,-5)},
(4,4);(2,-2) **\crv{(0,1)},
(-2,2);(1.5,4) **\crv{(-1,4.5)},
(-2,-2);(-2,2) **\crv{(-3,0)},
(4,-4);(12,-2) **\crv{(10,-6)},
(4,4);(12,2) **\crv{(10,6)},
\endxy~
\rangle_N
+(A^{-6}-A^{-2}) \langle~
\xy
(12,2);(10.4,0.4) **@{-}, 
(9.6,-0.4);(8,-2) **@{-},
(12,-2);(8,2) **@{-},
(5.6,-0.4);(4,-2) **@{-}, 
(8,2);(6.6,0.4) **@{-},
(8,-2);(4,2) **@{-},
(4,2);(12,2) **\crv{(8,6)},
(4,-2);(12,-2) **\crv{(8,-6)},
(11,0.5)*{\urcorner}, (8.9,0.5)*{\ulcorner}, 
\endxy~
\rangle_N\Big)\\
& + y^2 \Big(A^{-8} \langle~ \xy
(12,2);(10.4,0.4) **@{-}, 
(9.6,-0.4);(8,-2) **@{-},
(12,-2);(8,2) **@{-},
(5.6,-0.4);(4,-2) **@{-}, 
(8,2);(6.6,0.4) **@{-},
(8,-2);(4,2) **@{-},
(4,2);(12,2) **\crv{(8,6)},
(4,-2);(12,-2) **\crv{(8,-6)},
(11,0.5)*{\urcorner}, (8.9,0.5)*{\ulcorner}, 
\endxy~\rangle_N
+ (A^{-6}-A^{-2})\langle~\xy (0,0) *\xycircle(3,3){-}, 
\endxy~ ~\rangle_N \Big)\\
=&x^2(-A^{-2}-A^2) +xy \Big(A^{-8} + 
(A^{-6}-A^{-2})(-A^{18}-A^{10}+A^{6}-A^{2}) \Big) \\
& + xy \Big( A^{-8}(A^{-8}-A^{-4}+1-A^4+A^8)   
+(A^{-6}-A^{-2})(-A^{10}-A^{2}) \Big) \\
& + y^2\Big(A^{-8}(-A^{10}-A^{2})+(A^{-6}-A^{-2})\Big)\\
=& (-A^{-2}-A^2)(x^2+y^2)+\\
&~(A^{-16}-A^{-12}+2A^{-8}-3A^{-4}+4-3A^4+2A^8-A^{12}+A^{16})xy. 
\end{align*}
This gives ${\mathbf K}(10_2;t)_N=\ll 10_2\gg_N(z(t), \overline{z}(t), t, t)
=(8t^7+25t^6+12t^5-37t^4-2t^3+19t^2-8t+1)t^{-6}$.
Hence for each integer $n \geq 2$,
\begin{align*}
{\mathbf K}_{2n-1}(10_2)_N &=\overline{{\mathbf K}(10_2;n)_N}=[8n^7+25n^6+12n^5-37n^4-2n^3+19n^2-8n+1]n^{-6}.
\end{align*}
Further, forgetting the orientation on $10_2$, we see that $\ll 10_2\gg_N$ $=\ll \widetilde{10_2}\gg$ and therefore
${\mathbf K}_{2n-1}(\widetilde{10_2})={\mathbf K}_{2n-1}(10_2)_N$ for all $n \geq 2.$
\end{example}

\begin{example}\label{examp-8^{-1,-1}_1} 
Consider the two component nonorientable surface-link $8^{-1,-1}_1$ in Yoshikawa's table. Observe that $sw(8^{-1,-1}_1)=0$. By (\ref{defn-poly-inv-kb}), (\ref{skein-rel-exmp}) and Definition \ref{defn-sp=poly-2}, we obtain 
\begin{align*}
&\ll~\xy  (-8,8);(-2,8) **@{-}, 
 (2,8);(8,8) **@{-}, 
 (-8,8);(-8,2) **@{-}, 
 (8,8);(8,2) **@{-}, 
 (-8,-8);(-2,-8) **@{-}, 
 (2,-8);(8,-8) **@{-}, 
 (-8,-8);(-8,-2) **@{-}, 
 (8,-8);(8,-2) **@{-},%
(-2,8);(2,4) **@{-}, (-2,4);(-0.4,5.4) **@{-},
(0.4,6.4);(2,8) **@{-}, (-2,4);(4,-2) **@{-}, (2,4);(-4,-2) **@{-}, (-2,-8);(2,-4) **@{-}, (-2,-4);(2,-8) **@{-},
(-2,-4);(1.4,-0.4) **@{-}, (2.4,0.4);(4,2) **@{-},
(2,-4);(0.6,-2.4) **@{-}, (-0.4,-1.6);(-1.7,-0.3) **@{-},  (-2.4,0.4);(-4,2) **@{-},
(-8,2);(-6.4,0.4) **@{-}, (-5.6,-0.4);(-4,-2) **@{-}, (-8,-2);(-4,2) **@{-},
(5.6,-0.4);(4,-2) **@{-}, (8,2);(6.6,0.4) **@{-},
(8,-2);(4,2) **@{-},
(-1.5,1.9);(1.5,1.9) **@{-},
(-1.5,2);(1.5,2) **@{-},
(-1.5,2.1);(1.5,2.1) **@{-},
(-1.5,-5.9);(1.5,-5.9) **@{-},
(-1.5,-6);(1.5,-6) **@{-},
(-1.5,-6.1);(1.5,-6.1) **@{-},
(0.5,-11.5) *{8^{-1,-1}_1},
 \endxy~\gg
 =[[~\xy  (-8,8);(-2,8) **@{-}, 
 (2,8);(8,8) **@{-}, 
 (-8,8);(-8,2) **@{-}, 
 (8,8);(8,2) **@{-}, 
 (-8,-8);(-2,-8) **@{-}, 
 (2,-8);(8,-8) **@{-}, 
 (-8,-8);(-8,-2) **@{-}, 
 (8,-8);(8,-2) **@{-},%
(-2,8);(2,4) **@{-}, (-2,4);(-0.4,5.4) **@{-},
(0.4,6.4);(2,8) **@{-}, (-2,4);(4,-2) **@{-}, (2,4);(-4,-2) **@{-}, (-2,-8);(2,-4) **@{-}, (-2,-4);(2,-8) **@{-},
(-2,-4);(1.4,-0.4) **@{-}, (2.4,0.4);(4,2) **@{-},
(2,-4);(0.6,-2.4) **@{-}, (-0.4,-1.6);(-1.7,-0.3) **@{-},  (-2.4,0.4);(-4,2) **@{-},
(-8,2);(-6.4,0.4) **@{-}, (-5.6,-0.4);(-4,-2) **@{-}, (-8,-2);(-4,2) **@{-},
(5.6,-0.4);(4,-2) **@{-}, (8,2);(6.6,0.4) **@{-},
(8,-2);(4,2) **@{-},
(-1.5,1.9);(1.5,1.9) **@{-},
(-1.5,2);(1.5,2) **@{-},
(-1.5,2.1);(1.5,2.1) **@{-},
(-1.5,-5.9);(1.5,-5.9) **@{-},
(-1.5,-6);(1.5,-6) **@{-},
(-1.5,-6.1);(1.5,-6.1) **@{-},
 \endxy~]]
 = x^2 [[~\xy  (-8,8);(-2,8) **@{-}, 
 (2,8);(8,8) **@{-}, 
 (-8,8);(-8,2) **@{-}, 
 (8,8);(8,2) **@{-}, 
 (-8,-8);(-2,-8) **@{-}, 
 (2,-8);(8,-8) **@{-}, 
 (-8,-8);(-8,-2) **@{-}, 
 (8,-8);(8,-2) **@{-},%
(-2,8);(2,4) **@{-}, (-2,4);(-0.4,5.4) **@{-},
(0.4,6.4);(2,8) **@{-}, (2,0);(4,-2) **@{-}, (-2,0);(-4,-2) **@{-},
(-2,-4);(1.4,-0.4) **@{-}, (2.4,0.4);(4,2) **@{-},
(2,-4);(0.6,-2.4) **@{-}, (-0.4,-1.6);(-1.7,-0.3) **@{-},  (-2.4,0.4);(-4,2) **@{-},
(-8,2);(-6.4,0.4) **@{-}, (-5.6,-0.4);(-4,-2) **@{-}, (-8,-2);(-4,2) **@{-},
(5.6,-0.4);(4,-2) **@{-}, (8,2);(6.6,0.4) **@{-},
(8,-2);(4,2) **@{-},
(-2,4);(2,4) **\crv{(0,1)}, (-2,0);(2,0) **\crv{(0,3)},
(-2,-4);(2,-4) **\crv{(0,-7)}, (-2,-8);(2,-8) **\crv{(0,-5)},
 \endxy~]]+\\
&xy [[~\xy 
 (-8,8);(-2,8) **@{-}, 
 (2,8);(8,8) **@{-}, 
 (-8,8);(-8,2) **@{-}, 
 (8,8);(8,2) **@{-}, 
 (-8,-8);(-2,-8) **@{-}, 
 (2,-8);(8,-8) **@{-}, 
 (-8,-8);(-8,-2) **@{-}, 
 (8,-8);(8,-2) **@{-},%
(-2,8);(2,4) **@{-}, (-2,4);(-0.4,5.4) **@{-},
(0.4,6.4);(2,8) **@{-}, (2,0);(4,-2) **@{-}, (-2,0);(-4,-2) **@{-},
(-2,-4);(1.4,-0.4) **@{-}, (2.4,0.4);(4,2) **@{-},
(2,-4);(0.6,-2.4) **@{-}, (-0.4,-1.6);(-1.7,-0.3) **@{-},  (-2.4,0.4);(-4,2) **@{-},
(-8,2);(-6.4,0.4) **@{-}, (-5.6,-0.4);(-4,-2) **@{-}, (-8,-2);(-4,2) **@{-},
(5.6,-0.4);(4,-2) **@{-}, (8,2);(6.6,0.4) **@{-},
(8,-2);(4,2) **@{-},
(-2,4);(2,4) **\crv{(0,1)}, (-2,0);(2,0) **\crv{(0,3)},
(-2,-4);(-2,-8) **\crv{(1,-6)}, (2,-8);(2,-4) **\crv{(-1,-6)},
 \endxy~]] +yx [[~\xy  
(-8,8);(-2,8) **@{-}, 
 (2,8);(8,8) **@{-}, 
 (-8,8);(-8,2) **@{-}, 
 (8,8);(8,2) **@{-}, 
 (-8,-8);(-2,-8) **@{-}, 
 (2,-8);(8,-8) **@{-}, 
 (-8,-8);(-8,-2) **@{-}, 
 (8,-8);(8,-2) **@{-},%
(-2,8);(2,4) **@{-}, (-2,4);(-0.4,5.4) **@{-},
(0.4,6.4);(2,8) **@{-}, (2,0);(4,-2) **@{-}, (-2,0);(-4,-2) **@{-},
(-2,-4);(1.4,-0.4) **@{-}, (2.4,0.4);(4,2) **@{-},
(2,-4);(0.6,-2.4) **@{-}, (-0.4,-1.6);(-1.7,-0.3) **@{-},  (-2.4,0.4);(-4,2) **@{-},
(-8,2);(-6.4,0.4) **@{-}, (-5.6,-0.4);(-4,-2) **@{-}, (-8,-2);(-4,2) **@{-},
(5.6,-0.4);(4,-2) **@{-}, (8,2);(6.6,0.4) **@{-},
(8,-2);(4,2) **@{-},
(-2,4);(-2,0) **\crv{(1,2)}, (2,0);(2,4) **\crv{(-1,2)},
(-2,-4);(2,-4) **\crv{(0,-7)}, (-2,-8);(2,-8) **\crv{(0,-5)},
 \endxy~]]
 ~+y^2 [[~\xy  
 (-8,8);(-2,8) **@{-}, 
 (2,8);(8,8) **@{-}, 
 (-8,8);(-8,2) **@{-}, 
 (8,8);(8,2) **@{-}, 
 (-8,-8);(-2,-8) **@{-}, 
 (2,-8);(8,-8) **@{-}, 
 (-8,-8);(-8,-2) **@{-}, 
 (8,-8);(8,-2) **@{-},%
(-2,8);(2,4) **@{-}, (-2,4);(-0.4,5.4) **@{-},
(0.4,6.4);(2,8) **@{-}, (2,0);(4,-2) **@{-}, (-2,0);(-4,-2) **@{-},
(-2,-4);(1.4,-0.4) **@{-}, (2.4,0.4);(4,2) **@{-},
(2,-4);(0.6,-2.4) **@{-}, (-0.4,-1.6);(-1.7,-0.3) **@{-},  (-2.4,0.4);(-4,2) **@{-},
(-8,2);(-6.4,0.4) **@{-}, (-5.6,-0.4);(-4,-2) **@{-}, (-8,-2);(-4,2) **@{-},
(5.6,-0.4);(4,-2) **@{-}, (8,2);(6.6,0.4) **@{-},
(8,-2);(4,2) **@{-},
(-2,4);(-2,0) **\crv{(1,2)}, (2,0);(2,4) **\crv{(-1,2)},
(-2,-4);(-2,-8) **\crv{(1,-6)}, (2,-8);(2,-4) **\crv{(-1,-6)},
 \endxy~]]\\
&= x^2 \langle~\xy (0,0) *\xycircle(3,3){-}, \endxy~
\xy (0,0) *\xycircle(3,3){-}, \endxy ~\rangle
+ xy \Big(A^{-1}(-A^{-3})\langle~\xy (0,0) *\xycircle(3,3){-}, 
\endxy~\rangle + A(-A^{3})^2\langle~\widetilde{3}_1~\rangle\Big)\\
&+ xy\Big(A(-A^{-3})\langle~\xy (0,0) *\xycircle(3,3){-}, \endxy~\xy (0,0) *\xycircle(3,3){-}, \endxy~\rangle
+A^{-1}(-A^{-3}) \langle~
\xy
(12,2);(8,-2) **@{-}, 
(5.6,0.4);(4,2) **@{-},
(9.6,0.4);(8,2) **@{-},
(10.2,-0.4);(12,-2) **@{-}, 
(8,-2);(6.2,-0.4) **@{-},
(8,2);(4,-2) **@{-},
(4,2);(12,2) **\crv{(8,6)},
(4,-2);(12,-2) **\crv{(8,-6)},
\endxy~\sharp~\xy
(12,2);(10.4,0.4) **@{-}, 
(9.6,-0.4);(8,-2) **@{-},
(12,-2);(8,2) **@{-},
(5.6,-0.4);(4,-2) **@{-}, 
(8,2);(6.6,0.4) **@{-},
(8,-2);(4,2) **@{-},
(4,2);(12,2) **\crv{(8,6)},
(4,-2);(12,-2) **\crv{(8,-6)},
\endxy~
\rangle\Big)\\
& + y^2 \Big(A(-A^{-3}) \langle~\xy (0,0) *\xycircle(3,3){-},\endxy~\rangle
+ A^{-1}(-A^3)\langle~\xy (0,0) *\xycircle(3,3){-},\endxy~\rangle\Big)\\
&=x^2(-A^{-2}-A^2) +xy \Big(-A^{-4} + 
A^7(-A^5-A^{-3}+A^{-7}) \Big) \\
& + xy \Big(-A^{-2}(-A^{-2}-A^{2})   
+(-A^{-4})(-A^4-A^{-4})^2\Big)+ y^2(-A^{-2}-A^{2})\\
&= (-A^{-2}-A^2)(x^2+y^2)+(-A^{-12}-2A^{-4}+2-2A^4-A^{12})xy. 
\end{align*}
This gives ${\mathbf K}(8^{-1,-1}_1;t)=\ll 8^{-1,-1}_1\gg(z(t), \overline{z}(t), t, t)
=(6t^5+10t^4-4t^3-9t^2+6t-1)t^{-4}.$
Hence for each integer $n \geq 2$,
\begin{align*}
{\mathbf K}_{2n-1}(8^{-1,-1}_1) &=\overline{{\mathbf K}(8^{-1,-1}_1;n)}=
[6n^5+10n^4-4n^3-9n^2+6n-1]n^{-4}.
\end{align*}
\end{example}

\begin{example}\label{examp-8^{1,1}_1} 
Consider the two torus component surface-link $8^{1,1}_1$ in Yoshikawa's table with the orientation indicated below. By Theorem \ref{thm-skein-rel-kb-1} together with (\ref{nkbp-exmps}) and (\ref{skein-rel-exmp}), we obtain 
\begin{align*}
&{\mathbf K}\bigg(~\xy 
(-8,8);(-2,8) **@{-}, 
(2,8);(8,8) **@{-}, 
(-8,8);(-8,2) **@{-}, 
(8,8);(8,2) **@{-}, 
(-8,-8);(-2,-8) **@{-}, 
(2,-8);(8,-8) **@{-}, 
(-8,-8);(-8,-2) **@{-}, 
(8,-8);(8,-2) **@{-},
(-2,8);(2,4) **@{-}, 
(-2,4);(2,8) **@{-},%
(-2,4);(4,-2) **@{-}, 
(2,4);(-4,-2) **@{-}, 
(-2,-8);(2,-4) **@{-}, 
(-2,-4);(2,-8) **@{-},
(-2,-4);(1.4,-0.4) **@{-}, 
(2.4,0.4);(4,2) **@{-},
(2,-4);(-1.7,-0.3) **@{-}, %
(-2.4,0.4);(-4,2) **@{-},
(-8,2);(-6.4,0.4) **@{-}, 
(-5.6,-0.4);(-4,-2) **@{-}, 
(-8,-2);(-4,2) **@{-},
(5.6,-0.4);(4,-2) **@{-}, 
(8,2);(6.6,0.4) **@{-},
(8,-2);(4,2) **@{-},
(-0.1,4.5);(-0.1,7.5) **@{-},
(0,4.5);(0,7.5) **@{-},
(0.1,4.5);(0.1,7.5) **@{-},
(-0.1,-3.5);(-0.1,-0.5) **@{-},
(0,-3.5);(0,-0.5) **@{-},
(0.1,-3.5);(0.1,-0.5) **@{-},
(-1.5,1.9);(1.5,1.9) **@{-},
(-1.5,2);(1.5,2) **@{-},
(-1.5,2.1);(1.5,2.1) **@{-},
(-1.5,-5.9);(1.5,-5.9) **@{-},
(-1.5,-6);(1.5,-6) **@{-},
(-1.5,-6.1);(1.5,-6.1) **@{-},
(-1,6.4)*{\ulcorner}, (-1.4,-8)*{\urcorner}, (-3,-1.2)*{\llcorner}, 
(1,-3)*{\lrcorner},
(0.5,-12.2) *{8^{1,1}_1},
 \endxy~\bigg)_N 
 = t^2 {\mathbf K}\bigg(~\xy 
 (-8,8);(-2,8) **@{-}, 
(2,8);(8,8) **@{-}, 
(-8,8);(-8,2) **@{-}, 
(8,8);(8,2) **@{-}, 
(-8,-8);(-2,-8) **@{-}, 
(2,-8);(8,-8) **@{-}, 
(-8,-8);(-8,-2) **@{-}, 
(8,-8);(8,-2) **@{-},
(-2,8);(2,4) **@{-}, 
(-2,4);(2,8) **@{-},%
(-2,8);(2,4) **@{-}, (-2,4);(2,8) **@{-},
(0.4,6.4);(2,8) **@{-}, (2,0);(4,-2) **@{-}, (-2,0);(-4,-2) **@{-},
(-2,-4);(1.4,-0.4) **@{-}, (2.4,0.4);(4,2) **@{-},
(2,-4);(-1.7,-0.3) **@{-},  
(-0.4,-1.6);(-1.7,-0.3) **@{-},  (-2.4,0.4);(-4,2) **@{-},
(-8,2);(-6.4,0.4) **@{-}, (-5.6,-0.4);(-4,-2) **@{-}, (-8,-2);(-4,2) **@{-},
(5.6,-0.4);(4,-2) **@{-}, (8,2);(6.6,0.4) **@{-},
(8,-2);(4,2) **@{-},
(-0.1,4.5);(-0.1,7.5) **@{-},
(0,4.5);(0,7.5) **@{-},
(0.1,4.5);(0.1,7.5) **@{-},
(-0.1,-3.5);(-0.1,-0.5) **@{-},
(0,-3.5);(0,-0.5) **@{-},
(0.1,-3.5);(0.1,-0.5) **@{-},
(-1,6.4)*{\ulcorner}, (-1.4,-8)*{\urcorner}, (-3,-1.2)*{\llcorner}, 
 (1,-3)*{\lrcorner},
(-2,4);(2,4) **\crv{(0,1)}, (-2,0);(2,0) **\crv{(0,3)},
(-2,-4);(2,-4) **\crv{(0,-7)}, (-2,-8);(2,-8) **\crv{(0,-5)},
 \endxy~\bigg)_N
 + t^2 {\mathbf K}\bigg(~\xy 
 (-8,8);(-2,8) **@{-}, 
(2,8);(8,8) **@{-}, 
(-8,8);(-8,2) **@{-}, 
(8,8);(8,2) **@{-}, 
(-8,-8);(-2,-8) **@{-}, 
(2,-8);(8,-8) **@{-}, 
(-8,-8);(-8,-2) **@{-}, 
(8,-8);(8,-2) **@{-},
(-2,8);(2,4) **@{-}, 
(-2,4);(2,8) **@{-},%
(-2,8);(2,4) **@{-}, (-2,4);(2,8) **@{-},
(0.4,6.4);(2,8) **@{-}, (2,0);(4,-2) **@{-}, (-2,0);(-4,-2) **@{-},
(-2,-4);(1.4,-0.4) **@{-}, (2.4,0.4);(4,2) **@{-},
(2,-4);(-1.7,-0.3) **@{-},  
(-0.4,-1.6);(-1.7,-0.3) **@{-},  (-2.4,0.4);(-4,2) **@{-},
(-8,2);(-6.4,0.4) **@{-}, (-5.6,-0.4);(-4,-2) **@{-}, (-8,-2);(-4,2) **@{-},
(5.6,-0.4);(4,-2) **@{-}, (8,2);(6.6,0.4) **@{-},
(8,-2);(4,2) **@{-},
(-0.1,4.5);(-0.1,7.5) **@{-},
(0,4.5);(0,7.5) **@{-},
(0.1,4.5);(0.1,7.5) **@{-},
(-0.1,-3.5);(-0.1,-0.5) **@{-},
(0,-3.5);(0,-0.5) **@{-},
(0.1,-3.5);(0.1,-0.5) **@{-},
(-1,6.4)*{\ulcorner}, (-1.4,-8)*{\urcorner}, (-3,-1.2)*{\llcorner}, 
(1,-3)*{\lrcorner},
(-2,4);(2,4) **\crv{(0,1)}, (-2,0);(2,0) **\crv{(0,3)},
(-2,-4);(-2,-8) **\crv{(1,-6)}, (2,-8);(2,-4) **\crv{(-1,-6)},
 \endxy~\bigg)_N+\\
&t^2 {\mathbf K}\bigg(~\xy 
(-8,8);(-2,8) **@{-}, 
(2,8);(8,8) **@{-}, 
(-8,8);(-8,2) **@{-}, 
(8,8);(8,2) **@{-}, 
(-8,-8);(-2,-8) **@{-}, 
(2,-8);(8,-8) **@{-}, 
(-8,-8);(-8,-2) **@{-}, 
(8,-8);(8,-2) **@{-},
(-2,8);(2,4) **@{-}, 
(-2,4);(2,8) **@{-},%
(4,-2);(5.6,-0.4) **@{-},
(6.6,0.4);(8,2) **@{-},
(-2,8);(2,4) **@{-}, (-2,4);(2,8) **@{-},
(0.4,6.4);(2,8) **@{-}, (2,0);(4,-2) **@{-}, (-2,0);(-4,-2) **@{-},
(-2,-4);(1.4,-0.4) **@{-}, (2.4,0.4);(4,2) **@{-},
(2,-4);(-1.7,-0.3) **@{-},  
(-0.4,-1.6);(-1.7,-0.3) **@{-},  (-2.4,0.4);(-4,2) **@{-},
(-8,2);(-6.4,0.4) **@{-}, (-5.6,-0.4);(-4,-2) **@{-}, (-8,-2);(-4,2) **@{-},
(8,-2);(4,2) **@{-},
(-0.1,4.5);(-0.1,7.5) **@{-},
(0,4.5);(0,7.5) **@{-},
(0.1,4.5);(0.1,7.5) **@{-},
(-0.1,-3.5);(-0.1,-0.5) **@{-},
(0,-3.5);(0,-0.5) **@{-},
(0.1,-3.5);(0.1,-0.5) **@{-},
(-1,6.4)*{\ulcorner}, (-1.4,-8)*{\urcorner}, (-3,-1.2)*{\llcorner}, 
(1,-3)*{\lrcorner},
(-2,4);(-2,0) **\crv{(1,2)}, (2,0);(2,4) **\crv{(-1,2)},
(-2,-4);(2,-4) **\crv{(0,-7)}, (-2,-8);(2,-8) **\crv{(0,-5)},
 ~\endxy\bigg)_N
 ~+t^2 {\mathbf K}\bigg(~\xy 
 (-8,8);(-2,8) **@{-}, 
(2,8);(8,8) **@{-}, 
(-8,8);(-8,2) **@{-}, 
(8,8);(8,2) **@{-}, 
(-8,-8);(-2,-8) **@{-}, 
(2,-8);(8,-8) **@{-}, 
(-8,-8);(-8,-2) **@{-}, 
(8,-8);(8,-2) **@{-},
(-2,8);(2,4) **@{-}, 
(-2,4);(2,8) **@{-},%
(-2,8);(2,4) **@{-}, (-2,4);(2,8) **@{-},
(0.4,6.4);(2,8) **@{-}, (2,0);(4,-2) **@{-}, (-2,0);(-4,-2) **@{-},
(-2,-4);(1.4,-0.4) **@{-}, (2.4,0.4);(4,2) **@{-},
(2,-4);(-1.7,-0.3) **@{-}, 
(-0.4,-1.6);(-1.7,-0.3) **@{-},  (-2.4,0.4);(-4,2) **@{-},
(-8,2);(-6.4,0.4) **@{-}, (-5.6,-0.4);(-4,-2) **@{-}, (-8,-2);(-4,2) **@{-},
(5.6,-0.4);(4,-2) **@{-}, (8,2);(6.6,0.4) **@{-},
(8,-2);(4,2) **@{-},
(-0.1,4.5);(-0.1,7.5) **@{-},
(0,4.5);(0,7.5) **@{-},
(0.1,4.5);(0.1,7.5) **@{-},
(-0.1,-3.5);(-0.1,-0.5) **@{-},
(0,-3.5);(0,-0.5) **@{-},
(0.1,-3.5);(0.1,-0.5) **@{-},
(-1,6.4)*{\ulcorner}, (-1.4,-8)*{\urcorner}, (-3,-1.2)*{\llcorner}, 
(1,-3)*{\lrcorner},
(-2,4);(-2,0) **\crv{(1,2)}, (2,0);(2,4) **\crv{(-1,2)},
(-2,-4);(-2,-8) **\crv{(1,-6)}, (2,-8);(2,-4) **\crv{(-1,-6)},
 \endxy~\bigg)_N
 =t^2
 {\mathbf K}\bigg(~\xy 
 (-8,8);(-2,8) **@{-}, 
(2,8);(8,8) **@{-}, 
(-8,8);(-8,2) **@{-}, 
(8,8);(8,2) **@{-}, 
(-8,-8);(-2,-8) **@{-}, 
(2,-8);(8,-8) **@{-}, 
(-8,-8);(-8,-2) **@{-}, 
(8,-8);(8,-2) **@{-},
(-2,8);(2,8) **@{-}, 
(2,0);(4,-2) **@{-}, 
(-2,0);(-4,-2) **@{-},
(2.4,0.4);(4,2) **@{-},%
(-2.4,0.4);(-4,2) **@{-},
(-8,2);(-6.4,0.4) **@{-}, (-5.6,-0.4);(-4,-2) **@{-}, (-8,-2);(-4,2) **@{-},
(5.6,-0.4);(4,-2) **@{-}, (8,2);(6.6,0.4) **@{-},
(8,-2);(4,2) **@{-},
(-1.4,-8)*{\urcorner}, (-3,-1.2)*{\llcorner}, 
(-2,0);(2,0) **\crv{(0,3)},
(-1.5,-1);(1.5,-1) **\crv{(0,-3)}, 
(-2,-8);(2,-8) **\crv{(0,-5)},
 \endxy~\bigg)_N+\\
&t^2 {\mathbf K}\bigg(~\xy  
 (-8,8);(-2,8) **@{-}, 
(2,8);(8,8) **@{-}, 
(-8,8);(-8,2) **@{-}, 
(8,8);(8,2) **@{-}, 
(-8,-8);(-2,-8) **@{-}, 
(2,-8);(8,-8) **@{-}, 
(-8,-8);(-8,-2) **@{-}, 
(8,-8);(8,-2) **@{-},
(-2,8);(2,8) **@{-}, 
(2,0);(4,-2) **@{-}, 
(-2,0);(-4,-2) **@{-},
(-2,-4);(1.4,-0.4) **@{-}, 
(2.4,0.4);(4,2) **@{-},
(2,-4);(-1.7,-0.3) **@{-},  
(-2.4,0.4);(-4,2) **@{-},
(-8,2);(-6.4,0.4) **@{-}, (-5.6,-0.4);(-4,-2) **@{-}, (-8,-2);(-4,2) **@{-},
(5.6,-0.4);(4,-2) **@{-}, (8,2);(6.6,0.4) **@{-},
(8,-2);(4,2) **@{-},
(-0.1,-3.5);(-0.1,-0.5) **@{-},
(0,-3.5);(0,-0.5) **@{-},
(0.1,-3.5);(0.1,-0.5) **@{-},
(-1.4,-8)*{\urcorner}, (-3,-1.2)*{\llcorner}, 
(1,-3)*{\lrcorner}, 
(-2,0);(2,0) **\crv{(0,3)},
(-2,-4);(-2,-8) **\crv{(1,-6)}, 
(2,-8);(2,-4) **\crv{(-1,-6)},
 \endxy~\bigg)_N
 +t^2 {\mathbf K}\bigg(~\xy  
(-8,8);(-2,8) **@{-}, 
(2,8);(8,8) **@{-}, 
(-8,8);(-8,2) **@{-}, 
(8,8);(8,2) **@{-}, 
(-8,-8);(-2,-8) **@{-}, 
(2,-8);(8,-8) **@{-}, 
(-8,-8);(-8,-2) **@{-}, 
(8,-8);(8,-2) **@{-},
(-2,8);(2,4) **@{-}, (-2,4);(2,8) **@{-},
(0.4,6.4);(2,8) **@{-}, (2,0);(4,-2) **@{-}, (-2,0);(-4,-2) **@{-},
(2.4,0.4);(4,2) **@{-},%
(-2.4,0.4);(-4,2) **@{-},
(-8,2);(-6.4,0.4) **@{-}, (-5.6,-0.4);(-4,-2) **@{-}, (-8,-2);(-4,2) **@{-},
(5.6,-0.4);(4,-2) **@{-}, (8,2);(6.6,0.4) **@{-},
(8,-2);(4,2) **@{-},
(-0.1,4.5);(-0.1,7.5) **@{-},
(0,4.5);(0,7.5) **@{-},
(0.1,4.5);(0.1,7.5) **@{-},
(-1,6.4)*{\ulcorner}, 
(-1.4,-8)*{\urcorner}, (-3,-1.2)*{\llcorner}, 
(-2,4);(-2,0) **\crv{(1,2)}, 
(2,0);(2,4) **\crv{(-1,2)}, 
(-2,-8);(2,-8) **\crv{(0,-5)},
(-1.5,-1);(1.5,-1) **\crv{(0,-3)},
 ~\endxy\bigg)_N
 +t^2 {\mathbf K}\bigg(~\xy  
 (-8,8);(-2,8) **@{-}, 
(2,8);(8,8) **@{-}, 
(-8,8);(-8,2) **@{-}, 
(8,8);(8,2) **@{-}, 
(-8,-8);(-2,-8) **@{-}, 
(2,-8);(8,-8) **@{-}, 
(-8,-8);(-8,-2) **@{-}, 
(8,-8);(8,-2) **@{-},
(-2,8);(2,4) **@{-}, (-2,4);(2,8) **@{-},
(0.4,6.4);(2,8) **@{-}, (2,0);(4,-2) **@{-}, (-2,0);(-4,-2) **@{-},
(-2,-4);(1.4,-0.4) **@{-}, (2.4,0.4);(4,2) **@{-},
(2,-4);(-1.7,-0.3) **@{-},  
(-0.4,-1.6);(-1.7,-0.3) **@{-},  (-2.4,0.4);(-4,2) **@{-},
(-8,2);(-6.4,0.4) **@{-}, (-5.6,-0.4);(-4,-2) **@{-}, (-8,-2);(-4,2) **@{-},
(5.6,-0.4);(4,-2) **@{-}, (8,2);(6.6,0.4) **@{-},
(8,-2);(4,2) **@{-},
(-0.1,4.5);(-0.1,7.5) **@{-},
(0,4.5);(0,7.5) **@{-},
(0.1,4.5);(0.1,7.5) **@{-},
(-0.1,-3.5);(-0.1,-0.5) **@{-},
(0,-3.5);(0,-0.5) **@{-},
(0.1,-3.5);(0.1,-0.5) **@{-},
(-1,6.4)*{\ulcorner}, (-1.4,-8)*{\urcorner}, (-3,-1.2)*{\llcorner}, 
(1,-3)*{\lrcorner},
(-2,4);(-2,0) **\crv{(1,2)}, (2,0);(2,4) **\crv{(-1,2)},
(-2,-4);(-2,-8) **\crv{(1,-6)}, (2,-8);(2,-4) **\crv{(-1,-6)},
 \endxy~\bigg)_N\\
 =&t^2{\mathbf K}(O^2)_N + t^3{\mathbf K}(2^2_1\sharp 2^{2*}_1)_N + t^3{\mathbf K}(O^2)_N+t^3{\mathbf K}(2^2_1\sharp 2^{2*}_1)_N + t^3{\mathbf K}(O^2)_N\\
&+t^4{\mathbf K}(2^2_1\sqcup 2^{2*}_1)_N + t^4{\mathbf K}(2^2_1\sharp 2^{2*}_1)_N + t^4{\mathbf K}(2^2_1\sharp 2^{2*}_1)_N+t^4{\mathbf K}(O^2)_N\\
 =&(t^2+2t^3+t^4)(t^{-1}-1) + \Big(2(t^4+t^3) + t^4(t^{-1}-1)\Big)(z(t)^{10}+z(t)^{2})(\overline{z}(t)^{10}+\overline{z}(t)^{2})\\
=&(6t^4+15t^3+3t^2-11t+3)t^{-1}.
 \end{align*}
Hence for each integer $n \geq 2$,
\begin{align*}
{\mathbf K}_{2n-1}(8^{1,1}_1)_N &=\overline{{\mathbf K}(8^{1,1}_2;n)_N}=[6n^4+15n^3+3n^2-11n+3]n^{-1}.
\end{align*}
Further, forgetting the orientation on $8^{1,1}_1$, we see that $\ll 8^{1,1}_1\gg_N$ $=\ll \widetilde{8^{1,1}_1}\gg$ and therefore
${\mathbf K}_{2n-1}(\widetilde{8^{1,1}_1})={\mathbf K}_{2n-1}(8^{1,1}_1)_N$ for all $n \geq 2.$
\end{example}

\begin{example}\label{examp-10^{1}_1} 
Let $10^{1}_1$ be the spun torus of the trefoil in Yoshikawa's table with the orientation indicated below. By Theorem \ref{thm-skein-rel-kb-1} together with (\ref{nkbp-exmps}) and (\ref{skein-rel-exmp}), we obtain 
\begin{align*}
&{\mathbf K}\bigg(~\xy 
(-12,8);(-2,8) **@{-}, 
(2,8);(12,8) **@{-}, 
(-12,8);(-12,2) **@{-}, 
(12,8);(12,2) **@{-}, 
(-12,-8);(-2,-8) **@{-}, 
(2,-8);(12,-8) **@{-}, 
(-12,-8);(-12,-2) **@{-}, 
(12,-8);(12,-2) **@{-},
(-2,8);(2,4) **@{-}, 
(-2,4);(2,8) **@{-},%
(-2,4);(4,-2) **@{-}, 
(2,4);(-4,-2) **@{-}, 
(-2,-8);(2,-4) **@{-}, 
(-2,-4);(2,-8) **@{-},
(-2,-4);(1.4,-0.4) **@{-}, 
(2.4,0.4);(4,2) **@{-},
(2,-4);(-1.7,-0.3) **@{-}, %
(-2.4,0.4);(-4,2) **@{-},
(-12,2);(-10.4,0.4) **@{-}, 
(-9.6,-0.4);(-8,-2) **@{-}, 
(-12,-2);(-8,2) **@{-},
(-8,2);(-6.4,0.4) **@{-}, 
(-5.6,-0.4);(-4,-2) **@{-}, 
(-8,-2);(-4,2) **@{-},
(12,2);(10.4,0.4) **@{-}, 
(9.6,-0.4);(8,-2) **@{-},
(12,-2);(8,2) **@{-},
(5.6,-0.4);(4,-2) **@{-}, 
(8,2);(6.6,0.4) **@{-},
(8,-2);(4,2) **@{-},
(-0.1,4.5);(-0.1,7.5) **@{-},
(0,4.5);(0,7.5) **@{-},
(0.1,4.5);(0.1,7.5) **@{-},
(-0.1,-3.5);(-0.1,-0.5) **@{-},
(0,-3.5);(0,-0.5) **@{-},
(0.1,-3.5);(0.1,-0.5) **@{-},
(-1.5,1.9);(1.5,1.9) **@{-},
(-1.5,2);(1.5,2) **@{-},
(-1.5,2.1);(1.5,2.1) **@{-},
(-1.5,-5.9);(1.5,-5.9) **@{-},
(-1.5,-6);(1.5,-6) **@{-},
(-1.5,-6.1);(1.5,-6.1) **@{-},
(-1,6.4)*{\ulcorner}, (-1.4,-8)*{\urcorner}, (-3,-1.2)*{\llcorner}, (-11,-1.2)*{\llcorner}, (-8.9,-1.2)*{\lrcorner}, (11,0.5)*{\urcorner}, (8.9,0.5)*{\ulcorner}, (1,-3)*{\lrcorner},
(0.5,-12.2) *{10^{1}_1},
 \endxy~\bigg)_N 
 = t^2 {\mathbf K}\bigg(~\xy (-12,8);(-2,8) **@{-}, (2,8);(12,8) **@{-}, (-12,8);(-12,2) **@{-}, (12,8);(12,2) **@{-}, (-12,-8);(-2,-8) **@{-}, (2,-8);(12,-8) **@{-}, (-12,-8);(-12,-2) **@{-}, (12,-8);(12,-2) **@{-},
(-2,8);(2,4) **@{-}, (-2,4);(2,8) **@{-},
(0.4,6.4);(2,8) **@{-}, (2,0);(4,-2) **@{-}, (-2,0);(-4,-2) **@{-},
(-2,-4);(1.4,-0.4) **@{-}, (2.4,0.4);(4,2) **@{-},
(2,-4);(-1.7,-0.3) **@{-},  
(-0.4,-1.6);(-1.7,-0.3) **@{-},  (-2.4,0.4);(-4,2) **@{-},
(-12,2);(-10.4,0.4) **@{-}, (-9.6,-0.4);(-8,-2) **@{-}, (-12,-2);(-8,2) **@{-},
(-8,2);(-6.4,0.4) **@{-}, (-5.6,-0.4);(-4,-2) **@{-}, (-8,-2);(-4,2) **@{-},
(12,2);(10.4,0.4) **@{-}, (9.6,-0.4);(8,-2) **@{-},
(12,-2);(8,2) **@{-},
(5.6,-0.4);(4,-2) **@{-}, (8,2);(6.6,0.4) **@{-},
(8,-2);(4,2) **@{-},
(-0.1,4.5);(-0.1,7.5) **@{-},
(0,4.5);(0,7.5) **@{-},
(0.1,4.5);(0.1,7.5) **@{-},
(-0.1,-3.5);(-0.1,-0.5) **@{-},
(0,-3.5);(0,-0.5) **@{-},
(0.1,-3.5);(0.1,-0.5) **@{-},
(-1,6.4)*{\ulcorner}, (-1.4,-8)*{\urcorner}, (-3,-1.2)*{\llcorner}, (-11,-1.2)*{\llcorner}, (-8.9,-1.2)*{\lrcorner}, (11,0.5)*{\urcorner}, (8.9,0.5)*{\ulcorner}, (1,-3)*{\lrcorner},
(-2,4);(2,4) **\crv{(0,1)}, (-2,0);(2,0) **\crv{(0,3)},
(-2,-4);(2,-4) **\crv{(0,-7)}, (-2,-8);(2,-8) **\crv{(0,-5)},
 \endxy~\bigg)_N
 + t^2 {\mathbf K}\bigg(~\xy (-12,8);(-2,8) **@{-}, (2,8);(12,8) **@{-}, (-12,8);(-12,2) **@{-}, (12,8);(12,2) **@{-}, (-12,-8);(-2,-8) **@{-}, (2,-8);(12,-8) **@{-}, (-12,-8);(-12,-2) **@{-}, (12,-8);(12,-2) **@{-},
(-2,8);(2,4) **@{-}, (-2,4);(2,8) **@{-},
(0.4,6.4);(2,8) **@{-}, (2,0);(4,-2) **@{-}, (-2,0);(-4,-2) **@{-},
(-2,-4);(1.4,-0.4) **@{-}, (2.4,0.4);(4,2) **@{-},
(2,-4);(-1.7,-0.3) **@{-},  
(-0.4,-1.6);(-1.7,-0.3) **@{-},  (-2.4,0.4);(-4,2) **@{-},
(-12,2);(-10.4,0.4) **@{-}, (-9.6,-0.4);(-8,-2) **@{-}, (-12,-2);(-8,2) **@{-},
(-8,2);(-6.4,0.4) **@{-}, (-5.6,-0.4);(-4,-2) **@{-}, (-8,-2);(-4,2) **@{-},
(12,2);(10.4,0.4) **@{-}, (9.6,-0.4);(8,-2) **@{-},
(12,-2);(8,2) **@{-},
(5.6,-0.4);(4,-2) **@{-}, (8,2);(6.6,0.4) **@{-},
(8,-2);(4,2) **@{-},
(-0.1,4.5);(-0.1,7.5) **@{-},
(0,4.5);(0,7.5) **@{-},
(0.1,4.5);(0.1,7.5) **@{-},
(-0.1,-3.5);(-0.1,-0.5) **@{-},
(0,-3.5);(0,-0.5) **@{-},
(0.1,-3.5);(0.1,-0.5) **@{-},
(-1,6.4)*{\ulcorner}, (-1.4,-8)*{\urcorner}, (-3,-1.2)*{\llcorner}, (-11,-1.2)*{\llcorner}, (-8.9,-1.2)*{\lrcorner}, (11,0.5)*{\urcorner}, (8.9,0.5)*{\ulcorner}, (1,-3)*{\lrcorner},
(-2,4);(2,4) **\crv{(0,1)}, (-2,0);(2,0) **\crv{(0,3)},
(-2,-4);(-2,-8) **\crv{(1,-6)}, (2,-8);(2,-4) **\crv{(-1,-6)},
 \endxy~\bigg)_N+\\
&t^2 {\mathbf K}\bigg(~\xy (-12,8);(-2,8) **@{-}, (2,8);(12,8) **@{-}, (-12,8);(-12,2) **@{-}, (12,8);(12,2) **@{-}, (-12,-8);(-2,-8) **@{-}, (2,-8);(12,-8) **@{-}, (-12,-8);(-12,-2) **@{-}, (12,-8);(12,-2) **@{-},
(-2,8);(2,4) **@{-}, (-2,4);(2,8) **@{-},
(0.4,6.4);(2,8) **@{-}, (2,0);(4,-2) **@{-}, (-2,0);(-4,-2) **@{-},
(-2,-4);(1.4,-0.4) **@{-}, (2.4,0.4);(4,2) **@{-},
(2,-4);(-1.7,-0.3) **@{-}, 
(-0.4,-1.6);(-1.7,-0.3) **@{-},  (-2.4,0.4);(-4,2) **@{-},
(-12,2);(-10.4,0.4) **@{-}, (-9.6,-0.4);(-8,-2) **@{-}, (-12,-2);(-8,2) **@{-},
(-8,2);(-6.4,0.4) **@{-}, (-5.6,-0.4);(-4,-2) **@{-}, (-8,-2);(-4,2) **@{-},
(12,2);(10.4,0.4) **@{-}, (9.6,-0.4);(8,-2) **@{-},
(12,-2);(8,2) **@{-},
(5.6,-0.4);(4,-2) **@{-}, (8,2);(6.6,0.4) **@{-},
(8,-2);(4,2) **@{-},
(-0.1,4.5);(-0.1,7.5) **@{-},
(0,4.5);(0,7.5) **@{-},
(0.1,4.5);(0.1,7.5) **@{-},
(-0.1,-3.5);(-0.1,-0.5) **@{-},
(0,-3.5);(0,-0.5) **@{-},
(0.1,-3.5);(0.1,-0.5) **@{-},
(-1,6.4)*{\ulcorner}, (-1.4,-8)*{\urcorner}, (-3,-1.2)*{\llcorner}, (-11,-1.2)*{\llcorner}, (-8.9,-1.2)*{\lrcorner}, (11,0.5)*{\urcorner}, (8.9,0.5)*{\ulcorner}, (1,-3)*{\lrcorner},
(-2,4);(-2,0) **\crv{(1,2)}, (2,0);(2,4) **\crv{(-1,2)},
(-2,-4);(2,-4) **\crv{(0,-7)}, (-2,-8);(2,-8) **\crv{(0,-5)},
 \endxy~\bigg)_N
 ~+t^2 {\mathbf K}\bigg(~\xy (-12,8);(-2,8) **@{-}, (2,8);(12,8) **@{-}, (-12,8);(-12,2) **@{-}, (12,8);(12,2) **@{-}, (-12,-8);(-2,-8) **@{-}, (2,-8);(12,-8) **@{-}, (-12,-8);(-12,-2) **@{-}, (12,-8);(12,-2) **@{-},
(-2,8);(2,4) **@{-}, (-2,4);(2,8) **@{-},
(0.4,6.4);(2,8) **@{-}, (2,0);(4,-2) **@{-}, (-2,0);(-4,-2) **@{-},
(-2,-4);(1.4,-0.4) **@{-}, (2.4,0.4);(4,2) **@{-},
(2,-4);(-1.7,-0.3) **@{-},  
(-0.4,-1.6);(-1.7,-0.3) **@{-},  (-2.4,0.4);(-4,2) **@{-},
(-12,2);(-10.4,0.4) **@{-}, (-9.6,-0.4);(-8,-2) **@{-}, (-12,-2);(-8,2) **@{-},
(-8,2);(-6.4,0.4) **@{-}, (-5.6,-0.4);(-4,-2) **@{-}, (-8,-2);(-4,2) **@{-},
(12,2);(10.4,0.4) **@{-}, (9.6,-0.4);(8,-2) **@{-},
(12,-2);(8,2) **@{-},
(5.6,-0.4);(4,-2) **@{-}, (8,2);(6.6,0.4) **@{-},
(8,-2);(4,2) **@{-},
(-0.1,4.5);(-0.1,7.5) **@{-},
(0,4.5);(0,7.5) **@{-},
(0.1,4.5);(0.1,7.5) **@{-},
(-0.1,-3.5);(-0.1,-0.5) **@{-},
(0,-3.5);(0,-0.5) **@{-},
(0.1,-3.5);(0.1,-0.5) **@{-},
(-1,6.4)*{\ulcorner}, (-1.4,-8)*{\urcorner}, (-3,-1.2)*{\llcorner}, (-11,-1.2)*{\llcorner}, (-8.9,-1.2)*{\lrcorner}, (11,0.5)*{\urcorner}, (8.9,0.5)*{\ulcorner}, (1,-3)*{\lrcorner},
(-2,4);(-2,0) **\crv{(1,2)}, (2,0);(2,4) **\crv{(-1,2)},
(-2,-4);(-2,-8) **\crv{(1,-6)}, (2,-8);(2,-4) **\crv{(-1,-6)},
 \endxy~\bigg)_N=t^2
 {\mathbf K}\bigg(~\xy (-12,8);(-2,8) **@{-}, (2,8);(12,8) **@{-}, (-12,8);(-12,2) **@{-}, (12,8);(12,2) **@{-}, (-12,-8);(-2,-8) **@{-}, (2,-8);(12,-8) **@{-}, (-12,-8);(-12,-2) **@{-}, (12,-8);(12,-2) **@{-},
(-2,8);(2,8) **@{-}, 
(2,0);(4,-2) **@{-}, 
(-2,0);(-4,-2) **@{-},
(2.4,0.4);(4,2) **@{-},
(-2.4,0.4);(-4,2) **@{-},
(-12,2);(-10.4,0.4) **@{-}, (-9.6,-0.4);(-8,-2) **@{-}, (-12,-2);(-8,2) **@{-},
(-8,2);(-6.4,0.4) **@{-}, (-5.6,-0.4);(-4,-2) **@{-}, (-8,-2);(-4,2) **@{-},
(12,2);(10.4,0.4) **@{-}, (9.6,-0.4);(8,-2) **@{-},
(12,-2);(8,2) **@{-},
(5.6,-0.4);(4,-2) **@{-}, (8,2);(6.6,0.4) **@{-},
(8,-2);(4,2) **@{-},
(-1.4,-8)*{\urcorner}, (-3,-1.2)*{\llcorner}, (-11,-1.2)*{\llcorner}, 
(11,0.5)*{\urcorner}, (8.9,0.5)*{\ulcorner}, 
(-2,0);(2,0) **\crv{(0,3)},
(-1.5,-1);(1.5,-1) **\crv{(0,-3)}, 
(-2,-8);(2,-8) **\crv{(0,-5)},
\endxy~\bigg)_N+\\
&t^2 {\mathbf K}\bigg(~\xy (-12,8);(-2,8) **@{-}, (2,8);(12,8) **@{-}, (-12,8);(-12,2) **@{-}, (12,8);(12,2) **@{-}, (-12,-8);(-2,-8) **@{-}, (2,-8);(12,-8) **@{-}, (-12,-8);(-12,-2) **@{-}, (12,-8);(12,-2) **@{-},
(-2,8);(2,8) **@{-}, 
(2,0);(4,-2) **@{-}, 
(-2,0);(-4,-2) **@{-},
(-2,-4);(1.4,-0.4) **@{-}, 
(2.4,0.4);(4,2) **@{-},
(2,-4);(-1.7,-0.3) **@{-},   
(-2.4,0.4);(-4,2) **@{-},
(-12,2);(-10.4,0.4) **@{-}, (-9.6,-0.4);(-8,-2) **@{-}, (-12,-2);(-8,2) **@{-},
(-8,2);(-6.4,0.4) **@{-}, (-5.6,-0.4);(-4,-2) **@{-}, (-8,-2);(-4,2) **@{-},
(12,2);(10.4,0.4) **@{-}, (9.6,-0.4);(8,-2) **@{-},
(12,-2);(8,2) **@{-},
(5.6,-0.4);(4,-2) **@{-}, (8,2);(6.6,0.4) **@{-},
(8,-2);(4,2) **@{-},
(-0.1,-3.5);(-0.1,-0.5) **@{-},
(0,-3.5);(0,-0.5) **@{-},
(0.1,-3.5);(0.1,-0.5) **@{-},
(-1.4,-8)*{\urcorner}, (-3,-1.2)*{\llcorner}, (-11,-1.2)*{\llcorner},  
(11,0.5)*{\urcorner}, (8.9,0.5)*{\ulcorner}, (1,-3)*{\lrcorner}, 
(-2,0);(2,0) **\crv{(0,3)},
(-2,-4);(-2,-8) **\crv{(1,-6)}, 
(2,-8);(2,-4) **\crv{(-1,-6)},
 \endxy~\bigg)_N+
 t^2 {\mathbf K}\bigg(~\xy (-12,8);(-2,8) **@{-}, (2,8);(12,8) **@{-}, (-12,8);(-12,2) **@{-}, (12,8);(12,2) **@{-}, (-12,-8);(-2,-8) **@{-}, (2,-8);(12,-8) **@{-}, (-12,-8);(-12,-2) **@{-}, (12,-8);(12,-2) **@{-},
(-2,8);(2,4) **@{-}, (-2,4);(2,8) **@{-},
(0.4,6.4);(2,8) **@{-}, (2,0);(4,-2) **@{-}, (-2,0);(-4,-2) **@{-},
(2.4,0.4);(4,2) **@{-}, 
(-2.4,0.4);(-4,2) **@{-},
(-12,2);(-10.4,0.4) **@{-}, (-9.6,-0.4);(-8,-2) **@{-}, (-12,-2);(-8,2) **@{-},
(-8,2);(-6.4,0.4) **@{-}, (-5.6,-0.4);(-4,-2) **@{-}, (-8,-2);(-4,2) **@{-},
(12,2);(10.4,0.4) **@{-}, (9.6,-0.4);(8,-2) **@{-},
(12,-2);(8,2) **@{-},
(5.6,-0.4);(4,-2) **@{-}, (8,2);(6.6,0.4) **@{-},
(8,-2);(4,2) **@{-},
(-0.1,4.5);(-0.1,7.5) **@{-},
(0,4.5);(0,7.5) **@{-},
(0.1,4.5);(0.1,7.5) **@{-},
(-1,6.4)*{\ulcorner}, 
(-1.4,-8)*{\urcorner}, (-3,-1.2)*{\llcorner}, (-11,-1.2)*{\llcorner}, (-8.9,-1.2)*{\lrcorner}, 
(11,0.5)*{\urcorner}, (8.9,0.5)*{\ulcorner},
(-2,4);(-2,0) **\crv{(1,2)}, 
(2,0);(2,4) **\crv{(-1,2)}, 
(-2,-8);(2,-8) **\crv{(0,-5)},
(-1.5,-1);(1.5,-1) **\crv{(0,-3)},
 \endxy~\bigg)_N
 +t^2 {\mathbf K}\bigg(~\xy (-12,8);(-2,8) **@{-}, (2,8);(12,8) **@{-}, (-12,8);(-12,2) **@{-}, (12,8);(12,2) **@{-}, (-12,-8);(-2,-8) **@{-}, (2,-8);(12,-8) **@{-}, (-12,-8);(-12,-2) **@{-}, (12,-8);(12,-2) **@{-},
(-2,8);(2,4) **@{-}, (-2,4);(2,8) **@{-},
(0.4,6.4);(2,8) **@{-}, (2,0);(4,-2) **@{-}, (-2,0);(-4,-2) **@{-},
(-2,-4);(1.4,-0.4) **@{-}, (2.4,0.4);(4,2) **@{-},
(2,-4);(-1.7,-0.3) **@{-},  
(-0.4,-1.6);(-1.7,-0.3) **@{-},  (-2.4,0.4);(-4,2) **@{-},
(-12,2);(-10.4,0.4) **@{-}, (-9.6,-0.4);(-8,-2) **@{-}, (-12,-2);(-8,2) **@{-},
(-8,2);(-6.4,0.4) **@{-}, (-5.6,-0.4);(-4,-2) **@{-}, (-8,-2);(-4,2) **@{-},
(12,2);(10.4,0.4) **@{-}, (9.6,-0.4);(8,-2) **@{-},
(12,-2);(8,2) **@{-},
(5.6,-0.4);(4,-2) **@{-}, (8,2);(6.6,0.4) **@{-},
(8,-2);(4,2) **@{-},
(-0.1,4.5);(-0.1,7.5) **@{-},
(0,4.5);(0,7.5) **@{-},
(0.1,4.5);(0.1,7.5) **@{-},
(-0.1,-3.5);(-0.1,-0.5) **@{-},
(0,-3.5);(0,-0.5) **@{-},
(0.1,-3.5);(0.1,-0.5) **@{-},
(-1,6.4)*{\ulcorner}, (-1.4,-8)*{\urcorner}, (-3,-1.2)*{\llcorner}, (-11,-1.2)*{\llcorner}, (-8.9,-1.2)*{\lrcorner}, (11,0.5)*{\urcorner}, (8.9,0.5)*{\ulcorner}, (1,-3)*{\lrcorner},
(-2,4);(-2,0) **\crv{(1,2)}, (2,0);(2,4) **\crv{(-1,2)},
(-2,-4);(-2,-8) **\crv{(1,-6)}, (2,-8);(2,-4) **\crv{(-1,-6)},
 \endxy~\bigg)_N
 \end{align*}
 \begin{align*}
&=t^2{\mathbf K}(O^2)_N + t^3{\mathbf K}(3_1\sharp 3_1^*)_N + t^3{\mathbf K}(O^2)_N+t^3{\mathbf K}(3_1\sharp 3_1^*)_N + t^3{\mathbf K}(O^2)_N\\
&~~~~+t^4{\mathbf K}(3_1\sqcup 3_1^*)_N + t^4{\mathbf K}(3_1\sharp 3_1^*)_N + t^4{\mathbf K}(3_1\sharp 3_1^*)_N+t^4{\mathbf K}(O^2)_N\\
&=(t^2+2t^3+t^4)(t^{-1}-1) + \Big(2(t^4+t^3) + t^4(t^{-1}-1)\Big)\\
 &\hskip 2.20cm (-z(t)^{16}+z(t)^{12}+z(t)^4)(-\overline{z}(t)^{16}+\overline{z}(t)^{12}+\overline{z}(t)^{4}) \\
&=(8t^6+32t^5+32t^4-32t^3-18t^2+17t-3)t^{-3}.
 \end{align*}
Hence for each integer $n \geq 2$,
\begin{align*}
{\mathbf K}_{2n-1}(10^{1}_1)_N &=\overline{{\mathbf K}(10^{1}_1;n)_N}=[8n^6+32n^5+32n^4-32n^3-18n^2+17n-3]n^{-3}.
\end{align*}
Further, forgetting the orientation on $10^{1}_1$, we see that $\ll 10^{1}_1\gg_N$ $=\ll \widetilde{10^{1}_1}\gg$ and therefore
${\mathbf K}_{2n-1}(\widetilde{10^{1}_1})={\mathbf K}_{2n-1}(10^{1}_1)_N$ for all $n \geq 2.$
\end{example}

\bigskip

As a summary of the above discussion of examples, we give the following Table I of the first four invariants ${\mathbf K}_{2n-1}(\mathcal L)_N (n=2,3,4,5)$ of all unoriented surface-links $\mathcal L$ in Yoshikawa's table with ch-index $\leq 8$ and three more $2$-knots $9_1, 10_2$ and $10^1_1$.
\[
\begin{array}{|l|l|l|l|l|}
\hline
\text{Surface-link $\mathcal L$}&{\mathbf K}_3(\mathcal L)&{\mathbf K}_5(\mathcal L)&{\mathbf K}_7(\mathcal L)&{\mathbf K}_9(\mathcal L)\\
\hline
0_1&[1]&[1]&[1]&[1]\\
2^1_1&[1]&[1]&[1]&[1]\\
2^{-1}_1&[1]2^{-\frac{1}{2}}\sqrt{5}&[1]3^{-\frac{1}{2}}\sqrt{8}&[1]4^{-\frac{1}{2}}\sqrt{11}&[1]5^{-\frac{1}{2}}\sqrt{14}\\
6^{0,1}_1&[1]2^{-2}&[4]3^{-2}&[2]4^{-2}&[7]5^{-2}\\
7^{0,-2}_1&[2]2^{-5}&[1]2^{-1}3^{-4}&[1]2^{-1}4^{-4}&[4]2^{-1}5^{-4}\\
8_1&[1]2^{-4}&[1]3^{-4}&[4]4^{-4}&[4]5^{-4}\\
8^{1,1}_1&[2]2^{-1}&[1]&[4]4^{-1}&[5]5^{-1}\\
8^{-1,-1}_1&[1]2^{-4}&[1]3^{-4}&[4]4^{-4}&[4]5^{-4}\\
9_1&[2]2^{-7}&[4]3^{-7}&[2]4^{-7}&[1]5^{-7}\\
10_2&[1]2^{-6}&[4]3^{-6}&[1]4^{-6}&[1]5^{-6}\\
10^{1}_1&[2]2^{-3}&[4]3^{-2}&[1]4^{-3}&[8]5^{-3}\\
\hline
\end{array}
\]
\centerline{{\bf Table I.} The invariants ${\mathbf K}_{2n-1}(\mathcal L) (n=2,3,4,5)$ of surface-links $\mathcal L$}

\bigskip

We close this subsection with the following remarks come from Table I.

\begin{remark} 
\begin{itemize}
\item[(1)] The invariants ${\mathbf K}_{2n-1}(\mathcal L) (n=2,3,4,5)$ distinguish the spun $2$-knot $8_1$, the $2$-twist spun $2$-knot $10_2$ and the spun torus $10^1_1$ of the trefoil knot $3_1$.

\item[(2)] The invariants ${\mathbf K}_{2n-1}(\mathcal L) (n=2,3,4,5)$ distinguish two nonorientable surface-links $7^{0,-2}_1$ and $8^{-1,-1}_1$ of the same nonorientable total genus.

\item[(3)] The invariants ${\mathbf K}_{2n-1}(\mathcal L) (n=2,3,4,5)$ distinguish two component orientable surface-links $6^{0,1}_1$ and $8^{1,1}_1$, which have the same surface-link group $\mathbb Z \oplus \mathbb Z$ and so have the same Alexander ideal.

\item[(4)] The invariants ${\mathbf K}_{2n-1}(\mathcal L) (n=2,3,4,5)$ distinguish the ribbon $2$-knot $9_1$ associated with the knot $6_1$ from the spun $2$-knots $8_1, 10_2$ and $10_2$.

\item[(5)]) The $2$-knot $8_1$ and the nonorientable surface-link $8^{-1,-1}_1$ have the same invariant ${\mathbf K}_{2n-1}(\mathcal L) (n=2,3,4,5)$.

\item[(6)] ${\mathbf K}_5(6^{0,1}_1)_N =[4]3^{-2}={\mathbf K}_5(10^{1}_1)_N =[4]3^{-2}$. But, the other three invariants ${\mathbf K}_{3}(\mathcal L), {\mathbf K}_{7}(\mathcal L)$ and ${\mathbf K}_{9}(\mathcal L)$ distinguish two surface-links $6^{0,1}_1$ and $10^1_1$.
\end{itemize}
\end{remark}

\section*{Acknowlegements}
This work was supported by Basic Science Research Program through the National Research Foundation of Korea(NRF) funded by the Ministry of Education, Science and Technology (2013R1A1A2012446) and NRF-2016R1A2B4016029.


\end{document}